\newcommand{\B}{\mathcal{B}}
\newcommand{\C}{\mathcal{C}}
\renewcommand{\H}{\mathbb{H}}
\newcommand{\J}{\mathcal{J}}
\renewcommand{\L}{\mathcal{L}}
\newcommand{\N}{\mathbb{N}}
\newcommand{\R}{\mathbb{R}}
\newcommand{\loc}{{\rm loc}}
\newcommand{\dive}{{\mbox{\normalfont div}}}
\newcommand{\dist}{{\mbox{\normalfont dist}}}
\newcommand{\PV}{\mbox{\normalfont P.V.}}
\newcommand{\Haus}{\mathcal{H}}
\newcommand{\red}{{\partial^* \!}}
\renewcommand{\Cap}{\mbox{\normalfont Cap}}
\DeclareMathOperator{\diam}{diam}
\DeclareMathOperator{\supp}{supp}
\DeclareMathOperator{\osc}{osc}
\DeclareMathOperator{\Per}{Per}
\def\Xint#1{\mathchoice
{\XXint\displaystyle\textstyle{#1}}%
{\XXint\textstyle\scriptstyle{#1}}%
{\XXint\scriptstyle\scriptscriptstyle{#1}}%
{\XXint\scriptscriptstyle\scriptscriptstyle{#1}}%
\!\int}
\def\XXint#1#2#3{{\setbox0=\hbox{$#1{#2#3}{\int}$ }
\vcenter{\hbox{$#2#3$ }}\kern-.6\wd0}}
\def\dashint{\Xint-}
\newlength{\dhatheight}
\numberwithin{equation}{section}
\theoremstyle{plain}
\newtheorem{definition}{Definition}[section]
\newtheorem{theorem}[definition]{Theorem}
\newtheorem{proposition}[definition]{Proposition}
\newtheorem{lemma}[definition]{Lemma}
\newtheorem{corollary}[definition]{Corollary}
\theoremstyle{definition}
\newtheorem{remark}[definition]{Remark}
\renewcommand{\le}{\leqslant}
\renewcommand{\ge}{\geqslant}
\title{A gradient estimate for nonlocal minimal graphs}
\author{Xavier Cabr\'e}
\author{Matteo Cozzi}
\address{
%\vspace{-\baselineskip}
\newline
\textit{Xavier Cabr\'e \textsuperscript{1,2}}
\newline
\textsuperscript{1} Universitat Polit\`ecnica de Catalunya, Departament de Matem\`{a}tiques, Diagonal 647, 08028 Barcelona, Spain
\newline
\textsuperscript{2} ICREA, Pg. Lluis Companys 23, 08010 Barcelona, Spain
\newline
\textit{E-mail address}: \textit{\tt xavier.cabre@upc.edu}
}
\address{\vspace{-6pt}
\newline
\textit{Matteo Cozzi \textsuperscript{3}}
\newline
\textsuperscript{3} University of Bath, Department of Mathematical Sciences, Claverton Down, Bath BA2 7AY, UK
\newline
\textit{E-mail address}: \textit{\tt m.cozzi@bath.ac.uk}
}
\thanks{Both authors are members of the Barcelona Graduate School of Mathematics, are part of the Catalan research group 2017 SGR 1392, and are supported by the MINECO grants~MTM2014-52402-C3-1-P and~MTM2017-84214-C2-1-P. The second author has also been supported by the~``Mar\'ia de Maeztu'' MINECO grant~MDM-2014-0445 and is currently supported by a Royal Society Newton International Fellowship}
\keywords{Nonlocal minimal surfaces, nonlocal minimal graphs, gradient estimates, regularity results, rigidity theorems, fractional Sobolev inequalities, weak Harnack inequalities}
\subjclass[2010]{53A10, 47G20, 35J60, 49Q05, 28A75, 58J05}
\begin{document}

\begin{abstract}
We consider the class of measurable functions defined in all of~$\R^n$ that give rise to a nonlocal minimal graph over a ball of~$\R^n$. We establish that the gradient of any such function is bounded in the interior of the ball by a power of its oscillation. This estimate, together with previously known results, leads to the~$C^\infty$ regularity of the function in the ball. While the smoothness of nonlocal minimal graphs was known for~$n = 1, 2$---but without a quantitative bound---, in higher dimensions only their continuity had been established.

To prove the gradient bound, we show that the normal to a nonlocal minimal graph is a supersolution of a truncated fractional Jacobi operator, for which we prove a weak Harnack inequality. To this end, we establish a new universal fractional Sobolev inequality on nonlocal minimal surfaces.

Our estimate provides an extension to the fractional setting of the celebrated gradient bounds of~Finn and of~Bombieri,~De Giorgi~\& Miranda for solutions of the classical mean curvature equation.
\end{abstract}

\maketitle

%\bigskip
%\bigskip

%\setcounter{tocdepth}{1}
%\tableofcontents

\section{Introduction}

\noindent
Nonlocal minimal surfaces were first introduced and studied by Caffarelli, Roquejoffre \& Savin in the seminal 
work~\cite{CRS10}, where the authors defined them as the minimizers of a fractional perimeter functional. They were motivated by the work of Caffarelli \& Souganidis~\cite{CS10} on the asymptotic configurations of a threshold dynamics scheme governed by a L\'evy-type jump diffusion. They appeared also in~\cite{I09}, where Imbert studied related nonlocal geometric flows arising in dislocation dynamics in crystals. Later, Savin \& Valdinoci~\cite{SV12} showed the relevance of nonlocal minimal surfaces by proving, through~$\Gamma$-convergence techniques, that they are the limiting configurations for Ginzburg-Landau energies modeling phase-separation phenomena in the presence of strongly nonlocal interactions.

Since their introduction, nonlocal minimal surfaces have attracted much attention, first and foremost to understand their regularity and to make progresses towards their classification. We refer the reader to~\cite{DV16},~\cite{L15},~\cite[Section~7]{CF17}, and~\cite[Chapter~6]{BV16} for general introductions to this topic.

\subsection{The gradient estimates} \label{mainressub}

Our main result establishes the regularity, through a gradient estimate, of a particular type of nonlocal 
minimal surfaces:~$\alpha$-minimal graphs. To state it, let~$\alpha \in (0, 1)$ and~$n \ge 1$ be an integer. Given a bounded open set~$\Omega \subset \R^{n + 1}$ and a measurable set~$E \subseteq \R^{n + 1}$, we define the~\emph{$\alpha$-perimeter} of~$E$ inside~$\Omega$ as the quantity
$$
\Per_{\alpha}(E; \Omega) := I_\alpha(E \cap \Omega, \R^{n + 1} \setminus E) + I_\alpha(E \setminus \Omega, \Omega \setminus E),
$$
where
$$
I_\alpha(A, B) := \int_A \int_B \frac{dx dy}{|x - y|^{n + 1 + \alpha}}
$$
for any two disjoint measurable sets~$A, B \subseteq \R^{n + 1}$. If it happens that~$\Per_\alpha(E; \Omega)$ is 
finite and~$\Per_{\alpha}(E; \Omega) \le \Per_{\alpha}(F; \Omega)$
for every measurable set~$F \subseteq \R^{n + 1}$ such that~$F \setminus \Omega = E \setminus \Omega$, we then call~$E$ a~\emph{minimizer} of the~$\alpha$-perimeter inside~$\Omega$, and its boundary~$\partial E$ a \emph{nonlocal minimal surface of order~$\alpha$} in~$\Omega$---or simply an~\emph{$\alpha$-minimal surface} in~$\Omega$. For an unbounded open set~$\Omega$, we extend this definition by saying that~$\partial E$ is an~$\alpha$-minimal surface in~$\Omega$ if~$E$ minimizes the~$\alpha$-perimeter in every open set compactly contained in~$\Omega$.

Our main result deals with a particular class of nonlocal minimal surfaces, namely those sets that locally minimize 
the~$\alpha$-perimeter inside the infinite vertical cylinder~$B'_R \times \R$ over an open 
ball~$B'_R = \{ x' \in \R^n : |x'| < R \}$ and that are globally the subgraph of some function~$u$ defined in
all of~$\R^n$. It states that~$u$ is smooth inside~$B'_R$ and that, locally, the gradient of~$u$ is controlled by its oscillation.

\begin{theorem} \label{localgradestthm}
Let~$n \ge 1$ and~$\alpha \in (0, 1)$. Let~$E \subset \R^{n + 1}$ be the global subgraph
$$
E = \Big\{ (x', x_{n + 1}) \in \R^{n} \times \R : x_{n + 1} < u(x') \Big\}
$$
of a measurable function~$u: \R^n \to \R$, bounded in~$B'_r$ for some~$r > 0$, and
assume that~$\partial E$ is an~$\alpha$-minimal surface in the cylinder~$B'_{2 r} \times \R$.

Then,~$u \in C^\infty(B'_r)$ and its gradient satisfies
\begin{equation} \label{gradbound}
\| \nabla_{\! x'} u \|_{L^\infty(B'_r)} \le C \left( 1 + \frac{\osc_{B'_r} u}{r} \right)^{n + 1 + \alpha},
\end{equation}
for some constant~$C$ depending only on~$n$ and~$\alpha$.
\end{theorem}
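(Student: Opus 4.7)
The smoothness~$u \in C^\infty(B'_r)$ will follow as a bootstrap once we establish the gradient estimate~\eqref{gradbound}, since the nonlocal mean curvature equation for~$u$ becomes a uniformly nonlocal-elliptic equation when~$|\nabla_{x'} u|$ is bounded in a neighborhood, and standard fractional Schauder-type theory then applies. The argument therefore concentrates on proving~\eqref{gradbound}, which is equivalent to a lower bound on the last component~$\nu_{n + 1} = 1/\sqrt{1 + |\nabla_{x'} u|^2}$ of the upward unit normal to~$\partial E$ over~$B'_r$. My plan is to follow the blueprint of Bombieri, De~Giorgi and~Miranda in the classical case, replacing the Laplace-Beltrami operator on the graph with a suitable fractional Jacobi operator built from the kernel~$|x - y|^{-(n + 1 + \alpha)}$, and the Michael-Simon inequality with a fractional Sobolev inequality on~$\partial E$.

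As a first step, I would show that~$\nu_{n + 1}$ is a weak supersolution of a truncated fractional Jacobi-type operator on~$\partial E$. This should be the linearization of the vanishing nonlocal mean curvature condition under the family of vertical translations~$E \mapsto E - t \, e_{n + 1}$, which preserves both the graph structure and the~$\Per_\alpha$-minimality. Concretely, I would differentiate in~$t$ at~$t = 0$ the first variation of~$\Per_\alpha$ on~$E - t e_{n + 1}$ (after an appropriate truncation of the nonlocal interactions needed to secure integrability in~$B'_{2 r} \times \R$) and read off the Jacobi operator whose sign on~$\nu_{n + 1}$ is controlled. Care is needed since the linearization is being carried out without any a priori bound on~$|\nabla u|$.

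The next step is to run a De~Giorgi-Nash-Moser iteration for this fractional Jacobi operator. For this I would prove a universal fractional Sobolev inequality on~$\partial E$, valid for compactly supported test functions on the surface with constants depending only on~$n$ and~$\alpha$ (and not on the specific set~$E$). Combining the Sobolev inequality with the supersolution property of~$\nu_{n + 1}$ via iteration of energy estimates yields a weak Harnack inequality for~$\nu_{n + 1}$ on~$\partial E$. Applied on~$\partial E \cap (B'_r \times \R)$---using the~$L^\infty(B'_r)$ assumption on~$u$ and the graph structure to control the nonlocal tails from outside the cylinder---this weak Harnack, together with a careful tracking of scalings in~$r$ and in~$\osc_{B'_r} u$, should deliver a pointwise lower bound on~$\nu_{n + 1}$ with exactly the dependence~$(1 + \osc_{B'_r} u / r)^{-(n + 1 + \alpha)}$ claimed in~\eqref{gradbound}.

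The main anticipated obstacle is the universal fractional Sobolev inequality on~$\partial E$: the classical Michael-Simon inequality relies in an essential way on the pointwise vanishing of the mean curvature and on tangential calculus on a smooth hypersurface, neither of which is directly available in our setting. The nonlocal analogue has to be extracted solely from the~$\Per_\alpha$-minimality of~$E$, with no a~priori smoothness of~$\partial E$ to lean on. A secondary difficulty is the control of the nonlocal contribution to the Jacobi inequality coming from~$\partial E$ outside the cylinder, which must be absorbed in the oscillation of~$u$ to keep the constants in the final estimate universal.
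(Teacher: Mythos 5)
Your overall strategy---superharmonicity of~$\nu_E^{n + 1}$ for a truncated fractional Jacobi operator, a universal fractional Sobolev inequality on~$\partial E$, Moser iteration to a weak Harnack inequality, and finally a lower bound on~$\nu_E^{n + 1}$ with the right scaling---coincides with the paper's proof in all its essentials. Two technical ingredients, however, are missing, and without them the argument does not close.

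First, you correctly note that~$\partial E$ has no a~priori smoothness, but you do not say how the Jacobi inequality (which can only be computed pointwise where~$\partial E$ is smooth) becomes a \emph{weak} supersolution usable in the Moser iteration. The paper closes this gap with a capacitary argument: by Savin--Valdinoci, the singular set~$S$ of~$\partial E$ has Hausdorff dimension at most~$n - 2$, and Proposition~\ref{caphausprop} together with Lemma~\ref{genisweaklem} show that~$S$ has zero~$s$-fractional capacity on the surface for~$s > 1/2$, so a bounded ``generalized pointwise supersolution'' off~$S$ is automatically a weak supersolution. Without such a removability step, the energy estimates cannot be started. Second, you never invoke the uniform perimeter upper bound~$\Haus^n(\red E \cap B_\rho(x)) \le C_\star \rho^n$ of Cinti--Serra--Valdinoci. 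This is not a refinement but a load-bearing hypothesis: it is what makes the truncated Jacobi operator and the nonlocal tail integrals in the Caccioppoli estimates converge (Lemma~\ref{kerL1overredElem}), what provides the doubling property required for Moser iteration on~$\partial E$, and what permits localizing the fractional Sobolev inequality in ambient balls (Corollary~\ref{restrictedsobinecor}). The density lower bound from Caffarelli--Roquejoffre--Savin alone yields the global Sobolev inequality via the Brezis-style proof, as you envisage, but the iteration and the truncation both require the two-sided Ahlfors regularity coming from pairing CRS with CSV.
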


Note that, in~\eqref{gradbound}, the supremum of the gradient of~$u$ in~$B_r'$ is controlled by its oscillation in the same ball~$B_r'$. However, this is not an estimate up to the boundary, since~$\partial E$ is assumed to be~$\alpha$-minimal in the cylinder over the larger ball~$B_{2 r}'$. In fact, a gradient estimate up to the boundary cannot hold, by the boundary~\emph{stickiness} phenomenon commented in Subsection~\ref{plateau}.

The gradient bound~\eqref{gradbound} is the main novelty of Theorem~\ref{localgradestthm}. Once it is established, the smoothness of the
surface~$\partial E$ follows from the results of~Caffarelli, Roquejoffre~\&~Savin~\cite{CRS10}, Figalli~\&~Valdinoci~\cite{FV17}, and Barrios, Figalli~\&~Valdinoci~\cite{BFV14}. Prior to this work, no gradient estimate was available,
even in dimension~$n=1$. On the other hand, the smoothness of such graphs was known for~$n = 1$ and~$n = 2$---but without a quantitative bound---by the results of Savin~\&~Valdinoci~\cite{SV13} for~$n = 1$, and of~\cite{FV17} and Dipierro, Savin~\&~Valdinoci~\cite{DSV16} for~$n = 2$. In higher dimensions they were only known to be continuous---with no modulus of continuity being established---by a result of~\cite{DSV16}.

That the gradient is controlled by a power of the oscillation (instead of its exponential as in classical minimal 
graphs) is a consequence of the possibility of having an additional weighted~$L^1$
term in one of our main results: a weak Harnack inequality for elliptic integro-differential equations on nonlocal minimal surfaces. Such a term does not appear in the local case, and, to our knowledge, its presence was first clearly observed, in the nonlocal flat Euclidean case, by~Ros-Oton~\&~Serra~\cite[Theorem~2.2]{RS16}. If one ignores this term, it is still possible to control the gradient by an exponential of the oscillation (as in the local case) via a covering argument analogous to that of~\cite[Corollary~3.2]{DSJ11}---see the end of Section~\ref{weakharsec} for more comments on this.
%to obtain an exponential bound via a covering

As we will see in the next subsection, a significant application of 
Theorem~\ref{localgradestthm} regards the Dirichlet or Plateau problem for nonlocal minimal surfaces
in a bounded domain when the exterior datum is a locally bounded graph. This problem is known to enjoy existence and uniqueness. Now, by our result, it also has
interior~$C^\infty$ regularity. Note that the exterior datum may be
discontinuous---it needs only to be the graph of a locally bounded function. In addition, a maximum principle from~\cite[Section~3]{DSV16} allows to control the right-hand side of~\eqref{gradbound} by the oscillation of the exterior datum~$g$ in a large enough annulus. This leads to the gradient estimate~\eqref{estdatum} below, in which the right-hand side depends only on the exterior datum.

%In addition,~\eqref{estdatum} below gives a gradient estimate which depends only on the exterior datum.

Our proof of Theorem~\ref{localgradestthm} relies, in its essential strategy, on two new fundamental ingredients:
\begin{enumerate}[label=(\alph*),leftmargin=25pt]
\item the superharmonicity of the vertical component of the normal to a nonlocal minimal graph with respect to a truncated 
fractional Jacobi operator, and
\item a universal fractional Sobolev inequality on nonlocal minimal surfaces.
\end{enumerate}
We will use as well three other known results:
\begin{enumerate}[label=(\alph*),leftmargin=25pt]
\setcounter{enumi}{2}
\item the density estimates of Caffarelli, Roquejoffre \& Savin~\cite{CRS10},
\item the perimeter bound of Cinti, Serra \& Valdinoci~\cite{CSV16}, and
\item the estimate of~Savin \& Valdinoci~\cite{SV13} on the Hausdorff dimension of the singular set of a nonlocal minimal surface.
\end{enumerate}

In the following subsections we will state the results~(a) and~(b), and we will 
outline the proof of Theorem~\ref{localgradestthm}. Briefly, point~(c) will be essential to establish~(b), while~(d) will play an important role in the proof of~(a). Next,~(b)-(c)-(d) will be used to establish, through a Moser iteration, a new weak Harnack inequality for fractional equations on~$\alpha$-minimal surfaces. This inequality, applied to the vertical component of the normal, thanks to~(a), will lead easily to our gradient estimate. Point~(e) will be important to obtain, through a capacitary argument, the same bounds when~$u$ is not a priori known to be smooth.

It is worth noting that the Jacobi operator in~(a) will be of order~$2 s = 1 + \alpha > 1$. As we will comment more extensively in Subsection~\ref{proofoutlinesub}, this fact prevents us from using a simple method of~\cite{S06,RS16} to prove the fractional weak Harnack inequality. The issue here is the lack of information available a priori on the local geometry of nonlocal minimal surfaces, needed to control the Jacobi operator when applied to smooth barrier functions. Thus, we are forced to run a Moser iteration, which is based on the implementation of this technique in the nonlocal setting as first accomplished by Kassmann~\cite{Kas09,Kas11} in the flat case.

It is known that~$\alpha$-minimal surfaces converge to classical minimal surfaces as~$\mbox{$\alpha \uparrow 1$}$. This can be deduced from the results of~\cite{BBM01,D02,P04}---see also the more recent references~\cite{ADM11,CV11} for the statement in the exact same terminology as ours.
%By the results of~\cite{ADM11,CV11}, it is known that~$\alpha$-minimal surfaces converge to classical minimal surfaces as~$\alpha \uparrow 1$.
For classical minimal graphs, an estimate similar to~\eqref{gradbound} was established by~Finn~\cite{F63} for~$n = 2$, and by~Bombieri, De~Giorgi~\& Miranda~\cite{BDM69} in higher dimensions (the case~$n = 1$ is clearly trivial for classical minimal surfaces, while this is not the case in the fractional setting). They showed that the gradient of any solution~$u$ to the minimal graph equation in a ball~$B'_{2 r}$ of~$\R^n$ satisfies
\begin{equation} \label{BDMest}
\| \nabla_{\! x'} u \|_{L^\infty(B_r')} \le \exp \left\{ C \left( 1 + \frac{\sup_{B'_{2 r}} u - u(0)}{r} \right) \right\},
\end{equation}
for some dimensional constant~$C > 0$. 
After their works, several new proofs of gradient bounds similar to~\eqref{BDMest} were obtained, 
most notably in~\cite{T72,BG72,S76,K86,W98,DSJ11}.

Estimate \eqref{BDMest} for classical minimal graphs was shown to be optimal in~\cite{F63}. 
That is, the gradient cannot be controlled by a function of the oscillation growing slower 
than an exponential. We do not know whether inequality \eqref{gradbound} is optimal in the fractional case. 
When trying to adapt the example of \cite{F63}, it seems necessary to have a better understanding of
a delicate issue for nonlocal minimal graphs discovered in~\cite{DSV17}: the so-called boundary 
\emph{stickiness}. In Subsection~\ref{plateau} we will comment further on this issue.

As a consequence of the sharpness of \eqref{BDMest} in the classical setting, the constant~$C$ 
in~\eqref{gradbound} must blow-up as~$\alpha \uparrow 1$. To get a gradient bound uniform 
in~$\alpha$---of the form~\eqref{BDMest} for instance---, one should primarily obtain a Sobolev 
inequality on nonlocal minimal graphs, such as the one that we establish in Theorem~\ref{sobineintrothm}, 
but governed by a constant displaying the right dependence in~$\alpha$ as~$\alpha \uparrow 1$ 
when~$s = (1 + \alpha) / 2$ and~$p = 2$. This seems to be a non-trivial task, as some of the arguments of 
\cite{BBM02,MS02} in the flat Euclidean case do not extend to surfaces.

When the graph~$\partial E$ is~$\alpha$-minimal in the entire space~$\R^{n + 1}$, we establish a better gradient bound. It differs from~\eqref{gradbound} in that a lower power of the oscillation of~$u$ appears on the right-hand side of the estimate---the new power is~$n$. Interestingly, the exact same inequality holds for classical entire minimal graphs, as found by~Bombieri~\&~Giusti~\cite{BG72}. Still, our proof gives a constant that may blow up as~$\alpha \uparrow 1$.

\begin{theorem} \label{globalgradestthm}
Let~$n \ge 1$ and~$\alpha \in (0, 1)$. Let~$E$ be the global subgraph
$$
E = \Big\{ (x', x_{n + 1}) \in \R^{n} \times \R : x_{n + 1} < u(x') \Big\}
$$
of a locally bounded function~$u: \R^n \to \R$, and assume that~$\partial E$ is an~$\alpha$-minimal surface in all of~$\R^{n + 1}$.

Then,~$u$ is of class~$C^\infty$ and there exists a constant~$C$ depending only on~$n$ and~$\alpha$, such that
\begin{equation} \label{bettergradbound}
\| \nabla_{\! x'} u \|_{L^\infty(B'_r)} \le C \left( 1 + \frac{\osc_{B'_r} u}{r} \right)^{n}
\end{equation}
for every~$r > 0$.
\end{theorem}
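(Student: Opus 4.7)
The plan is to follow the strategy of Bombieri and Giusti~\cite{BG72} for entire classical minimal graphs, adapting their argument to the nonlocal setting with the help of Theorem~\ref{localgradestthm} and the key ingredients developed in its proof.

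First, I would reduce to the case $r = 1$. The $\alpha$-perimeter scales predictably under the dilation $x \mapsto \rho x$, so the rescaled function $u_\rho(x') := \rho^{-1} u(\rho x')$ is again the profile of an entire $\alpha$-minimal graph whenever $u$ is, and satisfies $|\nabla u_\rho(0)| = |\nabla u(0)|$ together with $\osc_{B'_1} u_\rho = \osc_{B'_\rho} u / \rho$. Applying the reduced estimate to $u_r$ then recovers~\eqref{bettergradbound}.

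Second, I would revisit the proof of Theorem~\ref{localgradestthm}, which proceeds by bounding from below the vertical component $\nu_{n+1}$ of the unit normal to $\partial E$---a supersolution of the truncated fractional Jacobi operator of ingredient~(a)---through a weak Harnack inequality, then recovering $|\nabla u|$ via the identity $|\nabla u| = \sqrt{\nu_{n+1}^{-2} - 1}$. In the local setting, that weak Harnack inequality can be applied only at a scale comparable to $r$, and its tail term must be controlled using the trivial bound $0 \le \nu_{n+1} \le 1$; this crude estimate is precisely what produces the exponent $n + 1 + \alpha$ in~\eqref{gradbound}. When $\partial E$ is $\alpha$-minimal in all of $\R^{n+1}$, by contrast, the same weak Harnack inequality can be applied at every scale $R \geq 1$. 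The plan is to use the scale-invariant fractional Sobolev inequality on minimal surfaces (ingredient~(b)) and the universal perimeter bound of Cinti, Serra \& Valdinoci (ingredient~(d)) to run the underlying Moser iteration with constants independent of $R$, and then send $R \to \infty$. In this limit the tail is absorbed into an $L^p$-average on $\partial E$ whose growth is controlled by the global perimeter bound, producing a gain of $1 + \alpha$ in the exponent and hence the power $n$ in~\eqref{bettergradbound}.

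The main obstacle is the rigorous implementation of this scale-uniform iteration: one must verify that the Moser scheme for the truncated Jacobi operator runs with constants that do not deteriorate as $R \to \infty$, and that the tail contribution genuinely vanishes in the limit rather than being merely bounded by a universal constant as in the local argument. Once \eqref{bettergradbound} is established, the $C^\infty$ regularity of $u$ follows from the same chain of higher-regularity results invoked after Theorem~\ref{localgradestthm}, namely~\cite{CRS10, FV17, BFV14}.
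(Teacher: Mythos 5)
Your high-level intuition---that the improvement from $n+1+\alpha$ to $n$ should come from working at a scale comparable to $M = \|u\|_{L^\infty(B'_r)}$ rather than at scale $r$---is correct, but the mechanism you describe is not the one that works, and you have not located the key idea.

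First, a factual point: your account of where the exponent $n+1+\alpha$ arises in the local case is off. In Corollary~\ref{Jcor} the tail term of the weak Harnack inequality~\eqref{weakharineonredE} is not ``controlled by the trivial bound $0 \le \nu^{n+1}_E \le 1$''; on the contrary, it is precisely the tail term that provides the lower bound. The exponent $n + 2s = n + 1 + \alpha$ comes from bounding the tail kernel $|x-y|^{-(n+2s)}$ from below by $(3(1+M))^{-(n+2s)}$ when $x,y$ range over $\red E \cap \C_r \subset B_r' \times (-M,M)$, after which the remaining tail integral of $w$ is controlled by~\eqref{nondegcond0}.

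Second, and more importantly, the paper's proof of Theorem~\ref{globalgradestthm} does not re-run the Moser iteration at a large scale, nor does it send $R \to \infty$. It applies the already-proved Corollary~\ref{weakharoverEcor} (with $p = 1$, $\Omega = \Omega' = \R^{n+1}$, $b = f = 0$) exactly once, at the fixed radius
$$
R := \sqrt{r^2 + \|u\|_{L^\infty(B'_r)}^2},
$$
chosen so that $B'_r \subseteq \pi'(\partial E \cap B_R)$. It then keeps only the $L^1$-average term of~\eqref{weakharineonredE} and uses two elementary facts: (i)~the area formula gives
$$
\int_{\partial E \cap B_R} \frac{d\sigma(x)}{\sqrt{1 + |\nabla_{\! x'} u(x')|^2}} \;=\; \bigl| \pi'(\partial E \cap B_R) \bigr| \;\ge\; |B'_r| = |B'_1|\, r^n,
$$
and (ii)~the perimeter upper bound~\eqref{perimeterboundabove} gives $\Haus^n(\partial E \cap B_R) \le C_\star R^n$. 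The ratio of these two quantities yields $(R/r)^n$, and~\eqref{bettergradbound} follows because $R \le C(r + \osc_{B'_r} u)$. None of this involves a ``scale-uniform iteration'' or a limit $R \to \infty$; the gain of $1 + \alpha$ is not produced by ``absorbing the tail'' but by discarding the tail term entirely and exploiting the identity between the $L^1$-integral of $\nu^{n+1}_E$ and the projected area. Your plan, as stated, leaves this essential computation unformulated and would not produce the exponent $n$ without it. (Your preliminary scaling reduction to $r = 1$ is harmless but unnecessary: the argument above is already scale-invariant.)
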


%Notice that here, in contrast with the bound~\eqref{gradbound}, the oscillation of~$u$ is taken in~$B_r'$, instead of in~$B_{2r}'$.

\subsection{The Dirichlet or Plateau problem}\label{plateau}

The existence of a solution to the fractional Plateau problem in a bounded Lipschitz domain~$\Omega \subset \R^{n + 1}$ (i.e., a nonlocal minimal surface~$\partial E$ in~$\Omega$ with datum prescribed in~$\R^{n + 1} \setminus \Omega$) was established in~\cite{CRS10}. When~$\Omega$ is unbounded, this result can be generalized through a diagonal compactness argument---see~\cite[Corollary~1.10]{L18} for the details. See also~\cite{CSV16} for existence results for nonlocal perimeter-type functionals with other kernels.

In~\cite{CRS10} it is also proved that a minimizer~$E$ of~$\Per_\alpha$ in~$\Omega$ satisfies (in a suitable viscosity sense) the Euler-Lagrange equation
\begin{equation} \label{H=0}
H_\alpha[E](x) = 0
\end{equation}
at any point~$x \in \Omega \cap \partial E$. Here,~$H_\alpha[E](x)$ denotes the so-called~\emph{$\alpha$-mean curvature} (or~\emph{nonlocal mean curvature of order~$\alpha$}) of~$E$ at a point~$x$ of its boundary, and is formally defined by
\begin{equation} \label{Hdef}
H_\alpha[E](x) := \, \PV \int_{\R^{n + 1}} \frac{\chi_{\R^{n + 1} \setminus E}(y) - \chi_{E}(y)}{|x - y|^{n + 1 + \alpha}} \, dy,
\end{equation}
where the integral is meant in the standard Cauchy principal value sense.

When~$\Omega$ is a cylinder of the form~$\Omega' \times \R$, with~$\Omega' \subset \R^n$ smooth and bounded, and the outside datum is the subgraph of a continuous function~$g: \R^n \setminus \Omega' \to \R$, it has been proved in~\cite{DSV16} that minimizers are also subgraphs inside~$\Omega'$. Thus, any minimizer~$E \subset \R^{n + 1}$ is globally the subgraph of a function~$u: \R^n \to \R$ and, as a consequence, its~$\alpha$-mean curvature can be written as
$$
H_\alpha[E](x', u(x')) = 2 \, \mathfrak{H}_\alpha u(x')
$$
for every~$x' \in \R^n$ around which~$u$ is of class~$C^2$. Here,~$\mathfrak{H}_\alpha$ is the operator
\begin{equation} \label{Halphadef}
\mathfrak{H}_\alpha u(x') := \PV \int_{\R^n} G_\alpha \left( \frac{u(x') - u(y')}{|x' - y'|} \right) \frac{dy'}{|x' - y'|^{n + \alpha}},
\end{equation}
with~$G_\alpha(t):= \int_0^t (1 + \tau^2)^{- (n + 1 + \alpha)/2} \, d\tau$. See,~e.g.,~\cite[Section~2]{CV13},~\cite[Section~3]{BFV14}, or~\cite{AV14} for proofs of this and of more general identities. Note that~$\mathfrak{H}_\alpha u(x')$ is well-defined and finite whenever~$u$ is~$C^2$ in a neighborhood of~$x'$. In particular, no growth assumption on~$u$ at infinity is needed, since~$G_\alpha$ is bounded.

As it will be shown in~\cite{CL17}, the subgraph~$E$ of~$u$ is a minimizer of~$\Per_\alpha$ in the cylinder~$\Omega' \times \R$ if and only if~$u$ solves, in an appropriate weak variational sense, the nonlinear equation~$\mathfrak{H}_\alpha u = 0$ in~$\Omega'$. Therefore, the Plateau problem for~$\alpha$-minimal graphs in~$\Omega' \times \R$ is equivalent to the Dirichlet problem
\begin{equation} \label{Dirprob}
\begin{cases}
\mathfrak{H}_\alpha u = 0 & \quad \mbox{in } \Omega',\\
u = g & \quad \mbox{in } \R^n \setminus \Omega',
\end{cases}
\end{equation}
for a given measurable function~$g: \R^n \setminus \Omega' \to \R$.

Problem~\eqref{Dirprob} enjoys existence and uniqueness of solution in a suitable weak variational sense. While the existence follows from~\cite{CRS10}, the uniqueness will be proved in~\cite{CL17} when~$\Omega'$ is a bounded open set with Lipschitz boundary and under the very mild assumption~$g \in L^\infty_\loc(\R^n \setminus \Omega')$---or even milder ones---on the exterior datum. Theorem~\ref{localgradestthm} of the present paper provides full interior regularity for the Dirichlet problem~\eqref{Dirprob}, giving that the solution~$u$ is of class~$C^\infty$ in~$\Omega'$. Moreover, by combining estimate~\eqref{gradbound} with the boundedness results of~\cite[Section~3]{DSV16} (see also~\cite{CL17}), we can control the gradient of~$u$ locally inside~$\Omega'$ by a power of the oscillation of the outside datum~$g$ in a sufficiently large neighborhood of~$\Omega'$. More precisely, we have the bound
\begin{equation} \label{estdatum}
\| \nabla_{\! x'} u \|_{L^\infty(U')} \le \frac{C}{\dist(U', \partial \Omega')^{n + 1 + \alpha}} \left( R_1 + \osc_{B'_{(1 + C) R_1} \setminus \Omega'} g \right)^{n + 1 + \alpha},
\end{equation}
where~$R_1 > 0$ is any radius for which~$\Omega' \subseteq B_{R_1}'$,~$U'$ is any open set compactly contained in~$\Omega'$, and~$C$ is a positive constant depending only on~$n$ and~$\alpha$. For the validity of~\eqref{estdatum}, the fact that the function~$G_\alpha$ in~\eqref{Halphadef} is bounded plays again an important role.

In our work we do not use expression~\eqref{Halphadef} to establish the gradient estimate. Instead, we work with identity~\eqref{Hdef} in~$\R^{n + 1}$ and with the~``ambient'' metric on~$\partial E$ inherited from balls in~$\R^{n + 1}$.

For~$n = 1, 2$ the smoothness of the solution~$u$ to~\eqref{Dirprob} was already known (although without a quantitative estimate) by the results of~\cite{SV13,FV17,DSV16}. In~\cite{DSV16} it was also established that, in any dimension,~$u$ is continuous up to the boundary of~$\Omega'$ from the inside, provided~$g$ is itself continuous up to the boundary of~$\R^n \setminus \Omega'$.
However, as noticed in~\cite{DSV17}, a solution~$u$ is rarely continuous~\emph{across} the boundary---presenting instead the so-called~\emph{boundary stickiness}
phenomenon---and may not have bounded gradient in all of~$\Omega'$, even if~$g$
is smooth and bounded. In this respect, the interior bound of Theorem~\ref{localgradestthm} is optimal---it cannot be extended up to the boundary. In fact, one may have whole vertical portions of the boundary~$\partial E$ of the subgraph of~$u$ lying on~$\partial \Omega' \times \R$---see also the forthcoming~\cite{BL17} for a more radical sticking phenomenon when~$\alpha$ is small and the datum~$g$ is unbounded.

\subsection{A truncated fractional Jacobi operator}\label{jacobi}

Let~$\alpha \in (0, 1)$ and~$\Sigma = \red E$ be the reduced boundary of a subset~$E$ of~$\R^{n + 1}$ with locally finite perimeter (see,~e.g.,~\cite{G84,M12,EG92} for the definition of reduced boundary and its main properties). Let~$\nu_E$ be the unit normal vector to~$\Sigma$ pointing outwards from~$E$. One defines the~\emph{fractional} (or~\emph{nonlocal})~\emph{$\alpha$-Jacobi operator}~$\J_{\Sigma, \alpha}$ at a point~$x \in \Sigma$, acting on a sufficiently smooth and bounded function~$w: \Sigma \to \R$,~by
\begin{equation} \label{Jacdef}
\J_{\Sigma, \alpha} w(x) := \L_{\Sigma, \frac{1 + \alpha}{2}} w(x) + c_{\Sigma, \frac{1 + \alpha}{2}}^2(x) \, w(x),
\end{equation}
where, for~$s \in (0, 1)$, we set
\begin{equation} \label{LSigmadef}
\L_{\Sigma, s} w(x) := \PV \int_{\Sigma} \frac{w(y) - w(x)}{|y - x|^{n + 2 s}} \, d\Haus^n(y)
\end{equation}
and
\begin{equation} \label{c2def}
c^2_{\Sigma, s}(x) := \int_{\Sigma} \frac{\langle \nu_E(x) - \nu_E(y), \nu_E(x) \rangle}{|y - x|^{n + 2 s}} \, d\Haus^n(y).
\end{equation}
See Section~\ref{notsec} for the definition of principal value surface
integrals as the one in~\eqref{LSigmadef}.

For the integrals in~\eqref{Jacdef}-\eqref{c2def} to converge (with~$w$ bounded), one needs an assumption on the behavior of~$\Sigma$ at infinity---e.g., that
\begin{equation} \label{FFMMMfinite}
\int_{\Sigma} \frac{d\Haus^n(y)}{(1 + |y|)^{n + 1 + \alpha}} < +\infty,
\end{equation}
as required in~\cite[Theorem~6.1]{FFMMM15}. We point out that, when~$\Sigma$ is an~$\alpha$-minimal surface in all of~$\R^{n + 1}$, condition~\eqref{FFMMMfinite} is satisfied thanks to a deep result: the perimeter estimate of~\cite{CSV16}---see Lemma~\ref{kerL1overredElem} of Section~\ref{prelisec} below.

The fractional Jacobi operator~$\J_{\Sigma, \alpha}$ was found in~\cite{DDPW14,FFMMM15} while computing the second variation of the fractional perimeter. Under the assumptions that~$\Sigma = \partial E$ is smooth, fulfills~\eqref{FFMMMfinite}, and has zero~$\alpha$-mean curvature inside a bounded open set~$\Omega \subset \R^{n + 1}$, it is proved in~\cite{DDPW14,FFMMM15} that
\begin{equation} \label{Jacsecvar}
\left. \frac{d^2}{dt^2} \Per_{\alpha}(E_{t \psi}; \Omega) \right\rvert_{t = 0} = 2 \int_{\partial E} \psi(x) \left( - \J_{\Sigma, \alpha} \psi(x) \right) d\Haus^n(x),
\end{equation}
for every smooth function~$\psi$ supported inside~$\partial E \cap \Omega$ and with~$\{ E_{t \psi} \}_{t > 0}$ being the family of normal perturbations of~$E$ induced by~$\psi$.

In order to deal with sets that minimize the~$\alpha$-perimeter only inside proper subsets of~$\R^{n + 1}$, as in our Theorem~\ref{localgradestthm}, and not to impose any restriction---such as~\eqref{FFMMMfinite}---on their global geometry or on the exterior datum, in this paper we introduce a truncated version of the Jacobi operator. Given an open set~$\Omega \subseteq \R^{n + 1}$, we define
\begin{equation}
 \label{truncJac}
\J_{\Sigma, \alpha}^\Omega w(x) := \PV \int_{\Sigma \cap \Omega} \frac{w(y) - w(x) + \langle \nu_E(x) - \nu_E(y), \nu_E(x) \rangle \, w(x)}{|y - x|^{n + 1 + \alpha}} \, d\Haus^n(y).
\end{equation}
Observe that for~$\Omega = \R^{n + 1}$ we recover the~$\alpha$-Jacobi operator of~\eqref{Jacdef}-\eqref{c2def}. We also consider the truncated fractional Laplace-type operator
\begin{equation} \label{truncLap}
\L_{\Sigma, \frac{1 + \alpha}{2}}^\Omega w(x) := \PV \int_{\Sigma \cap \Omega} \frac{w(y) - w(x)}{|y - x|^{n + 1 + \alpha}} \, d\Haus^n(y),
\end{equation}
which will be the fundamental object of Section~\ref{weakharsec}, where we establish a weak Harnack inequality for non-negative supersolutions of equations having~\eqref{truncLap} as leading term. In that section, we will consider more general equations involving truncated kernels, not only the geometric ones driven by the Jacobi operator. For this analysis to be carried out, the perimeter estimates of~\cite{CSV16} will be crucial, in particular as they allow the truncated integrals to be well defined. Regardless of this, in Remark~\ref{defJac} we point out that the global Jacobi operator~\eqref{Jacdef}-\eqref{c2def}---with no truncation---can still be defined on hypersurfaces that minimize the~$\alpha$-perimeter only in a proper cylinder (at least when acting on functions supported within the cylinder). This can be done with no restriction on their global geometry or exterior datum.

The next result shows that, as a consequence of the translation invariance of equation~\eqref{H=0}-\eqref{Hdef}, 
the normal~$\nu_E$ to an~$\alpha$-minimal surface~$\partial E$ 
in all of~$\R^{n + 1}$ solves the fractional Jacobi equation~$\J_{\red E, \alpha} \nu_E = 0$. This fact, already observed in~\cite[Appendix~B]{DDPW14} for graphs, will be established in Section~\ref{jacsec} in all details, addressing in particular the convergence of the integrals at infinity. To do this, it will be crucial to use the perimeter estimates of~\cite{CSV16}.

In addition, when~$E$ is a global subgraph but only a minimizer in a vertical cylinder, we establish that the 
last component~$\nu_E^{n + 1}$ is a supersolution of an equation driven by the truncated 
fractional Jacobi operator \eqref{truncJac}, with~$\Omega$ being a cylinder. 
This result is new and has the advantage of requiring the~$\alpha$-minimality of~$E$ just inside a proper subset of~$\R^{n + 1}$---a feature that will be of key importance to obtain Theorem~\ref{localgradestthm} in its full generality and thus to cover virtually every outside datum~$g$ in the Dirichlet problem~\eqref{Dirprob}.

Here are the precise statements of these facts.

\begin{theorem} \label{Jacintrothm}
Let~$n \ge 1$,~$\alpha \in (0, 1)$, and~$E \subset \R^{n + 1}$. The following facts hold true.
\begin{enumerate}[label=$(\roman*)$,leftmargin=*]
\item If~$\partial E$ is an~$\alpha$-minimal surface~in all of~$\R^{n + 1}$, then the fractional Jacobi equation
$$
- \J_{\red E, \alpha} \, \nu_E(\bar{x}) = 0
$$
holds at every point~$\bar{x} \in \partial E$ around which~$\partial E$ is of class~$C^3$.
\item There exists a constant~$C > 0$ depending only on~$n$ and~$\alpha$, such that, if~$E \subset \R^{n + 1}$ is the global subgraph
$$
E = \Big\{ (x', x_{n + 1}) \in \R^n \times \R : x_{n + 1} < u(x') \Big\}
$$
of a measurable function~$u: \R^n \to \R$ and~$\partial E$ is~$\alpha$-minimal in~$B_{2 R}' \times \R$ for some~$R > 0$, then the truncated fractional Jacobi inequality
\begin{equation} \label{Jacle0}
- \J_{\red E, \alpha}^{B_{R}' \times \R} \, \nu_E^{n + 1}(\bar{x}) \ge - \frac{C}{R^{1 + \alpha}} \, \nu_E^{n + 1}(\bar{x})
\end{equation}
holds at every point~$\bar{x} \in \partial E \cap \big(B_{R/2}' \times \R \big)$ around which~$\partial E$ is of class~$C^3$.
\end{enumerate}
\end{theorem}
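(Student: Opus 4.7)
Both parts rest on the translation invariance of the nonlocal mean curvature $H_\alpha$ and on the linearization of the Euler--Lagrange equation $H_\alpha[E]=0$ around the minimal surface $\partial E$. The starting point is the one-parameter family of vertical translates $E_t := E + t e_{n+1}$; horizontal translates $E + te_j$ will cover the remaining components of $\nu_E$ in part (i).

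\emph{Part (i).} When $\partial E$ is $\alpha$-minimal in all of $\R^{n+1}$, so is $\partial E_t$, whence $H_\alpha[E_t](y)=0$ for every $y\in\partial E_t$. In particular, at the smooth point $\bar x\in\partial E$ and for $|t|$ small, $\bar x + te_{n+1}\in\partial E_t$ and $H_\alpha[E_t](\bar x + te_{n+1}) = H_\alpha[E](\bar x) = 0$. Differentiating this identity at $t=0$ and identifying the derivative as the linearization of $H_\alpha[E]$ under the normal perturbation whose velocity is $\psi = \langle e_{n+1},\nu_E\rangle = \nu_E^{n+1}$---which, via the first variation $\tfrac{d}{dt}\Per_\alpha(E_t) = \int H_\alpha[E_t]\,\psi\,d\Haus^n$ and the second variation formula \eqref{Jacsecvar}, equals $-2\,\J_{\red E,\alpha}\nu_E^{n+1}(\bar x)$---yields $\J_{\red E,\alpha}\nu_E^{n+1}(\bar x)=0$. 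The horizontal analog (with $v=e_j$, $j=1,\dots,n$) gives the vector-valued identity $\J_{\red E,\alpha}\nu_E(\bar x)=0$ componentwise. Convergence of the global Jacobi integrals is secured by \eqref{FFMMMfinite}, which holds on a globally $\alpha$-minimal surface by the perimeter estimate of \cite{CSV16} (cf.~Lemma~\ref{kerL1overredElem}).

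\emph{Part (ii).} For a subgraph $E$ that is $\alpha$-minimal only in $B'_{2R}\times\R$, the key observation is that the cylinder $B'_{2R}\times\R$ and the subgraph structure are both invariant under vertical translation. Hence $E_t$ is again a global subgraph, $\alpha$-minimal in $B'_{2R}\times\R$. For $\bar x\in\partial E\cap(B'_{R/2}\times\R)$ and $|t|$ small, both $\bar x$ and $\bar x+te_{n+1}$ lie in the interior minimality region, so the chain $H_\alpha[E_t](\bar x+te_{n+1})=H_\alpha[E](\bar x)=0$ is valid. Differentiating at $t=0$ as in part (i), we cannot quote the full Jacobi operator since \eqref{FFMMMfinite} may fail; instead, split the surface integral on $\red E$ into the piece over $\red E\cap(B'_R\times\R)$---which by definition equals $-2\,\J^{B'_R\times\R}_{\red E,\alpha}\nu_E^{n+1}(\bar x)$---and a remainder $\mathcal{R}(\bar x)$ coming from $\red E\setminus(B'_R\times\R)$, obtaining
\[
0 \;=\; -2\,\J^{B'_R\times\R}_{\red E,\alpha}\nu_E^{n+1}(\bar x) \;+\; \mathcal{R}(\bar x).
\]
Using the graph identities $\nu_E^{n+1}(y)\,d\Haus^n(y)=dy'$ and $\nu_E^i(y)\,d\Haus^n(y)=-\partial_i u(y')\,dy'$, together with the algebraic simplification
\[
\nu_E^{n+1}(y) - \langle \nu_E(y),\nu_E(\bar x)\rangle\nu_E^{n+1}(\bar x) \;=\; |\nu_E'(\bar x)|^2\,\nu_E^{n+1}(y) \;-\; \nu_E^{n+1}(\bar x)\,\nu_E'(\bar x)\cdot\nu_E'(y),
\]
the remainder reduces to linear combinations of integrals of the form $\int \nu_E^j(y)/|y-\bar x|^{n+1+\alpha}\,d\Haus^n(y)$ on $\red E\setminus(B'_R\times\R)$. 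Each such integral is estimated by applying the divergence theorem to the constant vector field $e_j/|y-\bar x|^{n+1+\alpha}$ on the unbounded subdomain $E\cap((\R^n\setminus B'_R)\times\R)$, converting it into a volume integral and a cylinder-wall integral on $(\partial B'_R\times\R)\cap E$. With the distance bound $|y-\bar x|\ge R/2$ (valid since $\bar x'\in B'_{R/2}$), both contributions are $\lesssim 1/R^{1+\alpha}$. Pairing these bounds with the factors $|\nu_E'(\bar x)|^2$ and $\nu_E^{n+1}(\bar x)|\nu_E'(\bar x)|$ appearing in front, and using $(\nu_E^{n+1}(\bar x))^2|\nabla u(\bar x')| = \nu_E^{n+1}(\bar x)|\nu_E'(\bar x)|\le \nu_E^{n+1}(\bar x)$, one arrives at $|\mathcal{R}(\bar x)|\le C\,\nu_E^{n+1}(\bar x)/R^{1+\alpha}$, which yields \eqref{Jacle0}.

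\emph{Main obstacle.} The central difficulty is the rigorous pointwise linearization of $H_\alpha$: the apparent identity $\partial_t|_{t=0}H_\alpha[E_t](\bar x+te_{n+1}) = -2\,\J_{\red E,\alpha}\nu_E^{n+1}(\bar x)$ rests on a delicate cancellation between a formally divergent integral coming from the shift of the kernel and a matching one coming from the shift of the domain, which combine into the principal-value definition of the Jacobi operator---this is where the $C^3$ regularity of $\partial E$ near $\bar x$ is used. For part (ii) the further obstacle is extracting the remainder estimate with the \emph{correct} factor $\nu_E^{n+1}(\bar x)$ (and not merely $C/R^{1+\alpha}$): this requires the algebraic rearrangement above together with the bookkeeping of the divergence-theorem boundary contributions on $(\partial B'_R\times\R)\cap E$, leveraging the subgraph structure to tame the potentially unbounded horizontal gradient of $u$ outside $B'_R$.
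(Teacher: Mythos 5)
Your overall strategy---differentiate the Euler--Lagrange equation $H_\alpha[E]=0$ along the direction of a translation, identify the outcome with a Jacobi-type operator applied to the corresponding component of the normal, and control the contributions from the region where minimality is unknown via the divergence theorem---is the same strategy the paper uses. The paper makes it precise through Proposition~\ref{derHprop}, which directly computes $\partial_v H_\alpha[E](\bar x)$ for tangential $v$ both as a volume integral (identity~\eqref{derH}) and, after integrating by parts on a domain $\Omega$, as a surface integral over $\red E\cap\Omega$ plus a boundary term on $\partial\Omega$ plus a volume integral over $\R^{n+1}\setminus\Omega$ (identity~\eqref{derH2}). Working from this identity avoids ever writing down a global surface integral over all of $\red E$, which is crucial for part~$(ii)$ since that integral need not converge outside the minimality cylinder (and indeed you acknowledge that~\eqref{FFMMMfinite} may fail). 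Your appeal to the second-variation formula~\eqref{Jacsecvar} to justify the pointwise identity $\partial_t|_{t=0}H_\alpha[E_t](\bar x+te_{n+1})=-2\J_{\red E,\alpha}\nu_E^{n+1}(\bar x)$ is the weakest link in part~$(i)$: \eqref{Jacsecvar} is an integral identity, and extracting a pointwise linearization from it requires additional work that the paper sidesteps by computing $\partial_v H_\alpha$ directly; you do flag this as the main obstacle, which is fair.

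The genuine gap is in the conclusion of part~$(ii)$. After your (correct) algebraic rearrangement
$$
\nu_E^{n+1}(y)-\langle\nu_E(y),\nu_E(\bar x)\rangle\nu_E^{n+1}(\bar x)\;=\;|\nu_E'(\bar x)|^2\,\nu_E^{n+1}(y)\;-\;\nu_E^{n+1}(\bar x)\,\nu_E'(\bar x)\cdot\nu_E'(y),
$$
your remainder splits into a term with coefficient $|\nu_E'(\bar x)|^2$ (in front of $\int_{\red E\setminus\C_R}\nu_E^{n+1}(y)\,|y-\bar x|^{-n-1-\alpha}\,d\sigma(y)$) and a term with coefficient $\nu_E^{n+1}(\bar x)\,\nu_E'(\bar x)\cdot\nu_E'(y)$. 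You correctly bound the second by $C\nu_E^{n+1}(\bar x)/R^{1+\alpha}$ since $\nu_E^{n+1}(\bar x)|\nu_E'(\bar x)|\le\nu_E^{n+1}(\bar x)$, but the first cannot be bounded by $C\nu_E^{n+1}(\bar x)/R^{1+\alpha}$: when $\nu_E^{n+1}(\bar x)$ is small, $|\nu_E'(\bar x)|^2$ is close to $1$ while the integral it multiplies is of size $1/R^{1+\alpha}$. So the absolute-value estimate $|\mathcal R(\bar x)|\le C\nu_E^{n+1}(\bar x)/R^{1+\alpha}$ that you state is false. What actually works---and what the paper exploits---is that this first term has a \emph{favorable sign}: $\nu_E^{n+1}(y)\ge 0$ because $E$ is a subgraph, so $|\nu_E'(\bar x)|^2\int_{\red E\setminus\C_R}\nu_E^{n+1}(y)\,|y-\bar x|^{-n-1-\alpha}\,d\sigma(y)\ge 0$ and the term is simply discarded, yielding the one-sided bound $-\J^{\C_R}_{\red E,\alpha}\nu_E^{n+1}(\bar x)\ge -C\nu_E^{n+1}(\bar x)/R^{1+\alpha}$. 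This is precisely the content of the paper's computation, where after choosing $v=\nu_E^{n+1}(\bar x)e_n-\nu_E^n(\bar x)e_{n+1}$ in~\eqref{derH2} and multiplying by $\nu_E^n(\bar x)/2$, the exterior term with coefficient $\nu_E^n(\bar x)^2$ is shown to be $\le 0$ and dropped, while the two terms carrying a factor $\nu_E^{n+1}(\bar x)$ are the only ones that need to be bounded. Recognizing this sign---not the algebraic bookkeeping you mention---is the crux of part~$(ii)$, and your write-up misses it.
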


We stress that the constant~$C$ in~\eqref{Jacle0} does not depend on the regularity of~$\partial E$ around~$\bar{x}$, which is assumed only to perform computations in the proof. 
This assumption will not prevent us from using 
Theorem~\ref{Jacintrothm} to establish the smoothness of nonlocal minimal graphs. 
Indeed, by the results of Savin~\&~Valdinoci~\cite{SV13}---point~(e) in the beginning of the 
introduction---, the set of regular points of~$\partial E$ is ``large'' within~$\partial E$. A capacitary argument then allows us to extend~\eqref{Jacle0} to the whole~$\partial E$ in a weak sense.

Notice that point~$(ii)$ of Theorem~\ref{Jacintrothm} requires~$\partial E$ to be smooth at~$\bar{x}$ as a manifold, but does not impose a priori regularity on the function~$u$ defining~$\partial E$ as the graph~$\{ x_{n + 1} = u(x')\}$. That is,~$|\nabla_{\! x'} u|$ could be infinite at a point~$\bar{x}'$, but~$\partial E$ still be~$C^\infty$ around~$(\bar{x}', u(\bar{x}'))$. Now, it is interesting to realize that our truncated fractional Jacobi inequality~\eqref{Jacle0} provides qualitative information on the slope of~$\partial E$, ruling out the possibility that the normal~$\nu_E$ could be horizontal at a point~$\bar{x} = (\bar{x}', \bar{x}_{n + 1})$ in~$\partial E \cap \big( B_{R/2}' \times \R \big)$---i.e., ruling out that~$|\nabla_{\! x'} u(\bar{x}')| = +\infty$. Indeed, suppose by contradiction that~$\nu_E^{n + 1}(\bar{x}) = 0$. By Theorem~\ref{Jacintrothm}$\,(ii)$ we would have
$$
0 \ge \J_{\red E, \alpha}^{B_R' \times \R} \, \nu_E^{n + 1}(\bar{x}) = \PV \int_{\red E \cap \left( B'_R \times \R \right)} \frac{\nu_E^{n + 1}(y)}{|y - \bar{x}|^{n + 1 + \alpha}} \, d\Haus^n(y).
$$
But since~$E$ is a global subgraph in the vertical direction, we have~$\nu_E^{n + 1}(y) \ge 0$ for every~$y \in \red E$. Therefore, by the above inequality,~$\nu_E^{n + 1} \equiv 0$ on~$\red E \cap \left( B'_R \times \R \right)$, which clearly contradicts the fact that~$\partial E$ is a graph. Notice that, for this argument to work, it is crucial that the right-hand side of~\eqref{Jacle0} is a zeroth order term.

From this remark (already made in~\cite[Theorem~C.2]{DV16} for globally regular subgraphs), we see that Theorem~\ref{Jacintrothm} ``suggests'' the regularity of nonlocal minimal graphs. This is established in the current paper, and its final output---Theorem~\ref{localgradestthm}---is a precise quantitative version of the observation above.

\begin{remark} \label{defJac}
Being a geometric object, one may wonder whether it is possible to define the global Jacobi operator~\eqref{Jacdef}-\eqref{c2def} also on a hypersurface~$\Sigma = \partial E$ that minimizes the~$\alpha$-perimeter only in a proper cylinder~$B_r' \times \R$. The answer is that this can be done, through the divergence theorem, at least when~$\Sigma \cap (\overline{B_r'} \times \R)$ is smooth and bounded, and when the operator acts on smooth functions~$w$ with compact support inside~$\Sigma \cap (B_r' \times \R)$.  Indeed, for such a~$w$, because of a simplification of the terms involving~$w(x)$ in (1.9), one sees that it suffices to give a meaning to the integrals
\begin{equation} \label{inttodefine}
\int_{\Sigma \setminus (B_r' \times \R)} \frac{\nu_E^i(y)}{|y - x|^{n + 2 s}} \, d\Haus^n(y),
\end{equation}
for~$i = 1, \ldots, n + 1$. Observe that the part of the integral over~$\Sigma \cap (B_r' \times \R)$ converges in the standard sense, since~$\Sigma$ is regular inside~$\overline{B_r'} \times \R$. On the other hand, the quantity in~\eqref{inttodefine} can be defined, using formally the divergence theorem, as
$$
- (n + 2 s) \int_{E \setminus (B_r' \times \R)} \frac{y_i - x_i}{|y - x|^{n + 2 + 2s}} \, dy + \int_{\partial (B_r' \times \R) \cap E} \frac{\nu_{B_r' \times \R}^i(y)}{|y - x|^{n + 2 s}} \, d\Haus^n(y).
$$
Note that the two integrals above always converge, no matter how rough~$\Sigma = \partial E$ is.

On the other hand, it is not clear if the global fractional Laplace-type operator~\eqref{LSigmadef} can be understood in a similar way. Except for this remark, in no other place we will need such an argument to define a surface integral---they will be understood in the standard way.
\end{remark}

\subsection{A universal Sobolev inequality on nonlocal minimal surfaces}\label{sobolev}

Miranda obtained in~\cite{M67}, for the first time, a universal Sobolev inequality for~$W^{1, p}$ functions over a minimal surface~$\Sigma$. His inequality is~\emph{universal} in the sense that it does not depend on the geometry or structure of~$\Sigma$---as long as~$\Sigma$ is a minimal surface. To prove it, he took advantage of the isoperimetric inequality for integral currents established by Federer~\& Fleming in~\cite{FF60}. Later on, Allard~\cite{A72} and Michael~\& Simon~\cite{MS73} independently extended the result of Miranda to general hypersurfaces~$\Sigma$ of Euclidean space. Their Sobolev inequality encapsulates the geometry of~$\Sigma$ only through an additional term depending on the mean curvature of~$\Sigma$.

In the next result, we establish a universal Sobolev inequality of fractional order for functions defined on a nonlocal minimal surface~$\Sigma = \partial E$. We point out that, when~$\Sigma$ is the Euclidean space (or an open subset of it), the fractional Sobolev inequality is well-known and several different proofs of it are 
available---see,~e.g.,~\cite{A75,BBM02, MS02,L09,DPV12}.

\begin{theorem} \label{sobineintrothm}
Let~$n \ge 1$,~$\alpha \in (0, 1)$, and~$\partial E$ be an~$\alpha$-minimal surface in all of~$\R^{n + 1}$. Let~$s \in (0, 1)$ and~$p \ge 1$ be such that~$n > s p$.

Then, there exists a constant~$C$ depending only on~$n$,~$\alpha$,~$s$, and~$p$, such that
\begin{equation} \label{sobineintro}
\| v \|_{L^{\frac{n p}{n - s p}}(\red E)} \le C \, [v]_{W^{s, p}(\red E)}
\end{equation}
for every~$v \in W^{s, p}(\red E)$.
\end{theorem}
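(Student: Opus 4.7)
\medskip

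The plan is to exploit the quantitative Ahlfors~$n$-regularity of~$\red E$, a direct consequence of two results recalled in the introduction: the lower bound~$\Haus^n(\red E \cap B_R(x_0)) \geqslant c R^n$ from the density estimates of Caffarelli-Roquejoffre-Savin (point~(c)) and the upper bound~$\Haus^n(\red E \cap B_R(x_0)) \leqslant C R^n$ from the perimeter estimate of Cinti-Serra-Valdinoci (point~(d)), both with constants depending only on~$n$ and~$\alpha$. This makes~$(\red E, |\cdot|, \Haus^n)$ a space of homogeneous type with universal constants, to which a relatively standard Sobolev argument can be applied.

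I would first address the endpoint~$p = 1$ by a coarea-isoperimetric approach. For~$v \geqslant 0$, decomposing~$v = \int_0^\infty \chi_{A_t} \, dt$ with~$A_t := \{ v > t \}$ gives the identity~$[v]_{W^{s, 1}(\red E)} = \int_0^\infty [\chi_{A_t}]_{W^{s, 1}(\red E)} \, dt$, while Minkowski's integral inequality yields~$\| v \|_{L^{n/(n-s)}(\red E)} \leqslant \int_0^\infty \Haus^n(A_t)^{(n-s)/n} \, dt$. The case~$p = 1$ thus reduces to the fractional isoperimetric inequality
\begin{equation*}
\Haus^n(A)^{(n-s)/n} \leqslant C \, [\chi_A]_{W^{s, 1}(\red E)} \qquad \text{for every measurable } A \subseteq \red E,
\end{equation*}
which I would establish by selecting, for each density point~$x \in A$, the radius~$R_x$ at which half of~$\Haus^n(\red E \cap B_{R_x}(x))$ lies outside~$A$. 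Its existence is guaranteed by continuity together with the lower Ahlfors bound, and the upper Ahlfors bound forces~$R_x \leqslant C \Haus^n(A)^{1/n}$. Restricting the Gagliardo double integral to~$y \in (\red E \setminus A) \cap B_{R_x}(x)$ then gives
\begin{equation*}
[\chi_A]_{W^{s, 1}(\red E)} \geqslant 2 \int_A \int_{(\red E \setminus A) \cap B_{R_x}(x)} \frac{d\Haus^n(y) \, d\Haus^n(x)}{|x - y|^{n + s}} \geqslant c \int_A R_x^{-s} \, d\Haus^n(x) \geqslant c \, \Haus^n(A)^{(n-s)/n}.
\end{equation*}

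For general~$p \geqslant 1$, I would apply the~$p = 1$ case to~$w := v^\gamma$ with~$\gamma := p(n-s)/(n-sp) \geqslant 1$, chosen so that~$\gamma \cdot n / (n-s) = p^*$. Combining the pointwise bound~$|a^\gamma - b^\gamma| \leqslant \gamma \max(a, b)^{\gamma - 1} |a - b|$ with Hölder's inequality applied to the kernel split~$|x-y|^{-(n+s)} = |x-y|^{-(n+sp)/p} \cdot |x-y|^{-n/p'}$ should formally lead to an estimate of the shape
\begin{equation*}
\| v \|_{L^{p^*}(\red E)}^\gamma \leqslant C \, [v]_{W^{s, p}(\red E)} \cdot \| v \|_{L^{p^*}(\red E)}^{\gamma - 1},
\end{equation*}
from which the desired inequality follows by division. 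The main technical difficulty is precisely this last step: the dual factor produced by Hölder,~$\bigl( \iint \max(v(x), v(y))^{(\gamma - 1) p'} |x - y|^{-n} \, d\Haus^n(x) \, d\Haus^n(y) \bigr)^{1/p'}$, features the non-integrable kernel~$|x - y|^{-n}$. To absorb it into~$\| v \|_{L^{p^*}}^{\gamma - 1}$, I would perform a dyadic-annular decomposition of the~$y$-integration over the regions~$\{ 2^{j - 1} \leqslant |x - y| < 2^{j} \}$, using the universal upper Ahlfors bound on each annulus together with a careful interpolation of the resulting pieces. Managing this exponent bookkeeping while keeping the constants independent of~$\red E$ is the technical heart of the argument.
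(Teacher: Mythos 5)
Your argument for the endpoint $p = 1$ is a valid (and genuinely different) route from the paper's. You establish a fractional isoperimetric inequality via the half-density-radius argument and then pass to the $W^{s,1}$ Sobolev inequality by coarea and Minkowski, whereas the paper follows the averaging argument of Brezis directly: starting from $|v(x)| \le |v(x) - v(y)| + |v(y)|$, averaging over $\red E \cap B_r(x)$, using Jensen's inequality with only the lower density bound, and optimizing in $r$. That argument handles all $p \ge 1$ at once and then deduces the isoperimetric inequality afterwards as a corollary (taking $p = 1$ and $v = \chi_A$). Two smaller remarks on your version: it is the lower Ahlfors bound, not the upper one, that yields $R_x \le C \Haus^n(A)^{1/n}$; and the reduction to density points implicitly invokes Lebesgue differentiation on $\red E$, which requires the doubling property and hence the upper bound as well, so your constant picks up a dependence on $C_\star$ that the paper's averaging proof avoids.

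The bootstrap from $p = 1$ to general $p$, however, has a genuine gap. Once the kernel is split so that the first Hölder factor equals $[v]_{W^{s,p}(\red E)}$, the second factor is forced to be
\begin{equation*}
\left( \iint_{\red E \times \red E} \frac{\max\{v(x), v(y)\}^{(\gamma - 1)p'}}{|x - y|^n} \, d\sigma(x) \, d\sigma(y) \right)^{1/p'},
\end{equation*}
and this double integral is genuinely $+\infty$ whenever $v \not\equiv 0$. Indeed, $\max\{v(x), v(y)\}^{(\gamma-1)p'} \ge v(x)^{(\gamma - 1)p'}$, and for $\Haus^n$-a.e.\ $x \in \red E$ the inner integral $\int_{\red E \cap B_1(x)} |x - y|^{-n} \, d\sigma(y)$ diverges logarithmically by Ahlfors $n$-regularity: each dyadic annulus $\{ 2^{-j-1} \le |x - y| < 2^{-j} \}$ contributes an amount bounded away from zero, and there are infinitely many. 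No dyadic-annular bookkeeping or interpolation can make a divergent integral finite, so the bound you aim for cannot hold. This is precisely the obstruction that separates the fractional case from the classical $W^{1,p}$ bootstrap via $v^\gamma$: there, $|\nabla(v^\gamma)| = \gamma v^{\gamma-1} |\nabla v|$ is a one-point identity, so both Hölder factors are evaluated at the same point and no kernel appears in the second one, whereas the Gagliardo ``chain rule'' produces an irreducible two-point factor. The paper's averaging proof sidesteps the issue entirely by never reducing to $p = 1$.
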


The quantity~$[\, \cdot \,]_{W^{s, p}(\red E)}$ is the seminorm of the fractional Sobolev space~$W^{s, p}(\red E)$ over~$\red E$---see Section~\ref{notsec} for the precise definition. Note that, in the theorem,~$\alpha$ and~$s$ are arbitrary parameters in~$(0, 1)$---no relation between them is assumed.

To prove Theorem~\ref{sobineintrothm} we do not rely on an isoperimetric-type
inequality, but we follow instead a beautiful proof of the fractional Sobolev inequality in~$\R^n$ that we learned from~H.~Brezis~\cite{B01}. A fractional isoperimetric inequality on~$\Sigma$ can be obtained a posteriori, by applying~\eqref{sobineintro} with~$p = 1$ to characteristic functions---see Corollary~\ref{isopinecor}. The only property of~$\Sigma = \red E$ needed to implement the argument of~\cite{B01} is the lower bound 
\begin{equation} \label{lowerbound}
\Haus^n( \Sigma \cap B_R(x)) \ge c_\star R^n \quad \mbox{for all } R > 0 \mbox{ and } x \in \Sigma,
\end{equation}
for the~$\Haus^n$-measure of~$\Sigma$ inside ambient balls~$B_R(x) := \left\{ y \in \R^{n + 1} : |y - x| < R \right\}$, where~$c_\star > 0$ is a constant. Estimate~\eqref{lowerbound} follows from the density estimates of~\cite{CRS10} and the relative isoperimetric inequality. Hence, we obtain the Sobolev inequality~\eqref{sobineintro} as a particular case of more general results valid on all hypersurfaces of~$\R^{n + 1}$ satisfying~\eqref{lowerbound} or even milder assumptions---see Proposition~\ref{sobineprop1}, and Corollaries~\ref{restrictedsobinecor} and~\ref{isopinecor}. We point out that the same lower bound~\eqref{lowerbound} was assumed in~\cite{H96} to deduce Sobolev inequalities of integer order on general metric measure spaces.

In the proof of our gradient estimate, we will need a localized version of the Sobolev inequality~\eqref{sobineintro}. This will require the use of the perimeter estimate of~\cite{CSV16}---i.e., the reverse inequality in~\eqref{lowerbound}.

Our proof of~\eqref{sobineintro} requires~$E$ to be a minimizer of~$\Per_\alpha$, since we use the density estimates for minimizers to deduce~\eqref{lowerbound}. In a future work, we will establish~\eqref{lowerbound} also for stationary surfaces of the~$\alpha$-perimeter. On the other hand, it would be very interesting to obtain nonlocal versions of the Michael-Simon and Allard inequality---an important open problem.

\subsection{A flatness result for entire nonlocal minimal graphs}

The classification of~$\alpha$-minimal surfaces is a central and challenging problem in the current research on nonlocal PDEs. Recall that~\cite{SV12} established their connection with phase transitions for strongly nonlocal Allen-Cahn energies. Savin \& Valdinoci~\cite{SV13} proved that in~$\R^2$ there exist no global minimizing~$\alpha$-minimal surfaces apart from straight lines. For this, they showed that the only minimizing~$\alpha$-minimal cones in~$\R^2$ are the half-planes. We added here the word~``minimizing'' (not used, but implicit, up to now) to distinguish minimizers from stable~$\alpha$-minimal surfaces or from stationary~$\alpha$-minimal surfaces, described next. The word~``minimizing'' will be added only in this subsection, being implicit elsewhere.

For cones in higher dimensions, no rigidity result nor a counterexample are known at the moment, with the exception of the works~\cite{CV13,CCS17}. Caffarelli \& Valdinoci~\cite{CV13} proved the flatness of minimizing~$\alpha$-minimal cones, when the parameter~$\alpha$ is close to~$1$, in dimension less than or equal to~$7$. %Instead, the first author, Cinti~\&~Serra~\cite{CCS17} establish
In~\cite{CCS17}, the first author, Cinti~\&~Serra established the flatness of stable~$\alpha$-minimal cones (and stable~$\alpha$-minimal surfaces, as well) in~$\R^3$ when~$\alpha$ is close to~$1$. Stable~$\alpha$-minimal surfaces are stationary points of the~$\alpha$-perimeter at which~$\Per_\alpha$ has non-negative second variation---as given by~\eqref{Jacsecvar}. Therefore, stable~$\alpha$-minimal surfaces are strong candidates to minimize the~$\alpha$-perimeter.

On the other hand, non-planar stationary~$\alpha$-minimal cones (i.e.,~cones that have zero~$\alpha$-mean curvature, but which are not necessarily minimizers of~$\Per_\alpha$, nor stable) are known to exist in all dimensions, thanks to the work~\cite{DDPW14} by~D\'avila, del Pino~\&~Wei. These are Lawson-type cones and, somewhat surprisingly, they are stable already in dimension~$7$ when~$\alpha$ is sufficiently small. We mention here an important open problem in this direction. The classical Simons cone is a stationary~$\alpha$-minimal surface in~$\R^{2 m}$ for all~$m \ge 1$. However, if it is a minimizer in large enough dimensions, as expected, remains unknown.

In~\cite{FV17}, Figalli~\& Valdinoci obtained a fractional version of a celebrated theorem of~De Giorgi, stating that the non-existence of singular minimizing~$\alpha$-minimal cones in~$\R^n$ yields the validity of a Bernstein-type theorem for~$\alpha$-minimal graphs in one dimension more. By applying this in combination with the results of~\cite{SV13}, they deduced that~$\alpha$-minimal graphs of functions defined in all of~$\R$ or in all of~$\R^2$ are flat. The validity of this result is not known at the moment in any higher dimension. Recall that for classical minimal graphs, Bernstein's theorem holds up to~$\R^7 \times \R = \R^8$, with counterexamples in higher dimensions.

Very recently,~Farina~\& Valdinoci~\cite{FarV17} noted that globally Lipschitz~$\alpha$-minimal graphs are affine in any dimension. We state this result in Theorem~\ref{Liprigthm}, and we give an alternative proof of it carried out independently of their work. In addition, as an application of the gradient estimate of Theorem~\ref{localgradestthm} or Theorem~\ref{globalgradestthm}, we improve~\cite[Theorem~4]{FarV17} by replacing the uniform Lipschitz hypothesis with a linear growth assumption. The precise statement is the following.

\begin{theorem} \label{growthrigthm}
Let~$n \ge 1$ and~$\alpha \in (0, 1)$. Let~$E$ be the global subgraph
$$
E = \Big\{ (x', x_{n + 1}) \in \R^{n} \times \R : x_{n + 1} < u(x') \Big\}
$$
of a measurable function~$u: \R^n \to \R$ satisfying
\begin{equation} \label{growthass}
|u(x')| \le C (1 + |x'|) \quad \mbox{for a.e.~} x' \in \R^n,
\end{equation}
for some constant~$C$. Assume also that~$\partial E$ is an~$\alpha$-minimal surface in all of~$\R^{n + 1}$.

Then,~$u$ is affine, or equivalently~$\partial E$ is a hyperplane.
\end{theorem}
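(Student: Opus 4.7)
The plan is to combine the global gradient estimate of Theorem \ref{globalgradestthm} with the linear growth hypothesis \eqref{growthass} to upgrade $u$ to a globally Lipschitz function, and then invoke the rigidity result for Lipschitz $\alpha$-minimal graphs, namely Theorem \ref{Liprigthm}. The key observation is that the oscillation of $u$ on a ball grows at most linearly in the radius, so that when we divide by the radius as required by \eqref{bettergradbound}, the resulting quantity stays bounded as the radius goes to infinity.

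More precisely, I would first note that for any $r > 0$ and any pair of points $x'_1, x'_2 \in B'_r$, the hypothesis \eqref{growthass} gives
\begin{equation*}
|u(x'_1) - u(x'_2)| \le |u(x'_1)| + |u(x'_2)| \le 2 C (1 + r),
\end{equation*}
so that $\osc_{B'_r} u \le 2 C (1 + r)$. In particular, $u$ is locally bounded, and for every $r \ge 1$,
\begin{equation*}
\frac{\osc_{B'_r} u}{r} \le 2 C \left( \frac{1}{r} + 1 \right) \le 4 C.
\end{equation*}
Since $\partial E$ is $\alpha$-minimal in all of $\R^{n + 1}$ and $u$ is locally bounded, Theorem \ref{globalgradestthm} applies and yields
\begin{equation*}
\| \nabla_{\! x'} u \|_{L^\infty(B'_r)} \le C_1 \left( 1 + 4 C \right)^{n} \quad \text{for all } r \ge 1,
\end{equation*}
with $C_1$ depending only on $n$ and $\alpha$. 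Letting $r \to +\infty$, we conclude that $\nabla_{\! x'} u \in L^\infty(\R^n)$, i.e., $u$ is globally Lipschitz on $\R^n$.

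At this stage, Theorem \ref{Liprigthm}---the rigidity result for uniformly Lipschitz nonlocal minimal graphs---immediately implies that $u$ is affine and hence that $\partial E$ is a hyperplane. The argument is essentially two lines once the pieces are in place: the only real content beyond citing the two previous theorems is the elementary observation that polynomial growth in $\osc_{B'_r} u / r$ combined with linear growth of $u$ still produces a finite limit as $r \to +\infty$. There is no genuine obstacle here; the potential pitfall is merely a bookkeeping one, namely checking that the hypotheses of Theorem \ref{globalgradestthm} (local boundedness of $u$ and $\alpha$-minimality of $\partial E$ in all of $\R^{n+1}$) and of Theorem \ref{Liprigthm} (global Lipschitz regularity plus entire $\alpha$-minimality) are both in force. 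This also clarifies why the improved exponent $n$ in \eqref{bettergradbound}, rather than $n+1+\alpha$ as in \eqref{gradbound}, is not essential for the argument: any polynomial growth in $\osc_{B'_r} u / r$ would suffice, since that ratio is uniformly bounded under the linear growth assumption.
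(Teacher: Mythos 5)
Your proposal is correct and follows exactly the same route as the paper: bound $\osc_{B'_r} u / r$ uniformly from the linear growth hypothesis, feed this into the global gradient estimate of Theorem~\ref{globalgradestthm} to conclude $u$ is globally Lipschitz, and then invoke the rigidity of Theorem~\ref{Liprigthm}. The only cosmetic difference is that you estimate $\osc_{B'_r} u$ directly while the paper bounds it via $2\|u\|_{L^\infty(B'_r)}$; both are trivial consequences of~\eqref{growthass}.
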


\subsection{Outline of the proof of the gradient estimate. A weak Harnack inequality on nonlocal
minimal surfaces} \label{proofoutlinesub}

The following are the fundamental steps in the proof of our main result, 
Theorem~\ref{localgradestthm}. To simplify the exposition, here we restrict ourselves to 
$\alpha$-minimal graphs~$\Sigma = \partial E$ in all of~$\R^{n + 1}$.

The last component~$\nu_E^{n + 1}$ of the upward pointing normal to~$\Sigma$ is a non-negative function. 
Since the~``nonlocal second fundamental form''~\eqref{c2def} is non-negative as well, we deduce from 
Theorem~\ref{Jacintrothm}$\,(i)$ that~$\nu_E^{n + 1}$ is a non-negative superharmonic function for 
the fractional Laplace-type operator on~$\Sigma$ defined in~\eqref{LSigmadef} with~$s = (1 + \alpha) / 2$.

The natural next step in the proof of Theorem~\ref{localgradestthm} consists in establishing a \emph{weak Harnack inequality} for non-negative supersolutions of linear integral equations on~$\alpha$-minimal surfaces. This is the content of the next result. See Section~\ref{weakharsec} for a much broader statement, valid for a larger class of equations posed on rather general hypersurfaces~$\Sigma \subset \R^{n + 1}$, and which does not assume the smoothness of~$\Sigma$, nor of the supersolution. Here,~$B_R := B_R(0) = \left\{ x \in \R^{n + 1} : |x| < R \right\}$.

\begin{theorem} \label{wHthm}
Let~$n \ge 1$,~$\alpha \in (0, 1)$,~$s \in (1/2, 1)$,~$\Sigma = \partial E$ be a smooth~$\alpha$-minimal surface in all of~$\R^{n + 1}$, and assume that~$0 \in \Sigma$. Let~$\L_{\Sigma, s}$ be as in~\eqref{LSigmadef} and~$w$ be a smooth, bounded, non-negative function on~$\Sigma$ satisfying
$$
- \L_{\Sigma, s} w \ge 0 \quad \mbox{in } \Sigma \cap B_{2}.
$$

Then,
\begin{equation} \label{wHine}
\inf_{\Sigma \cap B_1} w \ge c \left( \int_{\Sigma \cap B_1} w(x) \, d\Haus^n(x) + \int_{\Sigma \setminus B_1} \frac{w(y)}{|y|^{n + 2 s}} \, d\Haus^n(y) \right)
\end{equation}
for some constant~$c \in (0, 1]$ depending only on~$n$,~$\alpha$, and~$s$.
\end{theorem}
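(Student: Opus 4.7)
The plan is to follow the nonlocal Moser iteration strategy introduced by Kassmann~\cite{Kas09,Kas11} in the flat case, adapted to the surface $\Sigma$ through the universal Sobolev inequality of Theorem~\ref{sobineintrothm} and the density/perimeter bounds of~\cite{CRS10,CSV16}. The extra weighted $L^1$ tail term on the right-hand side of~\eqref{wHine} will arise naturally from the nonlocal contributions in the Caccioppoli-type estimates, as first observed in the flat case by Ros-Oton~\&~Serra~\cite{RS16}. After replacing $w$ by $w_\varepsilon := w + \varepsilon$ for $\varepsilon > 0$ (to be sent to $0$ at the end), I would test the supersolution inequality against $\phi = \eta^2 w_\varepsilon^{q-1}$ for a smooth cutoff $\eta$ supported in $\Sigma \cap B_{7/4}$ and various exponents $q \in \R \setminus \{0, 1\}$; symmetrizing the resulting double integral and applying the algebraic manipulations of~\cite{Kas09,Kas11} yields a Caccioppoli-type bound of the form
\[
\bigl[\eta w_\varepsilon^{q/2}\bigr]_{W^{s,2}(\Sigma)}^2 \le C(q) \int_{\Sigma \cap B_{7/4}} w_\varepsilon^q \, d\Haus^n + \textup{tail contributions}.
\]
The tail contributions split into a diagonal term of the form $w_\varepsilon(x)^q \int_{\Sigma \setminus B_{7/4}} |x-y|^{-n-2s} \, d\Haus^n(y)$---harmless thanks to the perimeter bound of~\cite{CSV16}---and a cross term $w_\varepsilon(x)^{q-1} \int_{\Sigma \setminus B_{7/4}} w_\varepsilon(y) |x-y|^{-n-2s} \, d\Haus^n(y)$, whose sign is favorable for $q < 0$ (so that it can be absorbed into the left-hand side) and for $q \in (0, 1)$ reproduces precisely the weighted $L^1$ tail appearing on the right-hand side of~\eqref{wHine}.

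The next step is to apply Theorem~\ref{sobineintrothm} to $\eta w_\varepsilon^{q/2}$---in a localized version on $\Sigma \cap B_r$, obtained from the global inequality by cutoff constructions together with~\eqref{lowerbound} and the perimeter bound of~\cite{CSV16}---thereby upgrading the $W^{s,2}$ seminorm bound into an $L^{2n/(n-2s)}$ bound (or, when $n = 1$ and hence $2s > n$, directly into an $L^\infty$ bound). Iterating this over a geometrically decreasing sequence of radii and exponents $q_k \to -\infty$ and summing the resulting geometric series gives, for some $p_0 > 0$,
\[
\inf_{\Sigma \cap B_1} w \ge c \left( \int_{\Sigma \cap B_{5/4}} w^{-p_0} \, d\Haus^n \right)^{-1/p_0} - C \int_{\Sigma \setminus B_1} \frac{w(y)}{|y|^{n+2s}} \, d\Haus^n(y).
\]
To bridge the negative-power average with an $L^1$ integral of $w$, I would then establish a BMO-type estimate for $\log w_\varepsilon$ by testing the supersolution inequality against $\phi = \eta^2 / w_\varepsilon$ and using the elementary inequality $(a - b)(1/a - 1/b) \le -(\log a - \log b)^2$ for $a, b > 0$. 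Combined with a fractional Poincar\'e inequality on the space of homogeneous type $(\Sigma, |\cdot|, \Haus^n)$---whose doubling property follows from~\eqref{lowerbound} and~\cite{CSV16}---and with John-Nirenberg (or, equivalently, a Bombieri-Giusti-type lemma), together with the reverse H\"older inequalities produced by the Moser iteration on positive powers $q \in (0, 1)$ pushed to $q \to 1^-$, this yields
\[
\int_{\Sigma \cap B_{5/4}} w \, d\Haus^n \le C \left( \int_{\Sigma \cap B_{5/4}} w^{-p_0} \, d\Haus^n \right)^{-1/p_0} + C \int_{\Sigma \setminus B_1} \frac{w(y)}{|y|^{n+2s}} \, d\Haus^n(y).
\]
Combining the two displays and letting $\varepsilon \to 0$ gives~\eqref{wHine}.

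The main obstacle I anticipate is controlling the nonlocal tail contributions uniformly throughout the Moser iteration, so that they appear in the final estimate exactly in the weighted $L^1$ form of~\eqref{wHine}. At each step one must carefully split the tail contributions from the double integrals into a part with favorable sign---to be absorbed into the $W^{s,2}$ seminorm on the left-hand side---and a part dominated by the prescribed tail, with constants that do not blow up in the iteration. A secondary technical difficulty is that Theorem~\ref{sobineintrothm} is global on $\red E$, whereas the Moser iteration requires a localized Sobolev inequality on $\Sigma \cap B_r$ with a uniform constant; bridging this gap relies on cutoff and extension constructions on $\Sigma$ together with the density estimate~\eqref{lowerbound} and the perimeter bound of~\cite{CSV16}. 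Finally, because $2s > 1$, one cannot substitute the iteration by a simpler barrier argument of the kind used in~\cite{S06,RS16}, as emphasized in Subsection~\ref{proofoutlinesub}.
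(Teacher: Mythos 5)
Your overall framework matches the paper exactly: Moser iteration \`a la Kassmann, with the fractional Sobolev inequality of Theorem~\ref{sobineintrothm} (suitably localized via Corollary~\ref{restrictedsobinecor}/Proposition~\ref{morreyprop}) replacing the Euclidean one, the perimeter and density bounds of~\cite{CRS10,CSV16} supplying the Ahlfors regularity, the BMO estimate for~$\log w$ from the test function~$\eta^2/w$, and an abstract John--Nirenberg-type lemma (the paper uses the Saloff-Coste variant, Lemma~\ref{JNlem}, rather than the Bombieri--Giusti one, a purely cosmetic choice). There is, however, a genuine gap in how you propose to generate the weighted $L^1$ tail on the right-hand side of~\eqref{wHine}.

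Your two intermediate displays, taken as written, give
$$
\inf_{\Sigma\cap B_1} w \;\ge\; c\Big(\textstyle\int w^{-p_0}\Big)^{-1/p_0} - C\,T
\qquad\text{and}\qquad
\textstyle\int_{\Sigma\cap B_{5/4}} w \;\le\; C\Big(\int w^{-p_0}\Big)^{-1/p_0} + C\,T,
$$
where $T := \int_{\Sigma\setminus B_1} w(y)\,|y|^{-n-2s}\,d\Haus^n(y)$. Combining these only yields $\inf w \ge c\int w - C'T$, i.e.\ the tail appears \emph{subtracted}, which is strictly weaker than~\eqref{wHine} and does not recover it. Moreover, both occurrences of $\pm C\,T$ are spurious: as you yourself note, the cross interaction $w(x)^{q-1}\int_{\mathrm{far}} w(y)K(x,y)\,d\sigma(y)$ has favorable sign for $q<1$, so in the negative-power iteration it can simply be dropped (the conclusion is $\inf w\ge c(\int w^{-p_0})^{-1/p_0}$ with no tail error), and in the positive-power Caccioppoli estimate it sits on the favorable side and does not pollute the reverse-H\"older chain. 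The mechanism that actually produces the additive tail term in the paper is different: for $q\in(0,1)$ the Caccioppoli estimate takes the form
$$
q\big[w^{\frac{1-q}{2}}\big]^2_{H^s} \;+\; \|w^{-q}\|_{L^1(\Sigma\cap B_\tau)}\inf_{x\in\Sigma\cap B_\tau}\int_{\Sigma\setminus B_t} w(y)K(x,y)\,d\sigma(y) \;\le\; \frac{Cq^{-1}}{(t-\tau)^{2s}}\|w^{1-q}\|_{L^1(\Sigma\cap B_t)}
$$
(this is~\eqref{q<1tech2.5}), so the tail quantity is \emph{bounded above}; dividing through and invoking the negative-power estimate for $\|w^{-1/2}\|_{L^1}^{-1}$ gives the independent bound~\eqref{q<1inebonus}, namely $\inf_x\int_{\Sigma\setminus B_1} w(y)K(x,y)\,d\sigma(y) \le C\,\dashint_{\Sigma\cap B_1} w$. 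Combining this with the tail-free weak Harnack estimate $\inf w \ge c\,\dashint w$ then produces the additive form $\inf w \ge \tfrac{c}{2}\big(\dashint w + \inf_x\int_{\mathrm{far}} wK\big)$. Once you make this correction---derive the tail bound as a separate one-sided consequence of the $q\in(0,1)$ Caccioppoli inequality, rather than as an error term in the iteration---your argument coincides with the paper's.
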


Before commenting on Theorem~\ref{wHthm}, we briefly show how the gradient estimate~\eqref{gradbound} of Theorem~\ref{localgradestthm} easily follows from it. Indeed, let~$u$ be the function defining~$\Sigma$ as a graph and pick a point~$\bar{x} \in \Sigma \cap (B_1' \times \R)$. As mentioned at the beginning of this subsection, by Theorem~\ref{Jacintrothm}$\,(i)$, the last component~$\nu_E^{n + 1}$ of the upward pointing normal is a non-negative supersolution of the fractional Laplace-type equation~$- \L_{\Sigma, (1 + \alpha) / 2} w = 0$ on the whole~$\Sigma$. Hence we may apply Theorem~\ref{wHthm} to it. By translating~$\bar{x}$ to the origin, taking advantage of~\eqref{wHine}, and going back to the original coordinates, we get
$$
\nu_E^{n + 1}(\bar{x}) \ge c \int_{\Sigma} \frac{\nu_E^{n + 1}(y)}{1 + |y - \bar{x}|^{n + 1 + \alpha}} \, d\Haus^n(y) \ge c \int_{\Sigma \cap (B'_1 \times \R)} \frac{\nu_E^{n + 1}(y)}{1 + |y - \bar{x}|^{n + 1 + \alpha}} \, d\Haus^n(y).
$$
Set now~$M := \| u \|_{L^\infty(B'_1)}$. In view of the inclusion~$\Sigma \cap (B'_1 \times \R) \subset B_1' \times [-M, M]$ and the fact that~$\nu_E^{n + 1} = (1 + |\nabla_{\! x'} u|^2)^{-1/2}$, we infer from the above inequality that
$$
\frac{1}{\sqrt{1 + |\nabla_{\! x'} u(\bar{x}')|^2}} \ge \frac{\tilde{c}}{(1 + M)^{n + 1 + \alpha}} \int_{\Sigma \cap (B'_1 \times \R)} \nu_E^{n + 1}(y) \, d\Haus^n(y) = \frac{\tilde{c} \, |B_1'|}{(1 + M)^{n + 1 + \alpha}},
$$
with~$\tilde{c} > 0$ depending only on~$n$ and~$\alpha$. Since~$\bar{x}'$ is an arbitrary point of~$B_1'$, this gives estimate~\eqref{gradbound} with~$r = 1$. The case of a general~$r > 0$ follows by scaling.

Of course, to perform the above computation we overlooked several non-trivial details---most importantly, that, according to the statement of Theorem~\ref{localgradestthm}, the set~$\Sigma = \partial E$ is not known to be smooth a priori and that~$E$ is assumed to minimize the~$\alpha$-perimeter only inside a vertical cylinder in~$\R^{n + 1}$.
%In the forthcoming sections we will provide a detailed proof of Theorem~\ref{localgradestthm}, addressing in particular these difficulties.

\begin{remark} \label{s>12rmk}
Theorem~\ref{wHthm} is limited to operators of fractional order~$2 s$ strictly greater than~$1$. 
We are forced to this hypothesis only because of the corresponding assumption~$\beta > 1$ made 
in Lemma~\ref{kerL1overredElem}. As we have already seen, for applications to~$\alpha$-minimal 
surfaces this hypothesis is not restrictive, since we need to take~$s = (1 + \alpha) / 2 > 1/2$. 
Nevertheless, it would be interesting to extend Theorem~\ref{wHthm} 
to operators with~$s \le 1 / 2$, 
possibly requiring stronger assumptions on~$\Sigma$.
\end{remark}

Harnack-type inequalities have been established by many authors for linear and nonlinear singular integral operators in Euclidean space, e.g.,~\cite{L72,BL02,SV04,CS09,Kas07,Kas11,DCKP14,C17}. For second-order elliptic PDEs with bounded measurable coefficients, the Harnack inequality was first established by Moser in his groundbreaking work~\cite{M61}. The idea that the same result could be extended to solutions of equations posed on minimal surfaces is, to the best of our knowledge, due to~Bombieri~\&~Giusti~\cite{BG72}. 
Theorem~\ref{wHthm} here is therefore an extension of their result to singular integral equations on 
nonlocal minimal surfaces.

Our proof of the weak Harnack inequality~\eqref{wHine} is based on a delicate application of the Moser 
iteration technique, implemented along the lines of the works of Kassmann~\cite{Kas09,Kas11}. We stress that 
having~$s > 1/2$ in Theorem~\ref{wHthm} prevents the possibility of obtaining~\eqref{wHine} through 
the simpler method of~\cite{S06,RS16} or~\cite[Section~3]{BV16}, which works well in a wide class of flat Euclidean nonlocal settings. Indeed, in their technique it is crucial for the operator to be bounded when 
applied to smooth barrier functions, a fact that a priori does not necessarily hold for the operator~$\L_{\Sigma, s}$ 
with estimates independent of the geometry of~$\Sigma$ when~$s \ge 1/2$.

To carry out the Moser iteration, two ingredients are needed: that the measure of the underlying space is doubling and a Sobolev-type inequality. In Subsection~\ref{sobolev} we already addressed the validity of a fractional Sobolev inequality on nonlocal minimal surfaces. On the other hand, the doubling property follows from the fact that
\begin{equation} \label{HnAhlreg}
c_\star r^n \le \Haus^n(\Sigma \cap B_r(x)) \le C_\star r^n
\end{equation}
holds for every point~$x \in \Sigma$ and radius~$r > 0$, for some constants~$C_\star \ge c_\star > 0$, when~$\Sigma = \red E$ is the reduced boundary of a minimizer of~$\Per_\alpha$ in all of~$\R^{n + 1}$.

While, as already noted in Subsection~\ref{sobolev}, the lower bound in~\eqref{HnAhlreg} follows from the density estimates of~\cite{CRS10}, the validity of the upper bound was established in the recent paper~\cite{CSV16}. This last result is deep and somewhat surprising, since it provides, for minimizers of the~$\alpha$-perimeter, a bound for their classical perimeter---a higher order quantity. In addition, and remarkably, the result of~\cite{CSV16} is true in any dimension and also for stable nonlocal minimal surfaces, not just for minimizers. However, it is not uniform as~$\alpha \uparrow 1$. In fact, for classical stable minimal surfaces in~$\R^{n + 1}$, the validity of a perimeter bound is known for~$n = 2$, but is still a famous open problem when~$n \ge 3$. Recall that, in light of the works~\cite{BBM01,D02,P04}---see also~\cite{ADM11,CV11}---,~$\alpha$-minimal surfaces converge to classical minimal surfaces as~$\alpha \uparrow 1$.

\subsection{Organization of the paper}

Section~\ref{notsec} specifies some notation that is used throughout the paper.
In Section~\ref{prelisec}, besides collecting several known results needed at later stages,
we obtain estimates for integral quantities defined on hypersurfaces with controlled volume growth and we establish a result on fractional capacities.
Section~\ref{jacsec} deals with fractional Jacobi operators applied to the normal vector; we prove Theorem~\ref{Jacintrothm}.
In Section~\ref{sobinesec} we establish fractional Poincar\'e, Sobolev, and isoperimetric inequalities 
on nonlocal minimal surfaces and on more general subsets of~$\R^{n + 1}$---in particular, we give the
proof of Theorem~\ref{sobineintrothm}.
Section~\ref{weakharsec} concerns the Moser iteration that leads 
to the weak Harnack inequality of Theorem~\ref{wHthm}.
In Section~\ref{regsec} we establish the gradient 
estimates of Theorems~\ref{localgradestthm} and~\ref{globalgradestthm}, while a proof of the Liouville-type 
Theorem~\ref{growthrigthm} is given in Section~\ref{rigsec}.
We end the article with Appendix~\ref{auxapp}, which contains a few technical computations.

\section{Notation} \label{notsec}

\noindent
We will typically work in the Euclidean space~$\R^{n + 1}$, with~$n \ge 1$. Points in~$\R^{n + 1}$ are indicated by~$x, y, z$, while~$x', y', z'$ is reserved for points in~$\R^n$---which will often be identified with the subspace~$\R^n \times \{ 0 \}$ of~$\R^{n + 1}$. This distinction carries over to Euclidean balls. Hence, for~$R > 0$,~$x \in \R^{n + 1}$, and~$x' \in \R^n$ we write
\begin{align*}
B_R(x) & := \left\{ y \in \R^{n + 1} : |y - x| < R \right\}, \\
B'_R(x') & := \left\{ y' \in \R^{n} : |y' - x'| < R \right\},
\end{align*}
as well as~$B_R := B_R(0)$ and~$B_R' := B_R'(0)$.

We indicate as
$$
\C_R(x) := B'_R(x') \times \R
$$
the infinite vertical cylinder of radius~$R > 0$, centered at a point~$x = (x', x_{n + 1})$ of~$\R^{n + 1}$. Again,~$\C_R := \C_R(0)$.

For~$d \ge 0$, the symbol~$\Haus^d$ stands for the~$d$-dimensional Hausdorff measure in both~$\R^n$ and~$\R^{n + 1}$. The Hausdorff measures~$\Haus^{n + 1}$ of~$\R^{n + 1}$ and~$\Haus^n$ of~$\R^n$ are both sometimes denoted by~$|\cdot|$. Inside integrals over~$\Haus^n$-measurable subsets of~$\R^{n + 1}$, we frequently write~$d\sigma$ instead of~$d\Haus^n$.

We now set the notation for fractional Sobolev spaces defined on~$\Haus^n$-measurable subsets of~$\R^{n + 1}$. Let~$p \ge 1$,~$s \in (0, 1)$, and~$\Sigma \subset \R^{n + 1}$ be a set with locally finite~$n$-dimensional Hausdorff measure. Given~$U \subseteq \Sigma$, we say that a function~$v \in L^p(U)$ belongs to~$W^{s, p}(U)$ if and only if
$$
[v]_{W^{s, p}(U)} := \left( \int_{U} \int_{U} \frac{|v(x) - v(y)|^p}{|x - y|^{n + s p}} \, d\sigma(x) \, d\sigma(y) \right)^{\frac{1}{p}} < +\infty.
$$
The Sobolev space~$W^{s, p}(U)$ is then endowed with the norm~$\| \cdot \|_{W^{s, p}(U)}$ defined by the relation~$\| v \|_{W^{s, p}(U)}^p := \| v \|_{L^p(U)}^p + [v]_{W^{s, p}(U)}^p$. When~$p = 2$, the notation~$H^s(U)$ is favored to~$W^{s, 2}(U)$. If~$U$ has finite~$\Haus^n$ measure and~$v \in L^1(U)$, we also write
$$
(v)_{U} := \dashint_{U} v(x) \, d\sigma(x) := \frac{1}{\Haus^n(U)} \int_{U} v(x) \, d\sigma(x).
$$
A similar (standard) terminology is used for the corresponding spaces over open subsets of~$\R^n$ and of~$\R^{n + 1}$.

We will often deal with improper integrals defined over open subsets of~$\R^n$ and~$\R^{n + 1}$, as well as~$\Haus^n$-measurable subsets of~$\R^{n + 1}$. As customary, we will use the symbol~$\PV$ to indicate that an integral has to be understood in the Cauchy principal value sense, defined as follows. Let~$\Omega$ be an open subset of~$\R^{n + 1}$ and~$f: \Omega \to \R$ be a measurable function having an isolated singularity at a point~$x \in \Omega$. We define
$$
\PV \int_{\Omega} f(y) \, dy := \lim_{\delta \rightarrow 0^+} \int_{\Omega \setminus B_\delta(x)} f(y) \, dy,
$$
provided the limit converges. The same definition is adopted for integrals defined on a subset~$\Omega$ of~$\R^n$, integrating this time over~$\Omega \setminus B'_\delta(x')$. Similarly, if~$f$ is defined on a~$\Haus^n$-measurable set~$U \subset \R^{n + 1}$,~$x \in U$, and~$f$ has an isolated singularity at~$x$, we set
$$
\PV \int_{U} f(y) \, d\sigma(y) := \lim_{\delta \rightarrow 0^+} \int_{U \setminus B_\delta(x)} f(y) \, d\sigma(y).
$$
Note that, also in this case, we remove Euclidean balls of~$\R^{n + 1}$ from the domain of integration, prior to taking the limit.

\section{Some preliminary results} \label{prelisec}

\noindent
In this section, we review some known facts about the regularity of nonlocal minimal surfaces, obtain integral bounds to be used throughout the paper, and collect a few results about the fractional Sobolev capacity on general hypersurfaces of Euclidean space. The reader who wants to get to the heart of the matter may skip this part and proceed to Section~\ref{jacsec}.

\subsection{Some known regularity results for nonlocal minimal surfaces} \label{persubsec}

The following theorem provides uniform perimeter and density estimates for nonlocal minimal surfaces. They were proved in~\cite{CSV16} and~\cite{CRS10}, respectively. From now on, by nonlocal minimal surface we mean a minimizer of the fractional perimeter---as in the beginning of the introduction. Note that the symbol~$\red E$ appearing in the statement denotes the~\emph{reduced boundary} of a set~$E$ with finite perimeter (see,~e.g.,~\cite{G84} or~\cite{M12} for more details).

\begin{theorem}[\cite{CRS10,CSV16}] \label{perimeterboundsthm}
Let~$n \ge 1$,~$\alpha \in (0, 1)$, and~$\partial E$ be an~$\alpha$-minimal surface in the ball~$B_R(x)$, for some~$x \in \R^{n + 1}$ and~$R > 0$.

Then,~$E$ has locally finite perimeter inside~$B_R(x)$ and for every~$\bar{\mu} \in (0, 1)$ it holds
\begin{equation} \label{perimeterboundabove}
\hspace{6pt} \Haus^n(\red E \cap B_\rho(x)) = \Per(E; B_\rho(x)) \le C_\star \rho^{n} \quad \mbox{for } \rho \in (0, \bar{\mu} R],
\end{equation}
for some constant~$C_\star$ depending only on~$n$,~$\alpha$, and~$\bar{\mu}$. Moreover, if~$x \in \partial E$, then
\begin{equation} \label{perimeterboundbelow}
\Haus^n(\red E \cap B_\rho(x)) = \Per(E; B_\rho(x)) \ge c_\star \rho^{n} \quad \hspace{2pt} \mbox{for } \rho \in (0, R],
\end{equation}
for some constant~$c_\star > 0$ depending only on~$n$ and~$\alpha$.
\end{theorem}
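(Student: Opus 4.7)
The plan is to prove the lower bound~\eqref{perimeterboundbelow} by the density-estimate argument of~\cite{CRS10}, and the upper perimeter bound~\eqref{perimeterboundabove} by the competitor-based argument of~\cite{CSV16}. The two inequalities are of very different nature: the lower bound follows from rather standard comparison with the competitor~$E \setminus B_\rho(x)$, while the upper bound is deep because it controls a \emph{higher order} quantity (the classical perimeter) for a minimizer of a fractional functional.

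For the lower bound, assume~$x \in \partial E$ and fix~$\rho \in (0, R]$. The first step is to test minimality of~$E$ against the competitor~$E \setminus B_\rho(x)$ (and also, if needed, against~$E \cup B_\rho(x)$), which differs from~$E$ only inside~$B_R(x)$. After canceling common contributions, the resulting inequality takes the schematic form
\begin{equation*}
I_\alpha\big(E \cap B_\rho(x),\, B_\rho(x) \setminus E\big) \le I_\alpha\big(E \cap B_\rho(x),\, \R^{n+1} \setminus (E \cup B_\rho(x))\big),
\end{equation*}
which, combined with the fractional isoperimetric inequality (or a direct comparison between the two sides using elementary kernel estimates), yields a differential inequality for the volume~$V(\rho) := |E \cap B_\rho(x)|$ of the form~$V'(\rho) \ge c \, V(\rho)^{n/(n+1)}$ for a.e.~$\rho$. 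Integrating gives~$V(\rho) \ge c \rho^{n+1}$, and the same argument applied to the complement yields~$|B_\rho(x) \setminus E| \ge c \rho^{n+1}$. Finally, the classical relative isoperimetric inequality on the ball~$B_\rho(x)$ turns these two-sided volume estimates into the perimeter lower bound~$\Per(E; B_\rho(x)) \ge c_\star \rho^n$. The equality with~$\Haus^n(\red E \cap B_\rho(x))$ follows from the definition of reduced boundary.

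For the upper bound, I would follow the strategy of Cinti, Serra \& Valdinoci. The starting point is again minimality, but the competitor is now constructed much more carefully: one slides/deforms the boundary of~$B_R(x)$ inward via a family of smooth competitors whose classical perimeter can be controlled, and exploits the minimality inequality
\begin{equation*}
\Per_\alpha(E; B_R(x)) \le \Per_\alpha(F; B_R(x))
\end{equation*}
to transfer the control on the fractional perimeter of competitors into a control on the classical perimeter of~$E$ itself inside~$B_{\bar\mu R}(x)$. Concretely, one shows that the fractional functional sees the classical perimeter at a suitable scale, and that the localization loss of a factor depending on~$\bar\mu \in (0,1)$ is unavoidable and is what produces the constant~$C_\star = C_\star(n,\alpha,\bar\mu)$.

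The hard part is the upper bound. Unlike the lower bound, where one-sided volume density is almost immediate from monotonicity of a volume functional, the upper perimeter estimate requires genuinely new ideas, and this is precisely where I expect to spend most of the effort: matching the nonlocal minimality with a \emph{classical} quantity requires the delicate sliding/competitor construction of~\cite{CSV16}, where the loss~$\bar\mu < 1$ and the blow-up of~$C_\star$ as~$\alpha \uparrow 1$ reflect genuine obstructions to a fully uniform estimate. Since the present theorem is just quoted from~\cite{CRS10,CSV16}, I would ultimately cite those papers directly rather than reproduce the full argument.
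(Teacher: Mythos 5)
Your plan matches the paper's proof: the lower bound is obtained by invoking the volume density estimates of~\cite[Theorem~4.1]{CRS10} together with the relative isoperimetric inequality, while the upper bound and the identity~$\Haus^n(\red E\cap B_\rho)=\Per(E;B_\rho)$ are quoted from~\cite[Corollary~1.8]{CSV16} (plus a covering argument for~$\bar\mu>1/4$) and De~Giorgi's structure theorem, respectively. The only difference is that the paper cites the density estimates directly rather than re-deriving them via the differential inequality sketch you give---and, for the record, the schematic comparison you write after testing with~$E\setminus B_\rho(x)$ should read~$I_\alpha(E\cap B_\rho,\,B_\rho\setminus E)\le I_\alpha(E\cap B_\rho,\,E\setminus B_\rho)$ rather than having the complement of~$E\cup B_\rho$ on the right.
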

\begin{proof}
The identity between the perimeter of~$E$ and the Hausdorff measure of~$\red E$ is a standard fact that holds for all sets with finite perimeter in light of~De~Giorgi's structure theorem---see~\cite[Chapter~15]{M12} or~\cite[Chapter~4]{G84}.

When~$\bar{\mu} \le 1/4$, inequality~\eqref{perimeterboundabove} is simply~\cite[Corollary~1.8]{CSV16}. The case~$\bar{\mu} \in (1/4, 1)$ follows using an easy covering argument.

In order to check that~\eqref{perimeterboundbelow} is true as well, we begin by noticing that, by the density estimates of~\cite[Theorem~4.1]{CRS10}, it holds
$$
\min \Big\{ |E \cap B_\rho(x)|, |B_\rho(x) \setminus E| \Big\} \ge c \, \rho^{n + 1}
$$
for some constant~$c > 0$ depending only on~$n$ and~$\alpha$. The lower bound for the perimeter of~$E$ then follows by applying the relative isoperimetric inequality (see,~e.g..~\cite[Corollary~1.29]{G84}).
\end{proof}

The next statement puts together some results from~\cite{CRS10,BFV14,SV13}, establishing the smoothness of nonlocal minimal surfaces in~$\R^{n + 1}$ outside of a set of Hausdorff dimension~$(n + 1) - 3 = n - 2$.

\begin{theorem}[\cite{CRS10,BFV14,SV13}] \label{regsingthm}
Let~$n \ge 1$,~$\alpha \in (0, 1)$,~$\Omega \subseteq \R^{n + 1}$ be an open set, and~$\partial E \subset \R^{n + 1}$ be an~$\alpha$-minimal surface in~$\Omega$.

Then,~$\partial E \cap \Omega$ is of class~$C^\infty$ outside of a closed singular set~$S \subset \partial E \cap \Omega$. The set~$S$ has Hausdorff dimension at most~$(n + 1) - 3 = n - 2$, i.e.,~$S = \varnothing$ if~$n = 1$ and~$\Haus^d(S) = 0$ for every~$d > n - 2$ if~$n \ge 2$.
\end{theorem}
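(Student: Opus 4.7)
The plan is to split the statement into its two claims---first the smoothness of $\partial E \cap \Omega$ outside a closed ``singular'' set $S$, and then the dimension bound $\dim_\Haus S \le n - 2$---and to assemble the known ingredients listed in the three citations. I would define $S$ as the set of points of $\partial E \cap \Omega$ around which $\partial E$ fails to be a smooth hypersurface: by construction $S$ is closed in $\partial E \cap \Omega$, and its complement is open.

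For the interior smoothness on $(\partial E \cap \Omega)\setminus S$, I would invoke the improvement-of-flatness ($\varepsilon$-regularity) theorem of Caffarelli, Roquejoffre \& Savin~\cite{CRS10}: if $\partial E$ lies in a sufficiently thin vertical slab inside some ball $B_\rho(x_0) \subset \Omega$, then $\partial E \cap B_{\rho/2}(x_0)$ coincides with the graph of a $C^{1,\beta}$ function, for some universal $\beta = \beta(n,\alpha) > 0$. To upgrade $C^{1,\beta}$ to $C^\infty$, I would then differentiate the Euler-Lagrange equation $H_\alpha[E] = 0$ and iterate the Schauder-type estimates for fractional mean-curvature-type operators developed by Barrios, Figalli \& Valdinoci~\cite{BFV14}, gaining a fractional order of regularity at each step. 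At any point of $(\partial E \cap \Omega)\setminus S$, a standard compactness argument shows that some blow-up sequence is flat; hence the $\varepsilon$-flatness hypothesis is verified at some scale, and smoothness follows.

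For the dimension bound on $S$, the plan is to run Federer's dimension reduction in the nonlocal incarnation of~\cite{CRS10,SV13}. Two ingredients are required: compactness with cone structure of blow-ups, and classification in dimension two. For the first, the scaling identity $\Per_\alpha(\lambda E; \lambda \Omega) = \lambda^{n+1-\alpha}\Per_\alpha(E; \Omega)$, the uniform density/perimeter estimates of Theorem~\ref{perimeterboundsthm}, and the $L^1_\loc$-lower semicontinuity of $\Per_\alpha$ imply that any sequence of rescalings $(E - x_0)/r_k$ at a singular point $x_0 \in S$ admits, up to subsequence, an $L^1_\loc$-limit $E_\infty$ that minimizes $\Per_\alpha$ globally and whose boundary is a cone with vertex at the origin. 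For the second, Savin \& Valdinoci~\cite{SV13} classify all minimizing $\alpha$-minimal cones in $\R^2$ as half-planes, which in particular forces $S = \varnothing$ when $n = 1$. If $\Haus^d(S) > 0$ for some $d > n - 2$, the iterated blow-up scheme then produces, after at most $n - 1$ reductions, a non-planar minimizing $\alpha$-minimal cone in $\R^2$, contradicting~\cite{SV13}.

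The main obstacle is the careful implementation of dimension reduction in the nonlocal setting, especially verifying that blow-up limits are genuine cones and that singular points are preserved under blow-up. In the classical case these facts rest on a monotonicity formula; in the nonlocal case they follow instead from the scaling of $\Per_\alpha$ combined with the uniform perimeter bound of~\cite{CSV16}---Theorem~\ref{perimeterboundsthm} here---which rules out loss of mass at infinity. Once compactness and cone structure are in hand, the combinatorial core of Federer's argument transfers to our setting with no substantial modification.
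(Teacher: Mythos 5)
Your outline correctly identifies and orders the three ingredients---the $\varepsilon$-regularity theorem of~\cite{CRS10}, the Schauder bootstrap of~\cite{BFV14} passing from $C^{1,\beta}$ to $C^\infty$, and the classification of minimizing $\alpha$-minimal cones in $\R^2$ of~\cite{SV13}, applied via dimension reduction---and this is precisely the route the paper takes, citing those works without further elaboration.

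There is, however, a genuine error in your closing paragraph on how dimension reduction works in the nonlocal setting. You write that the cone structure of blow-ups ``follows instead from the scaling of $\Per_\alpha$ combined with the uniform perimeter bound of~\cite{CSV16},'' implying that no monotonicity formula is available. In fact, \cite{CRS10} does establish a monotonicity formula---through the Caffarelli--Silvestre-type extension to $\R^{n+2}_+$---and it is precisely this monotonicity, not the mere homogeneity $\Per_\alpha(\lambda E; \lambda \Omega) = \lambda^{n+1-\alpha}\Per_\alpha(E;\Omega)$, that forces blow-up limits to be cones and lets Federer's dimension-reduction scheme run in both~\cite{CRS10} (yielding the bound $n-1$) and~\cite{SV13} (improving it to $n-2$). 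Scale invariance of the functional alone does not produce a monotone quantity, and hence cannot detect when a blow-up fails to be conical. Moreover, \cite{CSV16} postdates both \cite{CRS10} and \cite{SV13}, so neither could have invoked it; the compactness used in those proofs relies only on the density estimates (the lower perimeter bound) already present in~\cite{CRS10}.
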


The theorem follows from the results of~\cite{CRS10,BFV14}, where it is shown that the surface~$\partial E$ is smooth outside of a certain singular set~$S \subset \partial E$. By~\cite[Corollary~2]{SV13}, the set~$S$ is empty when~$n = 1$ and has Hausdorff dimension at most~$(n + 1) - 3 = n - 2$ when~$n \ge 2$. 

\subsection{Integral estimates on hypersurfaces} \label{intestsub}

In this subsection, we provide estimates for some integral quantities defined on general hypersurfaces with controlled volume growth.

Take a set~$\Sigma \subset \R^{n + 1}$, a domain~$\Omega \subseteq \R^{n + 1}$, and~$R_0 > 0$. Suppose that~$\Sigma$ satisfies
\begin{equation} \label{Sigmaperbound}
\Haus^n(\Sigma \cap B_\rho(x)) \le C_\star \rho^n \quad \mbox{for every } x \in \Sigma \cap \Omega \mbox{ and } \rho \in (0, R_0],
\end{equation}
for some constant~$C_\star$. Note that this estimate holds in particular when~$\Sigma$ is the (reduced) boundary of a minimizer of the~$\alpha$-perimeter, by inequality~\eqref{perimeterboundabove}. As a consequence, all results in this subsection apply to nonlocal minimal surfaces.

We begin with a lemma that deals with ``contributions coming from far''.

\begin{lemma} \label{kerL1overredElem}
Let~$n \ge 1$,~$\Sigma \subset \R^{n + 1}$,~$\Omega \subseteq \R^{n + 1}$ be a domain, and assume that~\eqref{Sigmaperbound} holds for some positive constants~$R_0$ and~$C_\star$. Let~$\beta > 1$,~$x_0 \in \R^{n + 1}$, and~$r \in (0, R_0 / 2]$.

Then,
$$
\int_{\left( \Sigma \cap \Omega \right) \setminus B_r(x_0)} \frac{d\sigma(y)}{|y - x_0|^{n + \beta}} \le \frac{C}{r^{\beta}}
$$
for some constant~$C$ depending only on~$n$,~$\beta$, and~$C_\star$.
\end{lemma}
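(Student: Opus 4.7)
The plan is to prove the estimate by a standard dyadic decomposition of the complement of $B_r(x_0)$ in $\R^{n+1}$, coupled with a preliminary volume bound for $\Sigma \cap \Omega$ inside arbitrary balls centered at $x_0$ (not just centered on $\Sigma \cap \Omega$). Write
$$
(\Sigma \cap \Omega) \setminus B_r(x_0) = \bigcup_{k = 0}^\infty (\Sigma \cap \Omega) \cap A_k, \qquad A_k := B_{2^{k + 1} r}(x_0) \setminus B_{2^k r}(x_0),
$$
so that on each $A_k$ the integrand is bounded by $(2^k r)^{-(n + \beta)}$, reducing the problem to estimating $\Haus^n\bigl( (\Sigma \cap \Omega) \cap B_{2^{k + 1} r}(x_0) \bigr)$ for each $k \geq 0$.

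Next I would establish the auxiliary bound that, for every $x_0 \in \R^{n + 1}$ and $R > 0$,
$$
\Haus^n\bigl( (\Sigma \cap \Omega) \cap B_R(x_0) \bigr) \;\leq\; \begin{cases} C \, R^n & \text{if } R \leq R_0 / 2, \\[1mm] C \, R^{n + 1} / R_0 & \text{if } R > R_0 / 2, \end{cases}
$$
with $C$ depending only on $n$ and $C_\star$. For the first case, if the intersection is nonempty pick any $z \in (\Sigma \cap \Omega) \cap B_R(x_0)$; then $(\Sigma \cap \Omega) \cap B_R(x_0) \subseteq \Sigma \cap B_{2 R}(z)$, and the hypothesis~\eqref{Sigmaperbound} applies directly with the admissible radius $2 R \leq R_0$. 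For the second case, cover $B_R(x_0)$ by at most $C(n) (R / R_0)^{n + 1}$ balls of radius $R_0 / 2$, and apply the first case to each of them.

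Substituting back, the $k$-th term of the dyadic sum is at most $C \, 2^{-k \beta} r^{-\beta}$ for all $k$ with $2^{k + 1} r \leq R_0 / 2$, and at most $C \, 2^{k (1 - \beta)} r^{1 - \beta} / R_0$ for the remaining $k$. The first family forms a geometric series summing to $C/r^\beta$. Since $\beta > 1$, the second family is also geometric, and its sum is comparable to $R_0^{-\beta}$; the assumption $r \leq R_0 / 2$ then gives $R_0^{-\beta} \leq 2^\beta r^{-\beta}$, so this contribution is absorbed into the desired bound.

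The only delicate point, and the reason for the hypothesis $\beta > 1$, is the treatment of the far-away annuli with $2^{k + 1} r > R_0$: there the local bound~\eqref{Sigmaperbound} ceases to apply at the natural scale, and one is forced into the weaker estimate $R^{n + 1} / R_0$ coming from the covering. This produces a series whose convergence relies precisely on $1 - \beta < 0$; without this hypothesis the tail would diverge. Everything else is just careful bookkeeping of constants.
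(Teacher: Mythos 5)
Your argument is correct, and it takes a somewhat different route from the paper's. You use dyadic annuli $B_{2^{k+1}r}(x_0) \setminus B_{2^k r}(x_0)$ and derive, as an intermediate step, a two-regime volume bound $\Haus^n\big( (\Sigma \cap \Omega) \cap B_R(x_0)\big) \leq C R^n$ for $R \leq R_0/2$ and $\leq C R^{n+1}/R_0$ for $R > R_0/2$; the paper instead uses arithmetically spaced annuli $B_{(j+1)r}(x_0) \setminus B_{jr}(x_0)$ of uniform width $r$ and covers each directly by roughly $j^n$ balls of radius $r$ centered on $\Sigma \cap \Omega$, applying \eqref{Sigmaperbound} at the single admissible scale $2r$ and never explicitly formulating a volume bound for large balls. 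The hypothesis $\beta > 1$ plays the same role in both: the only a priori control on $\Sigma$ at scales larger than $R_0$ is $\Haus^n$-mass growing like ambient volume divided by a fixed length scale (that scale being $r$ in the paper's covering, $R_0$ in yours), i.e.\ one power more than the Ahlfors-regular $R^n$, which against the weight $|y - x_0|^{-(n+\beta)}$ produces the series $\sum_j j^{-\beta}$ in the paper and $\sum_k 2^{k(1-\beta)}$ in your proof, each requiring $\beta > 1$. Your auxiliary volume estimate is slightly sharper and more modular; the paper's covering is marginally more compact to state. The bookkeeping---in particular invoking $r \leq R_0/2$ to absorb the $R_0^{-\beta}$ tail into $C r^{-\beta}$, and the fact that the constant ends up depending only on $n$, $\beta$, $C_\star$---is handled correctly.
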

\begin{proof}
We write~$\R^{n + 1} \setminus B_r(x_0) = \cup_{j = 1}^{+\infty} A_j$, where~$A_j := B_{(j + 1) r}(x_0) \setminus B_{j r}(x_0)$.

Note that each annulus~$A_j$ can be covered by a collection~$\B_j$ of at most~$c_n j^{(n + 1) - 1} = c_n j^n$ balls of radius~$r$, for some dimensional constant~$c_n > 0$ (recall that we are in~$\R^{n + 1}$). Considering only those balls that intersect~$\Sigma \cap \Omega$ and doubling their radius to be~$2 r$, we may assume that each of them is centered at some point in~$\Sigma \cap \Omega$. Thanks to this construction and~\eqref{Sigmaperbound}, we conclude that
\begin{align*}
\int_{\left( \Sigma \cap \Omega \right) \setminus B_r(x_0)} \frac{d\sigma(y)}{|y - x_0|^{n + \beta}} & \le \sum_{j = 1}^{+\infty} \int_{\Sigma \cap \Omega \cap A_j} \frac{d\sigma(y)}{(j r)^{n + \beta}} \le \sum_{j = 1}^{+\infty} \Bigg\{ \frac{1}{(j r)^{n + \beta}} \sum_{B \in \B_j} \Haus^n(\Sigma \cap B) \Bigg\} \\
& \le \frac{2^n C_\star}{r^{\beta}} \sum_{j = 1}^{+\infty} \frac{\# \B_j}{j^{n + \beta}} \le \frac{2^n c_n C_\star}{r^{\beta}} \sum_{j = 1}^{+\infty} \frac{1}{j^{\beta}} \le \frac{C}{r^\beta},
\end{align*}
since~$\beta > 1$, for some constant~$C$ depending only on~$n$,~$\beta$, and~$C_\star$.
\end{proof}

Next, we provide an estimate for interactions at small scales.

\begin{lemma} \label{kerdesingboundlem}
Let~$n \ge 1$,~$\Sigma \subset \R^{n + 1}$,~$\Omega \subseteq \R^{n + 1}$ be a domain, and assume that~\eqref{Sigmaperbound} holds for some positive constants~$R_0$ and~$C_\star$. Let~$\gamma > 0$,~$x_0 \in \Sigma \cap \Omega$, and~$r \in (0, R_0]$.

Then,
\begin{equation} \label{kerbound2}
\int_{\Sigma \cap \Omega \cap B_r(x_0)} \frac{d\sigma(y)}{|y - x_0|^{n - \gamma}} \le C \, r^{\gamma}
\end{equation}
for some constant~$C$ depending only on~$n$,~$\gamma$, and~$C_\star$.
\end{lemma}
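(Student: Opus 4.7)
The plan is to estimate the integral by decomposing $B_r(x_0)$ into a sequence of dyadic annuli around $x_0$ and using the polynomial volume bound \eqref{Sigmaperbound} on each annulus. This is the natural analogue of the ``contributions from far away'' argument of Lemma~\ref{kerL1overredElem}, but reversed: here the singularity of the integrand is at $x_0$, so small scales are the only concern.

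More concretely, I would write $B_r(x_0) = \bigcup_{j = 0}^{\infty} A_j$ with
$$
A_j := B_{r/2^j}(x_0) \setminus B_{r/2^{j + 1}}(x_0),
$$
and split the integral accordingly. Since $x_0 \in \Sigma \cap \Omega$, the hypothesis \eqref{Sigmaperbound} gives $\Haus^n(\Sigma \cap \Omega \cap B_{r/2^j}(x_0)) \le C_\star (r/2^j)^n$ for every $j \ge 0$, because $r/2^j \le r \le R_0$. On the annulus $A_j$ one has $|y - x_0| \ge r/2^{j + 1}$, so $|y - x_0|^{\gamma - n} \le (2^{j + 1}/r)^{n - \gamma}$ when $\gamma \le n$. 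Multiplying by the volume estimate on $A_j \subset B_{r/2^j}(x_0)$ yields a bound of the form $C_\star\, 2^{n - \gamma}\, 2^{- \gamma j}\, r^\gamma$ on the $j$-th term. Summing the geometric series $\sum_{j \ge 0} 2^{- \gamma j}$, which converges precisely because $\gamma > 0$, produces the desired bound \eqref{kerbound2}.

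If instead $\gamma > n$, the integrand is bounded in $B_r(x_0)$ and one obtains the estimate trivially by bounding $|y - x_0|^{\gamma - n} \le r^{\gamma - n}$ and applying \eqref{Sigmaperbound} once to $B_r(x_0)$. Either way, the constant $C$ depends only on $n$, $\gamma$, and $C_\star$, as required.

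No real obstacle is expected: the assumption \eqref{Sigmaperbound} is an $n$-dimensional Ahlfors upper regularity bound, and the lemma is exactly the version of the elementary fact $\int_{B_r^n} |y|^{\gamma - n}\, dy \lesssim r^\gamma$ adapted to this setting. The only minor point to keep track of is making sure the dyadic ball radii $r/2^j$ stay within $(0, R_0]$ so that the volume hypothesis applies at every scale, and this is automatic from $r \le R_0$.
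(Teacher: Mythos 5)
Your proof is correct and follows essentially the same route as the paper's: dyadic decomposition of $B_r(x_0)$ into annuli around the singularity, the Ahlfors upper bound \eqref{Sigmaperbound} on each annulus, a geometric series in $2^{-\gamma j}$, and a trivial direct estimate when $\gamma \ge n$. The only differences from the paper are cosmetic (indexing of the annuli starting at $j=0$ vs.\ $j=1$).
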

\begin{proof}
When~$\gamma \ge n$, estimate~\eqref{kerbound2} is an immediate consequence of assumption~\eqref{Sigmaperbound}. We thus assume that~$\gamma \in (0, n)$ and consider, for every positive integer~$j$, the annulus~$A_j := B_{2^{1 - j} r}(x_0) \setminus B_{2^{- j} r}(x_0)$. By virtue of~\eqref{Sigmaperbound}, we have
\begin{align*}
\int_{\Sigma \cap \Omega \cap B_r(x_0)} \frac{d\sigma(y)}{|y - x_0|^{n - \gamma}} & \le \sum_{j = 1}^{+\infty} \int_{\Sigma \cap \Omega \cap A_j} \frac{d\sigma(y)}{(2^{- j} r)^{n - \gamma}} \\
& \le \sum_{j = 1}^{+\infty} \left( \frac{2^{j}}{r} \right)^{n - \gamma} \Haus^n(\Sigma \cap B_{2^{1 - j} r}(x_0)) \le C_\star 2^n \, r^{\gamma} \sum_{j = 1}^{+\infty} 2^{- \gamma j},
\end{align*}
which clearly yields~\eqref{kerbound2}, as~$\gamma > 0$.
\end{proof}

The last two lemmas immediately yield the following result.

\begin{corollary} \label{etasemilem}
Let~$n \ge 1$,~$\Sigma \subset \R^{n + 1}$,~$\Omega \subseteq \R^{n + 1}$ be a domain, and assume that~\eqref{Sigmaperbound} holds for some positive constants~$R_0$ and~$C_\star$. Let~$x_0 \in \Sigma \cap \Omega$,~$r \in (0, R_0/2]$,~$p \ge 1$, and~$s \in (0, 1)$ be such that~$sp > 1$.

Then, for every~$\eta \in W^{1, \infty}(\R^{n + 1})$ satisfying
$$
|\eta| \le C_\bullet \quad \mbox{ and } \quad |\nabla \eta| \le \frac{C_\bullet}{r} \quad \mbox{in } \R^{n + 1}
$$
for some constant~$C_\bullet$, it holds
$$
\int_{\Sigma \cap \Omega} \frac{|\eta(y) - \eta(x_0)|^p}{|y - x_0|^{n + s p}} \, d\sigma(y) \le \frac{C}{r^{s p}}
$$
for some constant~$C$ depending only on~$n$,~$s$,~$p$,~$C_\star$, and~$C_\bullet$.
\end{corollary}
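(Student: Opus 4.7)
The plan is to split the integration domain into the ``local'' part $\Sigma \cap \Omega \cap B_r(x_0)$ and the ``far'' part $(\Sigma \cap \Omega) \setminus B_r(x_0)$, using the Lipschitz bound on $\eta$ on the former and the uniform $L^\infty$ bound on the latter. Each of these pieces is then directly controlled by one of the two preceding lemmas, thanks to the hypothesis $sp > 1$. This is a standard way of handling fractional seminorms of bump-like functions, adapted here to the hypersurface setting.

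First, for $y \in \Sigma \cap \Omega \cap B_r(x_0)$, the mean value inequality applied along the segment from $x_0$ to $y$ in $\R^{n+1}$ (using that $\eta \in W^{1,\infty}(\R^{n+1})$) yields $|\eta(y) - \eta(x_0)| \le (C_\bullet/r)\,|y - x_0|$, and therefore
\[
\int_{\Sigma \cap \Omega \cap B_r(x_0)} \frac{|\eta(y) - \eta(x_0)|^p}{|y - x_0|^{n + sp}} \, d\sigma(y) \le \frac{C_\bullet^p}{r^p} \int_{\Sigma \cap \Omega \cap B_r(x_0)} \frac{d\sigma(y)}{|y - x_0|^{n - p(1-s)}}.
\]
Since $p(1-s) > 0$ and $r \in (0, R_0]$, I can apply Lemma~\ref{kerdesingboundlem} with $\gamma = p(1-s) > 0$ to bound the last integral by $C r^{p(1-s)}$, yielding a total of order $r^{-sp}$, as required.

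Second, for $y \in (\Sigma \cap \Omega) \setminus B_r(x_0)$, the $L^\infty$ bound gives $|\eta(y) - \eta(x_0)|^p \le (2C_\bullet)^p$, so the tail part is controlled by
\[
(2C_\bullet)^p \int_{(\Sigma \cap \Omega) \setminus B_r(x_0)} \frac{d\sigma(y)}{|y - x_0|^{n + sp}}.
\]
At this point the hypothesis $sp > 1$ is exactly what is needed to invoke Lemma~\ref{kerL1overredElem} with $\beta = sp$, and since $r \in (0, R_0/2]$ the lemma applies and produces a bound of the form $C/r^{sp}$.

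Adding the two contributions yields the desired estimate, with a constant depending only on $n$, $s$, $p$, $C_\star$, and $C_\bullet$. There is no real obstacle: the proof is a direct application of the two previous integral lemmas, and the only mild point worth checking is the compatibility of the radii ($r \le R_0/2$ is needed for the far estimate, while the near estimate works for $r \le R_0$), which is already built into the statement. The assumption $sp > 1$ is used only in the far estimate, via Lemma~\ref{kerL1overredElem}, and not in the near one.
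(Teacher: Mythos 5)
Your proof is correct and matches the paper's argument exactly: the same near/far split at radius $r$ around $x_0$, the same Lipschitz estimate on the near part followed by Lemma~\ref{kerdesingboundlem} with $\gamma = (1-s)p$, and the same $L^\infty$ bound on the far part followed by Lemma~\ref{kerL1overredElem} with $\beta = sp > 1$.
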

\begin{proof}
We have
\begin{align*}
\int_{\Sigma \cap \Omega} \frac{|\eta(y) - \eta(x_0)|^p}{|y - x_0|^{n + s p}} \, d\sigma(y) & \le \frac{C_\bullet^p}{r^p} \int_{\Sigma \cap \Omega \cap B_r(x_0)} \frac{d\sigma(y)}{|y - x_0|^{n - (1 - s) p}} \\
& \quad + 2^p C_\bullet^p \int_{\left( \Sigma \cap \Omega \right) \setminus B_r(x_0)} \frac{ d\sigma(y)}{|y - x_0|^{n + s p}}.
\end{align*}
Applying Lemmas~\ref{kerL1overredElem} and~\ref{kerdesingboundlem} (with~$\beta = s p > 1$ and~$\gamma = (1 - s) p > 0$), we conclude the result.
\end{proof}

\subsection{Fractional capacities on hypersurfaces} \label{capsubsec}

We collect here a few facts on the fractional Sobolev capacity over rather general~$\Haus^n$-measurable subsets~$\Sigma$ of~$\R^{n + 1}$ and in particular over nonlocal minimal surfaces. Although most of the results presented here can probably be deduced from more general theories (see,~e.g.,~the recent~\cite{N16}), for the convenience of the reader we include all the proofs.

The next definition introduces the concept of~$s$-capacity on~$\Sigma$. In our future applications to~$\alpha$-minimal surfaces, we will only need~$s = (1 + \alpha) / 2$, which is always greater than~$1/2$. For this reason, and also since we use it in some proofs, we restrict ourselves to~$s \in (1/2, 1)$.

\begin{definition} \label{capdef1}
Given~$n \ge 1$,~$s \in (1/2, 1)$, a set~$\Sigma \subset \R^{n + 1}$ with locally finite~$\Haus^n$-measure, and~$A \subseteq \Sigma$, we define
\begin{align*}
\Cap_{\, \Sigma, s}(A) := \inf \Big\{ \| v \|_{H^s(\Sigma)}^2 & : v \in H^s(\Sigma) \mbox{ and } v \ge 1 \mbox{ $\Haus^n$-a.e.} \\[-5pt]
& \hspace{11pt} \mbox{in an open neighborhood of } A \Big\} \\[-2pt]
= \inf \Big\{ \| v \|_{H^s(\Sigma)}^2 & : v \in H^s(\Sigma), \, 0 \le v \le 1 \mbox{ $\Haus^n$-a.e.~in } \Sigma, \mbox{ and }\\[-5pt]
& \hspace{11pt} v = 1 \mbox{~$\Haus^n$-a.e.~in an open neighborhood of } A \Big\}.
\end{align*}
We call this quantity the~\emph{$s$-fractional capacity} of~$A$ on the set~$\Sigma$.
\end{definition}

Recall that~$\| v \|_{H^s(\Sigma)}^2 = \| v \|_{L^2(\Sigma)}^2 + [v]_{H^s(\Sigma)}^2$ and see Section~\ref{notsec} for the definition of the~$H^s = W^{s, 2}$ seminorm. Also, the second characterization for~$\Cap_{\, \Sigma, s}$ in Definition~\ref{capdef1} holds since, for every~$v \in H^s(\Sigma)$, we have~$\left\| \min \left\{ \max \{ v, 0 \}, 1 \right\} \right\|_{L^2(\Sigma)} \le \| v \|_{L^2(\Sigma)}$ and~$\left[ \min \left\{ \max \{ v, 0 \}, 1 \right\} \right]_{H^s(\Sigma)} \le [ v ]_{H^s(\Sigma)}$.

By Theorem~\ref{regsingthm}, we know that the singular set of a nonlocal minimal surface in~$\R^{n + 1}$ has Hausdorff dimension at most~$n - 2$. The following proposition---which is the main result of the subsection---provides a capacitary description of this fact.

\begin{proposition} \label{capsingthm}
Let~$n \ge 1$,~$\alpha \in (0, 1)$,~$\Omega \subseteq \R^{n + 1}$ be an open set, and~$\partial E \subset \R^{n + 1}$ be an~$\alpha$-minimal surface in~$\Omega$.

Then, the singular set~$S$ of~$\partial E$ in~$\Omega$ satisfies
$$
\Cap_{\, \Sigma \cap \Omega, s}(S) = 0 \quad \mbox{for every } s \in (1/2, 1).
$$
\end{proposition}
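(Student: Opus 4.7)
The plan is to exploit the density and perimeter bounds on nonlocal minimal surfaces from Theorem~\ref{perimeterboundsthm} to upgrade the Hausdorff dimension estimate of Theorem~\ref{regsingthm} into a capacity bound, via a standard cutoff construction. The case $n = 1$ is immediate since Theorem~\ref{regsingthm} gives $S = \varnothing$, so I focus on $n \ge 2$. By countable subadditivity of the $s$-fractional capacity it suffices to show $\Cap_{\, \Sigma \cap \Omega, s}(S \cap K) = 0$ for every compact $K \subset \Omega$, where I write $\Sigma := \red E$. Since $s \in (1/2, 1)$, we have $n - 2 < n - 2 s$, so I fix $d \in (n - 2, n - 2 s)$; by Theorem~\ref{regsingthm}, $\Haus^d(S) = 0$. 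Hence, for any small $\varepsilon > 0$, I can cover $S \cap K$ by countably many balls $B_{r_i}(x_i)$ with $x_i \in S \cap K$, $\sum_i r_i^d < \varepsilon$, and every $r_i$ so small that $B_{2 r_i}(x_i) \subset \Omega$ and the upper perimeter bound of Theorem~\ref{perimeterboundsthm} applies at scale $2 r_i$ around $x_i$ (by a standard Vitali-type procedure I may arrange the centers to lie in $S \cap K$).

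Next, I would take radial cutoffs $\psi_i \in W^{1, \infty}(\R^{n + 1})$ with $0 \le \psi_i \le 1$, $\psi_i \equiv 1$ on $B_{r_i}(x_i)$, $\supp \psi_i \subset B_{2 r_i}(x_i)$, and $|\nabla \psi_i| \le C/r_i$, and set $v(x) := \sup_i \psi_i(x)$ for $x \in \Sigma \cap \Omega$. Then $v$ is Borel, $0 \le v \le 1$, and $v \equiv 1$ on the open neighborhood $\bigcup_i B_{r_i}(x_i)$ of $S \cap K$, so it is admissible in Definition~\ref{capdef1}. The pointwise bound $v \le \chi_{\bigcup_i B_{2 r_i}(x_i)}$ together with the perimeter bound gives $\| v \|_{L^2(\Sigma \cap \Omega)}^2 \le C \sum_i r_i^n$, while $|\sup_i \psi_i(x) - \sup_i \psi_i(y)|^2 \le \sum_i |\psi_i(x) - \psi_i(y)|^2$ yields $[v]_{H^s(\Sigma \cap \Omega)}^2 \le \sum_i [\psi_i]_{H^s(\Sigma \cap \Omega)}^2$.

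The heart of the argument is then the per-ball estimate $[\psi_i]_{H^s(\Sigma \cap \Omega)}^2 \le C \, r_i^{n - 2 s}$. I would prove it by splitting the Gagliardo double integral according to whether $x$ lies in $\Sigma \cap B_{3 r_i}(x_i)$ or in its complement inside $\Sigma \cap \Omega$. On the inner region, Corollary~\ref{etasemilem} (with $p = 2$, $\eta = \psi_i$, and scale $r_i$) bounds the inner integral by $C r_i^{-2 s}$ uniformly in $x$, and the perimeter bound controls the outer integration by a factor $C r_i^n$, giving $C r_i^{n - 2 s}$. On the outer region $\psi_i(x) = 0$, so the inner integral only sees $y \in B_{2 r_i}(x_i)$, where $|x - y| \ge |x - x_i|/3$; by the perimeter bound it is $\le C r_i^n |x - x_i|^{-n - 2 s}$, and Lemma~\ref{kerL1overredElem} (applicable precisely because $\beta = 2 s > 1$) provides another factor $C r_i^{-2 s}$ after integrating in $x$. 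Summing over $i$ and using $r_i^d \le \sum_j r_j^d < \varepsilon$,
$$
\| v \|_{H^s(\Sigma \cap \Omega)}^2 \le C \sum_i \left( r_i^n + r_i^{n - 2 s} \right) \le C \, \varepsilon^{(n - 2 s)/d},
$$
which tends to $0$ as $\varepsilon \to 0^+$ because $n - 2 s > 0$. Hence $\Cap_{\, \Sigma \cap \Omega, s}(S \cap K) = 0$, as desired.

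The main obstacle is the seminorm estimate $[\psi_i]_{H^s}^2 \le C r_i^{n - 2 s}$ and, more conceptually, the fact that the whole argument hinges on the hypothesis $s > 1/2$: this is what makes Lemma~\ref{kerL1overredElem} applicable with $\beta = 2 s > 1$ (the same restriction flagged in Remark~\ref{s>12rmk} for the weak Harnack inequality), and it is also what yields $n - 2 s > n - 2$, letting the Hausdorff dimension bound on $S$ compensate the strength of the Gagliardo kernel. Everything else in the proof is geometric bookkeeping built on the Ahlfors regularity of $\red E$ provided by Theorem~\ref{perimeterboundsthm}.
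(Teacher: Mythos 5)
Your proposal is correct and follows essentially the same route as the paper: the paper factors the argument through an abstract capacity--Hausdorff comparison (Proposition~\ref{caphausprop}), itself built on countable subadditivity and a per-ball capacity bound (Lemma~\ref{cap2lem}) proved via Corollary~\ref{etasemilem} and the perimeter estimates, and your argument inlines exactly these three steps into one computation with $v = \sup_i \psi_i$, your per-ball bound $[\psi_i]_{H^s}^2 \le C r_i^{n-2s}$ being precisely Lemma~\ref{cap2lem}. The only minor organizational differences are that the paper uses the symmetry factor $2$ to reduce to the outer variable ranging over $\Sigma \cap B_{2r_i}(x_i)$ rather than your two-region split, and that your explicit reduction to compact $K \subset \Omega$ makes clean a point the paper leaves implicit (that Theorem~\ref{perimeterboundsthm} only gives uniform density bounds in compact subsets of $\Omega$).
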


Proposition~\ref{capsingthm} follows immediately from Theorem~\ref{regsingthm} and the following result, which explores the relationship between the~$s$-fractional capacity and the Hausdorff measure on general~$\Haus^n$-measurable subsets~$\Sigma$ of~$\R^{n + 1}$ satisfying
\begin{equation} \label{densestforcap}
\Haus^n ( \Sigma \cap B_\rho(x) ) \le C_\star \rho^n,
\end{equation}
for some constant~$C_\star$. Notice that for nonlocal minimal surfaces this condition is warranted by Theorem~\ref{perimeterboundsthm}. For the next result, we restrict ourselves to~$n \ge 2$---observe that, when~$n = 1$, Proposition~\ref{capsingthm} is an immediate consequence of Theorem~\ref{regsingthm}, as in this case~$S = \varnothing$.

\begin{proposition} \label{caphausprop}
Let~$n \ge 2$,~$s \in (1/2, 1)$,~$\Sigma \subset \R^{n + 1}$ be a set with locally finite~$\Haus^n$-measure, and~$\Omega \subseteq \R^{n + 1}$ be an open set. Suppose that~\eqref{densestforcap} holds for every~$x \in \Sigma \cap \Omega$ and~$\rho \in (0, R]$, for some positive constants~$R$ and~$C_\star$.

Then, there exists a constant~$C$ depending only on~$n$,~$s$, and~$C_\star$, such that
$$
\Cap_{\, \Sigma \cap \Omega, s}(A) \le C \, \Haus^{n - 2 s}(A)
$$
for every set~$A \subseteq \Sigma \cap \Omega$.
\end{proposition}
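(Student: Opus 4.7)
My plan is the classical potential-theoretic strategy adapted to this surface setting: cover $A$ by small balls using the definition of Hausdorff measure, attach to each ball a cutoff whose $H^s(\Sigma\cap\Omega)$-norm is controlled by $r_i^{n-2s}$, and glue these cutoffs via a pointwise supremum to produce a single admissible test function in Definition~\ref{capdef1}.

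I would first dispose of the trivial case $\Haus^{n-2s}(A)=+\infty$ and fix $\epsilon>0$. By the standard comparison between the Hausdorff measure and its ball-covering variant, I extract a countable cover $\{B_{r_i}(x_i)\}_{i\in I}$ of $A$ with $r_i\le\delta:=\min\{1,R/6\}$ and $\sum_i r_i^{n-2s}\le C_n\bigl(\Haus^{n-2s}(A)+\epsilon\bigr)$. For each $i$, I pick $\eta_i\in C^\infty(\R^{n+1};[0,1])$ with $\eta_i\equiv 1$ on $B_{r_i}(x_i)$, $\supp\eta_i\subseteq B_{2r_i}(x_i)$, and $|\nabla\eta_i|\le C/r_i$. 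The crucial single-ball bound to establish is
\[
\|\eta_i\|_{H^s(\Sigma\cap\Omega)}^2\le C\,r_i^{n-2s}.
\]
The $L^2$-part follows from~\eqref{densestforcap} (applied at an auxiliary base point in $\Sigma\cap\Omega\cap B_{2r_i}(x_i)$, if any) together with $r_i\le 1$, giving $\|\eta_i\|_{L^2}^2\le Cr_i^n\le Cr_i^{n-2s}$. For the seminorm I split the outer integration according to whether $x\in\Sigma\cap\Omega\cap B_{3r_i}(x_i)$ or not. In the near region, Corollary~\ref{etasemilem} applied with $x_0=x$, $r=r_i$, $p=2$ yields the inner-integral bound $Cr_i^{-2s}$, while~\eqref{densestforcap} bounds the outer measure by $Cr_i^n$. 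In the far region, where $\eta_i(x)=0$, I would estimate $|\eta_i(x)-\eta_i(y)|^2\le\chi_{B_{2r_i}(x_i)}(y)$, use $|x-y|\gtrsim|x-x_i|$ for $x\notin B_{3r_i}(x_i)$ and $y\in B_{2r_i}(x_i)$, first integrate in $y$ to get a factor $Cr_i^n$, and finally integrate in $x$ via Lemma~\ref{kerL1overredElem} with $\beta=2s>1$ to get a factor $Cr_i^{-2s}$; the product is again $Cr_i^{n-2s}$.

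Finally I define $v:=\sup_i\eta_i$, which is Borel, lies in $[0,1]$, and equals $1$ on the open neighborhood $\bigcup_i B_{r_i}(x_i)$ of $A$. Since $(\sup_i\eta_i)^2\le\sum_i\eta_i^2$ and $|\sup_i\eta_i(x)-\sup_i\eta_i(y)|^2\le\sum_i|\eta_i(x)-\eta_i(y)|^2$, I obtain
\[
\|v\|_{H^s(\Sigma\cap\Omega)}^2\le\sum_i\|\eta_i\|_{H^s(\Sigma\cap\Omega)}^2\le C\sum_i r_i^{n-2s}\le C\bigl(\Haus^{n-2s}(A)+\epsilon\bigr),
\]
which in particular gives $v\in H^s(\Sigma\cap\Omega)$ and hence $\Cap_{\,\Sigma\cap\Omega,s}(A)\le C(\Haus^{n-2s}(A)+\epsilon)$; letting $\epsilon\downarrow 0$ closes the argument. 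The main technical obstacle is the single-ball seminorm estimate, because integration takes place on $\Sigma$ rather than on a Euclidean ball; this is precisely where the hypothesis $s>1/2$ enters crucially, as it makes $2s>1$ in Lemma~\ref{kerL1overredElem} and $sp>1$ with $p=2$ in Corollary~\ref{etasemilem}, the two ingredients that let the volume-growth bound~\eqref{densestforcap} dominate both the near singularity and the far tail of the kernel.
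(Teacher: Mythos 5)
Your proof is correct and follows essentially the same route as the paper: cover $A$ by small balls via the definition of Hausdorff measure, bound the $H^s(\Sigma\cap\Omega)$-norm of a cutoff on each ball by $Cr^{n-2s}$ using Corollary~\ref{etasemilem} and Lemma~\ref{kerL1overredElem} together with~\eqref{densestforcap}, and sum. The paper packages the two key steps as separate lemmas (countable subadditivity of $\Cap_{\,\Sigma,s}$, proved by exactly the same $\sup$-gluing trick you use, and the single-ball capacity bound of Lemma~\ref{cap2lem}), whereas you inline them; the paper's single-ball estimate also avoids your near/far split by noting that the integrand vanishes when both points lie outside $B_{2r}(x_0)$ and invoking symmetry to reduce to one outer region, but the content is the same.
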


To prove Proposition~\ref{caphausprop} we need some preliminary results.

Let~$\Sigma \subset \R^{n + 1}$ be a set with locally finite~$\Haus^n$-measure. It is obvious that~$\Cap_{\, \Sigma, s}(A) \le \Cap_{\, \Sigma, s}(B)$ if~$A \subseteq B$ are subsets of~$\Sigma$. The~$s$-fractional capacity is also countably subadditive, i.e., if~$\{ A_i \}_{i \in \N}$ are subsets of~$\Sigma$, then
\begin{equation} \label{subaddcap}
\Cap_{\, \Sigma, s} \! \left( \bigcup_{i = 1}^{+\infty} A_i \right) \le \sum_{i = 1}^{+\infty} \Cap_{\, \Sigma, s}(A_i).
\end{equation}
To verify this, we may assume that the right-hand side of~\eqref{subaddcap} is finite. Let~$\varepsilon > 0$ and~$\{ v_i \} \subset H^s(\Sigma)$ be a sequence of functions satisfying~$v_i \ge 1$ in an open neighborhood of~$A_i$ and~$\| v_i \|_{H^s(\Sigma)}^2 \le \Cap_{\, \Sigma, s}(A_i) + 2^{-i} \varepsilon$ for every~$i \in \N$. Since it can be easily seen that
$$
\| \max \{ u, w \} \|_{H^s(\Sigma)}^2 \le \| u \|_{H^s(\Sigma)}^2 + \| w \|_{H^s(\Sigma)}^2
$$
for any two~$u, w \in H^s(\Sigma)$, considering~$v := \sup_{i \in \N} v_i$, iterating the previous inequality, and applying Fatou's lemma, we obtain
$$
\| v \|_{H^s(\Sigma)}^2 \le \sum_{i = 1}^{+\infty} \| v_i \|_{H^s(\Sigma)}^2 \le \varepsilon + \sum_{i = 1}^{+\infty} \Cap_{\, \Sigma, s}(A_i).
$$
The proof of~\eqref{subaddcap} is then finished, as~$v \ge 1$ in an open neighborhood of~$\cup_{i = 1}^{+\infty} A_i$ and~$\varepsilon > 0$ can be chosen arbitrarily small.

The following is another ingredient towards the proof of Proposition~\ref{caphausprop}. It provides a sharp upper bound for the~$s$-fractional capacity of ambient balls.

\begin{lemma} \label{cap2lem}
Let~$n \ge 1$,~$s \in (1/2, 1)$,~$\Sigma$ be a subset of~$\R^{n + 1}$ having locally finite~$\Haus^n$-measure,~$\Omega \subseteq \R^{n + 1}$ be an open set,~$x_0 \in \Sigma \cap \Omega$, and~$R > 0$. Assume that~\eqref{densestforcap} holds for every point~$x \in \Sigma \cap \Omega$ and radius~$\rho \in (0, 2 R]$, for some constant~$C_\star$.

Then, there exists a constant~$C$ depending only on~$n$,~$s$, and~$C_\star$, such that
\begin{equation} \label{Capest}
\Cap_{\, \Sigma \cap \Omega, s} \left( \Sigma \cap \Omega \cap B_r(x_0) \right) \le C r^{n - 2 s} \left( 1 + r^{2 s} \right)
\end{equation}
for every~$r \in (0, R)$.
\end{lemma}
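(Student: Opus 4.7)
The approach is to exhibit an admissible competitor in the variational problem defining~$\Cap_{\Sigma \cap \Omega, s}(\Sigma \cap \Omega \cap B_r(x_0))$ and bound its~$H^s(\Sigma \cap \Omega)$-norm by~$C\, r^{n - 2s}(1 + r^{2s})$. A natural choice is the piecewise-linear radial cutoff
\[
v(x) := \max\bigl\{0, \min\{1, 2 - |x - x_0|/r\}\bigr\},
\]
which equals~$1$ on~$B_r(x_0)$, vanishes outside~$B_{2r}(x_0)$, and satisfies~$\|v\|_{L^\infty(\R^{n+1})} \le 1$ and~$\|\nabla v\|_{L^\infty(\R^{n+1})} \le 1/r$. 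Its restriction to~$\Sigma \cap \Omega$ equals~$1$ on the (relatively) open set~$\Sigma \cap \Omega \cap B_r(x_0)$, hence is admissible in Definition~\ref{capdef1}.

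The~$L^2$ term is immediate from~$\supp v \subseteq \overline{B_{2r}(x_0)}$ together with hypothesis~\eqref{densestforcap}:
\[
\|v\|_{L^2(\Sigma \cap \Omega)}^2 \le \Haus^n\bigl(\Sigma \cap \Omega \cap B_{2r}(x_0)\bigr) \le C_\star (2r)^n = C\, r^n.
\]
For the Gagliardo seminorm, the integrand vanishes whenever both~$x$ and~$y$ lie outside~$B_{2r}(x_0)$, so by symmetry
\[
[v]_{H^s(\Sigma \cap \Omega)}^2 \le 2 \int_{\Sigma \cap \Omega \cap B_{2r}(x_0)} \biggl( \int_{\Sigma \cap \Omega} \frac{|v(x) - v(y)|^2}{|x - y|^{n + 2s}} \, d\sigma(y) \biggr) d\sigma(x).
\]
For each fixed center~$x \in \Sigma \cap \Omega \cap B_{2r}(x_0)$, Corollary~\ref{etasemilem} applied with~$p = 2$ (the hypothesis~$sp = 2s > 1$ being granted by~$s > 1/2$, and~$r \le R_0/2$ by taking~$R_0 := 2R$) bounds the inner integral by~$C/r^{2s}$. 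A further application of~\eqref{densestforcap} to the outer integral yields~$[v]_{H^s(\Sigma \cap \Omega)}^2 \le C\, r^{n - 2s}$.

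Summing the two contributions gives~$\|v\|_{H^s(\Sigma \cap \Omega)}^2 \le C(r^n + r^{n - 2s}) = C\, r^{n - 2s}(1 + r^{2s})$, which is precisely~\eqref{Capest}. The only nontrivial step is the seminorm estimate: a global Lipschitz bound on the difference quotient is not integrable against the singular kernel, and one must split the contributions at the natural scale~$|x - y| \sim r$ using the upper volume bound~\eqref{Sigmaperbound}. This is exactly what Corollary~\ref{etasemilem} accomplishes, and it is here that the hypothesis~$s > 1/2$ enters essentially, via the requirement~$sp > 1$.
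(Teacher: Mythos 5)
Your proof is correct and takes essentially the same route as the paper: you test the capacitary infimum against a cutoff that is~$1$ on~$B_r(x_0)$ and supported in~$B_{2r}(x_0)$, bound the~$L^2$ term directly from~\eqref{densestforcap}, and control the Gagliardo seminorm by reducing to~$x \in \Sigma \cap \Omega \cap B_{2r}(x_0)$ and invoking Corollary~\ref{etasemilem} with~$p = 2$ (where~$s > 1/2$ enters through~$sp > 1$). The only cosmetic difference is that the paper uses a smooth cutoff equal to~$1$ on~$B_{3r/2}(x_0)$, whereas you use a piecewise-linear Lipschitz one equal to~$1$ on~$\overline{B_r(x_0)}$; both lie in~$W^{1,\infty}(\R^{n+1})$ with the required bounds, and both are admissible since~$\Sigma \cap \Omega \cap B_r(x_0)$ is itself a relatively open neighborhood of the set being capacitated.
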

\begin{proof}
Take a smooth function~$v \in C^\infty(\R^{n + 1})$ satisfying~$\supp(v) \subset B_{2 r}(x_0)$,~$0 \le v \le 1$ in~$\R^{n + 1}$,~$v = 1$ in~$B_{3r/2}(x_0)$, and~$|\nabla v| \le 4 / r$ in~$\R^{n + 1}$. By~\eqref{densestforcap} and Corollary~\ref{etasemilem} we have
\begin{align*}
[v]_{H^s(\Sigma \cap \Omega)}^2 & \le 2 \int_{\Sigma \cap B_{2r}(x_0)} \left( \int_{\Sigma \cap \Omega} \frac{|v(y) - v(x)|^2}{|y - x|^{n + 2 s}} \, d\sigma(y) \right) d\sigma(x) \\
& \le \frac{C}{r^{2 s}} \, \Haus^n(\Sigma \cap B_{2r}(x_0)) \le C r^{n - 2 s}
\end{align*}
for some~$C$ depending only on~$n$,~$s$, and~$C_\star$. On the other hand,~\eqref{densestforcap} also yields
$$
\| v \|_{L^2(\Sigma \cap \Omega)}^2 = \int_{\Sigma \cap B_{2 r}(x_0)} |v(x)|^2 \, d\sigma(x) \le \Haus^n(\Sigma \cap B_{2 r}(x_0)) \le 2^n C_\star r^n.
$$
From these bounds, estimate~\eqref{Capest} follows at once.
\end{proof}

Thanks to these results, we are now ready to address the proof of Proposition~\ref{caphausprop}.

\begin{proof}[Proof of Proposition~\ref{caphausprop}]
Note that~$n \ge 2 > 2 s$ and that we may assume~$\Haus^{n - 2 s}(A)$ to be finite. Take~$\delta \in \left( 0, \min \{ 1, R/4 \} \right]$ and consider a countable covering~$\{ D_j \}_{j \in \N}$ of~$A$ by sets~$D_j \subset \R^{n + 1}$ having diameter~$d_j := \diam(D_j) \le \delta$. Assuming without loss of generality that all~$D_j$'s intersect~$A$, we deduce the existence of a new countable cover~$\{ B_{d_j}(x_j) \}_{j \in \N}$ of~$A$ made up of balls centered at points~$x_j \in A$. By taking advantage of~\eqref{subaddcap} and Lemma~\ref{cap2lem}, we have
$$
\Cap_{\, \Sigma \cap \Omega, s}(A) \le \sum_{j = 1}^{+\infty} \Cap_{\, \Sigma \cap \Omega, s} \left( \Sigma \cap \Omega \cap B_{d_j}(x_j) \right) \le C \sum_{j = 1}^{+\infty} d_j^{\, n - 2 s},
$$
for some constant~$C$ depending only on~$n$,~$s$, and~$C_\star$. The conclusion follows by applying the definition of Hausdorff measure, as~$\delta$ can be taken arbitrarily small.
\end{proof}

\section{The normal vector field and the fractional Jacobi operator} \label{jacsec}

\noindent
In this section we establish Theorem~\ref{Jacintrothm}. Part~$(i)$ has been stated within the proof of Corollary~B.1 of~\cite{DDPW14}. Here, we will give a proof of it, addressing in particular integrability issues in all details. On the other hand, point~$(ii)$ of our theorem is new and will play a fundamental role in the sequel.

We begin with a preliminary result concerning the derivative of the~$\alpha$-mean curvature of a set~$E$ along tangential directions to~$\partial E$. This is identity~\eqref{derH} below, which was already obtained in~\cite[Proposition~2.1]{CFSW}. For completeness, we include a slightly shorter proof of it below. From it, we deduce~\eqref{derH2}, which will be later used with~$\Omega$ being either the whole space~$\R^{n + 1}$ or a vertical cylinder. The sets~$E^{(t)}$, for~$t = 0, 1$, appearing in the statement denote the points of density~$t$ of~$E$, defined as
\begin{equation} \label{densitypoints}
E^{(t)} := \left\{ x \in \R^{n + 1} : \lim_{r \rightarrow 0^+} \frac{|E \cap B_r(x)|}{|B_r|} = t \right\};
\end{equation}
see~\cite{M12} for more details.

\begin{proposition} \label{derHprop}
Let~$n \ge 1$ and~$\alpha \in (0, 1)$. Let~$E$ be a measurable subset of~$\R^{n + 1}$ and assume that~$\partial E$ is of class~$C^{2, \beta}$ in a neighborhood of a point~$\bar{x} \in \partial E$, for some~$\beta > \alpha$.

Then,
\begin{equation} \label{derH}
\partial_v H_\alpha[E](\bar{x}) = - (n + 1 + \alpha) \, \PV \int_{\R^{n + 1}} \frac{\chi_{\R^{n + 1} \setminus E}(y) - \chi_{E}(y)}{|\bar{x} - y|^{n + 3 + \alpha}} \left( (\bar{x} - y) \cdot v \right) dy
\end{equation}
for every direction~$v \in \R^{n + 1}$ orthogonal to~$\nu_E(\bar{x})$.

Furthermore, let~$\Omega \subseteq \R^{n + 1}$ be an open set with locally Lipschitz boundary and such that~$\bar{x} \in \Omega$. Assume that~$E$ has locally finite perimeter in~$\overline{\Omega}$, that~$\Haus^n(\red E \cap \partial \Omega) = 0$, and that both~$\int_{\red E \cap \Omega} (1 + |y|)^{- n - 1 - \alpha} \, d\sigma(y)$ and~$\int_{\partial \Omega} (1 + |y|)^{- n - 1 - \alpha} \, d\sigma(y)$ are finite.

Then,
\begin{equation} \label{derH2}
\begin{aligned}
\partial_v H_\alpha[E](\bar{x}) & = 2 \, \PV \int_{\red E \cap \Omega} \frac{\left( \nu_E(y) - \nu_E(\bar{x}) \right) \cdot v}{|\bar{x} - y|^{n + 1 + \alpha}} \, d\sigma(y) \\
& \quad + \int_{\partial \Omega} \frac{\chi_{E^{(1)}}(y) - \chi_{E^{(0)}}(y)}{|\bar{x} - y|^{n + 1 + \alpha}} \left( \nu_\Omega(y) \cdot v \right) d\sigma(y) \\
& \quad - (n + 1 + \alpha) \int_{\R^{n + 1} \setminus \Omega} \frac{\chi_{\R^{n + 1} \setminus E}(y) - \chi_{E}(y)}{|\bar{x} - y|^{n + 3 + \alpha}} \left( (\bar{x} - y) \cdot v \right) dy
\end{aligned}
\end{equation}
for every direction~$v \in \R^{n + 1}$ orthogonal to~$\nu_E(\bar{x})$, where~$E^{(1)}$ and~$E^{(0)}$ are given by~\eqref{densitypoints}.
\end{proposition}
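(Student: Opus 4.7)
My approach is to prove~\eqref{derH} by direct differentiation under the principal value integral, and then to derive~\eqref{derH2} from~\eqref{derH} via the divergence theorem applied after splitting $\R^{n+1} = \Omega \sqcup (\R^{n+1}\setminus\Omega)$.

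For~\eqref{derH}, $H_\alpha[E](\bar{x}+tv)$ is the principal-value integral of $(\chi_{\R^{n+1}\setminus E} - \chi_E)(y)|\bar{x}+tv-y|^{-(n+1+\alpha)}$, whose pointwise $t$-derivative at $t=0$ equals $-(n+1+\alpha)(\chi_{\R^{n+1}\setminus E}-\chi_E)(y)(\bar{x}-y)\cdot v/|\bar{x}-y|^{n+3+\alpha}$. What requires work is (i) the convergence of the resulting principal value and (ii) the justification of the exchange of $\partial_t$ with the integral. I would rotate coordinates so that $\nu_E(\bar{x}) = e_{n+1}$ and write $\partial E$ locally as a graph $\{z_{n+1} = g(z')\}$ with $g\in C^{2,\beta}$, $g(0)=0$, $\nabla g(0)=0$. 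On an annulus $B_\rho(\bar{x})\setminus B_\delta(\bar{x})$, I would split the integrand into a tangent-half-space contribution and a correction supported on the strip $\{|z_{n+1}|\le|g(z')|\}$. The half-space contribution, proportional to $\sgn(z_{n+1})(v\cdot z')/|z|^{n+3+\alpha}$, vanishes identically for every $\delta$ by the oddness of $\sgn(z_{n+1})\,z_i$ in $z_i$ (for each $i \le n$), using the tangentiality $v \perp e_{n+1}$. The strip correction reduces, after integrating in $z_{n+1}$ (where $|z| \approx |z'|$ since $z_{n+1}^2 \le g(z')^2 = O(|z'|^4)$), to an integral proportional to $\int_{|z'|<\rho}g(z')(v\cdot z')|z'|^{-(n+3+\alpha)}\,dz'$. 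Taylor-expanding $g(z') = \frac{1}{2}z'^{T} D^2g(0)z' + O(|z'|^{2+\beta})$, the quadratic part yields a cubic integrand which again vanishes by odd symmetry on centered balls, while the $O(|z'|^{2+\beta})$ remainder produces an absolutely integrable error precisely because $\beta > \alpha$. The same splitting applied to the difference quotient $t^{-1}[|\bar{x}+tv-y|^{-(n+1+\alpha)} - |\bar{x}-y|^{-(n+1+\alpha)}]$ furnishes a bound uniform in $t$, so dominated convergence identifies the derivative of $H_\alpha[E](\bar{x}+tv)$ at $t=0$ with the principal value above, proving~\eqref{derH}.

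For~\eqref{derH2}, the key algebraic observation is
\[
(n+1+\alpha)\,\frac{(\bar{x}-y)\cdot v}{|\bar{x}-y|^{n+3+\alpha}} \;=\; v\cdot\nabla_y\frac{1}{|\bar{x}-y|^{n+1+\alpha}} \;=\; \dive_y\!\left(\frac{v}{|\bar{x}-y|^{n+1+\alpha}}\right),
\]
which lets one rewrite the right-hand side of~\eqref{derH} as $-\PV \int_{\R^{n+1}}(\chi_{\R^{n+1}\setminus E} - \chi_E)\,\dive_y(v/|\bar{x}-y|^{n+1+\alpha})\,dy$. Splitting the domain into $\Omega$ and $\R^{n+1}\setminus\Omega$, the piece on $\R^{n+1}\setminus\Omega$ may be rewritten back to the form of the last summand of~\eqref{derH2}. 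On $\Omega$, I would decompose $\chi_{\R^{n+1}\setminus E} - \chi_E = 1 - 2\chi_E$ and apply the divergence theorem on $\Omega\setminus B_\delta(\bar{x})$ and on $(E\cap\Omega)\setminus B_\delta(\bar{x})$; this is justified by the local finite perimeter hypothesis together with $\Haus^n(\red E \cap \partial\Omega)=0$, which guarantees that the trace of $\chi_E$ on $\partial\Omega$ coincides $\Haus^n$-almost everywhere with $\chi_{E^{(1)}}$. The spherical boundary contribution coming from the $\Omega$-integral equals $\delta^{-(1+\alpha)}\int_{S^n} v\cdot\omega\,d\omega$ and vanishes identically, while the one from the $E\cap\Omega$-integral is of order $\delta^{\beta-\alpha} \to 0$ by the same $C^{2,\beta}$ tangent-half-space comparison used in the proof of~\eqref{derH}. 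Collecting the surviving surface integrals, using $2\chi_{E^{(1)}} - 1 = \chi_{E^{(1)}} - \chi_{E^{(0)}}$ on $\partial\Omega$, and exploiting $v\cdot\nu_E(\bar{x})=0$ to rewrite $v\cdot\nu_E(y) = v\cdot(\nu_E(y)-\nu_E(\bar{x}))$ (which is what makes the surface principal value converge), one obtains exactly~\eqref{derH2}.

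The main obstacle, shared by both parts, is the careful bookkeeping of principal values: checking that the $\partial B_\delta(\bar{x})$ contributions appearing in the divergence theorem vanish in the limit, so that the surface principal value on $\red E \cap \Omega$ inherits its convergence from the same cancellations that make the volume principal value converge. The hypotheses $v\perp\nu_E(\bar{x})$ and $\beta>\alpha$ enter precisely in these cancellations and are both sharp.
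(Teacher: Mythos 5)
Your treatment of~\eqref{derH2} mirrors the paper's: rewrite the integrand as a divergence, split $\R^{n+1}$ into $\Omega$ and its complement, apply the divergence theorem for sets of locally finite perimeter in $\Omega\setminus B_\delta(\bar{x})$ and in $(E\cap\Omega)\setminus B_\delta(\bar{x})$, use $\Haus^n(\red E\cap\partial\Omega)=0$ to identify the trace of $\chi_E$ on $\partial\Omega$ with $\chi_{E^{(1)}}$, and control the $\partial B_\delta(\bar{x})$ contribution via the tangent-paraboloid comparison so that it is $O(\delta^{\beta-\alpha})$. The algebraic detail $v\cdot\nu_E(y)=v\cdot(\nu_E(y)-\nu_E(\bar{x}))$ is also exactly the observation that makes the surface principal value converge. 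This half of the proposal is in order.

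The gap is in the setup of~\eqref{derH}. The quantity $H_\alpha[E](\bar{x}+tv)$ that you propose to differentiate is not well defined for $t\neq 0$: since $v$ is tangent to $\partial E$ at $\bar{x}$, the point $\bar{x}+tv$ leaves $\partial E$ (it lies at distance of order $t^2$ from it), and for any $x\notin\partial E$ the principal value $\PV\int_{\R^{n+1}}(\chi_{\R^{n+1}\setminus E}(y)-\chi_E(y))\,|x-y|^{-n-1-\alpha}\,dy$ diverges, because the integrand has a fixed sign near $x$ and the exponent exceeds $n+1$. The tangential derivative $\partial_v H_\alpha[E](\bar{x})$ has to be taken along $\partial E$, and the paper does this by parameterizing along the graph and differentiating $x'\mapsto H_\alpha[E](x',v(x'))$. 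This produces, via the chain rule applied to the upper/lower limits of integration in the strip between tangent plane and graph, an additional term proportional to $\int \langle D^2_{x'}v(0)e_i,y'\rangle\,|y'|^{-n-1-\alpha}\,dy'$, which vanishes by odd symmetry. Your argument computes the right-hand side of~\eqref{derH} correctly (the annulus/strip decomposition, the odd-symmetry cancellations, and the role of $\beta>\alpha$ in the remainder are all accurate), but it does not generate this extra Hessian contribution, nor does it explain why differentiation along the chord $\bar{x}+tv$ should coincide with the tangential derivative. As it stands you have exhibited the candidate value, not proved the identity; the missing step is precisely the graph parameterization (or an equivalent argument showing the normal second-order drift does not contribute), which is what the paper supplies.
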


\begin{proof}
Up to a translation and a rotation, we may assume that~$\bar{x} = 0$ and~$\nu_E(0) = e_{n + 1}$. In this setting,~\eqref{derH} and~\eqref{derH2} are respectively equivalent to
\begin{equation} \label{derHbis}
\partial_{x_i} H_\alpha[E](0) = (n + 1 + \alpha) \, \PV \int_{\R^{n + 1}} \frac{\chi_{\R^{n + 1} \setminus E}(y) - \chi_{E}(y)}{|y|^{n + 3 + \alpha}} y_i \, dy
\end{equation}
and
\begin{equation} \label{derH2bis}
\begin{aligned}
\partial_{x_i} H_\alpha[E](0) & = 2 \, \PV \int_{\red E \cap \Omega} \frac{\nu_E^i(y)}{|y|^{n + 1 + \alpha}} \, d\sigma(y) \\
& \quad + \int_{\partial \Omega} \frac{\chi_{E^{(1)}}(y) - \chi_{E^{(0)}}(y)}{|y|^{n + 1 + \alpha}} \, \nu^i_\Omega(y) \, d\sigma(y) \\
& \quad + (n + 1 + \alpha) \int_{\R^{n + 1} \setminus \Omega} \frac{\chi_{\R^{n + 1} \setminus E}(y) - \chi_{E}(y)}{|y|^{n + 3 + \alpha}} \, y_i \, dy,
\end{aligned}
\end{equation}
for every~$i = 1, \ldots, n$.

We begin by establishing~\eqref{derHbis}. Take~$\varepsilon > 0$ small enough to have that
$$
E \cap B_\varepsilon = \Big\{ (x', x_{n + 1}) \in B_\varepsilon : x_{n + 1} < v(x') \Big\}
$$
for some function~$v \in C^{2, \beta}(\R^n)$ satisfying~$v(0) = 0$ and~$\nabla_{\! x'} v(0) = 0$. Let~$\eta \in C^\infty(\R^{n + 1})$ be a radially symmetric non-decreasing function, such that~$\eta = 0$ in~$B_1$,~$\eta = 1$ outside of~$B_2$, and~$| \nabla \eta | \le 2$ in~$\R^{n + 1}$. For~$\delta \in (0, \varepsilon/4)$, we consider~$\eta^{(\delta)}(x) := \eta(x / \delta)$ and write
$$
H_\alpha[E](x', v(x')) = \Psi^{(\delta)}_1(x') + \Psi^{(\delta)}_2(x'),
$$
with
\begin{align*}
\Psi^{(\delta)}_1(x') & := \int_{\R^{n + 1}} \frac{\chi_{\R^{n + 1} \setminus E}(y) - \chi_E(y)}{|(x', v(x')) - y|^{n + 1 + \alpha}} \, \eta^{(\delta)}((x', v(x')) - y) \, dy,\\
\Psi^{(\delta)}_2(x') & := \PV \int_{\R^{n + 1}} \frac{\chi_{\R^{n + 1} \setminus E}(y) - \chi_E(y)}{|(x', v(x')) - y|^{n + 1 + \alpha}} \left( 1 - \eta^{(\delta)}((x', v(x')) - y) \right) dy.
\end{align*}

First, we claim that
\begin{equation} \label{limPsi2=0}
\lim_{\delta \rightarrow 0^+} \partial_{x_i} \! \Psi_2^{(\delta)}(0) = 0.
\end{equation}
To check~\eqref{limPsi2=0}, let~$\Pi^-_{x'}$ be the lower half-space bounded by the tangent hyperplane to~$\partial E$ at~$(x', v(x'))$, i.e.,~$\Pi^-_{x'} := \left\{ (y', y_{n + 1}) \in \R^n \times \R : y_{n + 1} < v(x') + \langle \nabla_{\! x'} v(x'), y' - x' \rangle \right\}$.
By symmetry,
$$
\PV \int_{\R^{n + 1}} \frac{\chi_{\R^{n + 1} \setminus \Pi_{x'}^-}(y) - \chi_{\Pi^-_{x'}}(y)}{|(x', v(x')) - y|^{n + 1 + \alpha}} \left( 1 - \eta^{(\delta)}((x', v(x')) - y) \right) dy = 0.
$$
Hence, we may subtract this term from~$\Psi_2^{(\delta)}(x')$ and write, for~$x' \in B_\delta'$,
$$
\Psi_2^{(\delta)}(x') = - 2 \int_{B'_{4 \delta}} \int_{v(x') + \langle \nabla_{\! x'} v(x'), \, y' - x' \rangle}^{v(y')} \frac{1 - \eta^{(\delta)}((x', v(x')) - y)}{|(x', v(x')) - y|^{n + 1 + \alpha}} \, dy_{n + 1} dy'.
$$
We differentiate the above expression with respect to~$x_i$ and evaluate at~$0$. We get
\begin{align*}
\partial_{x_i} \! \Psi_2^{(\delta)}(0) & = - 2 \int_{B'_{4 \delta}} \int_{0}^{v(y')} \frac{(n + 1 + \alpha) \left( 1 - \eta^{(\delta)}(y) \right) y_i + |y|^2 \eta^{(\delta)}_{x_i}(y)}{|y|^{n + 3 + \alpha}} \, dy_{n + 1} dy' \\
& \quad + 2 \int_{B'_{4 \delta}} \frac{\left( 1 - \eta^{(\delta)}(y', 0) \right) \langle D_{x'}^2 v(0) e_i, y' \rangle}{|y'|^{n + 1 + \alpha}} \, dy'.
\end{align*}
By the symmetry properties of~$y_i$,~$\eta^{(\delta)}$, and~$\eta^{(\delta)}_{x_i}$, the last integral vanishes and also
$$
\int_{B'_{4 \delta}} \int_{0}^{\langle D_{x'}^2 v(0) y', y' \rangle / 2} \frac{ (n + 1 + \alpha) \left( 1 - \eta^{(\delta)}(y) \right) y_i + |y|^2 \eta_{x_i}^{(\delta)}(y)}{|y|^{n + 3 + \alpha}} \, dy_{n + 1} dy' = 0.
$$
Accordingly,
\begin{align*}
\partial_{x_i} \! \Psi_2^{(\delta)}(0) & = - 2 \int_{B'_{4 \delta}} \int_{\langle D_{x'}^2 v(0) y', y' \rangle / 2}^{v(y')} \frac{(n + 1 + \alpha) \left( 1 - \eta^{(\delta)}(y) \right) y_i + |y|^2 \eta^{(\delta)}_{x_i}(y)}{|y|^{n + 3 + \alpha}} \, dy_{n + 1} dy',
\end{align*}
and, using that~$v$ is~$C^{2, \beta}$, we estimate
\begin{align*}
\left| \partial_{x_i} \! \Psi_2^{(\delta)}(0) \right| & \le 2 \int_{B'_{4 \delta}} \left| v(y') - \frac{\langle D_{x'}^2 v(0) y', y' \rangle}{2} \right| \left( \frac{n + 2}{|y'|^{n + 2 + \alpha}} + \frac{2}{\delta} \frac{1}{|y'|^{n + 1 + \alpha}} \right) dy' \\
& \le C \int_{B'_{4 \delta}} \frac{dy'}{|y'|^{n - \beta + \alpha}} \le C \delta^{\beta - \alpha},
\end{align*}
where, from now on,~$C$ denotes constants independent of~$\delta$. Claim~\eqref{limPsi2=0} follows.

In view of~\eqref{limPsi2=0}, to obtain~\eqref{derHbis} we only need to show that
\begin{equation} \label{derHtris}
\lim_{\delta \rightarrow 0^+} \partial_{x_i} \! \Psi_1^{(\delta)}(0) = (n + 1 + \alpha) \, \PV \int_{\R^{n + 1}} \frac{\chi_{\R^{n + 1} \setminus E}(y) - \chi_{E}(y)}{|y|^{n + 3 + \alpha}} y_i \, dy.
\end{equation}
A straightforward computation reveals that
$$
\partial_{x_i} \! \Psi_1^{(\delta)}(0) = I_1^{(\delta)} - I_2^{(\delta)},
$$
with
\begin{align*}
I_1^{(\delta)} & := (n + 1 + \alpha) \int_{\R^{n + 1}} \frac{\chi_{\R^{n + 1} \setminus E}(y) - \chi_E(y)}{|y|^{n + 3 + \alpha}} \, y_i \eta^{(\delta)}(y) \, dy,\\
I_2^{(\delta)} & := \int_{\R^{n + 1}} \frac{\chi_{\R^{n + 1} \setminus E}(y) - \chi_E(y)}{|y|^{n + 1 + \alpha}} \, \eta^{(\delta)}_{x_i}(y) \, dy.
\end{align*}
On the one hand, the integrand in~$I_2^{(\delta)}$ with~$E$ replaced by~$\left\{ y_{n + 1} < \langle D^2_{x'}v(0) y', y' \rangle / 2 \right\}$ has zero integral over~$\R^{n + 1}$, by oddness of~$\eta^{(\delta)}_{x_i}$. Subtracting this integral from~$I_2^{(\delta)}$, we see that
\begin{equation} \label{I2deltaequiv}
I_2^{(\delta)} = - 2 \int_{B_{2 \delta}'} \int_{\langle D_{x'}^2 v(0) y', y' \rangle / 2}^{v(y')} \frac{\eta_{x_i}^{(\delta)}(y)}{|y|^{n + 1 + \alpha}} \, dy_{n + 1} dy'.
\end{equation}
Thus, by the~$C^{2, \beta}$ regularity of~$v$,
$$
\left| I_2^{(\delta)} \right| \le \frac{4}{\delta} \int_{B_{2 \delta}'} \left| v(y') - \frac{\langle D_{x'}^2 v(0) y', y' \rangle}{2} \right| \frac{dy'}{|y'|^{n + 1 + \alpha}} \le \frac{C}{\delta} \int_{B_{2 \delta}'} \frac{dy'}{|y'|^{n - 1 - \beta + \alpha}} \le C \delta^{\beta - \alpha}.
$$
On the other hand, a similar argument yields that
$$
\left| I_1^{(\delta)} - (n + 1 + \alpha) \int_{\R^{n + 1} \setminus B_{2 \delta}} \frac{\chi_{\R^{n + 1} \setminus E}(y) - \chi_E(y)}{|y|^{n + 3 + \alpha}} y_i \, dy \, \right| \le C \delta^{\beta - \alpha}.
$$
The combination of the last two estimates immediately leads us to~\eqref{derHtris}.

We now move to the proof of~\eqref{derH2bis}. Write
$$
J^{(\delta)} := (n + 1 + \alpha) \int_{\Omega \setminus B_\delta} \frac{\chi_{\R^{n + 1} \setminus E}(y) - \chi_{E}(y)}{|y|^{n + 3 + \alpha}} y_i \, dy.
$$
By the divergence theorem for sets of locally finite perimeter, we have
\begin{align*}
J^{(\delta)} & = \int_{(\Omega \cap E) \setminus B_\delta} \dive \left( \frac{e_i}{|y|^{n + 1 + \alpha}} \right) dy - \int_{\Omega \setminus (E \cup B_\delta)} \dive \left( \frac{e_i}{|y|^{n + 1 + \alpha}} \right) dy \\
& = 2 \int_{(\red E \cap \Omega) \setminus B_\delta} \frac{\nu_E^i(y)}{|y|^{n + 1 + \alpha}} \, d\sigma(y) + \int_{\partial \Omega} \frac{\chi_{E^{(1)}}(y) - \chi_{E^{(0)}}(y)}{|y|^{n + 1 + \alpha}} \, \nu_\Omega^i(y) \, d\sigma(y) \\
& \quad + \frac{1}{\delta^{n + 1 + \alpha}} \int_{\partial B_\delta} \left( \chi_{\R^{n + 1} \setminus E}(y) - \chi_E(y) \right) \nu^i_{B_\delta}(y) \, d\sigma(y),
\end{align*}
for a.e.~$\delta > 0$ small. Notice that we used the formulas of, say,~\cite[Theorem~16.3]{M12} in order to decompose the reduced boundary of intersections. We also took advantage of the hypothesis~$\Haus^n(\red E \cap \partial \Omega) = 0$. Thanks to the~$C^{2, \beta}$ regularity of~$\partial E$ near~$0$, using that~$\nu^i_{B_\delta}(y) = y_i/\delta$, and subtracting the quadratic part of~$v$ at~$0$---as in the argument leading to~\eqref{I2deltaequiv}---, it is not hard to check that
$$
\left| \int_{\partial B_\delta} \left( \chi_{\R^{n + 1} \setminus E}(y) - \chi_E(y) \right) \nu^i_{B_\delta}(y) \, d\sigma(y) \right| \le C \delta^{n + 1 + \beta}.
$$
Hence, letting~$\delta \rightarrow 0^+$ in the above identity, we see that~\eqref{derH2bis} follows from~\eqref{derHbis}.
\end{proof}

With this in hand, we may now proceed to prove Theorem~\ref{Jacintrothm}.

\begin{proof}[Proof of Theorem~\ref{Jacintrothm}]
First of all, we observe that, by Theorem~\ref{perimeterboundsthm} and Lemma~\ref{kerL1overredElem}, both fractional Jacobi operators~\eqref{Jacdef} and~\eqref{truncJac} are well-defined when acting on bounded functions that are smooth near the singularity, under the hypotheses of points~$(i)$ and~$(ii)$, respectively.

In both cases~$(i)$ and~$(ii)$, by the~$\alpha$-minimality of~$E$ there exists a small~$\varepsilon > 0$ for which~$H_\alpha[E](x) = 0$ for every~$x \in \partial E \cap B_\varepsilon(\bar{x})$.

Now, for point~$(i)$, we apply identity~\eqref{derH2} of Proposition~\ref{derHprop} with~$\Omega = \R^{n + 1}$ and deduce that
$$
\left\langle \PV \int_{\red E} \frac{\nu_E(\bar{x}) - \nu_E(y)}{|\bar{x} - y|^{n + 1 + \alpha}} \, d\sigma(y), v \right\rangle = 0
$$
for every~$v \in \R^{n + 1}$ orthogonal to~$\nu_E(\bar{x})$. This means that
$$
\PV \int_{\red E} \frac{\nu_E(\bar{x}) - \nu_E(y)}{|\bar{x} - y|^{n + 1 + \alpha}} \, d\sigma(y) \mbox{ is parallel to } \nu_E(\bar{x}),
$$
or equivalently that
$$
\PV \int_{\red E} \frac{\nu_E(\bar{x}) - \nu_E(y)}{|\bar{x} - y|^{n + 1 + \alpha}} \, d\sigma(y) = \left\langle \PV \int_{\red E} \frac{\nu_E(\bar{x}) - \nu_E(y)}{|\bar{x} - y|^{n + 1 + \alpha}} \, d\sigma(y), \nu_E(\bar{x}) \right\rangle \nu_E(\bar{x}).
$$
From this, the claim of point~$(i)$ readily follows.

To tackle point~$(ii)$, we first observe that, since~$E$ has locally finite perimeter in~$\C_{3 R / 2}$, we have that~$\Haus^n(\red E \cap \partial \C_{r_k}) = 0$ along a sequence of radii~$\{ r_k \}$ converging to~$R$. We now set~$r = r_k$ and prove the validity of inequality~\eqref{Jacle0} with~$r$ in place of~$R$. The conclusion will then follow by letting~$k \rightarrow +\infty$.

Up to a rotation in the hyperplane orthogonal to~$e_{n + 1}$, we may assume that~$\nu_E(\bar{x})$ lies in the~$2$-dimensional plane spanned by~$e_n$ and~$e_{n + 1}$, that is
\begin{equation} \label{nuE2span}
\nu_E(\bar{x}) = \nu_E^n(\bar{x}) e_n + \nu_E^{n + 1}(\bar{x}) e_{n + 1}.
\end{equation}
Applying~\eqref{derH2} with~$\Omega = \C_r$ and the vector~$v = \nu_E^{n + 1}(\bar{x}) e_n - \nu_E^{n}(\bar{x}) e_{n + 1}$ orthogonal to~$\nu_E(\bar{x})$, we get that
%\begin{align*}
%0 & = 2 \, \PV \int_{\red E \cap \C_r} \frac{\left( \nu_E(y) - \nu_E(\bar{x}) \right) \cdot v}{|\bar{x} - y|^{n + 1 + \alpha}} \, d\sigma(y) \\
%& \quad + \int_{\partial \C_r} \frac{\chi_{E^{(1)}}(y) - \chi_{E^{(0)}}(y)}{|\bar{x} - y|^{n + 1 + \alpha}} \left( \nu_{\C_r}(y) \cdot v \right) d\sigma(y) \\
%& \quad - (n + 1 + \alpha) \int_{\R^{n + 1} \setminus \C_r} \frac{\chi_{\R^{n + 1} \setminus E}(y) - \chi_{E}(y)}{|\bar{x} - y|^{n + 3 + \alpha}} \left( (\bar{x} - y) \cdot v \right) dy
%\end{align*}
%for every~$v \in \R^{n + 1}$ orthogonal to~$\nu_E(\bar{x})$. 
%Hence, we can plug the vector~$v = \nu_E^{n + 1}(\bar{x}) e_n - \nu_E^{n}(\bar{x}) e_{n + 1}$ in the above identity and obtain
\begin{equation} \label{vchoice}
\begin{aligned}
0 & = 2 \, \PV \int_{\red E \cap \C_r} \frac{\nu_E^n(y) \nu_E^{n + 1}(\bar{x}) - \nu_E^{n + 1}(y) \nu_E^{n}(\bar{x})}{|\bar{x} - y|^{n + 1 + \alpha}} \, d\sigma(y) \\
& \quad + \frac{\nu_E^{n + 1}(\bar{x})}{r} \int_{\partial \C_r} \frac{\chi_{E^{(1)}}(y) - \chi_{E^{(0)}}(y)}{|\bar{x} - y|^{n + 1 + \alpha}} \, y_n \, d\sigma(y) \\
& \quad - (n + 1 + \alpha) \, \nu_E^{n + 1}(\bar{x}) \int_{\R^{n + 1} \setminus \C_r} \frac{\chi_{\R^{n + 1} \setminus E}(y) - \chi_{E}(y)}{|\bar{x} - y|^{n + 3 + \alpha}} \, (\bar{x}_n - y_n) \, dy \\
& \quad + (n + 1 + \alpha) \, \nu_E^{n}(\bar{x}) \int_{\R^{n + 1} \setminus \C_r} \frac{\chi_{\R^{n + 1} \setminus E}(y) - \chi_{E}(y)}{|\bar{x} - y|^{n + 3 + \alpha}} \, (\bar{x}_{n + 1} - y_{n + 1}) \, dy.
\end{aligned}
\end{equation}

We proceed to estimate the last three terms of~\eqref{vchoice}. On the one hand, since~$\bar{x} \in \C_{r/2}$ for~$r$ close enough to~$R$,
%$$
%\left| \frac{1}{r} \int_{\partial \C_r} \frac{\chi_{E^{(1)}}(y) - \chi_{E^{(0)}}(y)}{|\bar{x} - y|^{n + 1 + \alpha}} \, y_n \, d\sigma(y) \right| \le \frac{1}{r} \int_{\partial \C_r} \frac{|y_n|}{|\bar{x} - y|^{n + 1 + \alpha}} \, d\sigma(y) \le \frac{C}{r^{1 + \alpha}}
%$$
\begin{align*}
\left| \frac{1}{r} \int_{\partial \C_r} \frac{\chi_{E^{(1)}}(y) - \chi_{E^{(0)}}(y)}{|\bar{x} - y|^{n + 1 + \alpha}} \, y_n \, d\sigma(y) \right| & \le \frac{1}{r} \int_{\partial \C_r} \frac{|y_n|}{|\bar{x} - y|^{n + 1 + \alpha}} \, d\sigma(y) \\
& \le C r^{n - 1} \int_0^{+\infty} \frac{dt}{(r^2 + t^2)^{\frac{n + 1 + \alpha}{2}}} \le \frac{C}{r^{1 + \alpha}}
\end{align*}
for some constant~$C$ depending only on~$n$ and~$\alpha$. On the other hand, using again that~$\bar{x} \in \C_{r/2}$,
\begin{align*}
& \left| \int_{\R^{n + 1} \setminus \C_r} \frac{\chi_{\R^{n + 1} \setminus E}(y) - \chi_{E}(y)}{|\bar{x} - y|^{n + 3 + \alpha}} \, (\bar{x}_n - y_n) \, dy \right| \\
%& \hspace{25pt} \le \int_{\R^n \setminus B_r'} |\bar{x}' - y'| \Bigg( \int_{\R} \frac{dt}{\big( |\bar{x}' - y'|^2 + t^2 \big)^{\frac{n + 3 + \alpha}{2}}} \Bigg) dy' \\
& \hspace{30pt} \le C \int_{\R^n \setminus B_r'} |y'| \Bigg( \int_{0}^{|y'|} \frac{dt}{\big( |y'|^2 + t^2 \big)^{\frac{n + 3 + \alpha}{2}}} + \int_{|y'|}^{+\infty} \frac{dt}{\big( |y'|^2 + t^2 \big)^{\frac{n + 3 + \alpha}{2}}} \Bigg) dy' \\
& \hspace{30pt} \le C \int_{\R^n \setminus B_r'} \frac{dy'}{|y'|^{n + 1 + \alpha}} \le \frac{C}{r^{1 + \alpha}},
\end{align*}
with~$C$ depending only on~$n$ and~$\alpha$. Lastly, using that~$E$ is the global subgraph of~$u$ and writing~$\tilde{u}(z') := u(z' + \bar{x}') - \bar{x}_{n + 1}$, we have
\begin{align*}
& \int_{\R^{n + 1} \setminus \C_r} \frac{\chi_{\R^{n + 1} \setminus E}(y) - \chi_{E}(y)}{|\bar{x} - y|^{n + 3 + \alpha}} \, (\bar{x}_{n + 1} - y_{n + 1}) \, dy \\
& \hspace{25pt} = \int_{\R^n \setminus B_r'(-\bar{x}')} \Bigg( \int_{-\infty}^{\tilde{u}(z')} \frac{t}{\big( |z'|^2 + t^2 \big)^{\frac{n + 3 + \alpha}{2}}} \, dt - \int_{\tilde{u}(z')}^{+\infty} \frac{t}{\big( |z'|^2 + t^2 \big)^{\frac{n + 3 + \alpha}{2}}} \, dt \Bigg) dz' \\
& \hspace{25pt} = - \frac{2}{n + 1 + \alpha} \int_{\R^n \setminus B_r'(-\bar{x}')} \frac{dz'}{\big( |z'|^2 + \tilde{u}(z')^2 \big)^{\frac{n + 1 + \alpha}{2}}} \le 0.
\end{align*}

By multiplying both sides of identity~\eqref{vchoice} by~$\nu_E^n(\bar{x}) / 2$ and taking advantage of the last three estimates, we find
$$
0 \le \PV \int_{\red E \cap \C_r} \frac{\big( \nu_E^n(y) \nu_E^{n + 1}(\bar{x}) - \nu_E^{n + 1}(y) \nu_E^{n}(\bar{x}) \big) \nu_E^n(\bar{x})}{|\bar{x} - y|^{n + 1 + \alpha}} \, d\sigma(y) + \frac{C}{r^{1 + \alpha}} \nu_E^{n + 1}(\bar{x}).
$$
Moreover, recalling~\eqref{nuE2span}, for every~$y \in \red E \cap \C_r$ we have
\begin{align*}
& \big( \nu_E^n(y) \nu_E^{n + 1}(\bar{x}) - \nu_E^{n + 1}(y) \nu_E^{n}(\bar{x}) \big) \nu_E^n(\bar{x}) \\
& \hspace{30pt} = \big( \langle \nu_E(y), \nu_E(\bar{x}) \rangle - \nu_E^{n + 1}(y) \nu_E^{n + 1}(\bar{x}) \big) \nu_E^{n + 1}(\bar{x}) - \big( 1 - \nu_E^{n + 1}(\bar{x})^2 \big) \nu_E^{n + 1}(y) \\
& \hspace{30pt} = \langle \nu_E(y), \nu_E(\bar{x}) \rangle \nu_E^{n + 1}(\bar{x}) - \nu_E^{n + 1}(y) \\
& \hspace{30pt} = \nu_E^{n + 1}(\bar{x}) - \nu_E^{n + 1}(y) - \langle \nu_E(\bar{x}) - \nu_E(y) , \nu_E(\bar{x}) \rangle \nu_E^{n + 1}(\bar{x}),
\end{align*}
and~\eqref{Jacle0} follows.
\end{proof}

As we pointed out in the introduction, for a global subgraph~$E$ to be a minimizer of~$\Per_\alpha$ is equivalent to solve~$H_\alpha[E] = 0$. A careful inspection of the proof just displayed actually reveals that inequality~\eqref{Jacle0} still holds true if we relax the requirement on the vanishing of~$H_\alpha[E]$ and assume only that
$$
H_\alpha[E] = h \quad \mbox{in a neighborhood of } \bar{x} \mbox{ on } \partial E,
$$
for some function~$h$ that admits a~$C^1(\R^{n + 1})$ extension---still called~$h$---that satisfies~$\partial_{x_{n + 1}} h \le 0$. In this case, the constant~$C$ in~\eqref{Jacle0} will also depend on the~$L^\infty$ norm of the first~$n$ components of the gradient of~$h$. We stress that such an assumption on~$\partial_{x_{n + 1}} h$ is consistent with others made in the literature to obtain interior gradient bounds for graphs with prescribed classical mean curvature---compare it with, e.g.,~hypothesis~(1) of~\cite{K86} or Remark~(3) on page~75 of~\cite{W98}, where the opposite sign convention for the mean curvature is chosen, in comparison to ours.

\section{Poincar\'e, Sobolev, and isoperimetric inequalities} \label{sobinesec}

\noindent
Here we show the validity of fractional Poincar\'e, Sobolev, and isoperimetric inequalities on rather general subsets of~$\R^{n + 1}$---even though most of the results could be deduced in some well-behaved metric measure spaces as well. In particular, we will apply these results to~$\alpha$-minimal surfaces.

Throughout the section we consider a set~$\Sigma \subset \R^{n + 1}$ having locally finite~$\Haus^n$-measure. Given~$x_0 \in \Sigma$,~$\rho > 0$, and~$c_\star > 0$, we introduce the condition
\begin{equation} \label{Sigmaperdens}
\Haus^n(\Sigma \cap B_\rho(x_0)) \ge c_\star \rho^n.
\end{equation}

We begin with the following Poincar\'e-type inequality, whose simple proof is an extension of the one given, e.g.,~in~\cite[Section~4]{M03} for the Euclidean setting~$\Sigma = \R^n \times \{ 0 \}$.

\begin{proposition} \label{poinineprop}
Let~$n \ge 1$,~$s \in (0, 1)$,~$p \ge 1$,~$\Sigma \subset \R^{n + 1}$ be a set with locally finite~$\Haus^n$-measure,~$x_0 \in \Sigma$, and~$R > 0$. Assume that~\eqref{Sigmaperdens} holds with~$\rho = R$, for some constant~$c_\star > 0$.

Then, there exists a constant~$C$ depending only on~$n$,~$p$, and~$c_\star$, such that
\begin{equation} \label{poinine}
\| v - (v)_{\Sigma \cap B_R(x_0)} \|_{L^p(\Sigma \cap B_R(x_0))} \le C R^{s} \, [v]_{W^{s, p}(\Sigma \cap B_R(x_0))}
\end{equation}
holds for every~$v \in W^{s, p}(\Sigma \cap B_R(x_0))$.
\end{proposition}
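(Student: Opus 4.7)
The plan is to follow the standard short argument for the fractional Poincar\'e inequality on a bounded domain, originally written in the flat Euclidean case and noted to extend verbatim to this setting because the only geometric input needed is the volume lower bound~\eqref{Sigmaperdens}. Write $U := \Sigma \cap B_R(x_0)$ and $m := \Haus^n(U)$, which is strictly positive by the assumption~\eqref{Sigmaperdens}. First I would rewrite the average explicitly:
\begin{equation*}
v(x) - (v)_U \;=\; \frac{1}{m}\int_U \bigl(v(x)-v(y)\bigr)\,d\sigma(y),
\end{equation*}
and apply Jensen's inequality (or H\"older when $p=1$) to get
\begin{equation*}
|v(x) - (v)_U|^p \;\le\; \frac{1}{m}\int_U |v(x)-v(y)|^p\,d\sigma(y).
\end{equation*}

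Next, I would integrate over $x \in U$ and use Fubini to arrive at
\begin{equation*}
\int_U |v(x)-(v)_U|^p\,d\sigma(x) \;\le\; \frac{1}{m}\int_U\!\int_U |v(x)-v(y)|^p\,d\sigma(x)\,d\sigma(y).
\end{equation*}
Now the key, and completely elementary, observation is that for all $x,y \in U \subset B_R(x_0)$ we have $|x-y|\le 2R$, hence
\begin{equation*}
|v(x)-v(y)|^p \;\le\; (2R)^{n+sp}\,\frac{|v(x)-v(y)|^p}{|x-y|^{n+sp}}.
\end{equation*}
Inserting this into the double integral and recalling the definition of the Gagliardo seminorm $[\,\cdot\,]_{W^{s,p}(U)}$ gives
\begin{equation*}
\int_U |v(x)-(v)_U|^p\,d\sigma(x) \;\le\; \frac{(2R)^{n+sp}}{m}\,[v]_{W^{s,p}(U)}^{\,p}.
\end{equation*}

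Finally, I would plug in the density hypothesis $m \ge c_\star R^n$ to obtain
\begin{equation*}
\int_U |v(x)-(v)_U|^p\,d\sigma(x) \;\le\; \frac{2^{n+sp}}{c_\star}\,R^{sp}\,[v]_{W^{s,p}(U)}^{\,p},
\end{equation*}
and then take $p$-th roots to get~\eqref{poinine} with constant $C = (2^{n+sp}/c_\star)^{1/p}$, which indeed depends only on $n$, $p$, and $c_\star$ (and implicitly $s\in(0,1)$, bounded by $1$). There is no real obstacle here: the proof is a one-page computation, and the only ingredient beyond Jensen and the trivial bound $|x-y|\le 2R$ is the lower volume bound~\eqref{Sigmaperdens}, which is exactly the hypothesis of the proposition. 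In particular no upper perimeter bound, regularity of $\Sigma$, or isoperimetric inequality is required at this stage.
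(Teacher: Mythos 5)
Your proposal is correct and follows essentially the same argument as the paper's proof: Jensen's inequality for the average, the elementary bound $1 \le (2R)^{n+sp}|x-y|^{-n-sp}$ on $B_R(x_0)$, integration in $x$, and the density hypothesis~\eqref{Sigmaperdens} to bound $\Haus^n(\Sigma \cap B_R(x_0))$ from below. The only cosmetic difference is the order of steps (the paper inserts the kernel bound before integrating in $x$, you insert it after), and your remark that $2^{n+sp}\le 2^{n+p}$ absorbs the $s$-dependence of the constant matches the paper's use of $2^{n+p}$.
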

\begin{proof}
Given any~$x \in \Sigma \cap B_R(x_0)$, by Jensen's inequality we have
$$
\left| v(x) - (v)_{\Sigma \cap B_R} \right|^p = \left| \dashint_{\Sigma \cap B_R(x_0)} \left( v(x) - v(y) \right) d\sigma(y) \right|^p \le \dashint_{\Sigma \cap B_R(x_0)} \left| v(x) - v(y) \right|^p d\sigma(y).
$$
Since~$1 \le (2 R)^{n + s p} |x - y|^{- n - s p}$ for all~$x, y \in B_R(x_0)$, we deduce that
$$
\left| v(x) - (v)_{\Sigma \cap B_R} \right|^p \le (2 R)^{n + s p} \dashint_{\Sigma \cap B_R(x_0)} \frac{\left| v(x) - v(y) \right|^p}{|x - y|^{n + s p}} \, d\sigma(y).
$$
By integrating the above relation over~$\Sigma \cap B_R(x_0)$, we find
$$
\| v - (v)_{\Sigma \cap B_R} \|_{L^p(\Sigma \cap B_R(x_0))}^p \le \frac{ 2^{n + p} R^{n + s p}}{\Haus^{n}(\Sigma \cap B_R(x_0))} \, [v]_{W^{s, p}(\Sigma \cap B_R(x_0))}^p.
$$
From this, estimate~\eqref{poinine} follows by recalling that~\eqref{Sigmaperdens} holds with~$\rho = R$.
\end{proof}

We now address fractional Sobolev inequalities on~$\Sigma$ when~$n > s p$. We denote the the critical fractional Sobolev exponent by
$$
p^\star = p^\star_{n, s} := \frac{n p}{n - s p}.
$$
In this first result we assume the density hypothesis~\eqref{Sigmaperdens} to hold at all points and at all scales. Its verification is based on a proof of the fractional Sobolev inequality in the Euclidean space that we learned from~H.~Brezis~\cite{B01}.

\begin{proposition} \label{sobineprop1}
Let~$n \ge 1$,~$s \in (0, 1)$, and~$p \ge 1$ be such that~$n > s p$. Let~$\Sigma$ be a subset of~$\R^{n + 1}$ having locally finite~$\Haus^n$-measure,~$U$ be an open subset of~$\Sigma$, and suppose that condition~\eqref{Sigmaperdens} holds for all points~$x_0 \in U$ and radii~$\rho > 0$, for some constant~$c_\star > 0$.

Then, there exists a constant~$C$ depending only on~$n$,~$s$,~$p$, and~$c_\star$, such that
\begin{equation} \label{sobinepre}
\| v \|_{L^{p^\star}(\Sigma)} \le C \, [v]_{W^{s, p}(\Sigma)}
\end{equation}
holds for every~$v \in W^{s, p}(\Sigma)$ supported inside~$U$.
\end{proposition}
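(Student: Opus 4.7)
The plan is to follow the elegant approach of H.~Brezis mentioned in the statement. The argument splits into a weak-type $L^{p^\star}$ estimate that crucially exploits the density hypothesis~\eqref{Sigmaperdens}, followed by a dyadic truncation step to pass from weak to strong $L^{p^\star}$. By standard approximation I first reduce to $v$ nonnegative, bounded and supported in a relatively compact subset of $U$. Write $A_t := \{x \in \Sigma : v(x) > t\}$ and $\lambda(t) := \Haus^n(A_t) < +\infty$ for $t > 0$. For each such $t$, choose the scale $R_t := (2 \lambda(t)/c_\star)^{1/n}$; hypothesis~\eqref{Sigmaperdens} at any $x \in U$ gives
\begin{equation*}
\Haus^n(\Sigma \cap B_{R_t}(x)) \ge c_\star R_t^n = 2 \lambda(t) \ge 2 \Haus^n(A_t \cap B_{R_t}(x)),
\end{equation*}
so that $\Haus^n((\Sigma \setminus A_t) \cap B_{R_t}(x)) \ge \tfrac{c_\star}{2} R_t^n$. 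Bounding $|x - y| \le R_t$ on this set and using that $|v(x) - v(y)| > t$ for $x \in A_{2t}$, $y \in \Sigma \setminus A_t$, I obtain the level-set inequality
\begin{equation*}
[v]_{W^{s, p}(\Sigma)}^p \ge \int_{A_{2 t}} \int_{\Sigma \setminus A_t} \frac{|v(x) - v(y)|^p}{|x - y|^{n + s p}} \, d\sigma(y) \, d\sigma(x) \ge c_0 \, t^p \, \lambda(2 t) \, \lambda(t)^{-s p / n}.
\end{equation*}

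Setting $\Phi(t) := t^{p^\star} \lambda(t)$ and using the identity $p^\star - p = p^\star \cdot s p / n$, the level-set bound rewrites as $\Phi(2 t) \le C [v]_{W^{s, p}(\Sigma)}^p \, \Phi(t)^{s p / n}$. Since $s p / n < 1$ and the reduction above makes $\sup_t \Phi(t)$ a priori finite, a fixed-point argument yields the weak bound $\sup_{t > 0} t^{p^\star} \lambda(t) \le C [v]_{W^{s, p}(\Sigma)}^{p^\star}$. To upgrade this to strong $L^{p^\star}$, I would apply the weak bound to each dyadic truncation $v_k := (v - 2^{k - 1})_+ \wedge 2^{k - 1}$ for $k \in \Z$. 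Since $\{v_k > 2^{k - 2}\} \supseteq A_{2^k}$, this produces $2^{k p^\star} \Haus^n(A_{2^k}) \le C [v_k]_{W^{s, p}(\Sigma)}^{p^\star}$; combined with the dyadic equivalence $\|v\|_{L^{p^\star}(\Sigma)}^{p^\star} \sim \sum_k 2^{k p^\star} \Haus^n(A_{2^k})$, the target inequality~\eqref{sobinepre} reduces to
\begin{equation*}
\sum_{k \in \Z} [v_k]_{W^{s, p}(\Sigma)}^{p^\star} \le C [v]_{W^{s, p}(\Sigma)}^{p^\star}.
\end{equation*}
I would establish this by decomposing the Gagliardo double integral for $[v_k]^p$ according to which dyadic strips $(2^{j - 1}, 2^j]$ the values $v(x)$ and $v(y)$ fall in, exploiting the pointwise bound $|v_k(x) - v_k(y)| \le \min\{|v(x) - v(y)|, \, 2^{k - 1}\}$, and noting that each pair $(x, y)$ activates only $O(1)$ indices~$k$.

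The hard part is this final summation step. The nonlinear recursion $\Phi(2 t) \le C [v]^p \Phi(t)^{s p / n}$ produces $\sup_t \Phi(t) \lesssim [v]^{p^\star}$ but \emph{not} summability of $\{\Phi(2^k)\}_k$: its fixed-point analysis yields a uniform bound with no decay, and naively integrating the weak estimate in~$t$ gives a logarithmically divergent quantity depending on $\|v\|_\infty$ and $\Haus^n(\supp v)$. The truncation argument circumvents this by applying the weak estimate to each piece $v_k$ separately and reassembling through a careful geometric accounting of the Gagliardo seminorms of the truncations.
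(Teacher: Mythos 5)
Your argument is correct in its essentials, but it is \emph{not} the Brezis argument the paper actually uses, despite your opening claim. The paper's proof is a direct pointwise argument: for bounded~$v$ and any~$x \in U$, it averages~$|v(x)| \le |v(x)-v(y)| + |v(y)|$ in~$y$ over~$\Sigma \cap B_r(x)$, applies Jensen's inequality to each term, optimizes the free radius~$r > 0$, and then raises the resulting pointwise bound to the power~$p^\star$ and integrates in~$x$---no level sets, no weak-type estimate, no dyadic truncations appear. What you present is instead the classical co-area/distribution-function route: a weak-type~$L^{p^\star}$ bound obtained from the recursion~$\Phi(2t) \le C[v]^p \Phi(t)^{sp/n}$ for~$\Phi(t) = t^{p^\star}\lambda(t)$, upgraded to the strong bound via dyadic truncations. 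Both arguments exploit~\eqref{Sigmaperdens} in the same essential way, namely to guarantee mass near every~$x \in U$ at every scale; the paper's route is markedly shorter, while yours produces the weak-type endpoint as a by-product. One inaccuracy in your write-up: the heuristic that ``each pair~$(x,y)$ activates only~$O(1)$ indices~$k$'' is false---if~$v(y) \ll v(x)$ the pair activates roughly~$\log_2(v(x)/v(y))$ of the truncations~$v_k$. The correct justification of~$\sum_k [v_k]_{W^{s,p}(\Sigma)}^{p^\star} \le C[v]_{W^{s,p}(\Sigma)}^{p^\star}$ is: since the intervals~$(2^{k-1}, 2^k]$ tile~$(0,\infty)$ and~$v \ge 0$, one has~$\sum_k |v_k(x)-v_k(y)| = |v(x)-v(y)|$ pointwise; as~$p \ge 1$ this gives~$\sum_k |v_k(x)-v_k(y)|^p \le |v(x)-v(y)|^p$ and hence~$\sum_k [v_k]_{W^{s,p}(\Sigma)}^p \le [v]_{W^{s,p}(\Sigma)}^p$; finally, since~$p^\star/p > 1$, one has~$\sum_k \big([v_k]_{W^{s,p}(\Sigma)}^p\big)^{p^\star/p} \le \big(\sum_k [v_k]_{W^{s,p}(\Sigma)}^p\big)^{p^\star/p} \le [v]_{W^{s,p}(\Sigma)}^{p^\star}$. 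With that correction your proof is complete.
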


\begin{proof}
By a standard procedure using truncations, we may assume that~$v \in L^\infty(\Sigma)$. Now, for~$x, y \in \Sigma$, we have
$$
|v(x)| \le |v(x) - v(y)| + |v(y)|.
$$
By integrating this relation in~$y$ over~$\Sigma \cap B_r(x)$ for any given~$r > 0$, we obtain
\begin{equation} \label{sobtech1}
|v(x)| \le \dashint_{\Sigma \cap B_r(x)} |v(x) - v(y)| \, d\sigma(y) + \dashint_{\Sigma \cap B_r(x)} |v(y)| \, d\sigma(y).
\end{equation}
On the one hand, by the fact that~$1 \le r^{n + s p}|x - y|^{- n - s p}$ for all~$y \in B_r(x)$ and Jensen's inequality, we have
\begin{align*}
\dashint_{\Sigma \cap B_r(x)} |v(x) - v(y)| \, d\sigma(y) & \le \left( \dashint_{\Sigma \cap B_r(x)} |v(x) - v(y)|^p \, d\sigma(y) \right)^{\frac{1}{p}} \\
& \le \left( r^{n + s p} \dashint_{\Sigma \cap B_r(x)} \frac{|v(x) - v(y)|^p}{|x - y|^{n + s p}} \, d\sigma(y) \right)^{\frac{1}{p}}.
\end{align*}
On the other hand, we use Jensen's inequality once again to estimate
$$
\dashint_{\Sigma \cap B_r(x)} |v(y)| \, d\sigma(y) \le \left( \dashint_{\Sigma \cap B_r(x)} |v(y)|^{p^\star} \, d\sigma(y) \right)^{\frac{1}{p^\star}}.
$$
Note that the right-hand side of the above estimate is finite---this follows from the initial reduction to~$v \in L^\infty(\Sigma)$ and our starting hypothesis~$v \in L^p(\Sigma)$.

By plugging the last two estimates in~\eqref{sobtech1} and taking advantage of~\eqref{Sigmaperdens}, we get
\begin{equation} \label{keytechsobest}
|v(x)| \le C \, \Bigg\{ r^s \left( \int_{\Sigma} \frac{|v(x) - v(y)|^p}{|x - y|^{n + s p}} \, d\sigma(y) \right)^{\frac{1}{p}} + r^{- \frac{n}{p^\star}} \left( \int_{\Sigma} |v(y)|^{p^\star} d\sigma(y) \right)^{\frac{1}{p^\star}} \Bigg\}
\end{equation}
for a.e.~$x \in U$ and every~$r > 0$, for some constant~$C$ depending only on~$n$,~$s$,~$p$, and~$c_\star$. By optimizing in~$r > 0$ the above relation, say, picking as~$r$ the quantity
\begin{equation} \label{rxdef}
r(x) := \left( \int_{\Sigma} |v(y)|^{p^\star} d\sigma(y) \right)^{\frac{p}{n p^\star}} \left( \int_{\Sigma} \frac{|v(x) - v(y)|^p}{|x - y|^{n + s p}} \, d\sigma(y) \right)^{- \frac{1}{n}},
\end{equation}
we conclude that
\begin{equation} \label{keyestoptimized}
|v(x)| \le C \left( \int_{\Sigma} \frac{|v(x) - v(y)|^p}{|x - y|^{n + s p}} \, d\sigma(y) \right)^{\frac{1}{p^\star}} \left( \int_{\Sigma} |v(y)|^{p^\star} \, d\sigma(y) \right)^{\frac{s p}{n p^\star}},
\end{equation}
for a possibly larger~$C$.

By raising both sides of the previous inequality to the power~$p^\star$ and integrating it as~$x$ ranges over~$U$, we conclude that~\eqref{sobinepre} holds true.
\end{proof}

As a particular case of this result, we obtain the fractional Sobolev inequality on nonlocal minimal surfaces, as stated in Theorem~\ref{sobineintrothm}.

\begin{proof}[Proof of Theorem~\ref{sobineintrothm}]
It suffices to apply Proposition~\ref{sobineprop1} with~$\Omega = \R^{n + 1}$. Note that~\eqref{Sigmaperdens} is satisfied thanks to estimate~\eqref{perimeterboundbelow} of Theorem~\ref{perimeterboundsthm}, with a constant~$c_\star$ depending only on~$n$ and~$\alpha$.
\end{proof}

Proposition~\ref{sobineprop1} holds under global lower bounds on the density of~$\Sigma$. For later applications, it is important to have a related inequality involving only local quantities---such as in Proposition~\ref{poinineprop}. This is done in the next corollary, at the cost of complementing the density lower bound with the upper bound
\begin{equation} \label{Sigmaperbound2}
\Haus^n ( \Sigma \cap B_\rho(x_0) ) \le C_\star \rho^n,
\end{equation}
in order to estimate the tails of the~$W^{s, p}$ seminorm.

\begin{corollary} \label{restrictedsobinecor}
Let~$n \ge 2$,~$s \in (0, 1)$, and~$p \ge 1$ be such that~$1 < s p < n$. Given a set~$\Sigma \subset \R^{n + 1}$,~$\bar{x} \in \Sigma$, and~$R > 0$, assume that~\eqref{Sigmaperdens} and~\eqref{Sigmaperbound2} hold for every point~$x_0 \in \Sigma \cap B_{2 R}(\bar{x})$ and every radius~$\rho \in (0, R]$.

Then, there is a constant~$C$ depending only on~$n$,~$s$,~$p$,~$c_\star$, and~$C_\star$, such that
\begin{equation} \label{restrictedsobine}
\| v \|_{L^{p^\star}(\Sigma \cap B_r(\bar{x}))} \le C \left( [v]_{W^{s, p}(\Sigma \cap B_R(\bar{x}))} + \frac{1}{(R - r)^{s}} \| v \|_{L^p(\Sigma \cap B_R(\bar{x}))} \right)
\end{equation}
holds for all~$v \in W^{s, p}(\Sigma \cap B_R(\bar{x}))$ and~$r \in (0, R)$.
\end{corollary}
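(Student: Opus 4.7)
The plan is to localize $v$ via a cutoff, extend the hypersurface to one enjoying a global density lower bound, and apply the universal fractional Sobolev inequality of Proposition~\ref{sobineprop1}. Set $\rho := (r+R)/2$ and fix a smooth function $\eta \colon \R^{n+1} \to [0,1]$ with $\eta \equiv 1$ on $B_r(\bar x)$, $\supp \eta \subset B_\rho(\bar x)$, and $|\nabla \eta| \leq 4/(R-r)$. Define $w := \eta v$ on $\Sigma \cap B_R(\bar x)$, extended by zero on $\R^{n+1} \setminus (\Sigma \cap B_R(\bar x))$. Since $\eta \equiv 1$ on $B_r(\bar x)$, it suffices to bound $\|w\|_{L^{p^\star}}$ from above.

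The key device is the following extension. Fix an affine hyperplane $F \subset \R^{n+1}$ with $\mathrm{dist}(F, \bar x) = 2R$, and set $\Sigma^{\rm ext} := (\Sigma \cap \overline{B_{2R}(\bar x)}) \cup F$ and $U := B_\rho(\bar x) \cap \Sigma^{\rm ext}$. The set $U$ is open in $\Sigma^{\rm ext}$ and contains $\supp w$. A direct verification---using (i) the hypothesis~\eqref{Sigmaperdens} at scales $\rho_0 \leq R$, (ii) the pigeon-hole bound $\Haus^n(\Sigma^{\rm ext} \cap B_{\rho_0}(x)) \geq \Haus^n(\Sigma \cap B_R(x)) \geq c_\star R^n$ at intermediate scales, and (iii) the fact that $\Haus^n(F \cap B_{\rho_0}(x)) \geq c\,\rho_0^n$ once $\rho_0$ is large enough relative to $\mathrm{dist}(x, F) \in [R, 3R]$---shows that $\Sigma^{\rm ext}$ satisfies the global density lower bound~\eqref{Sigmaperdens} at every $x \in U$ and every $\rho_0 > 0$, with constant depending only on $n$ and $c_\star$. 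Proposition~\ref{sobineprop1} applied to $w$ on $\Sigma^{\rm ext}$ therefore yields $\|w\|_{L^{p^\star}(\Sigma^{\rm ext})} \leq C\,[w]_{W^{s,p}(\Sigma^{\rm ext})}$; since $w \equiv 0$ on $F$, the left-hand side equals $\|w\|_{L^{p^\star}(\tilde\Sigma)}$, where $\tilde\Sigma := \Sigma \cap \overline{B_{2R}(\bar x)}$. Carrying out this density verification on $\Sigma^{\rm ext}$---gluing the three scale regimes smoothly---is the main technical obstacle.

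It remains to estimate $[w]_{W^{s,p}(\Sigma^{\rm ext})}$. Splitting the Gagliardo double integral over $\Sigma^{\rm ext}\times\Sigma^{\rm ext}$ into the part where both points lie in $\tilde\Sigma$ and the tail where one point lies on $F$, the tail contributes at most $C R^{-sp}\|w\|_{L^p(\tilde\Sigma)}^p$ via the elementary bound $\int_F |x-y|^{-n-sp}\, d\Haus^n(y) \leq C\,\mathrm{dist}(x,F)^{-sp} \leq C R^{-sp}$. For the part on $\tilde\Sigma$, I would use $|\eta(x)v(x) - \eta(y)v(y)|^p \leq 2^{p-1}\bigl(\eta(x)^p|v(x)-v(y)|^p + |v(y)|^p|\eta(x)-\eta(y)|^p\bigr)$. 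Splitting the first resulting integral according to whether $y \in B_R(\bar x)$ or not, the inner piece is bounded by $[v]_{W^{s,p}(\Sigma \cap B_R(\bar x))}^p$, and the tail (where $|x-y| \geq (R-r)/2$) is controlled via Lemma~\ref{kerL1overredElem} (using $sp>1$) by $C(R-r)^{-sp}\|v\|_{L^p(\Sigma \cap B_R(\bar x))}^p$. The second integral is bounded via Corollary~\ref{etasemilem} applied with parameter $R-r$ by a comparable quantity. Since $\|w\|_{L^p(\tilde\Sigma)} \leq \|v\|_{L^p(\Sigma \cap B_R(\bar x))}$ and $R^{-s} \leq (R-r)^{-s}$, combining everything yields~\eqref{restrictedsobine}.
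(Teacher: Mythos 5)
Your proposal is essentially the paper's proof, with one cosmetic difference in how the auxiliary hypersurface is built. The paper sets $\Sigma_1 := (\Sigma \cap B_{3R/2}) \cup (\Pi \setminus B_{3R/2})$ where $\Pi$ is a hyperplane \emph{through} $\bar x$, i.e.\ it replaces $\Sigma$ outside $B_{3R/2}$ by the flat piece; you instead keep $\Sigma \cap \overline{B_{2R}(\bar x)}$ and adjoin an entire disjoint hyperplane $F$ tangent to $B_{2R}(\bar x)$. Both constructions deliver the global lower density bound needed for Proposition~\ref{sobineprop1}, but the paper's replacement avoids the ``gap'' between $\Sigma$ and $F$, so its verification has only two regimes ($\rho$ small vs.\ large) instead of your three-regime pigeon-hole; this is a wash. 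The remaining steps---cutoff at radius $(R+r)/2$, product-rule split of $|\eta(x)v(x)-\eta(y)v(y)|^p$, bounding the cross and tail terms via Lemma~\ref{kerL1overredElem} and Corollary~\ref{etasemilem}---match the paper's calculation almost line by line.

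Two points you should make explicit. First, the paper carefully establishes the \emph{upper} density bound \eqref{Sigmaperbound2} for $\Sigma_1$ at all scales (their \eqref{Etildeperabove}), not just the lower bound; you sidestep this by applying Lemma~\ref{kerL1overredElem} and Corollary~\ref{etasemilem} to $\tilde\Sigma$ rather than to $\Sigma^{\rm ext}$, which is legitimate, but that choice should be stated because both lemmas genuinely require the upper density bound on the surface you integrate over, and you are implicitly invoking the hypothesis \eqref{Sigmaperbound2} with $R_0 = R$ and $\Omega = B_{2R}(\bar x)$. Second, Corollary~\ref{etasemilem} is stated for $r \le R_0/2$, so applying it with ``parameter $R-r$'' requires $R-r \le R/2$, i.e.\ $r \ge R/2$; the case $r < R/2$ is not covered directly, but it follows trivially from the case $r = R/2$ since then $(R-r)^{-s}$ and $R^{-s}$ differ only by a bounded factor. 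Neither point is a real gap, but each is a line you would need to add to turn the sketch into a complete proof.
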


In the following proof, we will change~$\Sigma$ outside the ball~$B_{3 R/2}$ by replacing it with a hyperplane. In this way, we obtain a new hypersurface~$\Sigma_1$ that satisfies the density assumptions~\eqref{Sigmaperdens} and~\eqref{Sigmaperbound2} at all scales~$\rho > 0$. By doing this, we are able to apply the global fractional Sobolev estimate of Proposition~\ref{sobineprop1} on~$\Sigma_1$ and ultimately establish~\eqref{restrictedsobine}.

Although simple, this replacement trick is a bit non-standard. For this reason, in the forthcoming Remark~\ref{alttrickrmk} we sketch a different proof of Sobolev inequality~\eqref{restrictedsobine}. 

\begin{proof}[Proof of Corollary~\ref{restrictedsobinecor}]
We may suppose that~$\bar{x} = 0$. Consider the auxiliary set
$$
\Sigma_1 := \left( \Sigma \cap B_{\frac{3 R}{2}} \right) \cup \left( \Pi \setminus B_{\frac{3 R}{2}} \right),
$$
where~$\Pi$ denotes the hyperplane orthogonal to~$e_{n + 1}$ passing through the origin, that is,~$\Pi := \left\{ y \in \R^{n + 1} : y_{n + 1} = 0 \right\}$. It is not hard to see that~$\Sigma_1$ satisfies
\begin{equation} \label{Etildeperbelow}
\Haus^n(\Sigma_1 \cap B_\rho(x)) \ge c_\sharp \, \rho^n \quad \mbox{for every } x \in \Sigma_1 \cap B_R \mbox{ and } \rho > 0
\end{equation}
and
\begin{equation} \label{Etildeperabove}
\Haus^n(\Sigma_1 \cap B_\rho(x)) \le C_\sharp \, \rho^n \quad \mbox{for every } x \in \Sigma_1 \mbox{ and } \rho > 0,
\end{equation}
with~$0 < c_\sharp \le C_\sharp$ depending only on~$n$,~$c_\star$, and~$C_\star$.

To prove~\eqref{Etildeperbelow}, we consider separately the cases of small radii~$\rho \in (0, 4 R)$ and large radii~$\rho \ge 4 R$. In the first situation, we apply estimate~\eqref{Sigmaperdens} to the smaller set~$\Sigma_1 \cap B_{\rho / 8}(x) = \Sigma \cap B_{\rho / 8}(x)$, while in the second we just notice that~$\Sigma_1 \cap B_\rho(x)$ contains the flat set~$\Pi \cap \left( B_{3 \rho/4} \setminus B_{2 R} \right)$, which has~$\Haus^n$-measure of order~$\rho^n$.

Claim~\eqref{Etildeperabove} follows instead from the trivial estimate~$\Haus^n(\Pi \cap B_\rho(x)) \le C \rho^n$ and
\begin{equation} \label{Etildeperabovebis}
\Haus^n(\Sigma \cap B_{3 R / 2} \cap B_\rho(x)) \le C \rho^n \quad \mbox{for every } x \in \Sigma_1 \mbox{ and } \rho > 0,
\end{equation}
for some constant~$C$ depending only on~$n$ and~$C_\star$. To get~\eqref{Etildeperabovebis}, we distinguish between~$\rho \in (0, R/8)$ and~$\rho \ge R/8$. In the first case, we may assume that~$x \in B_{7 R / 4}$ (since otherwise~$B_\rho(x) \cap B_{3 R / 2} = \varnothing$ and~\eqref{Etildeperabovebis} is automatically satisfied) and directly apply~\eqref{Sigmaperbound2}. On the other hand, when~$\rho \ge R / 8$, hypothesis~\eqref{Sigmaperbound2} and a covering argument immediately lead to the desired estimate for the larger set~$\Sigma \cap B_{3 R / 2}$.

Let now~$v \in W^{s, p}(\Sigma \cap B_R)$,~$r \in (0, R)$, and consider a cutoff function~$\eta \in C^\infty(\R^{n + 1})$ satisfying~$0 \le \eta \le 1$ in~$\R^{n + 1}$,~$\supp(\eta) \subset B_{(R + r) / 2}$,~$\eta = 1$ in~$B_r$, and~$|\nabla \eta| \le 4 / (R - r)$ in~$\R^{n + 1}$. Observe that~$\eta v$ is supported inside~$U := \Sigma \cap B_R = \Sigma_1 \cap B_R$. Thanks to this and~\eqref{Etildeperbelow}, we may apply Proposition~\ref{sobineprop1} to the function~$\eta v$, on the set~$\Sigma_1$. We get
\begin{equation} \label{restrictedsobinetech0}
\| v \|_{L^{p^\star}(\Sigma \cap B_r)} \le \| \eta v \|_{L^{p^\star}(\Sigma \cap B_R)} \le \| \eta v \|_{L^{p^\star}(\Sigma_1)} \le C \, [\eta v]_{W^{s, p}(\Sigma_1)},
\end{equation}
for some constant~$C$ depending only on~$n$,~$s$,~$p$,~$c_\star$, and~$C_\star$. Thanks to the properties of~$\eta$,
\begin{equation} \label{restrictedsobinetech1}
\begin{aligned}
[\eta v]_{W^{s, p}(\Sigma_1)}^p & = \int_{\Sigma \cap B_R} \int_{\Sigma \cap B_R} \frac{|\eta(x) v(x) - \eta(y) v(y)|^p}{|x - y|^{n + s p}} \, d\sigma(x) \, d\sigma(y) \\
& \quad + 2 \int_{\Sigma \cap B_{\frac{R + r}{2}}} |v(x)|^p \left( \int_{\Sigma_1 \setminus B_R} \frac{d\sigma(y)}{|x - y|^{n + s p}} \right) d\sigma(x). 
\end{aligned}
\end{equation}

We have
\begin{align*}
|\eta(x) v(x) - \eta(y) v(y)|^p & = \left| \eta(y) \left( v(x) - v(y) \right) + v(x) \left( \eta(x) - \eta(y) \right) \right|^p \\
& \le 2^{p - 1} \Big( |v(x) - v(y)|^p + |v(x)|^p |\eta(x) - \eta(y)|^p \Big),
\end{align*}
and thus
\begin{align*}
& \int_{\Sigma \cap B_R} \int_{\Sigma \cap B_R} \frac{|\eta(x) v(x) - \eta(y) v(y)|^p}{|x - y|^{n + s p}} \, d\sigma(x) \, d\sigma(y) \\
& \hspace{20pt} \le 2^{p - 1} \left\{ [v]_{W^{s, p}(\Sigma \cap B_R)}^p + \int_{\Sigma \cap B_{\frac{R + r}{2}}} |v(x)|^p \left( \int_{\Sigma_1} \frac{|\eta(x) - \eta(y)|^p}{|x - y|^{n + s p}} \, d\sigma(y) \right) d\sigma(x) \right\}.
\end{align*}
By combining this with~\eqref{restrictedsobinetech1} and noting that~$B_{\frac{R - r}{2}}(x) \subseteq B_R$ for every~$x \in B_{\frac{R + r}{2}}$, we obtain
$$
[\eta v]_{W^{s, p}(\Sigma_1)}^p \le C \left( [v]_{W^{s, p}(\Sigma \cap B_R)}^p + \int_{\Sigma \cap B_{\frac{R + r}{2}}} |v(x)|^p \, \Xi(x) \, d\sigma(x) \right),
$$
with
$$
\Xi(x) := \int_{\Sigma_1} \frac{|\eta(x) - \eta(y)|^p}{|x - y|^{n + s p}} \, d\sigma(y) + \int_{\Sigma_1 \setminus B_{\frac{R - r}{2}}(x)} \frac{d\sigma(y)}{|x - y|^{n + s p}}.
$$
In view of Lemma~\ref{kerL1overredElem} (with~$\beta = sp > 1$) and Corollary~\ref{etasemilem}---that both can be applied, thanks to~\eqref{Etildeperabove}---, we see that~$\Xi(x) \le C (R - r)^{- s p}$ for every~$x \in \Sigma \cap B_{\frac{R + r}{2}}$. Accordingly,
$$
[\eta v]_{W^{s, p}(\Sigma_1)}^p \le C \left( [v]_{W^{s, p}(\Sigma \cap B_R)}^p + \frac{1}{(R - r)^{s p}} \| v \|_{L^p(\Sigma \cap B_R)}^p \right),
$$
and~\eqref{restrictedsobine} follows from~\eqref{restrictedsobinetech0}.
\end{proof}

\begin{remark} \label{alttrickrmk}
The following is an alternative, perhaps more natural, proof of Corollary~\ref{restrictedsobinecor}. Let~$v \in W^{s, p}(\Sigma \cap B_R)$ be a bounded function, extended by zero in~$\Sigma \setminus B_R$. We may suppose that~$\| v \|_{L^{p^\star}(\Sigma \cap B_R)} = 1$. For~$x \in \Sigma \cap B_R$, let~$r(x)$ be as in~\eqref{rxdef}, but with both domains of integration replaced by~$\Sigma \cap B_{2 R}$---as the proof of~\eqref{keytechsobest} really gives. Set~$G := \{ x \in \Sigma \cap B_R : r(x) < R \}$ and~$F := (\Sigma \cap B_R) \setminus G = \{ x \in \Sigma \cap B_R : r(x) \ge R \}$.

For~$x \in G$, we argue as in the proof of Proposition~\ref{sobineprop1} to establish~\eqref{keyestoptimized} with~$\Sigma$ replaced by~$\Sigma \cap B_{2 R}$---this can be done since we may apply~\eqref{Sigmaperdens} with~$\rho = r(x) \in (0, R)$. On the other hand, for~$x \in F$ we can still obtain the weaker inequality~\eqref{keytechsobest} with~$r = R$. This, combined with~$\| v \|_{L^{p^\star}(\Sigma \cap B_R)} = 1$ and~$r(x) \ge R$, leads to the uniform bound~$|v(x)| \le C R^{- n / p^\star}$. This, in turn, gives that~$|v|^{p^\star} \le R^{- s p} |v|^p$ in~$F$. By putting together this estimate with the one in~$G$, we conclude that
$$
\| v \|_{L^{p^\star}(\Sigma \cap B_R)} \le C \, \Big( [v]_{W^{s, p}(\Sigma \cap B_{2 R})} + R^{- s} \| v \|_{L^p(\Sigma \cap B_R)} \Big).
$$

This is essentially estimate~\eqref{restrictedsobinetech0}. The more refined inequality~\eqref{restrictedsobine} then follows via a cutoff argument as in the second part of the proof of Corollary~\ref{restrictedsobinecor}.
\end{remark}

Next is a Morrey-type~$L^\infty$ estimate for functions in the Sobolev space~$W^{s, p}$ whenever~$n < s p$. Our argument is an appropriate modification of those presented in~\cite[Lemma~2.2]{G03} and~\cite[Theorem~8.2]{DPV12}.

\begin{proposition} \label{morreyprop}
Let~$n \ge 1$,~$s \in (0, 1)$, and~$p \ge 1$ be such that~$n < s p$. Given a set~$\Sigma \subset \R^{n + 1}$, a point~$\bar{x}, \in \Sigma$ and~$R > 0$, assume that~\eqref{Sigmaperdens} and~\eqref{Sigmaperbound2} hold for every~$x_0 \in \Sigma \cap B_{2 R}(\bar{x})$ and~$\rho \in (0, R]$.

Then, there exists a constant~$C$ depending only on~$n$,~$s$,~$p$,~$c_\star$, and~$C_\star$, such that
\begin{equation} \label{morreyine}
\| v \|_{L^\infty(\Sigma \cap B_r(\bar{x}))} \le C R^{\frac{sp - n}{p}} \left( [v]_{W^{s, p}(\Sigma \cap B_R(\bar{x}))} + \frac{1}{(R - r)^{s}} \| v \|_{L^p(\Sigma \cap B_R(\bar{x}))} \right)
\end{equation}
holds for all~$v \in W^{s, p}(\Sigma \cap B_R(\bar{x}))$ and~$r \in (0, R)$.
\end{proposition}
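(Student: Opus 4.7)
The plan is to follow the classical dyadic telescoping proof of the fractional Morrey embedding (as in, e.g., \cite{G03,DPV12} in the flat setting), adapting it to the intrinsic geometry of $\Sigma$. Fix a point $x_0 \in \Sigma \cap B_r(\bar{x})$ that is a Lebesgue point of $v$ with respect to $\Haus^n$---the set of such points has full $\Haus^n$-measure since, by the two-sided density bounds \eqref{Sigmaperdens} and \eqref{Sigmaperbound2}, $\Haus^n$ is a doubling measure on $\Sigma \cap B_{2R}(\bar{x})$, so the Lebesgue differentiation theorem on doubling metric measure spaces applies. This will be enough to estimate the essential supremum. Consider the dyadic radii $\rho_k := 2^{-k}(R-r)$ for $k \ge 0$; note that each ball $B_{\rho_k}(x_0)$ is contained in $B_R(\bar{x})$.

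The core step is an oscillation bound between the averages $(v)_k := \dashint_{\Sigma \cap B_{\rho_k}(x_0)} v \, d\sigma$ at consecutive dyadic scales. Starting from
\[
(v)_{k+1} - (v)_k = \dashint_{\Sigma \cap B_{\rho_{k+1}}(x_0)} \bigl( v - (v)_k \bigr) \, d\sigma,
\]
applying Jensen's inequality and the lower density bound \eqref{Sigmaperdens} at scale $\rho_{k+1}$, and then invoking the fractional Poincar\'e inequality of Proposition~\ref{poinineprop} on $\Sigma \cap B_{\rho_k}(x_0)$, I would obtain
\[
|(v)_{k+1} - (v)_k| \le C \, \rho_k^{s - n/p} \, [v]_{W^{s,p}(\Sigma \cap B_{\rho_k}(x_0))} \le C \, \rho_k^{s - n/p} \, [v]_{W^{s,p}(\Sigma \cap B_R(\bar{x}))}.
\]
Since the assumption $sp > n$ gives $s - n/p > 0$, the quantities $\rho_k^{s - n/p} = 2^{-k(s-n/p)}(R-r)^{s - n/p}$ are summable in $k$. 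Summing and passing to the limit $k \to \infty$ via the Lebesgue point property $(v)_k \to v(x_0)$, and then combining with the elementary bound $|(v)_0| \le C(R-r)^{-n/p} \|v\|_{L^p(\Sigma \cap B_R(\bar{x}))}$ coming again from H\"older and \eqref{Sigmaperdens}, gives
\[
|v(x_0)| \le C \, (R-r)^{s - n/p} \, [v]_{W^{s,p}(\Sigma \cap B_R(\bar{x}))} + C \, (R-r)^{-n/p} \, \|v\|_{L^p(\Sigma \cap B_R(\bar{x}))}.
\]

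To conclude, I would reshape this into \eqref{morreyine} by using that $R \ge R - r$ together with $s - n/p > 0$ and $s > 0$, which yield $(R-r)^{s - n/p} \le R^{(sp-n)/p}$ and $(R-r)^{-n/p} \le R^{(sp-n)/p}(R-r)^{-s}$; substituting these into the previous display recovers exactly \eqref{morreyine}. The resulting inequality holds at every Lebesgue point, hence $\Haus^n$-almost everywhere in $\Sigma \cap B_r(\bar{x})$, which is what is needed for the essential supremum on the left-hand side. The only slightly delicate point in this plan is the use of Lebesgue differentiation on the surface $\Sigma$, which is however standard once one has a doubling measure---a property that follows directly from \eqref{Sigmaperdens} and \eqref{Sigmaperbound2}; as a safer alternative, one could replace $v$ by its precise representative defined pointwise through its averages and argue with that representative, thereby sidestepping any measure-theoretic subtlety.
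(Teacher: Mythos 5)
Your proof is correct and reaches the same estimate as the paper's. The underlying mechanism is the same---a dyadic telescoping argument built on the fractional Poincar\'e inequality of Proposition~\ref{poinineprop} and the Lebesgue differentiation theorem on the doubling metric measure space $(\Sigma \cap B_{2R}(\bar{x}), \Haus^n)$---but you handle the localization more directly. The paper first proves the auxiliary pointwise claim~\eqref{Morreyclaim}, which bounds $|u(x)|$ in terms of $[u]_{W^{s,p}(\Sigma \cap B_{2R}(\bar{x}))}$; to deduce~\eqref{morreyine} it then applies this claim to a truncated function $\eta v$ and invokes the tail estimates of Lemma~\ref{kerL1overredElem} and Corollary~\ref{etasemilem}, arguing as in the final part of the proof of Corollary~\ref{restrictedsobinecor}. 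You avoid that cutoff step entirely by starting the dyadic chain at $\rho_0 = R - r$: since $x_0 \in \Sigma \cap B_r(\bar{x})$, every ball $B_{\rho_k}(x_0)$ lies inside $B_R(\bar{x})$ by construction, so the seminorms appearing in the oscillation estimate are already localized where they should be and no cutoff function or tail lemma is needed. A further (minor) stylistic difference is that the paper funnels the telescoping through the Campanato-type quantity $M_\lambda$ of~\eqref{campadef} and then separately bounds $M_{sp}$ by the Gagliardo seminorm, whereas you apply Poincar\'e directly at each scale, collapsing those two steps into one. What you gain is a shorter, self-contained argument (only Poincar\'e and doubling); what the paper gains is uniformity of presentation with the Sobolev inequality of Corollary~\ref{restrictedsobinecor} and a standalone claim~\eqref{Morreyclaim} that could be reused. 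Your closing algebra---$(R-r)^{s-n/p} \le R^{(sp-n)/p}$ and $(R-r)^{-n/p} = (R-r)^{s-n/p}(R-r)^{-s} \le R^{(sp-n)/p}(R-r)^{-s}$, both valid because $sp > n$ and $R - r < R$---correctly converts your intermediate bound into~\eqref{morreyine}, and in fact shows that your estimate is slightly sharper than stated.
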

\begin{proof}
To prove~\eqref{morreyine}, it suffices to show that
\begin{equation} \label{Morreyclaim}
|u(x)| \le C \left( R^{\frac{sp - n}{p}} [u]_{W^{s, p}(\Sigma \cap B_{2 R}(\bar{x}))} + R^{- \frac{n}{p}} \| u \|_{L^p(\Sigma \cap B_{2 R}(\bar{x}))} \right)
\end{equation}
for~$\Haus^n$-a.e.~$x \in \Sigma \cap B_R(\bar{x})$ and every~$u \in W^{s, p}(\Sigma \cap B_{2 R}(\bar{x}))$. Indeed, one can then apply~\eqref{Morreyclaim} to the truncated function~$u = \eta v$, with~$\eta$ a convenient cutoff between the balls~$B_r(\bar{x})$ and~$B_{(R + r)/2}(\bar{x})$, and argue as in the final part of the proof of Corollary~\ref{restrictedsobinecor}.

Let~$u \in W^{s, p}(\Sigma \cap B_{2 R}(\bar{x}))$ and note that
\begin{equation} \label{TDL}
u(x) = \lim_{r \rightarrow 0^+} (u)_{\Sigma \cap B_r(x)}
\end{equation}
for~$\Haus^n$-a.e.~$x \in \Sigma \cap B_R(\bar{x})$. This is true by the Lebesgue differentiation theorem, which holds in the metric measure space~$\Sigma \cap B_{2 R}(\bar{x})$ (endowed with the metric inherited from the ambient space~$\R^{n + 1}$ and the measure~$\Haus^n \llcorner \Sigma$) thanks to assumptions~\eqref{Sigmaperdens} and~\eqref{Sigmaperbound2}, as shown in~\cite[Section~3.4]{HKST15}.

Take then any~$x \in \Sigma \cap B_R(\bar{x})$ for which~\eqref{TDL} holds true. Up to a translation, we may assume that~$x = 0$. For~$\lambda > n$, we define
\begin{equation} \label{campadef}
M_\lambda := \sup_{\rho \in (0, R/2]} \left( \rho^{-\lambda} \int_{\Sigma \cap B_\rho} |u(y) - (u)_{\Sigma \cap B_\rho}|^p \, d\sigma(y) \right)^{\frac{1}{p}}.
\end{equation}
We claim that there is a constant~$C_\lambda$ depending on~$n$,~$p$,~$c_\star$, and~$\lambda$, such that
\begin{equation} \label{giustiine}
\left| u(0) - (u)_{\Sigma \cap B_{R / 2}} \right| \le C_\lambda R^{\frac{\lambda - n}{p}} M_\lambda.
\end{equation}

Indeed, consider any two radii~$0 < \rho_1 < \rho_2 \le R/2$. By Jensen's inequality, we have
\begin{align*}
\left| (u)_{\Sigma \cap B_{\rho_1}} - (u)_{\Sigma \cap B_{\rho_2}} \right| & \le \dashint_{\Sigma \cap B_{\rho_1}} \left| u(y) - (u)_{\Sigma \cap B_{\rho_2}} \right| d\sigma(y) \\
& \le \left( \dashint_{\Sigma \cap B_{\rho_1}} \left| u(y) - (u)_{\Sigma \cap B_{\rho_2}} \right|^p d\sigma(y) \right)^{\frac{1}{p}}
\end{align*}
and thus, recalling~\eqref{campadef} and~\eqref{Sigmaperdens},
\begin{equation} \label{giustitech}
\left| (u)_{\Sigma \cap B_{\rho_1}} - (u)_{\Sigma \cap B_{\rho_2}} \right| \le c_\star^{- \frac{1}{p}} \rho_1^{- \frac{n}{p}} \rho_2^{\,\,\, \frac{\lambda}{p}} \, M_\lambda.
\end{equation}
Let~$\{ r_k \}$ be defined by~$r_k := 2^{- k} R$ for all~$k \in \N$. By applying inequality~\eqref{giustitech} with~$\rho_1 = r_{k + 1}$,~$\rho_2 = r_{k}$ and summing over~$k$, we easily obtain that
\begin{align*}
\left| (u)_{\Sigma \cap B_{R/2}} - \lim_{\ell \rightarrow +\infty} (u)_{\Sigma \cap B_{r_\ell}} \right| & \le \left( \frac{2^n}{c_\star} \right)^{\frac{1}{p}} R^{\frac{\lambda - n}{p}} M_\lambda \sum_{k = 1}^{+\infty} 2^{ - \frac{\lambda - n}{p} \, k} \le C_\lambda \, R^{\frac{\lambda - n}{p}} M_\lambda.
\end{align*}
Claim~\eqref{giustiine} then follows, since~\eqref{TDL} is satisfied for~$x = 0$.

Let~$\rho \in (0, R/2]$. Using once again Jensen's inequality, assumption~\eqref{Sigmaperdens}, and the fact that~$1 \le (2 \rho)^{n + s p} |y - z|^{ - n - s p}$ for all~$y, z \in B_\rho$, we have
\begin{align*}
\int_{\Sigma \cap B_\rho} |u(y) - (u)_{\Sigma \cap B_\rho}|^p \, d\sigma(y) & \le \int_{\Sigma \cap B_\rho} \dashint_{\Sigma \cap B_\rho} |u(y) - u(z)|^p \, d\sigma(z) d\sigma(y) \\
& \le C \rho^{s p} \int_{\Sigma \cap B_\rho} \int_{\Sigma \cap B_\rho} \frac{|u(y) - u(z)|^p}{|y - z|^{n + s p}} \, d\sigma(z) d\sigma(y) \\
& \le C \rho^{s p} \, [u]_{W^{s, p}(\Sigma \cap B_{R})}^p,
\end{align*}
where from now on~$C$ denotes constants depending only on~$n$,~$s$,~$p$,~$c_\star$, and~$C_\star$. Recalling definition~\eqref{campadef}, the above inequality yields~$M_{s p} \le C [u]_{W^{s, p}(\Sigma \cap B_{R})}$. By this,~\eqref{giustiine}, and the fact that~$n < s p$, we get that
$$
\left| u(0) - (u)_{\Sigma \cap B_{R / 2}} \right| \le C R^{\frac{sp - n}{p}} [u]_{W^{s, p}(\Sigma \cap B_{R})}.
$$
Thanks to this bound, Jensen's inequality, and~\eqref{Sigmaperdens}, we are led to~\eqref{Morreyclaim} for~$x = 0$.
\end{proof}

To conclude the section, we deduce an isoperimetric-type inequality involving a nonlocal notion of perimeter on~$\Sigma$. For any~$\Haus^n$-measurable set~$A \subseteq \Sigma$, we define the~$s$-perimeter of~$A$ on~$\Sigma$ as the quantity
$$
\Per_{\Sigma, s}(A) := \int_{A} \int_{\Sigma \setminus A} \frac{d\sigma(x) d\sigma(y)}{|x - y|^{n + s}}.
$$
We then have the following result.

\begin{corollary} \label{isopinecor}
Let~$n \ge 1$,~$s \in (0, 1)$, and~$\Sigma \subset \R^{n + 1}$ be a set with locally finite~$\Haus^n$-measure. Assume that~\eqref{Sigmaperdens} holds for all~$x_0 \in \Sigma$ and~$\rho > 0$, for some constant~$c_\star > 0$.

Then, there exists a constant~$C > 0$ depending only on~$n$,~$s$, and~$c_\star$, such that
\begin{equation} \label{isopine}
\Haus^n(A)^{\frac{n - s}{n}} \le C \Per_{\Sigma, s}(A)
\end{equation}
for every set~$A \subseteq \Sigma$ with finite~$\Haus^n$-measure.

In particular,~\eqref{isopine} holds when~$\Sigma$ is the reduced boundary~$\red E$ of a minimizer of the~$\alpha$-perimeter in all of~$\R^{n + 1}$---in this case, with a constant~$C$ depending only on~$n$,~$s$, and~$\alpha$.
\end{corollary}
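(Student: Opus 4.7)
The plan is to deduce the isoperimetric inequality~\eqref{isopine} directly from the fractional Sobolev inequality of Proposition~\ref{sobineprop1}, applied with exponent $p = 1$ to the characteristic function $v = \chi_A$. Note first that the hypothesis $n > sp$ in Proposition~\ref{sobineprop1} reduces to $n > s$, which is automatic since $n \ge 1$ and $s \in (0,1)$. Moreover, with $p=1$ the critical exponent is $p^\star = n/(n-s)$, and the global density assumption~\eqref{Sigmaperdens} required by Proposition~\ref{sobineprop1} (at all points and all scales) is precisely what we assume here.

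We may assume $\Per_{\Sigma, s}(A) < +\infty$, as otherwise~\eqref{isopine} is trivial. Under this assumption, together with $\Haus^n(A) < +\infty$, the function $v = \chi_A$ belongs to $L^1(\Sigma)$ and the identity
$$
[\chi_A]_{W^{s,1}(\Sigma)} = \int_\Sigma \int_\Sigma \frac{|\chi_A(x) - \chi_A(y)|}{|x-y|^{n+s}} \, d\sigma(x) \, d\sigma(y) = 2 \Per_{\Sigma, s}(A)
$$
shows that $\chi_A \in W^{s,1}(\Sigma)$. Taking $U = \Sigma$ in Proposition~\ref{sobineprop1}, the support condition is trivially satisfied, and we obtain
$$
\Haus^n(A)^{\frac{n-s}{n}} = \| \chi_A \|_{L^{p^\star}(\Sigma)} \le C \, [\chi_A]_{W^{s,1}(\Sigma)} = 2 C \Per_{\Sigma, s}(A),
$$
which is exactly~\eqref{isopine}.

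For the last assertion, when $\Sigma = \red E$ is the reduced boundary of a minimizer of $\Per_\alpha$ in all of $\R^{n+1}$, the lower bound~\eqref{Sigmaperdens} holds at every $x_0 \in \red E \subseteq \partial E$ and every $\rho > 0$ by inequality~\eqref{perimeterboundbelow} of Theorem~\ref{perimeterboundsthm} (applied with $R = +\infty$), with a constant $c_\star$ depending only on $n$ and $\alpha$. Therefore the general inequality just proved applies and yields~\eqref{isopine} with a constant $C$ depending only on $n$, $s$, and $\alpha$. No serious obstacle is expected in this argument: the only point worth verifying carefully is that the seminorm of $\chi_A$ indeed equals twice the nonlocal perimeter, which is immediate from $|\chi_A(x) - \chi_A(y)| = \chi_A(x) \chi_{\Sigma \setminus A}(y) + \chi_{\Sigma \setminus A}(x) \chi_A(y)$ and symmetry.
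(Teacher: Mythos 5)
Your proof is correct and follows exactly the paper's argument: apply Proposition~\ref{sobineprop1} with $p=1$, $v=\chi_A$, $U=\Sigma$, then use estimate~\eqref{perimeterboundbelow} of Theorem~\ref{perimeterboundsthm} for the $\alpha$-minimal case. The only difference is that you spell out the computations $\|\chi_A\|_{L^{p^\star}(\Sigma)} = \Haus^n(A)^{(n-s)/n}$ and $[\chi_A]_{W^{s,1}(\Sigma)} = 2\Per_{\Sigma,s}(A)$, which the paper leaves implicit.
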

\begin{proof}
It suffices to apply Proposition~\ref{sobineprop1} with~$p = 1$,~$v = \chi_A$, and~$U = \Sigma$---observe that~$U$ need not be bounded in the statement of Proposition~\ref{sobineprop1}. When~$\Sigma = \red E$, with~$\red E$ an~$\alpha$-minimal surface in~$\R^{n + 1}$, the result follows since estimate~\eqref{perimeterboundbelow} of Theorem~\ref{perimeterboundsthm} ensures the validity of~\eqref{Sigmaperdens} for any point~$x_0 \in \red E$ and scale~$\rho > 0$.
\end{proof}

\section{The weak Harnack inequality} \label{weakharsec}

\noindent
In this section, we establish a weak Harnack inequality for non-negative supersolutions of rather general integral equations posed on hypersurfaces of Euclidean space satisfying appropriate density assumptions. In particular, the inequality will hold for equations posed on nonlocal minimal surfaces---including those which are not graphs.

\subsection{The weak Harnack inequality on general hypersurfaces}

Let~$n \ge 1$ and~$\Sigma$ be a~$\Haus^n$-measurable subset of~$\R^{n + 1}$. In this subsection, we prove a weak Harnack inequality for non-negative supersolutions of equations driven by the integro-differential operator
$$
\L_K v(x) := \PV \int_{\Sigma} \left( v(y) - v(x) \right) K(x, y) \, d\sigma(y),
$$
where~$K: \Sigma \times \Sigma \to [0, +\infty]$ is a~$\Haus^n$-measurable function. See Section~\ref{notsec} for the definition of principal value surface integrals.

Given an open set~$\Omega \subseteq \R^{n + 1}$ and three real numbers~$R_0 > 0$,~$\Lambda \ge 1$,~$s \in (1/2, 1)$, we require~$K$ to satisfy the conditions
\begin{alignat}{2}
\label{Ksymm} K(x, y) = K(y, x) \hspace{50pt} & \quad \mbox{for~$\Haus^n$-a.e.~} x, y \in \Sigma, \\
\label{Kell} \frac{\Lambda^{-1} \chi_{B_{R_0}}(x - y)}{|x - y|^{n + 2 s}} \le K(x, y) \le \frac{\Lambda}{|x - y|^{n + 2 s}} & \quad \mbox{for~$\Haus^n$-a.e.~} x, y \in \Sigma \cap \Omega,
\end{alignat}
and
\begin{equation} \label{Ksupp}
K(x, y) = 0 \quad \mbox{for~$\Haus^n$-a.e.~} x \in \Sigma \cap \Omega \mbox{ and } y \in \Sigma \setminus \Omega.
\end{equation}
The integro-differential inequality defining our class of supersolutions will be assumed to hold only inside~$\Sigma \cap \Omega$. In this set, we suppose~$\Sigma$ to have the density property
\begin{equation} \label{EperCflat}
c_\star \rho^n \le \Haus^n(\Sigma \cap B_\rho(x)) \le C_\star \rho^n \quad \mbox{for every } x \in \Sigma \cap \Omega \mbox{ and~} \rho \in (0, R_0]
\end{equation}
for some constants~$C_\star \ge c_\star > 0$. We also assume that
\begin{equation} \label{0inOmega}
0 \in \Sigma \quad \mbox{and} \quad \Sigma \cap B_{R_0} \subseteq \Omega.
\end{equation}

Observe that, unless~$\Sigma \cap \Omega = \Sigma$, for~$x \in \Sigma \cap \Omega$ condition~\eqref{Ksupp} forces~$K(x, \cdot)$ to be supported inside the proper set~$\Sigma \cap \Omega$, where~$\Sigma$ behaves nicely, according to~\eqref{EperCflat}. This is why we refer to~$\L_K$ as a truncated operator in~$\Omega$. In light of this, the operator~$\L_K$ can be equivalently written as
$$
\L_K v(x) := \PV \int_{\Sigma \cap \Omega} \left( v(y) - v(x) \right) K(x, y) \, d\sigma(y),
$$
whenever it is evaluated at a point~$x \in \Sigma \cap \Omega$.

Concrete examples include the cases where~$\Omega = \C_R$ is a cylinder,~$R_0 = R/2$, and
$$
K(x, y) = \frac{\chi_{\C_R}(x) \chi_{\C_R}(y)}{|x - y|^{n + 2 s}},
$$
as well as~$\Omega = \R^{n + 1}$ and~$K(x, y) = |x - y|^{- n - 2 s}$. For~$s = (1 + \alpha) / 2$, these choices lead to the operators introduced in Subsection~\ref{jacobi}, which play a key role throughout the paper. Note that we also include the case when~$\Omega = \R^{n + 1}$ and~$K$ is a translation invariant truncated kernel such as~$K(x, y) = \chi_{B_{R_0}}(x - y) |x - y|^{- n - 2 s}$.

We stress that our results are limited to operators of fractional order~$2 s$ strictly greater than~$1$. See Remark~\ref{s>12rmk} for comments on this point.

Given an open set~$U \subseteq \Sigma \cap \Omega$, we consider the Sobolev space~$\H^K(U)$ made up by the~$\Haus^n$-measurable functions~$v: \Sigma \to \R$ for which~$v|_U \in L^2(U)$ and
$$
[v]_{\H^K(U)}^2 := \iint_{Q(U)} \left| v(x) - v(y) \right|^2 d\mu_K(x, y) < +\infty,
$$
where we set
$$
Q(U) := \left( \Sigma \times \Sigma \right) \setminus \left( \left( \Sigma \setminus U \right) \times \left( \Sigma \setminus U \right) \right)
$$
and we adopted the short-hand notation
$$
d\mu = d\mu_K(x, y) := K(x, y) \, d\sigma(x) d\sigma(y).
$$
Notice that~$\H^K(U)$ differs from~$H^s(U)$---as defined in Section~\ref{notsec}---primarily because the seminorm associated to the former is determined by an integral over~$Q(U)$ and not just~$U \times U$. We also define~$\H^K_\loc(U)$ as the set of functions that belong to~$\H^K(V)$ for every open set~$V \subset \subset U$.

Let~$b, f: U \to \R$ be two bounded~$\Haus^n$-measurable functions satisfying
\begin{equation} \label{bgeb*}
b \le R_0^{- 2 s} b_* \quad \mbox{in } \Sigma \cap B_{R_0}
\end{equation}
and
\begin{equation} \label{fged}
f \ge - d \quad \mbox{in } \Sigma \cap B_{R_0}
\end{equation}
for some non-negative constants~$b_*$ and~$d$. We deal with supersolutions of the equation
\begin{equation} \label{Lv=0}
- \L_K v + b v = f
\end{equation}
in~$U$. As we will see later, we need to take into account two types of them:~\emph{weak supersolutions} and~\emph{generalized pointwise supersolutions}, as defined next. All results in this section apply to both of them.

First, we consider the following standard variational notion of supersolution. The Moser iteration that leads to the weak Harnack inequality will be carried out for this type of supersolutions.

\begin{definition}
A function~$w \in \H_\loc^K(U)$ is a~\emph{weak supersolution} of~\eqref{Lv=0} in~$U$ if
\begin{equation} \label{weakdefine}
\begin{aligned}
& \frac{1}{2} \int_{\Sigma} \int_{\Sigma} \left( w(x) - w(y) \right) \left( \varphi(x) - \varphi(y) \right) d\mu + \int_{\Sigma} b(x) w(x) \varphi(x) \, d\sigma(x) \\
& \hspace{250pt} \ge \int_{\Sigma} f(x) \varphi(x) \, d\sigma(x)
\end{aligned}
\end{equation}
for every non-negative function~$\varphi \in \H^K(U)$ compactly supported inside~$U$.
\end{definition}

Since the equations involving the fractional Jacobi operators in Theorem~\ref{Jacintrothm} hold a priori only at points where the nonlocal minimal surface~$\Sigma = \partial E$ is~$C^3$, we also need to consider the following notion of supersolution.

\begin{definition} \label{esspointsoldef}
An~$\Haus^n$-measurable function~$w: \Sigma \cap \Omega \to \R$ is called a~\emph{generalized pointwise supersolution} of~\eqref{Lv=0} in~$U$ if there exists a closed set~$S \subset U$ satisfying~$\Cap_{\, \Sigma \cap \Omega, s}(S) = 0$ for~$n \ge 2$ and~$S = \varnothing$ for~$n = 1$, such that:
\begin{itemize}[leftmargin=*]
\item $w$ is bounded in~$\Sigma \cap \Omega$;
\item the quantity~$\L_K w(x)$ is well-defined at every~$x \in U \setminus S$, and the principal value that defines it converges uniformly in every compact set~$V \subset U \setminus S$;
\item it holds
\begin{equation} \label{Lwge0}
- \L_K w(x) + b(x) w(x) \ge f(x)
\end{equation}
for every~$x \in U \setminus S$.
\end{itemize}
\end{definition}

Note that the quantity~$\Cap_{\, \Sigma \cap \Omega, s}$ appearing in Definition~\ref{esspointsoldef} is the~$s$-fractional capacity on~$\Sigma \cap \Omega$ introduced in Subsection~\ref{capsubsec}. In our main applications, the set~$S$ will be the singular set of a nonlocal minimal surface~$\partial E$.

The hypothesis that~$w$ is bounded rules out undesirable pointwise supersolutions such as fundamental solutions. For instance, when~$n \ge 2$, the function~$- |x|^{- n + 2 s}$ is a pointwise solution for the fractional Laplacian of order~$2 s$ in~$\R^n$ outside of a set of zero~$s$-fractional capacity, but not a weak supersolution.

Before heading to the statement of the weak Harnack inequality, we establish that every generalized pointwise supersolution~$w$ is indeed a weak supersolution. Note that our proof will use crucially that~$w$ is bounded.

\begin{lemma} \label{genisweaklem}
Under assumptions~\eqref{Ksymm}-\eqref{0inOmega}, let~$w$ be a generalized pointwise supersolution of~\eqref{Lv=0} in~$\Sigma \cap B_R$, for some~$R \in (0, R_0)$ and with~$b$ and~$f$ bounded in~$\Sigma \cap B_R$.

Then,~$w$ is a weak supersolution of~\eqref{Lv=0} in~$\Sigma \cap B_R$.
\end{lemma}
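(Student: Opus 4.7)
My strategy is two-step: first derive the weak inequality for smooth test functions compactly supported away from~$S$, and then extend to arbitrary non-negative $\varphi \in \H^K(\Sigma \cap B_R)$ with compact support, using the hypothesis that~$S$ has vanishing $s$-capacity.

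\emph{Step 1.} I would start by taking $\varphi \in C^\infty_c(\R^{n+1})$ with $\supp\varphi \subset \Sigma \cap B_R \setminus S$ and $\varphi \ge 0$. Multiplying the pointwise inequality $-\L_K w + bw \ge f$ at each $x \in \supp\varphi$ by $\varphi(x)$ and integrating over $\Sigma$, the crux is to convert $-\int_\Sigma \varphi\,\L_K w\,d\sigma$ into the symmetric bilinear form $\tfrac12 \iint (w(x)-w(y))(\varphi(x)-\varphi(y))\,d\mu$. I would do this by truncating the principal value at distance $\delta > 0$ from the diagonal: on $\{|x-y|>\delta\}$ the kernel is bounded, so Fubini applies and the symmetry~\eqref{Ksymm} rewrites the truncated integral in symmetric form. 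Letting $\delta \to 0^+$, the uniform convergence of the PV on the compact set~$\supp\varphi$ — which is guaranteed by Definition~\ref{esspointsoldef} — transfers the identity to the limit. The two lower-order terms pose no difficulty, so the weak inequality is established for every such~$\varphi$.

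\emph{Step 2.} For a non-negative $\varphi \in \H^K(\Sigma \cap B_R)$ with compact support in $\Sigma \cap B_R$, the case $n = 1$ is immediate since $S = \varnothing$, so I focus on $n \ge 2$. Invoking $\Cap_{\,\Sigma\cap\Omega,s}(S) = 0$, I would choose $\xi_k \in H^s(\Sigma \cap \Omega)$ with $0 \le \xi_k \le 1$, $\xi_k \equiv 1$ on an open neighborhood of~$S$, and $\|\xi_k\|_{H^s(\Sigma\cap\Omega)} \to 0$, and set $\varphi_k := (1-\xi_k)\varphi$. After a truncation at height $M$ and a mollification to produce admissible test functions for Step~1, I obtain the weak inequality for each $\varphi_k$. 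The zeroth-order terms $\int b w \varphi_k$ and $\int f \varphi_k$ converge by dominated convergence. For the bilinear form, I would expand
\[
\xi_k\varphi(x) - \xi_k\varphi(y) = \xi_k(x)(\varphi(x)-\varphi(y)) + \varphi(y)(\xi_k(x) - \xi_k(y))
\]
and combine $\|\xi_k\|_{H^s(\Sigma\cap\Omega)} \to 0$ with the upper bound in~\eqref{Kell} inside $B_{R_0}$ (transferring $H^s$-control into $\H^K$-control) to conclude $[\varphi_k - \varphi]_{\H^K(\Sigma)} \to 0$; the truncation $M \to \infty$ is handled by monotone convergence.

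\emph{Main obstacle.} The delicate point is controlling the bilinear form in the limit, which by Cauchy--Schwarz requires the a priori regularity $w \in \H^K_\loc(\Sigma \cap B_R)$ — information not contained in the hypotheses. I would extract this regularity through a Caccioppoli-type energy estimate: test the pointwise inequality against $\zeta^2 w(1-\xi_k)^2$ (a manipulation legitimized by the uniform PV convergence on the support of $\zeta(1-\xi_k)$, which lies at positive distance from~$S$), exploit the standard symmetrization identity to produce
\[
\iint (w(x)-w(y))^2 \zeta(x)\zeta(y)(1-\xi_k(x))(1-\xi_k(y))\,d\mu,
\]
and dominate the resulting cross and lower-order terms using the boundedness of $w$, $b$, $f$ together with~\eqref{EperCflat}. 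Sending $k \to \infty$ and applying Fatou's lemma yields $[w]_{\H^K(V)} < \infty$ for every $V \Subset \Sigma \cap B_R$. With this regularity in hand, the Cauchy--Schwarz bound $|T_w(\varphi - \varphi_k)| \le [w]_{\H^K(V)}\,[\varphi - \varphi_k]_{\H^K(\Sigma)}$ closes the limiting argument, yielding the weak inequality for $\varphi$ and completing the proof.
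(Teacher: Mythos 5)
Your proposal follows the same essential strategy as the paper's proof: use the zero $s$-capacity of~$S$ to produce a sequence cutting away~$S$ (your~$\xi_k$, the paper's~$v_k$), establish the a priori bound~$[w]_{\H^K_\loc}<\infty$ via a Caccioppoli-type estimate, and then pass to the limit with the truncated test functions~$(1-\xi_k)\varphi_M$. The one genuine divergence is the Caccioppoli test function. The paper tests against~$\psi_k^2/\widetilde{w}$, with~$\widetilde{w}:=w-\inf_{\Sigma\cap\Omega}w+1\ge 1$ and~$\psi_k:=\eta(1-v_k)$; this produces the exact identity
$$\bigl(\widetilde{w}(x)-\widetilde{w}(y)\bigr)\left(\frac{\psi_k^2(x)}{\widetilde{w}(x)}-\frac{\psi_k^2(y)}{\widetilde{w}(y)}\right)=|\psi_k(x)-\psi_k(y)|^2-\frac{|\psi_k(y)\widetilde{w}(x)-\psi_k(x)\widetilde{w}(y)|^2}{\widetilde{w}(x)\widetilde{w}(y)},$$
with no cross terms to absorb; the second term, once moved across, controls the local energy of~$\widetilde{w}$ because~$\widetilde{w}$ is bounded both above and away from zero. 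Your choice~$\zeta^2 w(1-\xi_k)^2$ requires instead absorbing cross terms via Young's inequality using~$\|w\|_\infty$---workable, but a bit less clean.

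There is one genuine gap. Definition~\ref{esspointsoldef} assumes~$w$ bounded, not non-negative, and a pointwise inequality like~\eqref{Lwge0} can only be integrated against a non-negative test function. Your~$\zeta^2 w(1-\xi_k)^2$ inherits the sign of~$w$ and can therefore be negative, which invalidates that step. The remedy is precisely the paper's shift: work with~$\widetilde{w}:=w-\inf_{\Sigma\cap\Omega}w+1\ge 1$, which is a supersolution with a modified (still bounded) right-hand side since~$\L_K$ annihilates constants, and observe that~$[\widetilde{w}]_{\H^K}=[w]_{\H^K}$. A smaller point: restricting Step~1 to~$C^\infty_c(\R^{n+1})$ test functions forces an awkward mollification of~$(1-\xi_k)\varphi_M$ on the surface~$\Sigma$. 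This is unnecessary, since the integration-against-the-pointwise-inequality argument (via uniform convergence of the principal value) works for any bounded test function compactly supported in~$(\Sigma\cap B_R)\setminus S$, which~$(1-\xi_k)\varphi_M$ already is, being zero on the neighborhood of~$S$ where~$\xi_k\equiv 1$.
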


\begin{proof}
Up to scaling, we may assume that~$R = 1$. Let~$S$ be a closed subset of~$\Sigma \cap B_1$ with~$\Cap_{\, \Sigma \cap \Omega, s}(S) = 0$, outside of which~\eqref{Lwge0} is satisfied. By the Definition~\ref{capdef1} of~$s$-fractional capacity, there exists a sequence of functions~$\{ v_k \}_{k \in \N} \subset H^s(\Sigma \cap \Omega)$ such that~$0 \le v_k \le 1$ in~$\Sigma \cap \Omega$,~$v_k = 1$ in an open neighborhood~$A_k$ of~$S$, and
\begin{equation} \label{vkcap}
\| v_k \|_{H^s(\Sigma \cap \Omega)} \le \frac{1}{k}.
\end{equation}
By taking a subsequence, we also suppose that~$v_k$ converges to zero~$\Haus^n$-a.e.~in~$\Sigma \cap \Omega$.

Given~$r \in (1/2, 1)$, let~$\eta \in C^\infty(\R^{n + 1})$ be such that~$0 \le \eta \le 1$ in~$\R^{n + 1}$,~$\supp(\eta) \subset B_{(2 + r)/3}$,~$\eta = 1$ in~$B_{(1 + 2 r)/3}$, and~$|\nabla \eta| \le 6 / (1 - r)$ in~$\R^{n + 1}$. Write~$\psi_k := \eta (1 - v_k)$.

First, we show that~$w \in \H^{K}(\Sigma \cap B_r)$. As~$w$ is bounded, we only need to check that
\begin{equation} \label{wHkfin}
[w]_{\H^K(\Sigma \cap B_r)} < +\infty.
\end{equation}

Set~$\widetilde{w} := w - \inf_{\Sigma \cap \Omega} w + 1$. We multiply inequality~\eqref{Lwge0} by the non-negative function~$\psi_k^2 \widetilde{w}^{-1}$ and integrate over the set~$(\Sigma \setminus A_k) \cap B_1$, which contains its support. We obtain
\begin{equation} \label{easylemeq1}
\int_{(\Sigma \setminus A_k) \cap B_1} \Big( - \L_K w(x) + b(x) w(x) - f(x) \Big) \frac{\psi_k^2(x)}{\widetilde{w}(x)} \, d\sigma(x) \ge 0.
\end{equation}
Taking advantage of the uniform convergence in~$(\Sigma \setminus A_k) \cap \overline{B}_{(2 + r) / 3}$ of the principal value defining~$\L_K$ (as required in Definition~\ref{esspointsoldef}), of the fact that~$\psi_k$ is supported inside the set~$(\Sigma \setminus A_k) \cap B_{(2 + r) / 3}$, and symmetrizing in~$x$ and~$y$ thanks to~\eqref{Ksymm}, we get
\begin{equation} \label{psikovertildewtest}
\begin{aligned}
& - \int_{(\Sigma \setminus A_k) \cap B_1} \L_K w(x) \frac{\psi_k^2(x)}{\widetilde{w}(x)} \, d\sigma(x) \\
& \hspace{35pt} = \lim_{\delta \rightarrow 0^+} \int_{\Sigma} \frac{\psi_k^2(x)}{\widetilde{w}(x)} \left( \int_{\Sigma \setminus B_\delta(x)} \left( w(x) - w(y) \right) K(x, y) \, d\sigma(y) \right) d\sigma(x) \\
& \hspace{35pt} = \frac{1}{2} \lim_{\delta \rightarrow 0^+} \iint_{ \{ (x, y) \in (\Sigma \times \Sigma) : |x - y| > \delta \}} \left( \widetilde{w}(x) - \widetilde{w}(y) \right) \left( \frac{\psi_k^2(x)}{\widetilde{w}(x)} - \frac{\psi_k^2(y)}{\widetilde{w}(y)} \right) d\mu.
%& \hspace{40pt} =: \frac{1}{2} \, \PV \int_\Sigma \int_\Sigma \left( w(x) - w(y) \right) \left( \frac{\psi_k^{q + 1}(x)}{w^{q}(x)} - \frac{\psi_k^{q + 1}(y)}{w^{q}(y)} \right) d\mu.
\end{aligned}
\end{equation}
Since
$$
\left( \widetilde{w}(x) - \widetilde{w}(y) \right) \left( \frac{\psi_k^2(x)}{\widetilde{w}(x)} - \frac{\psi_k^2(y)}{\widetilde{w}(y)} \right) = |\psi_k(x) - \psi_k(y)|^2 - \frac{|\psi_k(y) \widetilde{w}(x) - \psi_k(x) \widetilde{w}(y)|^2}{\widetilde{w}(x) \widetilde{w}(y)},
$$
we deduce from~\eqref{easylemeq1} and~\eqref{psikovertildewtest} that
\begin{align*}
\int_{\Sigma} \int_{\Sigma} \frac{|\psi_k(y) \widetilde{w}(x) - \psi_k(x) \widetilde{w}(y)|^2}{\widetilde{w}(x) \widetilde{w}(y)} \, d\mu & \le \int_{\Sigma} \int_{\Sigma} |\psi_k(x) - \psi_k(y)|^2 \, d\mu \\
& \quad + 2 \int_{\Sigma \cap B_1} \big( b(x) w(x) + f(x) \big) \frac{\psi_k^2(x)}{\widetilde{w}(x)} \, d\sigma(x).
\end{align*}

Next, on the one hand, by the definition of~$\psi_k$ and the boundedness of~$w$, we have
\begin{align*}
& \int_{\Sigma} \int_{\Sigma} \frac{|\psi_k(y) \widetilde{w}(x) - \psi_k(x) \widetilde{w}(y)|^2}{\widetilde{w}(x) \widetilde{w}(y)} \, d\mu \\
& \hspace{60pt} \ge \frac{1}{C} \iint_{\big( \Sigma \cap B_{\frac{1 + 2 r}{3}} \big)^2} |(1 - v_k(x)) \widetilde{w}(x) - (1 - v_k(y)) \widetilde{w}(y)|^2 \, d\mu,
\end{align*}
where, from now on,~$C \ge 1$ denotes constants independent of~$k$. On the other hand,
$$
\left| \psi_k(x) - \psi_k(y) \right|^2 \le 2 \left| \eta(x) - \eta(y) \right|^2 + 2 \left| v_k(x) - v_k(y) \right|^2,
$$
and thus, using this, conditions~\eqref{Ksymm}-\eqref{0inOmega}, Corollary~\ref{etasemilem} (used with~$r$ replaced by~$1 - r$), and~\eqref{vkcap}, we find that
\begin{align*}
& \int_{\Sigma} \int_{\Sigma} |\psi_k(x) - \psi_k(y)|^2 \, d\mu \\
& \hspace{50pt} \le C \left\{ \int_{\Sigma \cap B_1} \left( \int_{\Sigma \cap \Omega} \frac{|\eta(x) - \eta(y)|^2}{|x - y|^{n + 2 s}} \, d\sigma(y) \right) d\sigma(x) + [v_k]_{H^s(\Sigma \cap \Omega)}^2 \right\} \\
& \hspace{50pt} \le C \left( \frac{\Haus^{n}(\Sigma \cap B_1)}{(1 - r)^{2 s}} + \frac{1}{k^2} \right) \le C.
\end{align*}
Moreover, as~$\widetilde{w} \ge 1$ in~$\Sigma \cap B_1$,
\begin{equation} \label{RHSest}
\begin{aligned}
& \int_{\Sigma \cap B_1} \big( b(x) w(x) + f(x) \big) \frac{\psi_k^2(x)}{\widetilde{w}(x)} \, d\sigma(x) \\
& \hspace{40pt} \le \left( \| b \|_{L^\infty(\Sigma \cap B_1)} \| w \|_{L^\infty(\Sigma \cap B_1)} + \| f \|_{L^\infty(\Sigma \cap B_1)} \right) \Haus^n(\Sigma \cap B_1) \le C.
\end{aligned}
\end{equation}
Putting the last estimates together and letting~$k \rightarrow +\infty$, by Fatou's lemma and the pointwise a.e.~convergence of~$v_k$ to~$0$, we conclude that
\begin{equation} \label{locsemifinite}
\iint_{\big( \Sigma \cap B_{\frac{1 + 2 r}{3}} \big)^2} |w(x) - w(y)|^2 \, d\mu < +\infty.
\end{equation}

By Lemma~\ref{kerL1overredElem} (used with~$(1 - r) / 3$ in place of~$r$), we can also estimate
\begin{align*}
& \int_{\Sigma \cap B_r} \int_{ \Sigma \setminus B_{\frac{1 + 2 r}{3}} } |w(x) - w(y)|^2 \, d\mu \\
& \hspace{30pt} \le 4 \Lambda \| w \|_{L^\infty(\Sigma \cap \Omega)}^2 \int_{\Sigma \cap B_r} \bigg( \int_{(\Sigma \cap \Omega) \setminus B_{\frac{1 - r}{3}}(x)} \frac{d\sigma(y)}{|y - x|^{n + 2 s}} \bigg) d\sigma(x) < +\infty.
\end{align*}
From this and~\eqref{locsemifinite}, claim~\eqref{wHkfin} follows.

To finish the proof, we are left to show that inequality~\eqref{weakdefine} is satisfied for every non-negative function~$\varphi \in \H^K(\Sigma \cap B_r)$ supported inside~$\Sigma \cap B_r$. Pick any such function and, for~$M > 0$, let~$\varphi_M := \min \{ \varphi, M \}$. Multiplying~\eqref{Lwge0} by~$(1 - v_k) \varphi_M$, integrating over~$(\Sigma \setminus A_k) \cap B_1$, and arguing as for~\eqref{easylemeq1}-\eqref{psikovertildewtest}, we obtain
%\begin{align*}
%& \frac{1}{2} \int_\Sigma \int_\Sigma \left( w(x) - w(y) \right) \left( (1 - v_k(x)) \varphi_M(x) - (1 - v_k(y)) \varphi_M(y) \right) d\mu \\
%& \hspace{130pt} \ge \int_\Sigma \big( f(x) - b(x) w(x) \big) (1 - v_k(x)) \varphi_M(x) \, d\sigma(x),
%\end{align*}
%which we may rewrite as
\begin{equation} \label{genisweaktech}
\begin{aligned}
& \frac{1}{2} \iint_{Q(\Sigma \cap B_r)} \left( w(x) - w(y) \right) \left( \varphi_M(x) - \varphi_M(y) \right) (1 - v_k(x)) \, d\mu \\
& \hspace{70pt} \ge \frac{1}{2} \iint_{Q(\Sigma \cap B_r)} \left( w(x) - w(y) \right) \left( v_k(x) - v_k(y) \right) \varphi_M(y) \, d\mu \\
& \hspace{70pt} \quad + \int_{\Sigma \cap B_r} \big( f(x) - b(x) w(x) \big) (1 - v_k(x)) \varphi_M(x) \, d\sigma(x).
\end{aligned}
\end{equation}
Since both~$w$ and~$\varphi_M$ belong to~$\H^K(\Sigma \cap B_r)$, Lebesgue's dominated convergence theorem gives that the left-hand side of~\eqref{genisweaktech} converges to
$$
\frac{1}{2} \iint_{Q(\Sigma \cap B_r)} \left( w(x) - w(y) \right) \left( \varphi_M(x) - \varphi_M(y) \right) d\mu
$$
as~$k \rightarrow +\infty$. On the other hand, as~$\supp(\varphi_M) \subset \Sigma \cap \Omega$ and~$K$ satisfies~\eqref{Ksupp}, the first term on the right-hand side of~\eqref{genisweaktech} is integrated over~$Q(\Sigma \cap B_r) \cap (\Omega \times \Omega)$. Hence, using that~$0 \le \varphi_M \le M$,~\eqref{vkcap},~\eqref{wHkfin}, and the Cauchy-Schwarz inequality, we infer its convergence to~$0$. As the second term can be easily addressed by dominated convergence (recall that~$b$ and~$f$ are assumed to be bounded), we conclude that
\begin{align*}
& \frac{1}{2} \iint_{Q(\Sigma \cap B_r)} \left( w(x) - w(y) \right) \left( \varphi_M(x) - \varphi_M(y) \right) d\mu \ge \int_\Sigma \big( f(x) - b(x) w(x) \big) \varphi_M(x) \, d\sigma(x).
\end{align*}

We now let~$M \rightarrow +\infty$ in the above inequality. The limit can be carried out on both sides using dominated convergence. On the left, we also take advantage of the fact that~$|\varphi_M(x) - \varphi_M(y)| \le |\varphi(x) - \varphi(y)|$ and of the Cauchy-Schwarz inequality. From this,~\eqref{weakdefine} follows.
\end{proof}

We can now proceed to the statement of our weak Harnack inequality.

\begin{theorem} \label{weakharthm}
Let~$n \ge 1$,~$s \in (1/2, 1)$, and assume conditions~\eqref{Ksymm}-\eqref{fged} to hold. Let~$w$ be a supersolution of~\eqref{Lv=0} in~$\Sigma \cap B_{4 R}$, for some~$R \in (0, R_0/4)$, such that
\begin{equation} \label{wge0thm}
w \ge 0 \quad \mbox{in } \Sigma \cap \Omega.
\end{equation}

Then, there exists an exponent~$\bar{p} > 1$, given by the value~$\bar{p} = \theta$ in~\eqref{thetadef} and in particular depending only on~$n$ and~$s$, such that for every~$p \in (0, \bar{p})$ it holds
\begin{align*}
& \inf_{\Sigma \cap B_{R}} w + R^{2 s} d \\
& \hspace{30pt} \ge c \left\{ \Bigg( \dashint_{\Sigma \cap B_R} w^{p}(x) \, d\sigma(x) \Bigg)^{\frac{1}{p}} + R^{2 s} \inf_{x \in \Sigma \cap B_{R / 2}} \int_{\Sigma \setminus B_R} w(y) K(x, y) \, d\sigma(y) \right\},
\end{align*} 
for some constant~$c \in (0, 1]$ depending only on~$n$,~$s$,~$c_\star$,~$C_\star$,~$\Lambda$,~$b_*$, and~$p$.
\end{theorem}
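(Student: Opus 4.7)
The plan is to run a Moser iteration along the lines of Kassmann~\cite{Kas09,Kas11}, adapted to the present geometric setting. The essential analytic inputs are the localized fractional Sobolev inequality of Corollary~\ref{restrictedsobinecor}, the Poincar\'e inequality of Proposition~\ref{poinineprop}, the doubling property encoded in~\eqref{EperCflat}, and the tail estimate of Lemma~\ref{kerL1overredElem}---this last one being precisely what forces the restriction $s>1/2$, through the condition $\beta=2s>1$. By Lemma~\ref{genisweaklem} I may assume $w$ to be a weak supersolution, and thus freely test~\eqref{weakdefine}. After rescaling to $R=1$, I would work with the shifted quantity
\begin{equation*}
\bar w\ :=\ w + d + T,\qquad T\ :=\ \inf_{x\in \Sigma\cap B_{1/2}}\int_{\Sigma\setminus B_1} w(y)\,K(x,y)\,d\sigma(y),
\end{equation*}
so that $f\ge -d$ is absorbed by $d$, the tail interactions between $\Sigma\cap B_1$ and $\Sigma\setminus B_1$ are absorbed by $T$, and the zeroth-order term $b\bar w$ is absorbed using~\eqref{bgeb*}. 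Unwinding $\bar w$ at the very end will produce the claimed inequality.

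The first main step is a reverse Moser iteration with negative exponents. For $q<0$, I would test~\eqref{weakdefine} with $\varphi=\eta^2 \bar w^{q-1}$, where $\eta$ is a cutoff between two concentric balls $B_r$ and $B_R$ inside $B_{1/2}$, and use the pointwise inequality $(a-b)(a^{q-1}-b^{q-1})\ge c_q(a^{q/2}-b^{q/2})^2$ for $a,b>0$. After a Cauchy-Schwarz absorption of the cross terms involving $\eta(x)-\eta(y)$ (controlled via Corollary~\ref{etasemilem}), this yields a Caccioppoli-type bound
\begin{equation*}
[\eta\,\bar w^{q/2}]_{H^s(\Sigma\cap B_R)}^{2}\ \le\ \frac{C}{(R-r)^{2s}}\int_{\Sigma\cap B_R}\bar w^{q}\,d\sigma.
\end{equation*}
Applying Corollary~\ref{restrictedsobinecor} with $p=2$ then upgrades integrability from exponent $q$ to $\theta q$, with $\theta=n/(n-2s)>1$. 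Iterating along $q_k=-\theta^k p_0$ with shrinking balls and sending $k\to\infty$ gives, for any $p_0>0$,
\begin{equation*}
\inf_{\Sigma\cap B_{1/2}}\bar w\ \ge\ c\left(\dashint_{\Sigma\cap B_1}\bar w^{-p_0}\,d\sigma\right)^{-1/p_0}.
\end{equation*}

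The second main step closes the gap between negative and positive exponents. Testing~\eqref{weakdefine} with $\varphi=\eta^2/\bar w$ and using the inequality $(a-b)(a^{-1}-b^{-1})\ge c\,(\log a-\log b)^2$ yields a fractional BMO-type bound for $\log \bar w$ of the form $[\log\bar w]_{H^s(\Sigma\cap B_{3/4})}^{2}\le C\,\Haus^n(\Sigma\cap B_1)$. Coupled with the Poincar\'e inequality of Proposition~\ref{poinineprop}, this shows that $\log \bar w$ has bounded $L^2$ oscillation on $\Sigma\cap B_{3/4}$. The Bombieri-Giusti iteration lemma~\cite{BG72}---whose proof uses only doubling and a covering argument and thus transfers to our setting under~\eqref{EperCflat}---then produces
\begin{equation*}
\left(\dashint_{\Sigma\cap B_1}\bar w^{p_0}\,d\sigma\right)^{\!1/p_0}\!\le\, C\left(\dashint_{\Sigma\cap B_1}\bar w^{-p_0}\,d\sigma\right)^{\!-1/p_0}
\end{equation*}
for some small $p_0>0$. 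A further Moser iteration with positive exponents $q\in(1,\bar p)$, still based on the Caccioppoli-Sobolev scheme above, will raise $p_0$ to any $p<\bar p=\theta$.

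The principal technical obstacle will be controlling the nonlocal error terms throughout the iteration so that the constants depend only on $n,s,c_\star,C_\star,\Lambda,b_*$ and $p$. In every Dirichlet form $\iint(\bar w(x)-\bar w(y))(\varphi(x)-\varphi(y))\,d\mu$, the part with one variable outside $B_R$ must contribute at worst a multiple of the tail~$T$. Assumption~\eqref{Ksupp} kills the interactions with $\Sigma\setminus\Omega$, where the density bounds fail, while Lemma~\ref{kerL1overredElem} handles the far, yet still in $\Omega$, contributions---and it is this last step that genuinely requires $s>1/2$, explaining the restriction in Remark~\ref{s>12rmk}.
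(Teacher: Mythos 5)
Your overall architecture---negative-exponent iteration, fractional BMO bound on $\log\bar w$, and an abstract ``John--Nirenberg style'' crossover lemma---mirrors the paper's three lemmas and its use of \cite[Lemma~2.2.6]{S-C02}. Two of your departures are minor: (i) the paper uses the Saloff-Coste lemma rather than the Bombieri--Giusti lemma because the former tolerates different exponents $\gamma_1\neq\gamma_2$ in the two iterations; you could use Bombieri--Giusti after relaxing to a common exponent. (ii) Your choice $\theta=n/(n-2s)$ is not even defined for $n=1<2s$; the paper caps $\theta$ case-by-case (see \eqref{thetadef}) to keep $2<2\theta\le 4$, a range that is also needed so that the hypothesis $q\le \min\{1,q_1/2\}$ of the abstract lemma can be met. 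These are fixable details.

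The genuine gap is in how you plan to obtain the tail term on the \emph{right-hand side} of the inequality. Shifting the supersolution to $\bar w=w+d+T$ and then proving $\inf_{\Sigma\cap B_1}\bar w\ge c\bigl(\dashint_{\Sigma\cap B_1}\bar w^{\,p}\bigr)^{1/p}$ only gives, after unwinding,
\begin{equation*}
\inf_{\Sigma\cap B_1}w+d\ \ge\ c\Bigl(\dashint_{\Sigma\cap B_1}w^{p}\Bigr)^{1/p}\ -\ (1-c)\,T,
\end{equation*}
i.e.\ the tail appears with the wrong sign. Since $w\ge 0$ and $d\ge 0$ already force $\inf w+d+T\ge T$ trivially, the shift does not give you any nontrivial comparison between $T$ and local averages of $w$. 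What is actually needed is the separate estimate
\begin{equation*}
\inf_{x\in\Sigma\cap B_{1/2}}\int_{\Sigma\setminus B_1}w(y)K(x,y)\,d\sigma(y)\ \le\ C\,\dashint_{\Sigma\cap B_1}w\,d\sigma,
\end{equation*}
which is \eqref{q<1inebonus} in the paper. It is derived by testing with $\varphi=\eta^2 w^{-q}$ for $q\in(0,1)$, splitting the off-diagonal contribution into $I^{(21)}-I^{(22)}$, and \emph{retaining} the good-signed term $I^{(22)}=\int\eta^2 w^{-q}\bigl(\int_{\Sigma\setminus B_t}wK\bigr)$ in the Caccioppoli inequality rather than discarding it. Combined with the negative-exponent estimate (to lower-bound $\|w^{-q}\|_{L^1}$ by $\bigl(\inf w\bigr)^{-q}$), this is what turns the tail into a lower bound for $\inf w$. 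This step is where the tail genuinely enters; it is not automated by absorbing $T$ into the supersolution, and your proposal as written does not recover it.
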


Note that, according to~\eqref{wge0thm}, the supersolution~$w$ is required to be non-negative in the whole set~$\Sigma \cap \Omega$ (that is, as far as the operator is extended), and not just in~$\Sigma \cap B_{4 R}$ where the equation is posed. When~$\Sigma = \R^n$, this condition has been proved in~\cite{Kas07,Kas11} to be necessary for the Harnack inequality to hold in this form.

We prove Theorem~\ref{weakharthm} by following the classical Moser iteration method, adapted to our fractional and non-Euclidean framework. To carry it out, we need the fractional Sobolev inequalities developed in Section~\ref{sobinesec}. For technical reasons, it will be convenient to have an exponent~$2 \theta \le 4$. For some~$n$ and~$s$, this will force~$2 \theta$ to be smaller than the fractional Sobolev exponent. This is why we define
\begin{equation} \label{thetadef}
\theta :=
\begin{dcases}
2 & \quad \mbox{if } n \le 2, \mbox{ or } n = 3 \mbox{ and } s \in \left[ \frac{3}{4}, 1 \right), \\
\frac{n}{n - 2 s} & \quad \mbox{if } n = 3 \mbox{ and } s \in \left( \frac{1}{2}, \frac{3}{4} \right), \mbox{ or } n \ge 4,
\end{dcases}
\end{equation}
for which we have~$2 < 2 \theta \le 4$. Note that there exists a constant~$C_\bullet$ depending only on~$n$,~$s$,~$c_\star$, and~$C_\star$, such that
\begin{equation} \label{sobproharnack}
\| v \|_{L^{2\theta}(\Sigma \cap B_{\rho_1})} \le C_\bullet \, \rho_2^{s - \frac{\theta - 1}{2 \theta} n} \left( [v]_{H^s(\Sigma \cap B_{\rho_2})} + \frac{1}{(\rho_2 - \rho_1)^{s}} \| v \|_{L^2(\Sigma \cap B_{\rho_2})} \right)
\end{equation}
holds for every~$0 < \rho_1 < \rho_2 \le R$ and all~$\Haus^n$-measurable functions~$v: \Sigma \cap B_R \to \R$. Inequality~\eqref{sobproharnack} follows from the results of Section~\ref{sobinesec}---in particular, from estimate~\eqref{restrictedsobine} when~$n \ge 2 > 2 s$ and from~\eqref{morreyine} along with H\"older's inequality when~$n = 1 < 2 s$. Note that the case~$n = 1 \ge 2 s$ never occurs since we assume~$s \in (1/2, 1)$.

We can now head to the proof of Theorem~\ref{weakharthm}. Thanks to Lemma~\ref{genisweaklem}, it suffices to present it for the case of weak supersolutions.

We start with the following lower bound for the infimum of~$w$ in terms of its norm in Lebesgue spaces of small, negative summability. Here, and also in Lemma~\ref{q<1lem}, we need to consider two independent scales~$R > r$, instead of just~$R$ and~$R/2$ as sometimes customary with Moser iterations---see, e.g.,~inequality~(4.4) of~\cite{M61}. This will be important in order to apply an abstract result, Lemma~\ref{JNlem}, that replaces in our setting the classical John-Nirenberg inequality.

\begin{lemma} \label{q>1lem}
Let~$w$ be a supersolution of~\eqref{Lv=0} in~$\Sigma \cap B_R$, for some~$R \in (0, R_0 / 2)$, satisfying
\begin{equation} \label{wged}
w \ge R^{2 s} d + \varepsilon \quad \mbox{in } \Sigma \cap B_R
\end{equation}
for some~$\varepsilon > 0$.

Then, there exist two constants~$\gamma_1 > 0$ and~$c > 0$ such that, for all~$p_0 \in (0, 1]$ and~$r \in [R/2, R)$,
\begin{equation} \label{q>1ine}
\inf_{\Sigma \cap B_r} w \ge \left\{ c \left( \frac{R - r}{R} \right)^{\gamma_1} \right\}^{\frac{1}{p_0}} \left( \dashint_{\Sigma \cap B_{R}} w(x)^{- p_0} \, d\sigma(x) \right)^{- \frac{1}{p_0}}.
\end{equation}
The constant~$\gamma_1$ depends only on~$n$ and~$s$, while $c$ depends only on~$n$,~$s$,~$c_\star$,~$C_\star$,~$\Lambda$, and~$b_*$.
\end{lemma}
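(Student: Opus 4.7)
My plan is to run a classical Moser iteration on the function $u := w^{-1}$, exploiting that hypothesis~\eqref{wged} makes $w \geq \varepsilon > 0$ uniformly on $\Sigma \cap B_R$, so that all negative powers of $w$ appearing below are bounded and integrable. The first step is to invoke Lemma~\ref{genisweaklem} to reduce to the case of a weak supersolution, thus making~\eqref{weakdefine} available.

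Next, I would derive a Caccioppoli-type estimate. Fix $\tau > 2$, set $p := (\tau-2)/2 > 0$, and pick a smooth cutoff $\eta \in C^\infty_c(B_{\rho_2})$ with $\eta \equiv 1$ on $B_{\rho_1}$ and $|\nabla\eta| \leq 4/(\rho_2-\rho_1)$, for $R/2 \leq \rho_1 < \rho_2 \leq R$. The test function is the admissible, non-negative $\varphi := \eta^2 w^{1-\tau}$. After expanding $\varphi(x) - \varphi(y)$ and symmetrizing in $x,y$, the standard Moser-type pointwise inequality (as in Kassmann~\cite{Kas09,Kas11} and~\cite{DCKP14})
$$
(w(x)-w(y))\bigl(\eta^2(x) w(x)^{1-\tau} - \eta^2(y) w(y)^{1-\tau}\bigr) \leq -c(\tau) \bigl|\eta(x) w(x)^{-p} - \eta(y) w(y)^{-p}\bigr|^2 + C(\tau) (\eta(x)-\eta(y))^2 \max\bigl\{w(x)^{-2p}, w(y)^{-2p}\bigr\},
$$
with $c(\tau), C(\tau)$ polynomial in $\tau$, integrated against $d\mu$ yields control on $[\eta w^{-p}]_{H^s(\Sigma \cap B_{\rho_2})}^2$. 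The cutoff error is handled via Corollary~\ref{etasemilem} at scale $\rho_2-\rho_1$; contributions from $y \notin B_{\rho_2}$ (where $\varphi(y)=0$) are dominated using Lemma~\ref{kerL1overredElem} together with $\varphi(x), w(y) \geq 0$. The zeroth-order terms are absorbed as follows: $\int bw\varphi = \int b\eta^2 w^{-2p}$ is controlled by~\eqref{bgeb*} and $R \leq R_0$ (giving $b \leq R^{-2s}b_*$), while $-\int f\varphi \leq \int d\,\eta^2 w^{1-\tau}$ is controlled via the crucial inequality $d\, w^{-1} \leq R^{-2s}$ coming from~\eqref{wged}, yielding a bound $\leq R^{-2s}\int \eta^2 w^{-2p}$. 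All error terms being of the same order $R^{-2s}(\rho_2-\rho_1)^{-2s}\|w^{-p}\|_{L^2(\Sigma\cap B_{\rho_2})}^2$, and since $\rho_2-\rho_1 \leq R$, one arrives at the Caccioppoli-type estimate
$$
[\eta w^{-p}]_{H^s(\Sigma\cap B_{\rho_2})}^2 \leq \frac{C(\tau)}{(\rho_2-\rho_1)^{2s}} \|w^{-p}\|_{L^2(\Sigma\cap B_{\rho_2})}^2.
$$

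Combining this with the localized fractional Sobolev inequality~\eqref{sobproharnack} (applied with exponent $2$ to $\eta w^{-p}$) gives the iteration step
$$
\|w^{-p}\|_{L^{2\theta}(\Sigma\cap B_{\rho_1})}^2 \leq \frac{C(\tau)\, \rho_2^{2s-(\theta-1)n/\theta}}{(\rho_2-\rho_1)^{2s}} \|w^{-p}\|_{L^2(\Sigma\cap B_{\rho_2})}^2.
$$
I would then iterate with $p_k := \theta^k p_0/2$, $\tau_k := 2p_k+2$, and telescoping radii $\rho_k := r + (R-r)2^{-k}$, so that $\rho_0 = R$ and $\rho_k \downarrow r$. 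Using $\|w^{-p_k}\|_{L^{2\theta}}^{1/p_k} = \|w^{-1}\|_{L^{2p_{k+1}}}$ to chain the estimates, and the fact that $\sum_k \theta^{-k} < \infty$ while $\log C(\tau_k)$ grows only linearly in $k$, passing to $k \to \infty$ and using $\lim_{q\to\infty}\|w^{-1}\|_{L^q(B_r)} = \|w^{-1}\|_{L^\infty(B_r)}$ (valid since $w^{-1}$ is bounded) yields
$$
\|w^{-1}\|_{L^\infty(\Sigma\cap B_r)} \leq \left\{ C \left(\frac{R}{R-r}\right)^{\gamma_1}\right\}^{1/p_0} \Bigl(\dashint_{\Sigma\cap B_R} w^{-p_0}\Bigr)^{1/p_0},
$$
with $\gamma_1 := 2s\theta/(\theta-1)$ depending only on $n, s$. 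A consistency check on the scaling: the residual exponent $\beta := 2s-(\theta-1)n/\theta$ on $\rho_2$ combines with the normalizing factor $\Haus^n(\Sigma\cap B_R)^{-1/p_0}$ from the $\dashint$ to reproduce exactly the scale-invariant ratio $(R/(R-r))^{\gamma_1/p_0}$, since $\gamma_1 - \beta\theta/(\theta-1) = n$. This is equivalent to~\eqref{q>1ine}.

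The hardest point will be establishing the pointwise algebraic inequality above with the correct polynomial dependence of $c(\tau), C(\tau)$ on $\tau$, so that the cumulative product $\prod_k (C(\tau_k)/c(\tau_k))^{1/(2p_k)}$ converges to a finite constant. A secondary but essential subtlety is that the positivity hypothesis $\varepsilon > 0$ is used only qualitatively, to ensure integrability of $w^{-\tau}$ at each iteration step; none of the constants appearing in~\eqref{q>1ine} depend on $\varepsilon$, which is what will later permit the application of the lemma to supersolutions that only satisfy $w \geq 0$ after sending $\varepsilon \downarrow 0$.
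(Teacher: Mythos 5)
Your proposal is correct and follows the same overall Moser-iteration architecture as the paper (Caccioppoli estimate from testing against a negative power of $w$, then the local Sobolev inequality~\eqref{sobproharnack}, then a telescoping iteration with exponents $\theta^k$ and radii converging geometrically to $r$, arriving at $\gamma_1 = 2s\theta/(\theta-1)$), with the zeroth-order and $f$-terms absorbed exactly as you describe via~\eqref{wged} and~\eqref{bgeb*}. The one genuine divergence from the paper is the choice of test function: you use the DCKP-style $\varphi = \eta^2 w^{1-\tau}$ with a quadratic cutoff, whereas the paper tests with $\varphi = \eta^{q+1} w^{-q}$ and invokes precisely Kassmann's algebraic inequality, Lemma~\ref{numericlem2}, which is tailored to the cutoff power $q+1$ and directly produces the term $\eta(x)\eta(y)\,|(\eta/w)^{(q-1)/2}(x)-(\eta/w)^{(q-1)/2}(y)|^2$ on the left-hand side. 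Your route is certainly standard in the nonlocal Moser literature, but be aware that the pointwise inequality you state is \emph{not} the one proved in the paper's appendix: Lemma~\ref{numericlem2} is for the $\eta^{q+1}$ cutoff, and Lemma~\ref{numericlem} covers only the range $q\in(0,1)$ used in Lemma~\ref{q<1lem}, so you would need to supply (or cite from, e.g.,~\cite{DCKP14}) a separate elementary inequality for the pairing $(a-b)(\eta_1^2 a^{-q}-\eta_2^2 b^{-q})$ with $q>1$, with the constants tracked polynomially in $q$. Given that, your proof closes in the same way, and your scaling check $\gamma_1 - \beta\theta/(\theta-1)=n$ correctly confirms that the residual $\rho$-powers combine with the averaged norm to yield the scale-invariant form~\eqref{q>1ine}. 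You also rightly observe that $\varepsilon>0$ is used purely qualitatively; the paper handles the reduction from generalized pointwise to weak supersolutions before Lemma~\ref{q>1lem} rather than inside it, but this is a matter of bookkeeping, not substance.
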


\begin{proof}
Up to scaling, we may suppose that~$R = 2$. Given~$1 \le r \le \tau < t \le 2$, we consider a cutoff function~$\eta \in C^\infty(\R^{n + 1})$ satisfying~$0 \le \eta \le 1$ in~$\R^{n + 1}$,~$\supp(\eta) \subset B_t$,~$\eta = 1$ in~$B_{(t + \tau) / 2}$, and~$|\nabla \eta| \le 4 / (t - \tau)$ in~$\R^{n + 1}$. Given~$q > 1$, we test the weak formulation~\eqref{weakdefine} with the non-negative function~$\varphi = \eta^{q + 1} w^{-q}$, obtaining
\begin{align*}
& \int_{\Sigma} \int_{\Sigma} \left( w(y) - w(x) \right) \left( \frac{\eta_k^{q + 1}(x)}{w^{q}(x)} - \frac{\eta^{q + 1}(y)}{w^{q}(y)} \right) d\mu \\
& \hspace{120pt} \le 2 \int_{\Sigma} \left( b(x) w(x) - f(x) \right) \frac{\eta^{q + 1}(x)}{w^q(x)} \, d\sigma(x).
\end{align*}
Using Lemma~\ref{numericlem2} and assumptions~\eqref{Kell}-\eqref{fged},
%and the fact that~$\eta$ is supported in~$B_t \subset B_{R_0 / 2}$,
this yields
\begin{equation} \label{p>1tech1}
\begin{aligned}
& \int_{\Sigma} \int_{\Sigma} \eta(x) \eta(y) \left| \left( \frac{\eta(x)}{w(x)} \right)^{\frac{q - 1}{2}} - \left(\frac{\eta(y)}{w(y)} \right)^{\frac{q - 1}{2}} \right|^2 \frac{d\sigma(x) \, d\sigma(y)}{|x - y|^{n + 2 s}} \\
& \hspace{40pt} \le C q^2 \Bigg\{ \int_{\Sigma \cap B_t} \left( \frac{\eta(x)}{w(x)} \right)^{q - 1} \left( \int_{\Sigma \cap \Omega} \frac{\left| \eta(x) - \eta(y) \right|^2}{|x - y|^{n + 2 s}} \, d\sigma(y) \right) d\sigma(x) \\
& \hspace{40pt} \quad + \int_{\Sigma} \left( R_0^{- 2 s} b_* w(x) + d \right) \frac{\eta^{q + 1}(x)}{w^q(x)} \, d\sigma(x) \Bigg\}
\end{aligned}
\end{equation}
where, from here on,~$C \ge 1$ is a constant depending only on~$n$,~$s$,~$c_\star$,~$C_\star$,~$\Lambda$, and~$b_*$.

Now, on the one hand
\begin{equation} \label{p>1tech2}
\begin{aligned}
& \int_{\Sigma} \int_{\Sigma} \eta(x) \eta(y) \left| \left( \frac{\eta(x)}{w(x)} \right)^{\frac{q - 1}{2}} - \left( \frac{\eta(y)}{w(y)} \right)^{\frac{q - 1}{2}} \right|^2 \frac{d\sigma(x) \, d\sigma(y)}{|x - y|^{n + 2 s}} \\
& \hspace{70pt} \ge \iint_{\big( \Sigma \cap B_{\frac{t + \tau}{2}} \big)^2} \left| w^{- \frac{q - 1}{2}}(x) - w^{- \frac{q - 1}{2}}(y) \right|^2 \frac{d\sigma(x) \, d\sigma(y)}{|x - y|^{n + 2 s}}.
\end{aligned}
\end{equation}
On the other hand, by Corollary~\ref{etasemilem}---which can be applied thanks to the second inequality in~\eqref{EperCflat}---, we have
$$
\int_{\Sigma \cap B_t} \left( \frac{\eta(x)}{w(x)} \right)^{q - 1} \left( \int_{\Sigma \cap \Omega} \frac{\left| \eta(x) - \eta(y) \right|^2}{|x - y|^{n + 2 s}} \, d\sigma(y) \right) d\sigma(x) \le C \frac{\left\| w^{- q + 1} \right\|_{L^1(\Sigma \cap B_t)}}{(t - \tau)^{2 s}}.
$$
Also, by~\eqref{wged},
\begin{align*}
\int_{\Sigma} \left( R_0^{- 2 s} b_* w(x) + d \right) \frac{\eta^{q + 1}(x)}{w^q(x)} \, d\sigma(x) & \le \frac{b_* + 1}{R^{2 s}} \int_{\Sigma} \frac{\eta^{q + 1}(x)}{w^{q- 1}(x)} \, d\sigma(x) \\
& \le C \, \frac{ \left\| w^{- q + 1} \right\|_{L^1(\Sigma \cap B_t)}}{(t - \tau)^{2 s}}.
\end{align*}
By plugging~\eqref{p>1tech2} together with the last two inequalities into~\eqref{p>1tech1}, we obtain that
$$
\left[ w^{- \frac{q - 1}{2}} \right]_{H^{s} \left( \Sigma \cap B_{\frac{t + \tau}{2}} \right) }^2 \le \frac{C q^2}{(t - \tau)^{2 s}} \left\| w^{- q + 1} \right\|_{L^1(\Sigma \cap B_t)}.
$$
Taking advantage of inequality~\eqref{sobproharnack}, this leads to
\begin{equation} \label{p>1tech4}
\left\| w^{- q + 1} \right\|_{L^{\theta}(\Sigma \cap B_\tau)} \le C q^2 \, \frac{t^{2 s - \frac{\theta - 1}{\theta} n} }{(t - \tau)^{2 s}} \left\| w^{- q + 1} \right\|_{L^1(\Sigma \cap B_t)},
\end{equation}
where~$\theta$ is the constant defined in~\eqref{thetadef}.

Let now~$p_0 \in (0, 1]$ (as in the statement of the lemma) and consider the three sequences~$\{ r_j \}$,~$\{ p_j \}$, and~$\{ \Phi_j \}$ defined by
$$
r_j := r + 2^{- j}(2 - r), \quad p_j := \theta^j p_0, \quad \mbox{and} \quad \Phi_j := \left( \dashint_{\Sigma \cap B_{r_j}} w(x)^{- p_j} \, d\sigma(x) \right)^{\frac{1}{p_j}}
$$
for every non-negative integer~$j$. By applying estimate~\eqref{p>1tech4} with~$\tau = r_{j + 1}$,~$t = r_j$, and~$q = 1 + p_j$, it is not hard to see that, for every integer~$j \ge 0$, we have
\begin{equation} \label{Phijest}
\Phi_{j + 1} \le \lambda_j \Phi_j \quad \mbox{with} \quad \lambda_j := \Big\{ C \, 2^{ 2 s j} (1 + p_j)^2 (2 - r)^{- 2 s} \Big\}^{\frac{1}{p_j}} \ge 1.
\end{equation}
Since the terms~$\Phi_j$ involve averages, to get the above relation we used once again~\eqref{EperCflat} to bound from above and below the~$n$-dimensional measures of the sets~$\Sigma \cap B_{r_j}$.

Notice now that
\begin{align*}
\frac{\log C}{p_i} & = \frac{\log C}{p_0} \, \theta^{-i} \le \frac{C}{p_0} (1 + \delta)^{- i}, \\
\frac{\log 2^{2 s i}}{p_i} & = \frac{2 s \log 2}{p_0} \, \theta^{-i} i \le \frac{C}{p_0} (1 + \delta)^{- i}, \quad \mbox{and} \\
\frac{\log (1 + p_i)^2}{p_i} & = \frac{2}{p_0} \log \left( 1 + p_0 \theta^i \right) \theta^{-i} \le \frac{C}{p_0} (1 + \delta)^{- i}
\end{align*}
for a small~$\delta > 0$ depending only on~$n$,~$s$,~$c_\star$,~$C_\star$,~$\Lambda$, and~$b_*$. Moreover,
$$
\prod_{i = 0}^{+\infty} (2 - r)^{ - \frac{2 s}{p_i}} = (2 - r)^{ - \frac{2 s}{p_0} \sum\limits_{i = 0}^{+\infty} \theta^{-i} } = (2 - r)^{- \frac{2 s \theta}{(\theta - 1) p_0} },
$$
and hence
\begin{align*}
\prod_{i = 0}^{+\infty} \lambda_i & = \left\{ \prod_{i = 0}^{+\infty} (2 - r)^{- \frac{2 s}{p_i}} \right\} \exp \left\{ \sum_{i = 0}^{+\infty} \left( \frac{\log C}{p_i} + \frac{\log 2^{2 s i}}{p_i} + \frac{\log (1 + p_i)^2}{p_i} \right) \right\} \\
& \le (2 - r)^{- \frac{2 s \theta}{(\theta - 1) p_0}} \exp \left\{ \frac{C}{p_0} \sum_{i = 0}^{+\infty} (1 + \delta)^{- i} \right\} \le C^{\frac{1}{p_0}}  (2 - r)^{- \frac{2 s \theta}{(\theta - 1) p_0}}.
\end{align*}
This and~$\Phi_j \le \Phi_0 \prod_{i = 0}^{j - 1} \lambda_i \le \Phi_0 \prod_{i = 0}^{+\infty} \lambda_i$, which follows by iterating~\eqref{Phijest}, lead to
\begin{align*}
\sup_{\Sigma \cap B_r} w^{-1} & \le \limsup_{j \rightarrow +\infty} \Phi_j \le C^{\frac{1}{p_0}} (2 - r)^{- \frac{2 s \theta}{(\theta - 1) p_0}} \Phi_0 \\
& = C^{\frac{1}{p_0}} (2 - r)^{- \frac{\gamma_1}{p_0}} \left( \dashint_{\Sigma \cap B_2} w(x)^{- p_0} \, d\sigma(x) \right)^{\frac{1}{p_0}},
\end{align*}
with~$\gamma_1 := 2 s \theta / (\theta - 1)$. From this, inequality~\eqref{q>1ine} follows.
\end{proof}

We proceed with a second lemma, where we get a uniform bound for the~$\mbox{BMO}$ seminorm of the logarithm of the supersolution~$w$. This will essentially allow us to control the integrals of small positive powers of~$w$ with those of small negative powers. Its proof follows a similar strategy to the one of Lemma~\ref{genisweaklem}, showing that generalized pointwise supersolutions are weak supersolutions.

\begin{lemma} \label{loglem}
Let~$w$ be a supersolution of~\eqref{Lv=0} in~$\Sigma \cap B_{2 R}$, for some~$R \in (0, R_0/2)$. Suppose that
\begin{equation} \label{wgedRn}
w \ge R^{2 s} d + \varepsilon \quad \mbox{in } \Sigma \cap B_{2 R}
\end{equation}
for some~$\varepsilon > 0$.

Then, there exists a constant~$C$ depending only on~$n$,~$s$,~$c_\star$,~$C_\star$,~$\Lambda$, and~$b_*$, such that
\begin{equation} \label{logine}
\dashint_{\Sigma \cap B_R} \left| \log w(x) - \left( \log w \right)_{\Sigma \cap B_R} \right| d\sigma(x) \le C.
\end{equation}
\end{lemma}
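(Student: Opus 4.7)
The plan is to adapt the Moser logarithm estimate to the nonlocal setting on~$\Sigma$. By Lemma~\ref{genisweaklem} I may assume that~$w$ is a weak supersolution. The idea is to test the weak formulation~\eqref{weakdefine} with $\varphi := \eta^2 / w$, where $\eta \in C^\infty(\R^{n+1})$ is a cutoff satisfying $0 \le \eta \le 1$, $\supp(\eta) \subset B_{3R/2}$, $\eta \equiv 1$ in $B_R$, and $|\nabla \eta| \le 8/R$. Since $w \ge \varepsilon > 0$ in $\Sigma \cap B_{2R}$ by~\eqref{wgedRn}, the function $\varphi$ is a non-negative admissible test function lying in $\H^K(\Sigma \cap B_{2R})$, and from it one extracts a Caccioppoli-type inequality for $V := \log w$.

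Setting $\mathcal{J}(x,y) := (w(x) - w(y))(\varphi(x) - \varphi(y))$, a direct computation gives
\[
\mathcal{J}(x,y) = \eta^2(x)\left(1 - \frac{w(y)}{w(x)}\right) + \eta^2(y)\left(1 - \frac{w(x)}{w(y)}\right),
\]
which is pointwise non-positive; indeed, for $\eta(x) = \eta(y)$ it equals $-2\eta^2 (\cosh u - 1)$ with $u := V(y) - V(x)$. Plugging $\varphi = \eta^2/w$ into~\eqref{weakdefine} and using hypotheses~\eqref{bgeb*}, \eqref{fged}, the bound $w \ge R^{2s} d$ (which yields $d/w \le R^{-2s}$), and the perimeter upper bound in~\eqref{EperCflat} over $\supp \eta \cap \Sigma$, I expect to obtain
\[
\iint \bigl( - \mathcal{J}(x,y) \bigr) \, d\mu \le C R^{n - 2s}.
\]

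The main technical step is then a pointwise algebraic inequality of the form
\[
- \mathcal{J}(x,y) \ge c \min\{\eta(x), \eta(y)\}^2 \bigl( V(x) - V(y) \bigr)^2 - C \bigl(\eta(x) - \eta(y)\bigr)^2,
\]
valid for all $x, y \in \Sigma$. To establish it, I will rewrite $-\mathcal{J}$ in the symmetric form $2 S (\cosh u - 1) + 2 D \sinh u$, where $S := (\eta(x)^2 + \eta(y)^2)/2$ and $D := (\eta(x)^2 - \eta(y)^2)/2$, and exploit $\cosh u - 1 \ge u^2/2$ together with the identity $\sinh^2 u = (\cosh u - 1)(\cosh u + 1)$ to absorb the cross-term $2 D \sinh u$ via a Young inequality, at the cost of a remainder controlled by $(\eta(x) - \eta(y))^2$. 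This is the main obstacle, since one must avoid picking up uncontrolled factors of $e^{|u|}$ on the error side; it is precisely the identity $\sinh^2 u / (\cosh u - 1) = \cosh u + 1$ that makes the Young splitting work here.

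Integrating the pointwise inequality and using Corollary~\ref{etasemilem} to bound $\iint (\eta(x) - \eta(y))^2 \, d\mu \le C R^{n - 2s}$, I deduce
\[
[V]_{H^s(\Sigma \cap B_R)}^2 \le C R^{n - 2s}.
\]
The Poincar\'e inequality of Proposition~\ref{poinineprop} (applied with $p = 2$) then gives $\| V - (V)_{\Sigma \cap B_R} \|_{L^2(\Sigma \cap B_R)}^2 \le C R^n$, and Jensen's inequality combined with the density lower bound in~\eqref{EperCflat} finally produces the desired $L^1$ estimate~\eqref{logine}.
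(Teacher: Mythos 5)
Your overall architecture is sound and closely parallels the paper's: test the weak formulation~\eqref{weakdefine} with $\varphi = \eta^2/w$, extract $[\log w]_{H^s(\Sigma\cap B_R)}^2 \le C R^{n-2s}$, and conclude via Proposition~\ref{poinineprop} and Jensen. The claimed pointwise inequality
\[
-\mathcal{J}(x,y) \ge c\,\min\{\eta(x),\eta(y)\}^2\,\bigl(V(x)-V(y)\bigr)^2 - C\,\bigl(\eta(x)-\eta(y)\bigr)^2
\]
is in fact true with universal $c,C$, but the argument you sketch for it does not close. Writing $-\mathcal{J} = 2S(\cosh u -1) + 2D\sinh u$ with $S=(\eta_x^2+\eta_y^2)/2$, $D=(\eta_x^2-\eta_y^2)/2$, every Young-type splitting of $2|D|\,|\sinh u| = 2|D|\sqrt{(\cosh u -1)(\cosh u +1)}$ leaves a remainder of the form $D^2(\cosh u+1)/\lambda$. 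No choice of the Young weight $\lambda$ makes this $\le C(\eta_x-\eta_y)^2$ uniformly in $u$: the factor $\cosh u + 1 \sim e^{|u|}$ is unbounded, the term $2S(\cosh u -1)$ available to absorb $\lambda(\cosh u-1)$ is of the very same exponential order, and (since $w$ is not assumed bounded in this lemma) there is no a priori bound on $e^{|u|}$. The $\sinh^2$ identity relocates the $e^{|u|}$ factor; it does not remove it. This is precisely the contamination you flag as the ``main obstacle.'' (In passing, $\mathcal{J}$ is not pointwise non-positive in general---take $\eta(x)=1$, $\eta(y)=0$, $w(y)<w(x)$.)

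The pointwise bound can nevertheless be proven, but by an elementary case split rather than a single Young step: reduce by symmetry to $\eta_x\le\eta_y$, write $-\mathcal{J} = 2\eta_x^2(\cosh u -1) + (\eta_y^2-\eta_x^2)(e^{-u}-1)$, and use $0 \le 1-e^{-u}\le \min\{1,u\}$ for $u\ge 0$. For $|u|\le 1$ one absorbs $2\eta_x(\eta_y-\eta_x)u$ into $\eta_x^2 u^2$ by Young and bounds the rest by $(\eta_y-\eta_x)^2$; for $|u|>1$ one uses $2(\cosh u -1)-u^2/2 \ge u^2/2 > 1/2$ together with a further Young step. This yields, e.g., $c=1/2$, $C=3$. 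The paper avoids all of this by a structurally cleaner route: it uses the exact algebraic identity
\[
(w(x)-w(y))\Bigl(\tfrac{\eta^2(x)}{w(x)}-\tfrac{\eta^2(y)}{w(y)}\Bigr)
= (\eta(x)-\eta(y))^2 - \tfrac{|\eta(y)w(x)-\eta(x)w(y)|^2}{w(x)w(y)},
\]
so that no absorption is needed at all---the $(\eta(x)-\eta(y))^2$ part is estimated by Corollary~\ref{etasemilem}, one restricts to $B_R\times B_R$ where $\eta\equiv 1$ to get $\iint_{B_R\times B_R} |w(x)-w(y)|^2/(w(x)w(y))\,d\mu \le C R^{n-2s}$, and the logarithm only enters at the very end through the separate elementary inequality $(\log a - \log b)^2 \le (a-b)^2/(ab)$ of Lemma~\ref{lognumericlem}. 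You should either adopt that decomposition or replace your Young/$\sinh^2$ step by the case analysis above.
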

\begin{proof}
Without loss of generality, we may restrict ourselves to the case~$R = 1$. Take a cutoff~$\eta \in C^\infty(\R^{n + 1})$ such that~$0 \le \eta \le 1$ in~$\R^{n + 1}$,~$\supp(\eta) \subseteq B_{3 / 2}$,~$\eta = 1$ in~$B_1$, and~$|\nabla \eta| \le 4$ in~$\R^{n + 1}$. Testing inequality~\eqref{weakdefine} with the function~$\varphi=\eta^2 w^{-1}$ and arguing as we did at the beginning of the proof of Lemma~\ref{genisweaklem}, one obtains that
\begin{equation} \label{logesteq1}
\int_{\Sigma \cap B_1} \int_{\Sigma \cap B_1} \frac{|w(x) - w(y)|^2}{w(x) w(y)} \frac{d\sigma(x) \, d\sigma(y)}{|x - y|^{n + 2 s}} \le C,
\end{equation}
for some constant~$C$ depending only on the quantities declared in the statement. Note that, instead of proceeding as in~\eqref{RHSest}, to estimate the right-hand side and zeroth order term of~\eqref{weakdefine} one needs to take advantage of~\eqref{wgedRn} and~\eqref{EperCflat}, obtaining
\begin{align*}
\int_{\Sigma} \left( R_0^{- 2 s} b_* w(x) + d \right) \frac{\eta^2(x)}{w(x)} \, d\sigma(x) & \le (b_* + 1) \int_{\Sigma \cap B_2} \eta^2(x) \, d\sigma(x) \le C.
\end{align*}

Due to Lemma~\ref{lognumericlem}, it follows from~\eqref{logesteq1} that
$$
[\log w]_{H^{s}(\Sigma \cap B_1)} \le C.
$$
Using Jensen's inequality, the left bound in~\eqref{EperCflat}, and Proposition~\ref{poinineprop}, we then deduce that
\begin{align*}
\dashint_{\Sigma \cap B_1} \left| \log w(x) - \left( \log w \right)_{\Sigma \cap B_1} \right| d\sigma(x) & \le \frac{\| \log w - (\log w)_{\Sigma \cap B_1} \|_{L^2(\Sigma \cap B_1)}}{\Haus^{n}(\Sigma \cap B_1)^{\frac{1}{2}}} \\
& \le C [\log w]_{H^{s}(\Sigma \cap B_1)} \le C.
\end{align*}
Thus,~\eqref{logine} is established.
\end{proof}

Finally, we connect small positive powers of~$w$ with the possibly larger power~$p$ appearing in the statement of Theorem~\ref{weakharthm}. Moreover, we obtain a bound from below for the~$L^1$ norm of~$w$ in terms of its tail, i.e.,~a quantity like the second term on the right-hand side of the conclusion of Theorem~\ref{weakharthm}.

\begin{lemma} \label{q<1lem}
Let~$p_1 \in (0, \theta)$ be given, with~$\theta > 1$ as in~\eqref{thetadef}. Let~$w$ be a supersolution of~\eqref{Lv=0} in~$\Sigma \cap B_R$, for some~$R \in (0, R_0 / 2)$, satisfying
\begin{equation} \label{wgedglobal}
w \ge R^{2 s} d + \varepsilon \quad \mbox{in } \Sigma \cap \Omega
\end{equation}
for some~$\varepsilon > 0$. 

Then, there exist two constants~$\gamma_2 > 0$ and~$C > 0$ such that, for every
\begin{equation} \label{pp0bound}
p \in \left(0, \frac{p_1}{\theta} \right]
\end{equation}
and every~$r \in \left[R/2, R \right)$, it holds
\begin{equation} \label{q<1ine}
\Bigg( \dashint_{\Sigma \cap B_{r}} w^{p_1}(x) \, d\sigma(x) \Bigg)^{\frac{1}{p_1}} \le \left\{ C \left( \frac{R}{R - r} \right)^{\gamma_2} \right\}^{\frac{1}{p} - \frac{1}{p_1}} \Bigg( \dashint_{\Sigma \cap B_R} w^{p}(x) \, d\sigma(x) \Bigg)^{\frac{1}{p}}.
\end{equation}
The constant~$\gamma_2$ depends only on~$n$ and~$s$, while~$C$ only on~$n$,~$s$,~$c_\star$,~$C_\star$,~$\Lambda$,~$p_1$, and~$b_*$. Furthermore,
\begin{equation} \label{q<1inebonus}
\inf_{x \in \Sigma \cap B_{R / 2}} \int_{\Sigma \setminus B_R} w(y) K(x, y) \, d\sigma(y) \le \frac{C'}{R^{2 s}} \, \dashint_{\Sigma \cap B_R} w(x) \, d\sigma(x),
\end{equation}
for some constant~$C'$ depending only on~$n$,~$s$,~$c_\star$,~$C_\star$,~$\Lambda$, and~$b_*$.
\end{lemma}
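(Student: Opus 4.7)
The statement splits into two assertions which I would prove independently: a Moser iteration along positive powers of $w$ for the reverse-H\"older estimate \eqref{q<1ine}, and a direct test-function computation for the tail bound \eqref{q<1inebonus}.

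For \eqref{q<1ine}, I would test \eqref{weakdefine} against the non-negative function $\varphi = \eta^{2} w^{q-1}$ for exponents $q > 0$ with $q \ne 1$, mimicking the proof of Lemma~\ref{q>1lem}. The standard pointwise inequality
\[
(a-b)(a^{q-1}-b^{q-1})\, \sgn(q-1) \ge c_q \,|a^{q/2}-b^{q/2}|^2,
\]
combined with the handling of the $\eta$-cross-terms via Corollary~\ref{etasemilem} and the absorption of the zeroth-order contributions using the hypothesis \eqref{wgedglobal} (so that $d\cdot w^{q-1} \le w^q/R^{2s}$) exactly as in \eqref{p>1tech1}--\eqref{p>1tech4}, yields a Caccioppoli-type bound
\[
[\eta \, w^{q/2}]_{H^s(\Sigma \cap B_t)}^2 \le \frac{C\,q^2}{(t-\tau)^{2s}} \int_{\Sigma \cap B_t} w^q \, d\sigma.
\]
Feeding this into the Sobolev estimate \eqref{sobproharnack} applied to $\eta w^{q/2}$ upgrades the Lebesgue exponent from $q$ to $q\theta$. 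I would then iterate starting from $q_0 = p$ along the geometric sequence $q_j = p\theta^j$ on shrinking radii $r_j = r + 2^{-j}(R-r)$, stop at the first index $j_\star$ with $q_{j_\star} \ge p_1$, and apply H\"older's inequality on $\Sigma \cap B_r$ to descend from $L^{q_{j_\star}}$ to $L^{p_1}$. Summing the constants via the geometric series $\sum_j 1/q_j \asymp \theta/(\theta-1) \cdot (1/p - 1/p_1)$ produces the sharp dependence $C^{1/p - 1/p_1}(R/(R-r))^{\gamma_2(1/p-1/p_1)}$ claimed in \eqref{q<1ine}.

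For \eqref{q<1inebonus}, I would test \eqref{weakdefine} with $\varphi = \eta^2$, where $\eta \equiv 1$ on $\Sigma \cap B_{R/2}$ and $\supp \eta \subset B_{3R/4}$, and split the resulting double integral into an interior piece on $(\Sigma \cap B_{3R/4})^2$ and a boundary piece on $(\Sigma \cap B_{3R/4}) \times (\Sigma \setminus B_{3R/4})$. Using $\eta \equiv 0$ outside $B_{3R/4}$, Fubini, and the symmetry of $K$, the boundary piece rewrites as
\[
2 \int_{\Sigma \cap B_{3R/4}} \eta^2 w\, T_1^* \, d\sigma - 2 \int_{\Sigma \cap B_{3R/4}} \eta^2 \, T_3^* \, d\sigma,
\]
where $T_1^*(x) := \int_{\Sigma \setminus B_{3R/4}} K(x,y)\,d\sigma(y)$ and $T_3^*(y) := \int_{\Sigma \setminus B_{3R/4}} w(x) K(x,y)\,d\sigma(x)$. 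For $x \in \supp \eta^2$, Lemma~\ref{kerL1overredElem} gives $T_1^*(x) \le C/R^{2s}$, so the first integral is $\le C R^{-2s}\int_{\Sigma \cap B_R} w$. Moreover $T_3^*(y) \ge T_2(y)$ for $y \in \Sigma \cap B_{R/2}$, and the density lower bound in \eqref{EperCflat} forces the second integral to be $\ge c R^n \inf_{\Sigma \cap B_{R/2}} T_2$. The interior piece is recognized via the symmetry of $K$ as $2\int_\Sigma w\,(-\L_K \eta^2)\, d\sigma$, and the smoothness of $\eta^2$ together with Lemma~\ref{kerL1overredElem} yield $|(-\L_K \eta^2)(x)| \le C/R^{2s}$ away from the singular set of $\Sigma$, controlling this contribution by $CR^{-2s}\int w$. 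Disposing of the $b$- and $f$-contributions via \eqref{bgeb*}--\eqref{fged} and the bound $R^{2s}d \le w$, and dividing by $\Haus^n(\Sigma \cap B_R) \asymp R^n$, yields \eqref{q<1inebonus}.

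The main technical obstacle is the degeneracy of the Caccioppoli constant $c_q$ as $q \to 1$, which forces the Moser iteration in the first part to avoid the critical value; since the sequence $q_j = p\theta^j$ depends continuously on $p$, one perturbs $p$ slightly so that no $q_j$ coincides with $1$, and the final estimate is stable under this perturbation. A secondary subtlety is making the principal-value manipulations of $-\L_K\eta^2$ in the tail argument rigorous despite $\Sigma$ being only Ahlfors-regular, a point that I would address by working inside the truncated integrals defining \eqref{weakdefine} and performing the rearrangement before passing to the limit.
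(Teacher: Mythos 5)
Your proof of the reverse-H\"older estimate \eqref{q<1ine} follows essentially the same route as the paper: test \eqref{weakdefine} with $\varphi = \eta^2 w^{q-1}$ for $q \in (0,1)$ (the paper writes this as $\eta^2 w^{-q'}$ with $q' = 1-q$), obtain a Caccioppoli bound for $[\eta\,w^{q/2}]_{H^s}$ via the numerical inequality of Lemma~\ref{numericlem}, upgrade with \eqref{sobproharnack}, and iterate. One bookkeeping point you should watch: the sequence $q_j = p\theta^j$ stopped at the first $j_\star$ with $q_{j_\star} \ge p_1$ allows an intermediate exponent $q_{j_\star - 1}$ to exceed $1$ (e.g.\ $\theta=2$, $p_1=1.9$, $p=0.6$ gives $q_1 = 1.2$), so the corresponding test function $\eta^2 w^{q_{j_\star-1}-1}$ carries a \emph{positive} power of $w$ and is no longer an admissible bounded test function. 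Your ``perturb $p$'' fix guards against hitting $q_j = 1$ exactly, but not against $q_j > 1$. The paper sidesteps both pitfalls by indexing from the top, $p_k := \theta^{-k+1}p_1$, so that $p_{k+1} \le p_1/\theta < 1$ at every step, and bridges from $p$ to $p_{\bar k + 1}$ by H\"older at the end.

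Your proof of the tail estimate \eqref{q<1inebonus} takes a genuinely different route and contains a real gap. After testing with $\varphi = \eta^2$ you are left with the symmetric interior piece $\frac{1}{2}\iint_{(\Sigma\cap B_{3R/4})^2}(w(x)-w(y))(\eta^2(x)-\eta^2(y))\,d\mu$, which you propose to control via the pointwise bound $|(-\L_K\eta^2)(x)| \le C/R^{2s}$. This bound is not available here. Lemma~\ref{kerL1overredElem} controls only the far-field tail $\int_{\Sigma\setminus B_r(x)}|y-x|^{-n-2s}\,d\sigma(y)$; the near-singularity part of the principal value, estimated crudely by $|\eta^2(x)-\eta^2(y)| \lesssim R^{-1}|x-y|$, yields $\int_{B_r(x)}|y-x|^{1-n-2s}\,d\sigma(y)$, which is \emph{not} integrable when $2s > 1$. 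Recovering convergence requires a second-order Taylor expansion and a tangent-plane cancellation, which is unavailable on a set $\Sigma$ that is only Ahlfors regular, and which, even on the smooth part of an $\alpha$-minimal surface, would introduce constants depending on the (a priori uncontrolled) local curvature of $\Sigma$. This is exactly the obstruction the paper flags in Subsection~\ref{proofoutlinesub}: for $s > 1/2$ one cannot bound $\L_{\Sigma,s}$ applied to smooth barriers with estimates independent of the geometry of $\Sigma$, which is why the simpler barrier method of~\cite{S06,RS16} fails and a Moser iteration is needed. The paper instead extracts the tail estimate from the \emph{weighted} test function $\varphi = \eta^2 w^{-q}$: the term $I^{(22)}$ in its proof (entering the key estimate \eqref{q<1tech2.5}) already carries $\|w^{-q}\|_{L^1(\Sigma\cap B_\tau)}$ multiplying the tail, and choosing $\tau = 1/2$, $t = 1$, $q = 1/2$, discarding the Sobolev seminorm, invoking Lemma~\ref{q>1lem} to convert $\|w^{-1/2}\|_{L^1}^{-1}$ into $\inf_{\Sigma\cap B_{1/4}} w^{1/2}$, and applying H\"older gives \eqref{q<1inebonus}. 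Both the $w^{-q}$ weight in the test function and the minimum principle of Lemma~\ref{q>1lem} are essential and absent from your argument.
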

\begin{proof}
By scaling, we may assume~$R = 1$. Let~$1/2 \le \tau < t \le 1$ be two radii and~$\eta \in C^\infty(\R^{n + 1})$ be a cutoff function satisfying~$0 \le \eta \le 1$ in~$\R^{n + 1}$, $\supp(\eta) \subset B_{(2 t + \tau) / 3}$,~$\eta = 1$ in~$B_{(t + 2 \tau) / 3}$, and~$|\nabla \eta| \le 6 / (t - \tau)$ in~$\R^{n + 1}$. Let~$q \in (0, 1)$.

By testing~\eqref{weakdefine} with the non-negative function~$\varphi = \eta^2 w^{-q}$ and taking advantage of~\eqref{Ksymm}, we obtain that
\begin{equation} \label{q<1tech0}
I^{(1)} + 2 I^{(2)} + D \ge 0,
\end{equation}
where
\begin{align*}
I^{(1)} := & \, \int_{\Sigma \cap B_t} \int_{\Sigma \cap B_t} \left( w(x) - w(y) \right) \left( \frac{\eta^2(x)}{w^{q}(x)} - \frac{\eta^2(y)}{w^{q}(y)} \right) d\mu, \\
I^{(2)} := & \, \int_{\Sigma \cap B_t} \frac{\eta^2(x)}{w^{q}(x)} \left( \int_{\Sigma \setminus B_t} \left( w(x) - w(y) \right) K(x, y) \, d\sigma(y) \right) d\sigma(x),
\end{align*}
and
$$
D := 2 \int_{\Sigma} \left( R_0^{- 2 s} b_* w(x) + d \right) \frac{\eta^2(x)}{w^q(x)} \, d\sigma(x).
$$

We begin by dealing with~$I^{(1)}$. Taking advantage of Lemma~\ref{numericlem}, we have that
\begin{align*}
I^{(1)} & \le - \frac{q}{2} \int_{\Sigma \cap B_t} \int_{\Sigma \cap B_t} \left| w^{\frac{1 - q}{2}}(x) - w^{\frac{1 - q}{2}}(y) \right|^2 \min \{ \eta(x), \eta(y) \}^2 \, d\mu \\
& \quad + \frac{4}{q} \int_{\Sigma \cap B_t} \int_{\Sigma \cap B_t} \max \{ w(x), w(y) \}^{1 - q} \left| \eta(x) - \eta(y) \right|^2 \, d\mu.
\end{align*}
On the one hand, by taking advantage of hypotheses~\eqref{Kell}-\eqref{0inOmega}, and applying Corollary~\ref{etasemilem}, we estimate
\begin{align*}
& \int_{\Sigma \cap B_t} \int_{\Sigma \cap B_t} \max \{ w(x), w(y) \}^{1 - q} \left| \eta(x) - \eta(y) \right|^2 \, d\mu \\
& \hspace{60pt} \le 2 \Lambda \int_{\Sigma \cap B_t} w^{1 - q}(x) \left(\int_{\Sigma \cap \Omega} \frac{\left| \eta(x) - \eta(y) \right|^2}{|x - y|^{n + 2 s}} \, d\sigma(y) \right) d\sigma(x) \\
& \hspace{60pt} \le C \, \frac{\| w^{1 - q} \|_{L^1(\Sigma \cap B_t)}}{(t - \tau)^{2 s}},
\end{align*}
where, from here on,~$C \ge 1$ depends only on~$n$,~$s$,~$c_\star$,~$C_\star$,~$\Lambda$, and~$b_*$. On the other hand,~\eqref{Kell} yields
\begin{align*}
& \int_{\Sigma \cap B_t} \int_{\Sigma \cap B_t} \left| w^{\frac{1 - q}{2}}(x) - w^{\frac{1 - q}{2}}(y) \right|^2 \min \{ \eta(x), \eta(y) \}^2 \, d\mu \ge \Lambda^{-1} \left[ w^{\frac{1 - q}{2}} \right]_{H^{s} \left( \Sigma \cap B_{ \frac{t + 2 \tau}{3}} \right)}^2.
\end{align*}
By the last three inequalities, we deduce that
\begin{equation} \label{q<1tech2}
I^{(1)} \le - \frac{q}{C} \left[ w^{\frac{1 - q}{2}} \right]_{H^s \left( \Sigma \cap B_{ \frac{t + 2 \tau}{3}} \right)}^2 + \frac{C q^{-1}}{{(t - \tau)^{2 s}}} \, \| w^{1 - q} \|_{L^1(\Sigma \cap B_t)}.
\end{equation}

We now look at the term~$I^{(2)}$. We split it as~$I^{(2)} = I^{(2 1)} - I^{(2 2)}$, with
\begin{align*}
I^{(2 1)} & := \int_{\Sigma \cap B_t} \eta^2(x) w^{1 - q}(x) \left( \int_{\Sigma \setminus B_t} K(x, y) \, d\sigma(y) \right) d\sigma(x), \\
I^{(2 2)} & := \int_{\Sigma \cap B_t} \frac{\eta^2(x)}{w^{q}(x)} \left( \int_{\Sigma \setminus B_t} w(y) K(x, y) \, d\sigma(y) \right) d\sigma(x).
\end{align*}
To bound~$I^{(2 1)}$, we use~\eqref{Kell}-\eqref{0inOmega} and Lemma~\ref{kerL1overredElem}. We obtain
\begin{align*}
I^{(2 1)} & \le \Lambda \int_{\Sigma \cap B_{(2 t + \tau) / 3}} \eta^2(x) w^{1 - q}(x) \left( \int_{(\Sigma \cap \Omega) \setminus B_{(t - \tau) / 3}(x)} \frac{ d\sigma(y) }{|y - x|^{n + 2 s}} \right) d\sigma(x) \\
& \le \frac{C}{(t - \tau)^{2 s}} \, \| w^{1 - q} \|_{L^1(\Sigma \cap B_t)}.
\end{align*}
On the other hand, using that, by~\eqref{wgedglobal},~$w \ge \varepsilon > 0$ in~$\Sigma \cap \Omega$ and, once again, hypotheses~\eqref{Ksupp} and~\eqref{0inOmega},
\begin{align*}
I^{(2 2)} & \ge \int_{\Sigma \cap B_{(t + 2 \tau) / 3}} w^{-q}(x) \left( \int_{\Sigma \setminus B_t} w(y) K(x, y) \, d\sigma(y) \right) d\sigma(x) \\
& \ge \| w^{- q} \|_{L^1(\Sigma \cap B_\tau)} \left( \inf_{x \in \Sigma \cap B_\tau} \int_{\Sigma \setminus B_t} w(y) K(x, y) \, d\sigma(y) \right).
\end{align*}
The combination of the last two inequalities gives that
$$
I^{(2)} \le C \, \frac{\| w^{1 - q} \|_{L^1(\Sigma \cap B_t)}}{(t - \tau)^{2 s}} - \| w^{- q} \|_{L^1(\Sigma \cap B_\tau)} \left( \inf_{x \in \Sigma \cap B_\tau} \int_{\Sigma \setminus B_t} w(y) K(x, y) \, d\sigma(y) \right).
$$

To estimate~$D$, we simply use~\eqref{wgedglobal} to deduce
$$
D \le 2 (b_* + 1) \int_{\Sigma} \frac{\eta^2(x)}{w^{q - 1}(x)} \, d\sigma(x) \le \frac{C}{(t - \tau)^{2 s}} \, \| w^{1 - q} \|_{L^1(\Sigma \cap B_t)}.
$$

By putting together the last two bounds with~\eqref{q<1tech0} and~\eqref{q<1tech2}, we get
\begin{equation} \label{q<1tech2.5}
\begin{aligned}
& q \left[ w^{\frac{1 - q}{2}} \right]_{H^{s} \left( \Sigma \cap B_{ \frac{t + 2 \tau}{3}} \right)}^2 + \| w^{- q} \|_{L^1(\Sigma \cap B_\tau)} \left( \inf_{x \in \Sigma \cap B_\tau} \int_{\Sigma \setminus B_t} w(y) K(x, y) \, d\sigma(y) \right) \\
& \hspace{250pt} \le \frac{C q^{- 1}}{(t - \tau)^{2 s}} \, \| w^{1 - q} \|_{L^1(\Sigma \cap B_t)}.
\end{aligned}
\end{equation}

At this stage (but not at a further one), we discard the second term on the left-hand side of~\eqref{q<1tech2.5} and we infer, with the aid of inequality~\eqref{sobproharnack}, that
\begin{equation} \label{q<1tech3}
\| w^{1 - q} \|_{L^\theta(\Sigma \cap B_\tau)} \le \frac{C}{q^2 (t - \tau)^{2 s}} \, \| w^{1 - q} \|_{L^1(\Sigma \cap B_t)}
\end{equation}
for every~$1/2 \le \tau < t \le 1$. Let~$p$ and~$p_1$ be as in the statement, and let~$\bar{k}$ be the positive integer for which
\begin{equation} \label{kbardef}
p_1 \theta^{- \bar{k}} \le p < p_1 \theta^{- \bar{k} + 1}.
\end{equation}
We consider the two finite sequences~$\{ p_k \}$ and~$\{ r_k \}$ defined by
$$
p_k := \theta^{-k + 1} p_1 \quad \mbox{and} \quad r_k := r + 2^{k - \bar{k} - 1} (1 - r)
$$
for every integer~$k = 1, \ldots, \bar{k} + 1$. Applying~\eqref{q<1tech3} with~$\tau = r_k$,~$t = r_{k + 1}$,~$q = 1 - p_{k + 1}$ and recalling hypothesis~\eqref{EperCflat}, it is not hard to see that, for every~$k = 1, \ldots, \bar{k}$,
$$
\left( \dashint_{\Sigma \cap B_{r_k}} w^{p_k}(x) \, d\sigma(x) \right)^{\frac{1}{p_k}} \le \lambda_k \left( \dashint_{\Sigma \cap B_{r_{k + 1}}} w^{p_{k + 1}}(x) \, d\sigma(x) \right)^{\frac{1}{p_{k + 1}}},
$$
with
$$
\lambda_k := \left\{ \frac{2^{2 s (\bar{k} - k)} C}{(1 - p_{k + 1})^2 (1 - r)^{2 s}} \right\}^{\frac{1}{p_{k + 1}}}.
$$
Therefore,
$$
\Bigg( \dashint_{\Sigma \cap B_{r_1}} w^{p_1}(x) \, d\sigma(x) \Bigg)^{\frac{1}{p_1}} \le \left( \prod_{k = 1}^{\bar{k}} \lambda_k \right) \Bigg( \dashint_{\Sigma \cap B_1} w^{p_{\bar{k} + 1}}(x) \, d\sigma(x) \Bigg)^{\frac{1}{p_{\bar{k} + 1}}}.
$$
Thus, using~\eqref{EperCflat} once again and, possibly, H\"older's inequality, we obtain that
\begin{equation} \label{q<1tech3.5}
\Bigg( \dashint_{\Sigma \cap B_r} w^{p_1}(x) \, d\sigma(x) \Bigg)^{\frac{1}{p_1}} \le C \left( \prod_{k = 1}^{\bar{k}} \lambda_k \right) \Bigg( \dashint_{\Sigma \cap B_1} w^{p}(x) \, d\sigma(x) \Bigg)^{\frac{1}{p}},
\end{equation}
since~$1/2 \le r \le r_1 \le 1$ and~$p_{\bar{k} + 1} \le p$, by the way~$\bar{k}$ was chosen in~\eqref{kbardef}.

As it can be easily checked,~\eqref{kbardef} and~\eqref{pp0bound} lead to
$$
\frac{1}{p_{\bar{k} + 1}} - \frac{1}{p_1} < \frac{\theta}{p} - \frac{1}{p_1} \le (\theta + 1) \left( \frac{1}{p} - \frac{1}{p_1} \right).
$$
Thanks to this inequality, we have
\begin{equation} \label{q<1tech4}
\sum_{k = 1}^{\bar{k}} \frac{1}{p_{k + 1}} = \frac{1}{p_1} \sum_{k = 1}^{\bar{k}} \theta^{k} = \frac{1}{p_1} \frac{\theta (\theta^{\bar{k}} - 1)}{\theta - 1} = \frac{\theta}{\theta - 1} \left( \frac{1}{p_{\bar{k} + 1}} - \frac{1}{p_1} \right) \le \frac{\gamma_2}{2 s} \left( \frac{1}{p} - \frac{1}{p_1} \right),
\end{equation}
where we set~$\gamma_2 := 2 \theta (\theta + 1) s / (\theta - 1)$. On the other hand, a simple but tedious computation reveals that
$$
2 s \sum_{k = 1}^{\bar{k}} \frac{\bar{k} - k}{p_{k + 1}} \le \frac{2 \theta s}{(\theta - 1)^2} \left( \frac{1}{p_{\bar{k} + 1}} - \frac{1}{p_1} \right) \le \frac{\gamma_2}{\theta - 1} \left( \frac{1}{p} - \frac{1}{p_1} \right).
$$
Here the first inequality can be checked by induction on~$\bar{k}$, for instance. Thanks to this,~\eqref{q<1tech4}, and the inequality~$1 - p_{k + 1} \ge 1 - p_2 = (\theta - p_1)/\theta$ for every~$k = 1, \ldots, \bar{k}$, we have
$$
\prod_{k = 1}^{\bar{k}} \lambda_k \le \left\{ \frac{C \, \theta^2}{\left( \theta - p_1 \right)^2 (1 - r)^{2 s}} \right\}^{\sum\limits_{k = 1}^{\bar{k}} \frac{1}{p_{k + 1}} } 2^{2 s \sum\limits_{k = 1}^{\bar{k}} \frac{\bar{k} - k}{p_{k + 1}} } \le \left\{ \frac{C}{\left( \theta - p_1 \right)^{\frac{\gamma_2}{s}} {(1 - r)}^{\gamma_2}} \right\}^{\frac{1}{p} - \frac{1}{p_1}}.
$$
By combining this with~\eqref{q<1tech3.5}, we arrive at estimate~\eqref{q<1ine}, provided~$C$ is taken to depend also on~$p_1$.

We now go back to~\eqref{q<1tech2.5}. By dropping this time the first term on its left-hand side, and applying it with~$\tau = 1/2$,~$t = 1$, and~$q = 1/2$, we find that
$$
\inf_{x \in \Sigma \cap B_{1 / 2}} \int_{\Sigma \setminus B_1} w(y) K(x, y) \, d\sigma(y) \le C' \, \| w^{1/2} \|_{L^1(\Sigma \cap B_1)}\| w^{- 1/2} \|_{L^1(\Sigma \cap B_{1/2})}^{-1}.
$$
By taking advantage of Lemma~\ref{q>1lem}, hypothesis~\eqref{EperCflat}, and H\"older's inequality, we then obtain
\begin{align*}
\inf_{x \in \Sigma \cap B_{1 / 2}} \int_{\Sigma \setminus B_1} w(y) K(x, y) \, d\sigma(y) & \le C' \, \| w^{1/2} \|_{L^1(\Sigma \cap B_1)} \inf_{\Sigma \cap B_{1 / 4}} w^{1/2} \\
& \le C' \, \| w^{1/2} \|_{L^1(\Sigma \cap B_1)}^2 \le C' \, \| w \|_{L^1(\Sigma \cap B_1)},
\end{align*}
which gives~\eqref{q<1inebonus}.
\end{proof}

We are now in position to put together the last three lemmas and conclude Theorem~\ref{weakharthm}. To do this, we need one more ingredient: an abstract lemma from~\cite{S-C02} that plays the role of the classical John-Nirenberg inequality originally used by Moser in~\cite{M61}. Note that we use this lemma in place of, say,~\cite[Theorem~4]{BG72}, due to the slightly better flexibility of the former. Indeed, the result of~\cite{S-C02} allows for the presence of different exponents ($\gamma_1$ versus~$\gamma_2$) in the constants that govern the estimates of Lemmas~\ref{q>1lem} and~\ref{q<1lem}. See also~\cite{AMPP16} for a variant of this abstract lemma set in a nonlocal parabolic framework.

We include next the statement of this abstract result for the convenience of the reader. Note that, in our setting, the space~$X$ will be the surface~$\Sigma$ endowed with the measure~$\mu = \Haus^n \llcorner \Sigma$ and~$\{ U_\sigma \}$ the intersection of~$\Sigma$ with ambient balls of~$\R^{n + 1}$.

\begin{lemma}[{\cite[Lemma~2.2.6]{S-C02}}] \label{JNlem}
Let~$(X, \mu)$ be a measure space and~$\{ U_\sigma \}_{\sigma \in [1, 2]}$ be a collection of measurable subsets of~$X$ satisfying~$U_{\sigma'} \subseteq U_{\sigma}$ for every~$1 \le \sigma' \le \sigma \le 2$. Let~$C_\sharp > 0$,~$\gamma > 0$, and~$0 < q_0 \le q_1 \le +\infty$.

Let~$f: U_2 \to \R$ be a positive measurable function satisfying
\begin{equation} \label{fcond1}
\| f \|_{L^{q_1}(U_{\sigma'}; \, d\mu)} \le \left\{ \frac{C_\sharp \, \mu(U_2)^{-1}}{(\sigma - \sigma')^\gamma} \right\}^{\frac{1}{q} - \frac{1}{q_1}} \| f \|_{L^q(U_\sigma; \, d\mu)}
\end{equation}
for every~$1 \le \sigma' < \sigma \le 2$ and every~$0 < q \le \min \{ 1, q_1/2 \}$. Assume further that
\begin{equation} \label{fcond2}
\mu \left( \left\{ \log f > \lambda \right\} \right) \le \frac{C_\sharp \, \mu(U_2)}{\lambda}
\quad \mbox{for every } \lambda > 0.
\end{equation}

Then,
$$
\| f \|_{L^{q_1}(U_1; \, d\mu)} \le C \, \mu(U_2)^{\frac{1}{q_1}}
$$
for some constant~$C$ depending only on~$C_\sharp$,~$\gamma$, and~$q_0$.
\end{lemma}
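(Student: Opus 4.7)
The plan is to combine the self-improving reverse-Hölder family~\eqref{fcond1} with the weak-$L^1$ distributional bound~\eqref{fcond2} through a two-stage argument, bootstrapping first to a positive $L^{q_0}$ moment and then jumping up to $L^{q_1}$ by a single application of~\eqref{fcond1}.

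\textbf{Stage 1 (the hard step): a uniform low-moment bound.} The goal is to produce a fixed $q_0 \in (0, \min\{1, q_1/2\}]$ and a constant $C = C(C_\sharp, \gamma, q_0)$, such that
\[
\| f \|_{L^{q_0}(U_{3/2}; \, d\mu)} \le C \, \mu(U_2)^{1/q_0}.
\]
The idea is to compare the two upper bounds one has on the super-level sets of $\log f$. On the one hand, Chebyshev combined with~\eqref{fcond1} applied between $\sigma = 2$ and a slightly smaller $\sigma' \in (3/2, 2)$ gives, for any admissible $q$,
\[
\mu(\{ f > e^\lambda \} \cap U_{\sigma'}) \le e^{- q_1 \lambda} \left\{ \frac{C_\sharp \, \mu(U_2)^{-1}}{(\sigma - \sigma')^\gamma} \right\}^{\frac{q_1}{q} - 1} \| f \|_{L^q(U_\sigma; \, d\mu)}^{q_1}.
\]
On the other hand,~\eqref{fcond2} provides the competing $\mu(\{ \log f > \lambda \}) \le C_\sharp \mu(U_2) / \lambda$. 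Using the layer-cake representation
\[
\int_{U_{\sigma'}} f^{q_0} \, d\mu \le \mu(U_2) + q_0 \int_0^{+\infty} e^{q_0 \lambda} \mu(\{ f > e^\lambda \} \cap U_{\sigma'}) \, d\lambda,
\]
breaking the $\lambda$-integral at the crossover $\lambda_*$ where the two distribution bounds coincide---so that $\lambda_* \approx q_1^{-1} \log \bigl( q_1 \Lambda_q \lambda_* \bigr)$ with $\Lambda_q := \| f \|_{L^q(U_2; \, d\mu)}^{q_1} / \mu(U_2)$---and choosing $q_0$ small enough that $q_0 < q_1$, one obtains an inequality of the shape
\[
\| f \|_{L^{q_0}(U_{\sigma'}; \, d\mu)}^{q_0} \le C_1 \mu(U_2) + C_2 (\sigma - \sigma')^{-\gamma_*} \mu(U_2)^{1 - q_0/q_1} \| f \|_{L^{q}(U_\sigma; \, d\mu)}^{q_0},
\]
for some new exponent $\gamma_* > 0$. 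Specializing to $q = q_0$ and applying this estimate along a dyadic sequence $\sigma_k \downarrow 3/2$ of radii in $[3/2, 2]$, a standard \textit{swallowing} argument (in the spirit of Lemma~6.1 in Giusti's book) absorbs the last term on the left-hand side and yields the desired $L^{q_0}$ bound.

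\textbf{Stage 2 (easy finish).} Once Stage~1 is available, apply~\eqref{fcond1} one last time with $\sigma' = 1$, $\sigma = 3/2$, and $q = q_0$:
\[
\| f \|_{L^{q_1}(U_1; \, d\mu)} \le \left\{ 2^\gamma C_\sharp \, \mu(U_2)^{-1} \right\}^{\frac{1}{q_0} - \frac{1}{q_1}} \| f \|_{L^{q_0}(U_{3/2}; \, d\mu)} \le C \, \mu(U_2)^{1/q_1},
\]
which is the conclusion, with $C$ depending only on $C_\sharp$, $\gamma$, and $q_0$.

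\textbf{Where I expect to struggle.} Hypothesis~\eqref{fcond2} is, by itself, too weak to control any positive moment of $f$: the bound $C_\sharp \mu(U_2)/\lambda$ is not integrable against $e^{q_0 \lambda}$ for any $q_0 > 0$. Conversely,~\eqref{fcond1} is a purely algebraic relation between moments and carries no absolute information. The heart of the argument is therefore the delicate balancing in Stage~1, where one must use~\eqref{fcond1} precisely to gain the exponential tail decay needed to beat the growing factor $e^{q_0 \lambda}$, while keeping the homogeneity in $\mu(U_2)$ correct and the resulting self-improving inequality in a form where the absorption step is legitimate. The choice of $q_0$ small---but independent of $f$---and the tracking of the dependencies on $(\sigma - \sigma')$ through the iteration is where the bookkeeping becomes technical.
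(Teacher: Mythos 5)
The two-stage structure you lay out (first get an absolute bound on a small positive moment, then jump to $L^{q_1}$ by a single application of \eqref{fcond1}) is indeed the shape of the proof in the cited reference \cite[Lemma~2.2.6]{S-C02}, and your Stage~2 is fine. The problem is Stage~1: the recursive inequality you arrive at,
\[
\| f \|_{L^{q_0}(U_{\sigma'})}^{q_0} \le C_1 + C_2\,(\sigma - \sigma')^{-\gamma_*}\, \| f \|_{L^{q_0}(U_\sigma)}^{q_0}
\]
(after normalizing $\mu(U_2)=1$ and specializing $q=q_0$), cannot be closed by a ``standard swallowing argument'' in the spirit of Giusti's Lemma~6.1. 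That lemma requires the coefficient multiplying the recursive term $Z(\sigma)$ to be a \emph{fixed} constant $\theta<1$; here the coefficient $C_2(\sigma-\sigma')^{-\gamma_*}$ blows up as $\sigma'\uparrow\sigma$ and is in general not smaller than one. Iterating it along any dyadic sequence $\sigma_k\downarrow 3/2$ simply produces a product of large constants, and nothing is absorbed. (Your crossover computation also drops a $(\log M)^{q_0/q_1}$ factor, which makes the dependence on $\|f\|_{L^{q_0}(U_\sigma)}^{q_0}$ strictly worse than linear; but the missing $\theta<1$ is already fatal.) This is exactly the difficulty you flag in your final paragraph, and it is not merely ``technical bookkeeping''---it is the missing idea.

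The mechanism that makes Stage~1 work, as in Bombieri--Giusti and Saloff-Coste, is that the recursion must be rendered \emph{sublinear} in the recursive term, not linear with a small coefficient. Concretely: split at a fixed height $e^K$ rather than at the crossover $\lambda_*$, apply H\"older with exponent $q_1/q$ on the set $\{f>e^K\}\cap U_{\sigma'}$, then \eqref{fcond1} and \eqref{fcond2}, to obtain (with $\mu(U_2)=1$)
\[
\int_{U_{\sigma'}} f^q\,d\mu \le e^{qK} + \left(\frac{C_\sharp^2}{K(\sigma-\sigma')^\gamma}\right)^{1-q/q_1}\int_{U_\sigma} f^q\,d\mu .
\]
Choosing $K \sim (\sigma-\sigma')^{-\gamma}$ makes the coefficient $\le 1/2$, but pushes $e^{qK}$ out of control unless one \emph{also} shrinks $q$ proportionally to $(\sigma-\sigma')^\gamma$. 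Doing so along a dyadic sequence $\sigma_k=1+2^{-k}$, $p_k\sim 2^{-k\gamma}$, and using Jensen to compare $\|f\|_{L^{p_{k+1}}(U_{\sigma_k})}^{p_{k+1}} \le \big(\|f\|_{L^{p_k}(U_{\sigma_k})}^{p_k}\big)^{p_{k+1}/p_k}$, one is led to an iteration of the form $Z_{k+1}\le A + \tfrac12 Z_k^\beta$ with $\beta = 2^{-\gamma}<1$, which stays bounded regardless of the size of $Z_0$ because the recursive exponent is strictly less than one. That sublinear dependence is what replaces the $\theta<1$ of the classical absorption lemma, and it is precisely what your fixed-$q_0$, crossover-based decomposition fails to produce.
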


With this in hand, we can complete the proof of the weak Harnack inequality. 

\begin{proof}[Proof of Theorem \ref{weakharthm}]
As usual, we assume without loss of generality that~$R = 1$. Set~$\bar{w} := w + 2^{2 s} d + \varepsilon$, with~$\varepsilon > 0$ small. Notice that~$\bar{w}$ is also a weak supersolution of~\eqref{Lv=0} in~$\Sigma \cap B_{4}$ (with~$b_+$ in place of~$b$) and that it satisfies~$\bar{w} \ge 2^{2 s} d + \varepsilon$ in~$\Sigma \cap \Omega$, thanks to assumption~\eqref{wge0thm}.
 
First of all, we claim that, for every~$p_1 \in (0, \theta)$, it holds
\begin{equation} \label{weakharclaim}
\inf_{\Sigma \cap B_1} \bar{w} \ge c \Bigg( \dashint_{\Sigma \cap B_1} \bar{w}^{\, p_1}(x) \, d\sigma(x) \Bigg)^{\frac{1}{p_1}},
\end{equation}
for some constant~$c \in (0, 1]$ depending only on~$n$,~$s$,~$c_\star$,~$C_\star$,~$\Lambda$,~$b_*$, and~$p_1$.

To prove this claim, we may assume~$p_1 \ge 1$, thanks to H\"older's inequality. Following the proof of Theorem~2.3.1 in~\cite{S-C02}, we apply Lemma~\ref{JNlem} to the functions~$e^{- \kappa} \bar{w}$ and~$e^\kappa \bar{w}^{-1}$, with~$\kappa := (\log \bar{w})_{\Sigma \cap B_2}$,~$\mu = \Haus^n \llcorner \Sigma$, and~$U_\sigma = \Sigma \cap B_\sigma$. We observe that condition~\eqref{fcond2} is fulfilled by both 
these functions, since by Lemma~\ref{loglem} (applied here with~$R = 2$),
\begin{align*}
\Haus^{n} \! \left( \Sigma \cap B_2 \cap \left\{ \left| \log \left( e^{- \kappa} \bar{w} \right) \right| > \lambda \right\} \right) & \le \frac{1}{\lambda} \int_{\Sigma \cap B_2} \left| \log \bar{w} (x) - \kappa \right| d\sigma(x) \\
& \le \frac{C \, \Haus^{n} \left( \Sigma \cap B_2 \right)}{\lambda}
\end{align*}
for every~$\lambda > 0$, where~$C \ge 1$ depends only on~$n$,~$s$,~$c_\star$,~$C_\star$,~$\Lambda$,~$b_*$, and~$p_1$.

Next, we claim that~$e^{- \kappa} \bar{w}$ satisfies~\eqref{fcond1} with~$q_1 = p_1$,~$q_0 = 1$, and~$\gamma = \gamma_2 > 0$ as in the statement of Lemma~\ref{q<1lem} and depending only on~$n$ and~$s$. Indeed, estimate~\eqref{q<1ine}, condition~\eqref{EperCflat}, and the fact that~$\theta \le 2$ ensure the validity of the estimate
\begin{align*}
& \Bigg( \int_{\Sigma \cap B_{r}} |e^{- \kappa} \bar{w}(x)|^{p_1} \, d\sigma(x) \Bigg)^{\frac{1}{p_1}} \\
& \hspace{60pt} \le \left\{ \frac{C \, \Haus^n(\Sigma \cap B_2)^{-1} }{\left( \rho - r \right)^{\gamma_2}} \right\}^{\frac{1}{q} - \frac{1}{p_1}} \Bigg( \int_{\Sigma \cap B_{\rho}} |e^{- \kappa} \bar{w}(x)|^q \, d\sigma(x) \Bigg)^{\frac{1}{q}}
\end{align*}
for every~$1 \le r < \rho \le 2$ and~$q \in (0, p_1 / 2]$. On the other hand, by Lemma~\ref{q>1lem},
$$
\sup_{\Sigma \cap B_r} \left| e^\kappa \bar{w}^{-1} \right| \le \left\{ \frac{C \, \Haus^n(\Sigma \cap B_2)^{-1} }{\left( \rho - r \right)^{\gamma_1}} \right\}^{\frac{1}{q}} \Bigg( \int_{\Sigma \cap B_{\rho}} |e^{\kappa} \bar{w}^{-1}(x)|^{q} \, d\sigma(x) \Bigg)^{\frac{1}{q}}
$$
for every~$1 \le r < \rho \le 2$,~$q \in (0, 1]$, and with~$\gamma_1 > 0$ (as in the statement of Lemma~\ref{q>1lem}) depending only on~$n$ and~$s$. Hence~\eqref{fcond1} is also verified by~$e^\kappa \bar{w}^{-1}$, this time with~$\gamma = \gamma_1$ and~$q_1 = +\infty$.

In light of Lemma~\ref{JNlem}, we infer that
$$
\Bigg( \dashint_{\Sigma \cap B_1} |e^{- \kappa} \bar{w}(x)|^{p_1} \, d\sigma(x) \Bigg)^{\frac{1}{p_1}} \le C \qquad \mbox{and} \qquad \sup_{\Sigma \cap B_1} \left| e^\kappa \bar{w}^{-1} \right| \le C.
$$
The combination of these two inequalities leads us to~\eqref{weakharclaim}.

Next, we notice that, by taking~$p_1 = 1$ in~\eqref{weakharclaim} and recalling estimate~\eqref{q<1inebonus} in Lemma~\ref{q<1lem} (applied with~$R = 1$), it immediately follows that
$$
\inf_{\Sigma \cap B_1} \bar{w} \ge c \, \inf_{x \in \Sigma \cap B_{1/2}} \int_{\Sigma \setminus B_1} \bar{w}(y) K(x, y) \, d\sigma(y).
$$

Since~$\bar{w} = w + 2^{2 s} d + \varepsilon$, the conclusion of Theorem~\ref{weakharthm} (with~$p = p_1 \in (0, \theta)$ and~$\bar{p} = \theta$) can be reached by putting together the last inequality with~\eqref{weakharclaim} and letting~$\varepsilon \rightarrow 0^+$.
\end{proof}

\subsection{The weak Harnack inequality on nonlocal minimal surfaces and graphs}

We now apply the results of the previous subsection to~$\alpha$-minimal surfaces and, in particular,~$\alpha$-minimal graphs. The sets~$\Omega$ and~$\Omega'$ considered in the next result will be later taken to be either all of~$\R^{n + 1}$ or vertical cylinders~$\C_\rho$.

\begin{corollary} \label{weakharoverEcor}
Let~$n \ge 1$,~$\Omega \subseteq \Omega'$ be two open subsets of~$\R^{n + 1}$, and suppose that
$$
\dist(\Omega, \R^{n + 1} \setminus \Omega') \ge \mu_0 \quad \mbox{for some } \mu_0 \in (0, +\infty]
$$
---when~$\Omega' = \R^{n + 1}$, we take~$\mu_0 = +\infty$. For~$\alpha \in (0, 1)$, let~$\partial E \subset \R^{n + 1}$ be an~$\alpha$-minimal surface in~$\Omega'$ and set~$\Sigma := \red E$, with~$\red E$ the reduced boundary of~$E$.

Let~$K: \red E \times \red E \to \R$ be a non-negative kernel satisfying hypotheses~\eqref{Ksymm}-\eqref{Ksupp} for some~$R_0 \in (0, \mu_0 / 2]$,~$\Lambda \ge 1$, and~$s \in (1/2, 1)$. Also assume the validity of~\eqref{0inOmega}.

Let~$b, f: \red E \cap B_{R_0} \to \R$ be two bounded~$\Haus^n$-measurable functions satisfying~\eqref{bgeb*} and~\eqref{fged} for some~$b_*, d \ge 0$. Given a radius~$R \in (0, R_0 / 4)$, let~$w$ be a supersolution of
$$
- \L_K v + b v = f \quad \mbox{in } \red E \cap B_{4 R}
$$
such that~$w \ge 0$ in~$\red E \cap \Omega$.

Then, there exists an exponent~$\bar{p} > 1$ depending only on~$n$ and~$s$, such that for every~$p \in (0, \bar{p})$ it holds
\begin{equation} \label{weakharineonredE}
\begin{aligned}
\inf_{\red E \cap B_{R}} w + R^{2 s} d & \ge c \left\{ \Bigg( \dashint_{\red E \cap B_R} w^{p}(x) \, d\sigma(x) \Bigg)^{\frac{1}{p}} \right. \\
& \quad \left. \rule{0pt}{24pt} + R^{2 s} \inf_{x \in \red E \cap B_{R/2}} \int_{\red E \setminus B_R} w(y) K(x, y) \, d\sigma(y) \right\}
\end{aligned}
\end{equation}
for some constant~$c \in (0, 1]$ depending only on~$n$,~$s$,~$\alpha$,~$\Lambda$,~$b_*$, and~$p$.
\end{corollary}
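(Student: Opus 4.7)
The plan is to deduce this corollary as an immediate consequence of Theorem~\ref{weakharthm} applied with $\Sigma = \red E$. All hypotheses of Theorem~\ref{weakharthm} other than the two-sided density bound~\eqref{EperCflat} are satisfied by direct inheritance from the statement of the corollary: the kernel $K$ satisfies~\eqref{Ksymm}-\eqref{Ksupp}, the functions $b$ and $f$ satisfy~\eqref{bgeb*}-\eqref{fged}, the assumption~\eqref{0inOmega} is built in, $R \in (0, R_0/4)$, and $w$ is a (weak or generalized pointwise) supersolution in $\red E \cap B_{4R}$ that is non-negative throughout $\red E \cap \Omega$. Thus the only substantive step is to check that $\Sigma = \red E$ satisfies the Ahlfors-regularity condition~\eqref{EperCflat} on $\Omega$ at scale $R_0$, with constants $c_\star$ and $C_\star$ depending only on $n$ and $\alpha$.

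To verify this, I would fix any $x \in \red E \cap \Omega$ and observe that the distance assumption $\dist(\Omega,\R^{n+1}\setminus\Omega')\ge\mu_0$, together with $R_0 \le \mu_0/2$, yields $B_{2R_0}(x)\subseteq\Omega'$. Hence $\partial E$ is an $\alpha$-minimal surface in the ball $B_{2R_0}(x)$. Since $x\in\partial E$, Theorem~\ref{perimeterboundsthm} applied in this ball provides both the lower bound $\Haus^n(\red E\cap B_\rho(x))\ge c_\star \rho^n$ for all $\rho\in(0,2R_0]$ via~\eqref{perimeterboundbelow}, and, after fixing $\bar\mu=1/2$ in~\eqref{perimeterboundabove}, the upper bound $\Haus^n(\red E\cap B_\rho(x))\le C_\star \rho^n$ for all $\rho\in(0,R_0]$. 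The resulting constants $c_\star, C_\star$ depend only on $n$ and $\alpha$, which is exactly what is needed.

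With~\eqref{EperCflat} established on $\red E$, Theorem~\ref{weakharthm} applies verbatim to $\Sigma=\red E$ and delivers~\eqref{weakharineonredE}. The constant produced by Theorem~\ref{weakharthm} depends on $n,s,c_\star,C_\star,\Lambda,b_*$ and $p$, and since $c_\star, C_\star$ depend only on $n$ and $\alpha$, the effective dependence collapses to $n,s,\alpha,\Lambda,b_*,p$, matching the statement of the corollary. The exponent $\bar{p}>1$ is inherited unchanged from Theorem~\ref{weakharthm} (namely $\bar{p}=\theta$ in~\eqref{thetadef}) and depends only on $n$ and $s$. There is no genuine obstacle in this step; the corollary is simply a repackaging of Theorem~\ref{weakharthm} into the geometric setting of nonlocal minimal surfaces, with the perimeter and density bounds of~\cite{CRS10,CSV16} supplying the required intrinsic Ahlfors regularity of $\red E$.
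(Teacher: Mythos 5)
Your proposal is correct and follows essentially the same route as the paper: establish the two-sided density bound~\eqref{EperCflat} on~$\red E \cap \Omega$ at scale~$R_0 \le \mu_0/2$ via Theorem~\ref{perimeterboundsthm} (using $B_{2R_0}(x)\subseteq\Omega'$ and $\bar\mu = 1/2$), then invoke Theorem~\ref{weakharthm} directly. You supply a bit more detail on where exactly~\eqref{perimeterboundabove}--\eqref{perimeterboundbelow} are applied, but the argument is the same.
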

\begin{proof}
The corollary is an immediate consequence of Theorems~\ref{perimeterboundsthm} and~\ref{weakharthm}. Indeed, by Theorem~\ref{perimeterboundsthm} there exist two constants~$C_\star \ge c_\star > 0$ depending only on~$n$ and~$\alpha$, for which
$$
c_\star \, \rho^n \le \Haus^n(\red E \cap B_\rho(x)) \le C_\star \rho^n \mbox{ for every } x \in \red E \cap \Omega \mbox{ and } \rho \in \left( 0, \frac{\mu_0}{2} \right].
$$
As, by assumption,~$R_0 \le \mu_0 / 2$, the hypotheses of Theorem~\ref{weakharthm} are satisfied. Hence, inequality~\eqref{weakharineonredE} plainly follows.
\end{proof}

Notice that, when~$\Omega = \Omega' = \R^{n + 1}$,~$b = f = 0$, and the kernel~$K$ takes the form~$K(x, y) = |x - y|^{- n - 2 s}$, Corollary~\ref{weakharoverEcor} yields Theorem~\ref{wHthm} of the introduction.

We now present an application of the weak Harnack inequality to~$\alpha$-minimal surfaces in bounded cylinders. This result will be of key importance in the next section, where we will establish the gradient estimate for nonlocal minimal graphs.

\begin{corollary} \label{Jcor}
Let~$n \ge 1$ and~$\alpha \in (0, 1)$. Let~$\partial E$ be an~$\alpha$-minimal surface in the cylinder~$\C_{32 r}$ and suppose that
\begin{equation} \label{redEincappedcyl}
\red E \cap \C_r \subset B_r' \times (- M, M),
\end{equation}
for some positive constants~$r$ and~$M$. Let~$b: \red E \cap \C_{8 r} \to \R$ be a bounded~$\Haus^n$-measurable function satisfying~$b \le r^{- 2 s} b_*$ in~$\red E \cap \C_{8 r}$ for some~$b_* \ge 0$, and~$K$ be the truncated kernel given by
$$
K(x, y) := \frac{\chi_{\C_{16 r}}(x) \chi_{\C_{16 r}}(y)}{|x - y|^{n + 2 s}} \quad \mbox{for all } x, y \in \red E,
$$
for some~$s \in (1/2, 1)$.

Let~$w$ be a non-negative supersolution of~\eqref{Lv=0} in~$\red E \cap \C_{8 r}$, with~$f = 0$. Assume further that, for some constant~$c_\flat > 0$,
\begin{equation} \label{nondegcond0}
\int_{\red E \cap \C_r} w(y) \, d\sigma(y) \ge c_\flat r^n.
\end{equation}

Then,
\begin{equation} \label{infwpower}
\inf_{\red E \cap \C_r} w \ge c \left( 1 + \frac{M}{r} \right)^{- n - 2 s}
\end{equation}
for some constant~$c > 0$ depending only on~$n$,~$s$,~$\alpha$,~$b_*$, and~$c_\flat$.
\end{corollary}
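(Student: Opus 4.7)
The plan is to apply the weak Harnack inequality of Corollary~\ref{weakharoverEcor} pointwise at each $\bar{x}\in\red E\cap\C_r$ at a scale comparable to $r$, and use the non-negativity plus the mass hypothesis~\eqref{nondegcond0} to bound $w(\bar{x})$ from below. Since the Harnack radius is forced to be $O(r)$ (regardless of $M$), we cannot expect the averaging term alone to see the vertical spread of $\red E\cap\C_r\subset B'_r\times(-M,M)$; the mass that lies outside the Harnack ball will instead be transferred via the nonlocal tail term, and this is exactly what yields the factor $(1+M/r)^{-n-2s}$.

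Fix $\bar{x}\in\red E\cap\C_r$ and apply Corollary~\ref{weakharoverEcor} with the choice $\Omega=\C_{16r}$, $\Omega'=\C_{32r}$ (so $\mu_0=16r$ and $\partial E$ is $\alpha$-minimal in $\Omega'$), $R_0=4r$, $R=r/2$, $d=0$, and $p=1$ (which is admissible since $\bar p>1$). One checks immediately that $\Sigma\cap\Omega=\Sigma\cap\C_{16r}$ and that $K$ satisfies~\eqref{Ksymm}-\eqref{Ksupp} with $\Lambda=1$; condition~\eqref{0inOmega} (in the frame translated by $\bar{x}$) reduces to $B_{R_0}(\bar{x})\subset\C_{16r}$, which follows from $\bar{x}'\in B'_r$ and $R_0+r=5r\le 16r$; the bound~\eqref{bgeb*} holds with the modified constant $4^{2s}b_*$; and $w$ is a supersolution in $\red E\cap B_{4R}(\bar x)\subset\red E\cap\C_{3r}\subset\red E\cap\C_{8r}$. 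The weak Harnack inequality therefore yields
\begin{equation*}
w(\bar{x})\ge\inf_{\red E\cap B_{r/2}(\bar x)}w\ge c\left(\dashint_{\red E\cap B_{r/2}(\bar x)}w\,d\sigma+r^{2s}\inf_{x\in\red E\cap B_{r/4}(\bar x)}\int_{\red E\setminus B_{r/2}(\bar x)}w(y)\,K(x,y)\,d\sigma(y)\right),
\end{equation*}
for some $c>0$ depending only on $n$, $s$, $\alpha$, and $b_*$.

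Now split the mass as $\int_{\red E\cap\C_r}w=A+B$, with
\[
A:=\int_{\red E\cap\C_r\cap B_{r/2}(\bar x)}w\,d\sigma,\qquad B:=\int_{(\red E\cap\C_r)\setminus B_{r/2}(\bar x)}w\,d\sigma,
\]
so that $A+B\ge c_\flat r^n$ by~\eqref{nondegcond0}. The perimeter upper bound of Theorem~\ref{perimeterboundsthm}, applied in an ambient ball of radius $\sim r$ around $\bar{x}$ (which is admissible since $\partial E$ is $\alpha$-minimal in $B_{31r}(\bar x)\subset\C_{32r}$), gives $\Haus^n(\red E\cap B_{r/2}(\bar x))\le Cr^n$, hence the averaging term is at least $cA/r^n$. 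For the tail, note that whenever $x\in B_{r/4}(\bar x)$ and $y\in\red E\cap\C_r$ one has both $x,y\in\C_{16r}$ and
\[
|x-y|\le |x-\bar x|+|\bar x-y|\le\tfrac{r}{4}+2\sqrt{r^2+M^2}\le C(r+M),
\]
so that $K(x,y)=|x-y|^{-n-2s}\ge c(r+M)^{-n-2s}$; consequently the tail term dominates $cr^{2s}B/(r+M)^{n+2s}$.

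Combining the two estimates, and using the trivial inequality $r^{2s}/(r+M)^{n+2s}\le 1/r^n$, we get
\begin{equation*}
w(\bar{x})\ge\frac{cr^{2s}(A+B)}{(r+M)^{n+2s}}\ge\frac{cc_\flat\,r^{n+2s}}{(r+M)^{n+2s}}=cc_\flat\Bigl(1+\tfrac{M}{r}\Bigr)^{-n-2s},
\end{equation*}
and taking the infimum over $\bar x\in\red E\cap\C_r$ yields~\eqref{infwpower}. The main subtlety, rather than a mathematical obstacle, is the bookkeeping needed to verify the Harnack hypotheses uniformly in $\bar x\in\red E\cap\C_r$ with $R$ independent of $M$; the conceptual content is simply that the tall set $\red E\cap\C_r$ is seen by the Harnack ball only through the kernel tail, which costs precisely a factor $(r/(r+M))^{n+2s}$.
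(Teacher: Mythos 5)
Your proof is correct and follows essentially the same route as the paper's: apply the weak Harnack inequality of Corollary~\ref{weakharoverEcor} at each $\bar{x}\in\red E\cap\C_r$ at the fixed scale $R=r/2$ (with $\Omega=\C_{16r}$, $\Omega'=\C_{32r}$, $R_0=4r$, $p=1$), lower-bound the average term via the perimeter upper bound of Theorem~\ref{perimeterboundsthm}, and lower-bound the tail term via the geometric bound $|x-y|\le C(r+M)$ coming from~\eqref{redEincappedcyl}. The paper rescales to $r=1$ first and absorbs your explicit $A+B$ split into a single inequality, but these are cosmetic differences; the key observation --- that the part of the mass missed by the Harnack ball is recovered through the kernel tail at cost $(1+M/r)^{-n-2s}$ --- is exactly the paper's mechanism.
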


\begin{proof}
Up to a rescaling, we can assume that~$r = 1$. We apply Corollary~\ref{weakharoverEcor} as follows. Take any point~$\bar{x} \in \red E \cap \C_1$. We translate~$\bar{x}$ to the origin and note that, in this new reference frame,~$\partial (E - \bar{x})$ is an~$\alpha$-minimal surface inside the cylinder~$\C_{32}(-\bar{x})$, while~$w(\cdot \, + \bar{x})$ is a non-negative supersolution of~\eqref{Lv=0} in~$\red (E - \bar{x}) \cap \C_8(-\bar{x})$, with~$f = 0$ and kernel~$K_{\bar{x}}(x, y) := \chi_{\C_{16}(-\bar{x})}(x) \chi_{\C_{16}(-\bar{x})}(y) |x - y|^{- n - 2 s}$. Hence, the hypotheses of Corollary~\ref{weakharoverEcor} are satisfied with~$E - \bar{x}$ in place of~$E$,~$\Omega = \C_{16}(-\bar{x})$,~$\Omega' = \C_{32}(-\bar{x})$, and~$R_0 = 4$. Applying~\eqref{weakharineonredE} with~$p = 1$ and~$R = 1/2$, and switching back to the original coordinates, we conclude that
\begin{equation} \label{localtechest0}
\begin{aligned}
\inf_{\red E \cap B_{1/2}(\bar{x})} w & \ge c \left( \dashint_{\red E \cap B_{1/2}(\bar{x})} w(x) \, d\sigma(x) \right. \\
& \quad \left. + \inf_{x \in \red E \cap B_{1 / 4}(\bar{x})} \int_{\left( \red E \cap \C_1 \right) \setminus B_{1/2}(\bar{x})} \frac{w(y)}{|x - y|^{n + 2 s}} \, d\sigma(y) \right)
\end{aligned}
\end{equation}
for a constant~$c > 0$ depending only on~$n$,~$s$,~$\alpha$, and~$b_*$.

In view of Theorem~\ref{perimeterboundsthm},
\begin{equation} \label{localtechest}
\dashint_{\red E \cap B_{1/2}(\bar{x})} w(x) \, d\sigma(x) \ge c \int_{\left( \red E \cap \C_1 \right) \cap B_{1/2}(\bar{x})} w(x) \, d\sigma(x).
\end{equation}
%\begin{equation} \label{localtechest}
%\dashint_{\red E \cap B_1} w(x) \, d\sigma(x) = \frac{1}{\Haus^n(\red E \cap B_1)} \dashint_{\red E \cap B_1} w(x) \, d\sigma(x) \ge c \int_{\red E \cap B_1} w(x) \, d\sigma(x).
%\end{equation}
Moreover, by hypothesis~\eqref{redEincappedcyl} and the fact that~$\bar{x} \in \red E \cap \C_1$, it holds
$$
|x - y| \le |x - \bar{x}| + |\bar{x}| + |y| \le 1/4 + 2 \sqrt{1 + M^2} \le 3 (1 + M)
$$
for every~$x \in \red E \cap B_{1 / 4}(\bar{x})$ and~$y \in \red E \cap \C_1$. Thus,
\begin{align*}
& \inf_{x \in \red E \cap B_{1 / 4}(\bar{x})} \int_{\left( \red E \cap \C_1 \right) \setminus B_{1/2}(\bar{x})} \frac{w(y)}{|x - y|^{n + 2 s}} \, d\sigma(y) \\
& \hspace{100pt} \ge \frac{c}{(1 + M)^{n + 2 s}} \int_{\left( \red E \cap \C_1 \right) \setminus B_{1/2}(\bar{x})} w(y) \, d\sigma(y).
\end{align*}
In light of the above estimate,~\eqref{localtechest0},~\eqref{localtechest}, and hypothesis~\eqref{nondegcond0}, we get that
$$
\inf_{\red E \cap B_{1/2}(\bar{x})} w  \ge \frac{c}{(1 + M)^{n + 2 s}} \int_{\red E \cap \C_1} w(y) \, d\sigma(y) \ge \frac{c}{(1 + M)^{n + 2 s}}.
$$
By the arbitrariness of~$\bar{x} \in \red E \cap \C_1$, we deduce that~\eqref{infwpower} holds true.
\end{proof}

Under the hypotheses of the last corollary and with the additional assumption that~$\Sigma$ is connected---which holds, for instance, when~$\Sigma$ is the graph of a continuous function, as in our application---one can obtain the estimate
\begin{equation} \label{infwexp}
\inf_{\red E \cap \C_r} w \ge \exp \left\{- C \left( 1 + \frac{M}{r} \right) \right\}
\end{equation}
with a different proof based on a chaining method appeared,~e.g.,~in~\cite[Corollary~3.2]{DSJ11}. We stress that this technique only makes use of the bound for the first term appearing on the right-hand side of the weak Harnack inequality~\eqref{weakharineonredE}. As a result, it produces the exponential estimate~\eqref{infwexp}, that is weaker than the power-type bound~\eqref{infwpower} in its dependence on~$M$. Arguing as in the proof of Theorem~\ref{localgradestthm} presented in Section~\ref{regsec}, one easily sees that~\eqref{infwexp} yields a gradient bound for nonlocal minimal graphs depending exponentially in the oscillation, analogous to the sharp estimate~\eqref{BDMest} for classical minimal graphs.

We present here below a brief sketch of the argument leading to~\eqref{infwexp}.

First, we cover~$\Sigma := \red E \cap \C_r$ with a family of Euclidean balls~$\B := \{ B^{(j)} \}_{j = 1}^N$ centered on~$\Sigma$ and with radii equal to a small (but universal) fraction of~$r$. Thanks to hypothesis~\eqref{redEincappedcyl}, the cardinality~$N$ can be chosen to be of order~$M / r$.

Take now two balls~$B^{(i)}$ and~$B^{(j)}$ having nonempty intersection. Applying Corollary~\ref{weakharoverEcor}, it is easy to get that
$$
\int_{\red E \cap B^{(i)}} w(x) \, d\sigma(x) \le C \int_{\red E \cap B^{(j)}} w(x) \, d\sigma(x)
$$
for some constant~$C > 1$ depending only on~$n$,~$s$,~$\alpha$, and~$b_*$. As~$\Sigma$ is connected, any two balls~$B^{(i)}$ and~$B^{(j)}$ can be joined by a connected chain of balls in~$\B$. By iterating the above inequality along this chain, we obtain a new estimate in which the constant~$C$ is now replaced by~$C^{1 + M/r}$.

Claim~\eqref{infwexp} follows using~\eqref{weakharineonredE}---with only the first term in its right-hand side---and noticing that hypothesis~\eqref{nondegcond0} ensures the existence of an index~$i$ for which
$$
\int_{\red E \cap B^{(i)}} w(x) \, d\sigma(x) \ge c \left( 1 + \frac{M}{r} \right)^{-1} r^n,
$$
for some~$c \in (0,1]$ depending only on~$n$,~$s$,~$\alpha$,~$b_*$, and~$c_\flat$.

\section{Regularity results} \label{regsec}

\noindent
We combine here the weak Harnack inequality of Section~\ref{weakharsec} with the results on the Jacobi operator presented in Section~\ref{jacsec} to deduce a gradient bound for nonlocal minimal graphs and, as a consequence, their smoothness. More specifically, we prove Theorems~\ref{localgradestthm} and~\ref{globalgradestthm} of the introduction.

\begin{proof}[Proof of Theorem~\ref{localgradestthm}]
Up to a covering argument, we may assume that~$\partial E$ minimizes the~$\alpha$-perimeter in the cylinder~$\C_{64 r}$, instead of~$\C_{2 r}$ as in the statement. Indeed, suppose that we have proved the gradient estimate~\eqref{gradbound} in this case. Going back to the hypotheses of Theorem~\ref{localgradestthm}, we pick a ball~$B_{r/64}'(\bar{x}') \subset B_r'$ such that~$\| \nabla_{\! x'} u \|_{L^\infty(B_r')} / 2 \le \| \nabla_{\! x'} u \|_{L^\infty(B_{r / 64}'(\bar{x}'))}$. Then,~$B'_{64 (r / 64)}(\bar{x}') = B_r'(\bar{x}') \subset B_{2 r}'$ and we may apply the gradient bound in~$B'_{r/64}(\bar{x}')$ obtaining that
$$
\| \nabla_{\! x'} u \|_{L^\infty(B_r')} \le C \left( 1 + \frac{\osc_{B'_{r / 64}(\bar{x}')} u}{r / 64} \right)^{n + 1 + \alpha} \le C \left( 1 + \frac{\osc_{B'_{r}} u}{r} \right)^{n + 1 + \alpha},
$$
as desired.

We now prove~\eqref{gradbound} assuming that~$\partial E$ is~$\alpha$-minimal in~$\C_{64 r}$. By~Theorem~\ref{regsingthm}, we know that~$\partial E \cap \overline{\C_{32 r}}$ is of class~$C^\infty$ outside of a closed singular set~$S$ of Hausdorff dimension at most~$n - 2$. By Proposition~\ref{capsingthm}, we also have
\begin{equation} \label{cap=0}
\begin{cases}
S = \varnothing & \quad \mbox{if } n = 1,\\
\Cap_{\, \Sigma \cap \C_{32 r}, \frac{1 + \alpha}{2}}(S) = 0 & \quad \mbox{if } n \ge 2.
\end{cases}
\end{equation}
Denote by~$\pi': \R^{n + 1} \to \R$ the projection along the~$(n + 1)$-th coordinate direction and let~$S' := \pi'(S \cap \C_r)$. This is a closed set relatively to~$B_r'$, by a compactness argument using that~$u$ is bounded in~$B_r'$. In addition, as~$S$ is at most~$(n - 2)$-dimensional, we have that~$S'$ satisfies
%its closedness follows from the fact that~$\overline{S \cap \C_r}$ is compact, since~$u$ is bounded in~$B_r'$
\begin{equation} \label{Ssmall}
\Haus^{d}(S') = 0 \quad \mbox{for all } d > n - 2.
\end{equation}
See,~e.g.,~Corollary~1 in Section~2.4.1 of~\cite{EG92} for a proof of this simple fact.

We can now proceed to prove the theorem. Observe that, up to a vertical translation, we may suppose that~$u(0) = 0$. This means in particular that
\begin{equation} \label{osccontrolsLinfty}
\osc_{B'_{r}} u \ge \| u \|_{L^\infty(B'_{r})}.
\end{equation}
Consider the truncated kernel
$$
K_r(x, y) := \frac{\chi_{\C_{16 r}(x)} \chi_{\C_{16 r}(y)}}{|x - y|^{n + 1 + \alpha}} \quad \mbox{for all } x, y \in \R^{n + 1},
$$
and the corresponding truncated fractional Laplace-type operator given by
$$
\L_r v(x) := \PV \int_{\red E} \left( v(y) - v(x) \right) K_r(x, y) \, dy \quad \mbox{ for } x \in \red E. 
$$
Let~$w := \nu_E^{n + 1}$ be the~$(n + 1)$-th component of the outer unit normal~$\nu_E$ to~$E$, defined on~$\red E$. The function~$w$ is smooth in~$\left( \partial E \setminus S \right) \cap \C_{32 r}$ and given by
\begin{equation} \label{w=1/sqrt}
w(x', u(x')) = \frac{1}{\sqrt{1 + | \nabla_{\! x'} u(x') |^2 }} \quad \mbox{for~} x' \in B'_{r} \setminus S'. 
\end{equation}
In view of Theorem~\ref{Jacintrothm}$\,(ii)$, it holds
$$
- \L_r w(x) + \frac{C}{r^{1 + \alpha}} w(x) \ge 0 \quad \mbox{for~} x \in \left( \partial E \setminus S \right) \cap \C_{8 r},
$$
for some constant~$C > 0$ depending only on~$n$ and~$\alpha$. Recalling~\eqref{cap=0}, this means that~$w$ is a generalized pointwise supersolution of~\eqref{Lv=0} in~$\red E \cap \C_{8 r}$ (in the sense of Definition~\ref{esspointsoldef}), with~$K = K_r$,~$s = (1 + \alpha)/2$,~$f = 0$, and~$b = C r^{- 1 - \alpha}$. Observe that the limit defining the principal value involved in the definition of~$\L_r$ converges uniformly away from~$S$, by the regularity of~$\red E$. Moreover,~$w$ is non-negative in~$\red E \cap \C_{32 r}$, since~$E$ is a global subgraph. As~$\partial E \cap \C_{r}$ is the graph of~$u$ and~\eqref{osccontrolsLinfty} is in force, we see that~\eqref{redEincappedcyl} holds with, say,~$M = 2 \osc_{B_{r}'} u$. Finally, by~\eqref{Ssmall} and~\eqref{w=1/sqrt},
\begin{align*}
\int_{\red E \cap \C_r} w(y) \, d\sigma(y) & = \int_{B'_r \setminus S'} w(y', u(y')) \sqrt{1 + |\nabla_{\! x'} u(y')|^2} \, dy' = \left| B'_r \right| = \left| B'_1 \right| r^n.
\end{align*}
Hence, condition~\eqref{nondegcond0} is satisfied.

As a consequence of these facts, we may apply Corollary~\ref{Jcor} to deduce that
$$
\inf_{x' \in B_r' \setminus S'} w(x', u(x')) \ge c \left( 1 + \frac{\osc_{B_{r}'} u}{r} \right)^{- n - 1 - \alpha}
$$
for some constant~$c \in (0, 1]$ depending only on~$n$ and~$\alpha$. Recalling~\eqref{w=1/sqrt}, we infer that~$u \in W^{1, \infty}(B_r' \setminus S')$ and
$$
\| \nabla_{\! x'} u \|_{L^\infty(B_r' \setminus S')} \le c^{-1} \left( 1 + \frac{\osc_{B'_{r}} u}{r} \right)^{n + 1 + \alpha}.
$$
Since~$S'$ is closed in~$B_r'$ and~$\Haus^{n - 1}(S') = 0$ by~\eqref{Ssmall}, this~$W^{1, \infty}$ bound in~$B_r' \setminus S'$ yields that~$u$ is actually Lipschitz in the whole~$B_r'$ and that estimate~\eqref{gradbound} holds. This is a consequence of a known removability result for the Sobolev space~$W^{1, \infty}$---see, e.g.,~\cite[Theorem~1.2.5]{MP97}. The fact that~$u$ is actually smooth is a consequence of~\cite[Theorem~1.1]{FV17}.
\end{proof}

As noted in~\cite{BG72} for the case of classical minimal surfaces, estimate~\eqref{gradbound} can be improved when dealing with graphs that are minimal in the whole space. In such a situation, the gradient of~$u$ can be bounded in terms of a better power of the oscillation, as claimed in Theorem~\ref{globalgradestthm} of the introduction. Here is a proof of this result.

\begin{proof}[Proof of Theorem~\ref{globalgradestthm}]
By Theorem~\ref{localgradestthm}, we already know that~$u$ is smooth in the whole~$\R^n$. As in the previous proof, we may assume after a vertical translation that
\begin{equation} \label{osccontrolsLinftybis}
\osc_{B'_r} u \ge \| u \|_{L^\infty(B'_r)}.
\end{equation}

We consider the outer unit normal~$\nu_E$ to~$E$ and let~$w := \nu_E^{n + 1}$ be its~$(n + 1)$-th component. We have~\eqref{w=1/sqrt} for every~$x' \in \R^n$ and~$w \ge 0$ on the whole~$\partial E$. Moreover, by Theorem~\ref{Jacintrothm}$\,(i)$ we know that
$$
- \L w(x) \ge 0 \quad \mbox{for~} x \in \partial E,
$$
where
$$
\L v(x) := \PV \int_{\partial E} \frac{v(y) - v(x)}{|x - y|^{n + 1 + \alpha}} \, d\sigma(y).
$$
Therefore,~$w$ is a supersolution of~\eqref{Lv=0} in the whole~$\partial E$, with~$b = f = 0$. Accordingly, we may apply Corollary~\ref{weakharoverEcor} (with~$\Omega = \Omega' = \R^{n + 1}$,~$R_0 = +\infty$, and~$p = 1$) to obtain, for every~$R > 0$,
\begin{equation} \label{infwgec}
\inf_{B_R} w \ge c \, \dashint_{\partial E \cap B_R} w(x) \, d\sigma(x)
\end{equation}
for some~$c \in (0, 1]$ depending only on~$n$ and~$\alpha$.

Let now~$r > 0$ and set
\begin{equation} \label{Rdef}
R := \sqrt{ r^2 + \| u \|^2_{L^\infty(B'_r)} }.
\end{equation}
With this choice we have that~$B'_r \subseteq \pi' \left( \partial E \cap B_R \right)$, where~$\pi': \R^{n + 1} \to \R^{n}$ is the vertical projection. By this,~\eqref{infwgec},~\eqref{w=1/sqrt}, and the perimeter bound~\eqref{perimeterboundabove} of Theorem~\ref{perimeterboundsthm}, we get that
\begin{align*}
\sup_{B'_r} \sqrt{1 + |\nabla_{\! x'} u|^2} & \le c^{-1} \, \Haus^{n}(\partial E \cap B_R) \left( \int_{\partial E \cap B_R} \frac{d\sigma(x)}{\sqrt{1 + |\nabla_{\! x'} u(x')|^2}} \right)^{-1} \\
& = c^{-1} \, \frac{\Haus^{n} \left( \partial E \cap B_R \right)}{\left| \pi' \left( \partial E \cap B_R \right) \right|} \le c^{-1} \left( \frac{R}{r} \right)^{n}
\end{align*}
for some possibly smaller~$c$. Estimate~\eqref{bettergradbound} follows from~\eqref{osccontrolsLinftybis} and~\eqref{Rdef}.
\end{proof}

\section{Rigidity results} \label{rigsec}

\noindent
We prove here two flatness results for entire nonlocal minimal graphs.

Consider the operator~$\mathfrak{H}_\alpha$ defined in~\eqref{Halphadef}, and note that, if~$u$ is smooth at~$x' \in \R^n$, then~$\mathfrak{H}_\alpha u(x')$ is well-defined in the principal value sense. One can see this by adding and subtracting the term
$$
\PV \int_{B'_\rho(x')} G_\alpha \left( \nabla_{\! x'} u(x') \cdot \frac{y' - x'}{|y' - x'|} \right) \, \frac{dy'}{|y' - x'|^{n + \alpha}},
$$
for any given radius~$\rho > 0$, from the definition of~$\mathfrak{H}_\alpha u(x')$. Note that such term vanishes by symmetry---recall that~$G_\alpha$ is odd. Hence, we may write~$\mathfrak{H}_\alpha u(x')$ in the form
\begin{equation} \label{Hualtdef}
\int_{\R^n} \left\{ G_\alpha \left( \frac{u(x') - u(y')}{|x' - y'|} \right) - \chi_{B_\rho'(x')}(y') \, G_\alpha \left( \nabla_{\! x'} u(x') \cdot \frac{x' - y'}{|x' - y'|} \right) \right\} \frac{dy'}{|x' - y'|^{n + \alpha}}.
\end{equation}
From this representation, it easily follows that~$\mathfrak{H}_\alpha u$ is well-defined if~$u$ is~$C^2$ at~$x'$---see,~e.g.,~\cite[Remark~B.2]{BLV16} for a thorough verification of this fact. Note that, unlike the fractional Laplacian, the operator~$\mathfrak{H}_\alpha$ does not require any boundedness or growth assumption at infinity on the function~$u$ to be well-defined. This is the case because~$G_\alpha$ is bounded.

The following is a Liouville-type theorem for globally Lipschitz nonlocal minimal graphs. A more general version of it has been established very recently by Farina~\&~Valdinoci~\cite{FarV17}. Independently of their work, we had found the result below, with a different proof based on the Harnack inequality for integro-differential operators in Euclidean space with kernels bounded above and below by multiples of that of the fractional Laplacian. We stress that our proof, as theirs, does not rely on any of the results presented in the previous sections.

\begin{theorem}[\cite{FarV17}] \label{Liprigthm}
Let~$n \ge 1$ and~$\alpha \in (0, 1)$. Let~$E$ be the subgraph
$$
E = \Big\{ (x', x_{n + 1}) \in \R^{n} \times \R : x_{n + 1} < u(x') \Big\}
$$
of a globally Lipschitz function~$u: \R^n \to \R$, and suppose that~$\partial E$ is an~$\alpha$-minimal surface in~$\R^{n + 1}$.

Then,~$u$ is affine, or equivalently~$\partial E$ is a hyperplane.
\end{theorem}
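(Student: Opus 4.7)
The plan is to linearize the equation $\mathfrak{H}_\alpha u = 0$ along horizontal increments of~$u$, thereby obtaining a bounded entire solution of a symmetric, uniformly elliptic linear integro-differential equation of order~$1 + \alpha$ on~$\R^n$, and then invoke a Liouville-type theorem for such equations.

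I would first observe that, since~$u$ is globally Lipschitz, the interior smoothness result of~\cite{FV17} (applied in each ball of~$\R^n$) gives that~$u \in C^\infty(\R^n)$ and hence~$\mathfrak{H}_\alpha u = 0$ holds pointwise everywhere. For fixed~$\tau > 0$ and~$i \in \{1, \dots, n\}$, I would then set~$w(x') := u(x' + \tau e_i) - u(x')$ and apply the fundamental theorem of calculus to the family~$u_s := (1 - s) u + s \, u(\cdot + \tau e_i)$, using that both~$u_0$ and~$u_1$ solve~$\mathfrak{H}_\alpha = 0$ by translation invariance. Recalling that~$G_\alpha'(t) = (1 + t^2)^{-(n + 1 + \alpha)/2}$, this produces the linear equation
$$
\PV \int_{\R^n} K(x', y') \, \frac{w(x') - w(y')}{|x' - y'|^{n + 1 + \alpha}} \, dy' = 0 \quad \text{in } \R^n,
$$
with
$$
K(x', y') := \int_0^1 \left( 1 + \left( \frac{u_s(x') - u_s(y')}{|x' - y'|} \right)^2 \right)^{-(n + 1 + \alpha)/2} ds.
$$
The kernel~$K$ is manifestly symmetric in~$(x', y')$, and since each~$u_s$ is Lipschitz with constant at most~$L := \| \nabla_{\! x'} u \|_{L^\infty(\R^n)}$, it satisfies the two-sided bound~$(1 + L^2)^{-(n + 1 + \alpha)/2} \le K \le 1$ pointwise. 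Hence~$w$ is a bounded entire solution (with~$\|w\|_{L^\infty(\R^n)} \le L \tau$) of a symmetric, uniformly elliptic linear integro-differential equation of order~$1 + \alpha$ whose kernel is comparable to that of~$(-\Delta)^{(1 + \alpha)/2}$.

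Next, I would invoke the Harnack inequality and the ensuing interior H\"older estimates for such operators, due to Silvestre~\cite{S06} and Caffarelli~\&~Silvestre~\cite{CS09} (see also Kassmann~\cite{Kas09,Kas11}), in the form~$[w]_{C^\beta(B'_R)} \le C R^{-\beta} \|w\|_{L^\infty(\R^n)}$ for some~$\beta \in (0, 1)$ and~$C$ independent of~$R$. Letting~$R \to \infty$ forces~$w$ to be constant. Therefore, for every~$\tau > 0$ and every~$i$, there exists~$c_i(\tau) \in \R$ such that~$u(x' + \tau e_i) - u(x') = c_i(\tau)$ for all~$x' \in \R^n$. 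By continuity (inherited from~$u$) and the additivity identity~$c_i(\tau + \sigma) = c_i(\tau) + c_i(\sigma)$ obtained by composing translations, Cauchy's functional equation yields~$c_i(\tau) = a_i \tau$ for some~$a_i \in \R$, whence~$\partial_{x_i} u \equiv a_i$ for every~$i$, and~$u$ is affine.

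The hardest step is the linearization itself: one must verify the symmetry and uniform two-sided bounds of~$K$, justify the interchange of the principal-value integral with the~$s$-integration, and ensure that the resulting operator falls within the framework of the available Harnack and H\"older theory in the regime~$2s = 1 + \alpha > 1$. All of these rely decisively on the \emph{global} Lipschitz character of~$u$, which is what keeps the difference-quotient~$(u_s(x') - u_s(y'))/|x' - y'|$ uniformly bounded across all scales---exactly the feature that allows this proof to bypass the heavier machinery of the previous sections.
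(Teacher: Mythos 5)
Your proof is correct, and it takes a genuinely different (though closely related) route from the paper's. The paper differentiates~$\mathfrak{H}_\alpha u = 0$ with respect to each~$x_i$, obtaining a linear equation for~$u_{x_i}$ with kernel~$\widetilde{K}(x', y') = G_\alpha'\big((u(x')-u(y'))/|x'-y'|\big)\,|x'-y'|^{-n-1-\alpha}$, bounded above and below thanks to the Lipschitz hypothesis; it then sets~$v^{(i)} := u_{x_i} - \inf_{\R^n} u_{x_i} \ge 0$ and applies the Harnack inequality (via~\cite[Theorem~2.5]{C17}) on balls~$B_r'$ of arbitrary radius~$r$, letting~$r \to \infty$ to conclude~$v^{(i)} \equiv 0$, so~$\nabla_{\! x'} u$ is constant. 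You instead linearize along finite increments~$w = u(\cdot + \tau e_i) - u(\cdot)$, producing an~$s$-averaged kernel, and invoke scale-invariant interior H\"older estimates to force~$w$ to be constant, finishing via Cauchy's functional equation. Both are valid; the paper's route is slightly shorter (no functional-equation step), while yours avoids differentiating the nonlinear operator under the principal value and uses the more robust finite-difference device. One caveat on references: your averaged kernel~$K(x', y')$---exactly like the paper's~$\widetilde{K}$---is symmetric only in the joint sense~$K(x', y') = K(y', x')$, not in the pointwise sense~$K(x', x'+z') = K(x', x'-z')$ that the viscosity frameworks of~\cite{S06} and~\cite{CS09} require. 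The H\"older (or Harnack) estimate should therefore be quoted from a De Giorgi--Nash--Moser source handling measurable, jointly symmetric kernels in the variational sense, such as~\cite{Kas09,Kas11},~\cite{DCKP14}, or~\cite{C17}; this is precisely what the paper does.
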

\begin{proof}
By~\cite[Theorem~1.1]{FV17}, a result on Lipschitz nonlocal minimal surfaces, the function~$u$ is actually smooth. By the uniform Lipschitz assumption it holds
%First, by the uniform Lipschitz assumption it holds
\begin{equation} \label{ugloblip}
| \nabla_{\! x'} u(x') | \le C_0 \quad \mbox{for } x' \in \R^n,
\end{equation}
%Applying~\cite[Theorem~1.1]{FV17}, a result on Lipschitz nonlocal minimal graphs, we get that the function~$u$ is actually smooth.
for some constant~$C_0$. Recalling that~$\mathfrak{H}_\alpha u(x')$ may be written as in~\eqref{Hualtdef}, we have
$$
\int_{\R^n} \left\{ G_\alpha \left( \frac{u(x' + z') - u(x')}{|z'|} \right) - \chi_{B_1'}(z') \, G_\alpha \left( \nabla_{\! x'} u(x') \cdot \frac{z'}{|z'|} \right) \right\} \frac{dz'}{|z'|^{n + \alpha}} = 0.
$$

We differentiate this identity with respect to~$x_i$, for~$i = 1, \ldots, n$. We get
\begin{equation} \label{lioutech1}
\begin{aligned}
& \int_{\R^n} \bigg\{ G_\alpha' \left( \frac{u(x' + z') - u(x')}{|z'|} \right) \big( u_{x_i}(x' + z') - u_{x_i}(x') \big) \\
& \hspace{40pt} - \chi_{B_1'}(z') \, G_\alpha' \left( \nabla_{\! x'} u(x') \cdot \frac{z'}{|z'|} \right) \big( \nabla_{\! x'} u_{x_i}(x') \cdot z' \big) \bigg\} \frac{dz'}{|z'|^{n + 1 + \alpha}} = 0.
\end{aligned}
\end{equation}
Notice now that, since~$G_\alpha'$ is even,
$$
\PV \int_{B_1'} G_\alpha' \left( \nabla_{\! x'} u(x') \cdot \frac{z'}{|z'|} \right) \big( \nabla_{\! x'} u_{x_i}(x') \cdot z' \big) \, \frac{dz'}{|z'|^{n + 1 + \alpha}} = 0.
$$
Accordingly,~\eqref{lioutech1} becomes
$$
\PV \int_{\R^n} G_\alpha' \left( \frac{u(x' + z') - u(x')}{|z'|} \right) \big( u_{x_i}(x' + z') - u_{x_i}(x') \big) \frac{dz'}{|z'|^{n + 1 + \alpha}} = 0.
$$
Switching back to the old variables, we conclude that
$$
\PV \int_{\R^n} \big( u_{x_i}(x') - u_{x_i}(y') \big) \widetilde{K}(x', y') \, dy' = 0 \quad \mbox{for every } x' \in \R^n \mbox{ and } i = 1, \ldots, n,
$$
where we set
$$
\widetilde{K}(x', y') := G_\alpha' \left( \frac{u(x') - u(y')}{|x' - y'|} \right) \frac{1}{|x' - y'|^{n + 1 + \alpha}}.
$$

Let~$\beta_i := \inf_{\R^n} u_{x_i}$, for~$i = 1, \ldots, n$, and define~$v^{(i)}(x') := u_{x_i}(x') - \beta_i$. These are bounded~$C^2$ functions which are non-negative in~$\R^n$ and satisfy
\begin{equation} \label{infvi=0}
\inf_{\R^n} v^{(i)} = 0
\end{equation}
and
\begin{equation} \label{veq}
\PV \int_{\R^n} \big( v^{(i)}(x') - v^{(i)}(y') \big) \widetilde{K}(x', y') \, dy' = 0 \quad \mbox{for~} x' \in \R^n.
\end{equation}
Observe now that the kernel~$\widetilde{K}$ is symmetric (since~$G_\alpha'$ is even) and that it satisfies
$$
\frac{\tilde{c}}{|x' - y'|^{n + 1 + \alpha}} \le \widetilde{K}(x', y') \le \frac{1}{|x' - y'|^{n + 1 + \alpha}} \quad \mbox{for~} x', y' \in \R^n,
$$
for some constant~$\tilde{c} \in (0, 1]$, since, by~\eqref{ugloblip},
$$
1 \ge G_\alpha' \left( \frac{u(x') - u(y')}{|x' - y'|} \right) = \left\{ 1 + \left( \frac{|u(x') - u(y')|}{|x' - y'|} \right)^2 \right\}^{- \frac{n + 1 + \alpha}{2}} \ge \left( 1 + C_0^2 \right)^{- \frac{n + 1 + \alpha}{2}}.
$$

In light of this, equation~\eqref{veq} is uniformly elliptic and thus, by,~e.g.,~\cite[Theorem~2.5]{C17}, each~$v^{(i)}$ satisfies the Harnack inequality. Recall that~$v^{(i)} \ge 0$ in all of~$\R^n$. We deduce that
%~\cite[Theorem~1.1]{DCKP14} or
$$
\sup_{B'_r} v^{(i)} \le C \inf_{B'_r} v^{(i)} \quad \mbox{for every } r > 0 \mbox{ and } i = 1, \ldots, n,
$$
for some constant~$C$ independent of~$r$. Letting now~$r \rightarrow +\infty$ and recalling~\eqref{infvi=0}, we deduce that
$$
0 \le \sup_{\R^n} v^{(i)} \le C \inf_{\R^n} v^{(i)} = 0 \quad \mbox{for every } i = 1, \ldots, n,
$$
i.e., each~$v^{(i)}$ is identically zero. The gradient of~$u$ is thus constant and~$u$ is affine.
\end{proof}

By combining Theorem~\ref{Liprigthm} with our gradient estimate for nonlocal minimal graphs, we can easily prove Theorem~\ref{growthrigthm}.

\begin{proof}[Proof of Theorem~\ref{growthrigthm}]
By Theorem~\ref{globalgradestthm}, we know that~$u \in C^\infty(\R^n)$. Taking advantage of~\eqref{bettergradbound} and~\eqref{growthass}, we infer that
$$
\| \nabla_{\! x'} u \|_{L^\infty(B_r')} \le C \left( 1 + \frac{2 \| u \|_{L^\infty(B_r')}}{r} \right)^n \le C \left( 1 + \frac{2 C (1 + r)}{r} \right)^n \le C
$$
for every~$r \ge 1$ and for some constant~$C$ independent of~$r$. Hence,~$u$ is globally Lipschitz, and the conclusion follows by virtue of Theorem~\ref{Liprigthm}.
\end{proof}

\appendix

\section{Numerical inequalities} \label{auxapp}

\noindent
The next three results have been used in the~Moser iteration performed in Section~\ref{weakharsec}. We begin with the following estimate due to~Kassmann~\cite{Kas09}.

\begin{lemma}[{\cite[Lemma~2.5]{Kas09}}] \label{numericlem2}
Let~$q > 1$. Then, for every~$a, b > 0$ and~$\sigma, \tau \ge 0$, it holds
\begin{align*}
(b - a) \left( \frac{\sigma^{q + 1}}{a^q} - \frac{\tau^{q + 1}}{b^q} \right) & \ge \frac{\sigma \tau}{q - 1} \left\{ \left( \frac{\tau}{b} \right)^{\frac{q - 1}{2}} - \left( \vphantom{\frac{\tau}{b}} \frac{\sigma}{a} \right)^{\frac{q - 1}{2}} \right\}^2 \\
& \quad - \max \left\{ 4, \frac{6 q - 5}{2} \right\} \left( \sigma - \tau \right)^2 \left\{ \left( \vphantom{\frac{\tau}{b}} \frac{\sigma}{a} \right)^{q - 1} + \left( \frac{\tau}{b} \right)^{q - 1} \right\}.
\end{align*}
\end{lemma}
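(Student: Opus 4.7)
The inequality is purely algebraic in the four parameters $a,b > 0$, $\sigma,\tau \ge 0$, with $q > 1$, and the left-hand side equals $(b-a)(\sigma^{q+1}/a^q - \tau^{q+1}/b^q)$. I begin by exploiting the two built-in invariances: the statement is symmetric under the simultaneous swap $(a,\sigma) \leftrightarrow (b,\tau)$, and both sides are homogeneous of degree $2$ under the joint scaling $(a,b,\sigma,\tau) \mapsto \lambda(a,b,\sigma,\tau)$. Using these, one may assume $b \ge a$ and normalize $a = 1$. Introducing the dimensionless ratios $X := \sigma/a$, $Y := \tau/b$, and $\rho := b/a \ge 1$ reduces the problem to a three-parameter algebraic inequality.

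The structural heuristic is to decompose the LHS into two pieces: a principal positive quadratic-form contribution that reproduces the first term of the RHS, plus a lower-order remainder that vanishes in the diagonal case $a = b$ and which is to be absorbed by the second RHS term (the factor $(\sigma-\tau)^2$ precisely encodes this ``commutator'' structure). The principal piece is produced via the classical elementary inequality
$$(V^p - U^p)(V - U) \ge \frac{4p}{(p+1)^2} \bigl( V^{(p+1)/2} - U^{(p+1)/2} \bigr)^2 \qquad (U,V \ge 0,\ p > 0),$$
applied after a careful grouping of the expansion of the LHS, with $p$ chosen so that $(p+1)/2 = (q-1)/2$. This produces exactly the squared difference $\bigl(Y^{(q-1)/2} - X^{(q-1)/2}\bigr)^2$ appearing on the right; the prefactor $\tfrac{1}{q-1}$ emerges through the factor $\tfrac{4p}{(p+1)^2}$ and is responsible for the characteristic singularity as $q \to 1^+$. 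The cross terms that remain after this extraction are then estimated by weighted Young inequalities $|AB| \le \tfrac{\varepsilon}{2} A^2 + \tfrac{1}{2\varepsilon} B^2$, partitioning the mass between the principal form (absorbed into the first RHS term) and the allowed error $(\sigma-\tau)^2\bigl[(\sigma/a)^{q-1} + (\tau/b)^{q-1}\bigr]$.

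The main obstacle is the bookkeeping needed to reach the explicit sharp constant $\max\{4, (6q-5)/2\}$. The two-sided maximum signals that two regimes---roughly $q$ near $1$ versus $q$ bounded away from $1$---must be treated with different Young-type splittings: in the first regime the constant $4$ is dictated by controlling the quadratic expansion near the degenerate point $\rho = 1$, $X = Y$, while in the second the constant $(6q-5)/2$ arises from the higher-order growth of $t \mapsto t^{q+1}$. A convenient sanity check throughout the computation is to verify the inequality at the extremal configurations $\sigma = \tau$, $a = b$, or one of the variables vanishing, which reduces the statement to a tractable one-parameter problem and pins down the optimal constants.
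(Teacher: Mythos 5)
The paper itself does not prove this lemma: it is quoted verbatim, with the proof delegated to \cite[Lemma~2.5]{Kas09}. So the appropriate question is whether your proposal would actually constitute a correct, self-contained argument. As written, it does not: it is a roadmap that defers precisely the steps that carry the mathematical content, and one of the signposted steps is already incorrect.

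Concretely, you propose to extract the principal term from the classical inequality
\begin{equation*}
(V^{p}-U^{p})(V-U)\ \ge\ \frac{4p}{(p+1)^{2}}\,\bigl(V^{(p+1)/2}-U^{(p+1)/2}\bigr)^{2},\qquad U,V\ge 0,\ p>0,
\end{equation*}
choosing $p$ so that $(p+1)/2=(q-1)/2$, i.e.\ $p=q-2$. Two things go wrong. First, the lemma is asserted for all $q>1$, but for $1<q<2$ one has $p=q-2<0$; then $t\mapsto t^{p}$ is decreasing, so $(V^{p}-U^{p})(V-U)\le 0$ and the displayed inequality with a positive constant simply fails, so this device is unavailable in exactly the regime where the factor $1/(q-1)$ is large. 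Second, even for $q>2$, the resulting prefactor is $\tfrac{4(q-2)}{(q-1)^{2}}$, which does \emph{not} equal $\tfrac{1}{q-1}$ except at the single value $q=7/3$; your claim that ``the prefactor $1/(q-1)$ emerges through the factor $4p/(p+1)^{2}$'' is therefore not correct as stated, and no reconciliation is offered. Beyond this, the rest of the argument is never executed: the ``careful grouping of the expansion of the LHS'' is not displayed, the cross terms to be absorbed by the $(\sigma-\tau)^{2}$ error are not written down, the weighted Young splittings are not chosen, and the constant $\max\{4,(6q-5)/2\}$ --- the only genuinely delicate feature of the statement --- is never derived. Checking extremal configurations, while a sensible sanity test, does not substitute for the estimate. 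To turn this into a proof you would need to either reproduce the algebraic manipulations in Kassmann's original argument, or find a decomposition whose principal part really does have the correct prefactor $\sigma\tau/(q-1)$ for \emph{all} $q>1$, and then track the remainder with explicit constants; the present proposal establishes neither.
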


The following simple result appeared, in a stronger version, in~\cite[Lemma~2.6]{Kas09}.

\begin{lemma} \label{lognumericlem}
For every~$a, b > 0$, it holds
\begin{equation} \label{lognumericine}
\left( \log a - \log b \right)^2 \le \frac{(a - b)^2}{a b}.
\end{equation}
\end{lemma}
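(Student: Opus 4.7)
The plan is to deduce the inequality from a one-line application of the Cauchy–Schwarz inequality. By symmetry in~$a$ and~$b$, we may assume without loss of generality that~$a > b > 0$, so that both sides of~\eqref{lognumericine} are non-negative and we may work with signed quantities.

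First, I would write
$$
\log a - \log b = \int_b^a \frac{dx}{x} = \int_b^a \frac{1}{\sqrt{x}} \cdot \frac{1}{\sqrt{x}} \, dx,
$$
and then apply the Cauchy–Schwarz inequality to this product, obtaining
$$
(\log a - \log b)^2 \le \left( \int_b^a 1 \, dx \right) \left( \int_b^a \frac{dx}{x^2} \right) = (a - b) \left( \frac{1}{b} - \frac{1}{a} \right) = \frac{(a - b)^2}{a b}.
$$
This is exactly~\eqref{lognumericine}.

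There is really no main obstacle here: the only mild point is choosing the right splitting~$1/x = (1/\sqrt{x})(1/\sqrt{x})$ so that the two Cauchy–Schwarz factors evaluate exactly to the length~$a - b$ and to~$1/b - 1/a = (a - b)/(ab)$. Alternatively, one can substitute~$t = a/b \ge 1$ and reduce the claim to~$\sqrt{t} \log t \le t - 1$, which follows by studying the monotonicity of~$f(u) := u^2 - 1 - 2 u \log u$ on~$[1, +\infty)$ and using the elementary bound~$\log u \le u - 1$; however, the integral argument above seems shorter and more transparent, so I would present that one.
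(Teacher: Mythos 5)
Your Cauchy--Schwarz argument is correct and gives a genuinely different, and in fact cleaner, proof than the paper's. The paper reduces to the one-variable inequality~$\psi(t) := \log t - \sqrt{t} + 1/\sqrt{t} \le 0$ for~$t \ge 1$ and verifies it by computing~$\psi'$ and observing that~$\psi'(t) = - (2 t \sqrt{t})^{-1}(\sqrt{t} - 1)^2 \le 0$, together with~$\psi(1) = 0$. Your alternative sketch (studying~$f(u) = u^2 - 1 - 2 u \log u$ on~$[1, +\infty)$) is essentially the same monotonicity argument. The main proof you chose to present---write~$\log a - \log b = \int_b^a dx/x$ and apply Cauchy--Schwarz---avoids any calculus beyond the fundamental theorem and is the more transparent of the two.

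One slip in the write-up should be fixed: the splitting you display and name,~$1/x = (1/\sqrt{x})(1/\sqrt{x})$, does \emph{not} produce the factors you then compute. With~$f = g = 1/\sqrt{x}$, Cauchy--Schwarz gives~$\big(\int_b^a f g\big)^2 \le \big(\int_b^a f^2\big)\big(\int_b^a g^2\big) = (\log a - \log b)^2$, which is a tautology. The splitting that yields~$\int_b^a 1\,dx$ and~$\int_b^a dx/x^2$ is the asymmetric one~$1/x = 1 \cdot (1/x)$, i.e.~$f \equiv 1$,~$g = 1/x$. The displayed inequality and the final identity~$(a - b)\big(1/b - 1/a\big) = (a - b)^2/(ab)$ are correct; only the stated choice of~$f$ and~$g$ needs to be corrected.
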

\begin{proof}
Inequality~\eqref{lognumericine} is equivalent to the fact that
\begin{equation} \label{lognumericinetech}
\psi(t) := \log t - \sqrt{t} + \frac{1}{\sqrt{t}} \le 0 \quad \mbox{for every } t \ge 1.
\end{equation}

We differentiate~$\psi$ and obtain that
$$
\psi'(t) = \frac{1}{t} - \frac{1}{2 \sqrt{t}} - \frac{1}{2 t \sqrt{t}} = - \frac{1}{2 t \sqrt{t}} \left( - 2 \sqrt{t} + t + 1 \right) = - \frac{1}{2 t \sqrt{t}} \left( \sqrt{t} - 1 \right)^2 \le 0
$$
for every~$t \ge 1$. Therefore~$\psi(t) \le \psi(1) = 0$ for every~$t \ge 1$, and~\eqref{lognumericinetech} follows.
\end{proof}

We finish with another numerical inequality in the spirit of the two previous ones.

\begin{lemma} \label{numericlem}
Let~$q \in (0, 1)$. Then, for every~$a, b > 0$ and~$\sigma, \tau \ge 0$, it holds
$$
(a - b) \left( \frac{\sigma^2}{a^q} - \frac{\tau^2}{b^q} \right) \le - \frac{q}{2} \left( a^{\frac{1 - q}{2}} - b^{\frac{1 - q}{2}} \right)^2 \min \{ \sigma, \tau \}^2 + \frac{4}{q} \max \{ a, b \}^{1 - q} \left( \sigma - \tau \right)^2.
$$
\end{lemma}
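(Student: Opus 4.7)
The plan is to reduce to a canonical case by symmetry, expand algebraically to isolate a manifestly non-positive ``main term'', control the remainder via Young's inequality, and close the argument with a sharp lower bound on the main term obtained through Cauchy--Schwarz.

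I would first observe that the inequality is invariant under the swap $(a,\sigma)\leftrightarrow(b,\tau)$, so without loss of generality $a\ge b$; the case $a=b$ being immediate, I assume $a>b>0$. I would then split into the two sub-cases $\sigma\ge\tau$ and $\sigma\le\tau$. Writing $h:=\sigma-\tau$ in the first one, a direct expansion via $\sigma^2=\tau^2+2\tau h+h^2$ yields the algebraic identity
\[
(a-b)\!\left(\tfrac{\sigma^2}{a^q}-\tfrac{\tau^2}{b^q}\right) \;=\; -\,\tau^2\,N \;+\; \tfrac{2\tau h(a-b)}{a^q} \;+\; \tfrac{h^2(a-b)}{a^q}\,,
\]
where I set $N:=(a-b)(a^q-b^q)/(a^qb^q)\ge 0$. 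The third summand is bounded by $a^{1-q}h^2$ since $(a-b)/a^q\le a^{1-q}$, so the whole game is to control the cross term and to show that $-\tau^2 N$ carries the correct negative size.

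For the cross term I would apply Young's inequality $2XY\le \tfrac12 X^2+2Y^2$ with $X=\tau\sqrt{N}$ and $Y=h(a-b)/(a^q\sqrt{N})$. The bound $Y^2\le a^{1-q}h^2/q$ will follow from the concavity estimate $a^q-b^q\ge q\,a^{q-1}(a-b)$, after a short computation giving $(a-b)^2/N=(a-b)\,a^qb^q/(a^q-b^q)\le a^{1+q}/q$. Collecting the three pieces I would arrive at
\[
(a-b)\!\left(\tfrac{\sigma^2}{a^q}-\tfrac{\tau^2}{b^q}\right) \;\le\; -\tfrac12\,\tau^2\,N \,+\, \tfrac{4}{q}\,a^{1-q}h^2\,,
\]
which is already the target inequality modulo replacing $\tfrac12\,N$ by $\tfrac{q}{2}\bigl(a^{(1-q)/2}-b^{(1-q)/2}\bigr)^2$, and modulo observing that in this sub-case $\min\{\sigma,\tau\}=\tau$ and $\max\{a,b\}=a$.

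The decisive and most delicate step is therefore to prove the sharp lower bound
\[
N \;=\; \frac{(a-b)(a^q-b^q)}{a^qb^q} \;\ge\; \frac{4q}{(1-q)^2}\,\bigl(a^{(1-q)/2}-b^{(1-q)/2}\bigr)^2\,,
\]
from which $N\ge q\bigl(a^{(1-q)/2}-b^{(1-q)/2}\bigr)^2$ follows at once since $(1-q)^2\le 1$. The hard part of the argument is this inequality, and I would obtain it from the integral representation $a^{(1-q)/2}-b^{(1-q)/2}=\tfrac{1-q}{2}\int_b^a t^{-(1+q)/2}\,dt$, applying the Cauchy--Schwarz bound $\bigl(\int 1\cdot g\,dt\bigr)^2\le \bigl(\int 1^2\,dt\bigr)\bigl(\int g^2\,dt\bigr)$ together with the elementary identities $\int_b^a dt=a-b$ and $\int_b^a t^{-(1+q)}\,dt=(a^q-b^q)/(q\,a^qb^q)$. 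The remaining sub-case $\sigma\le\tau$ is actually easier: writing $\tau=\sigma+h$ with $h\ge 0$, all three summands of the analogous expansion have the correct (non-positive) sign, so dropping the two cross-type terms and invoking only the same key bound on $N$ suffices.
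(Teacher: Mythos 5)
Your proof is correct, and it takes a genuinely different route from the paper's. The paper factors out $(a-b)/a^q$ and applies a single weighted Young's inequality $\sigma^2 \le (1+\varepsilon)\tau^2 + (2/\varepsilon)(\sigma-\tau)^2$ with the clever choice $\varepsilon = (q/2)(a-b)/a$, which combined with the elementary inequality $1-t^q \ge q(1-t)t^q$ lands directly on $-\tfrac{q}{2}\tfrac{(a-b)^2}{a^{1+q}}\tau^2 + \tfrac{4}{q}a^{1-q}(\sigma-\tau)^2$; the final comparison $\tfrac{(a-b)^2}{a^{1+q}} \ge \bigl(a^{(1-q)/2}-b^{(1-q)/2}\bigr)^2$ is a one-liner. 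This avoids any case split in $\sigma,\tau$. Your approach instead isolates the quantity $N = (a-b)(a^q-b^q)/(a^qb^q)$ as the negative main term, controls the cross term with an unweighted Young's inequality, and proves the sharper lower bound $N \ge \tfrac{4q}{(1-q)^2}\bigl(a^{(1-q)/2}-b^{(1-q)/2}\bigr)^2$ by Cauchy--Schwarz on the integral representation of the power difference. The price you pay is the case split on $\sigma\gtrless\tau$ (needed because your Young's step uses the sign of $h$) and the extra integral lemma; what you gain is a clean identification of the invariant $N$ and a self-contained, sharp estimate on it that is of independent interest. Both proofs are about the same length; the paper's is slightly more economical, but your Cauchy--Schwarz step is the more transparent way to see where the factor $\bigl(a^{(1-q)/2}-b^{(1-q)/2}\bigr)^2$ comes from.
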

\begin{proof}
Our argument follows a path similar to the one traced throughout the first part of the proof of~\cite[Lemma~5.1]{DCKP14}.

Clearly, we may assume that~$a > b$. We write
\begin{equation} \label{numerictech1}
(a - b) \left( \frac{\sigma^2}{a^q} - \frac{\tau^2}{b^q} \right) = \frac{a - b}{a^q} \left( \sigma^2 - \frac{a^q}{b^q} \tau^2 \right).
\end{equation}
An application of the weighted Young's inequality yields
$$
\sigma^2 = \tau^2 + (\sigma - \tau)^2 + 2 \tau (\sigma - \tau) \le \left( 1 + \varepsilon \right) \tau^2 + \frac{2}{\varepsilon} (\sigma - \tau)^2
$$
for every~$\varepsilon \in (0, 1]$. By taking advantage of this estimate in~\eqref{numerictech1} with the choice
$$
\varepsilon := \delta \, \frac{a - b}{a} \in (0, 1),
$$
for some~$\delta \in (0, 1)$ to be chosen later, we obtain
\begin{equation} \label{numerictech2}
\begin{aligned}
(a - b) \left( \frac{\sigma^2}{a^q} - \frac{\tau^2}{b^q} \right) & \le \frac{a - b}{a^q} \left\{ \left( 1 + \delta \, \frac{a - b}{a} - \frac{a^q}{b^q} \right) \tau^2 + \frac{2}{\delta} \, \frac{a}{a - b} (\sigma - \tau)^2 \right\} \\
& = \frac{(a - b)^2}{a^{1 + q}} \left\{ \delta - \left( \frac{a^q}{b^q} - 1 \right) \frac{a}{a - b} \right\} \tau^2 + \frac{2}{\delta} \, a^{1 - q} (\sigma - \tau)^2.
\end{aligned}
\end{equation}

We now compute
\begin{equation} \label{numerictech3}
\delta - \left( \frac{a^q}{b^q} - 1 \right) \frac{a}{a - b} = \delta - \left( \frac{a}{b} \right)^q \frac{1 - \left( \frac{b}{a} \right)^q}{1 - \frac{b}{a}} \le \delta - q,
\end{equation}
where the last inequality is a consequence of the fact that~$1 - t^q \ge q (1 - t) t^q$ for every~$t \in [0, 1]$. By choosing~$\delta = q/2$, from inequalities~\eqref{numerictech2} and~\eqref{numerictech3} we get
$$
(a - b) \left( \frac{\sigma^2}{a^q} - \frac{\tau^2}{b^q} \right) \le - \frac{q}{2} \frac{(a - b)^2}{a^{1 + q}} \tau^2 + \frac{4}{q} \, a^{1 - q} (\sigma - \tau)^2.
$$
Then, the conclusion of the lemma easily follows.
\end{proof}

\section*{Acknowledgments}

\noindent
We thank Joaquim Serra for several interesting conversations on the topic of this paper and, in particular, for important advice on the structure of the gradient estimate of Theorem~\ref{localgradestthm}. We also thank the referees for their interesting comments, which greatly contributed to the improvement of the paper.

\newpage


\begin{thebibliography}{99}
	
\bibitem{AV14}
N. Abatangelo, E. Valdinoci,
\emph{A notion of nonlocal curvature},
Numer. Funct. Anal. Optim. \textbf{35} (2014), no. 7-9, 793--815.

\bibitem{AMPP16}
B. Abdellaoui, M. Medina, I. Peral, A. Primo,
\emph{Optimal results for the fractional heat equation involving the {H}ardy potential},
Nonlinear Anal. \textbf{140} (2016), 166--207.

\bibitem{A75}
R. A. Adams,
\emph{Sobolev spaces},
Pure and Applied Mathematics, Vol. 65, Academic Press, New York-London, 1975.

\bibitem{A72}
W. K. Allard,
\emph{On the First Variation of a Varifold},
Ann. of Math. (2) \textbf{95} (1972), no. 3, 417--491.

\bibitem{ADM11}
L. Ambrosio, G. De Philippis, L. Martinazzi,
\emph{Gamma-convergence of nonlocal perimeter functionals},
Manuscripta Math. \textbf{134} (2011), no. 3-4, 377--403.

\bibitem{BFV14}
B. Barrios, A. Figalli, E. Valdinoci,
\emph{Bootstrap regularity for integro-differential operators and its application to nonlocal minimal surfaces},
Ann. Sc. Norm. Super. Pisa, Cl. Sci. (5) \textbf{13} (2014), no. 3, 609--639.

\bibitem{BL02}
R. F. Bass, D. Levin,
\emph{Harnack inequalities for jump processes},
Potential Anal. \textbf{17} (2002), no. 4, 375--388. 

\bibitem{BDM69}
E. Bombieri, E. De Giorgi, M. Miranda,
\emph{Una maggiorazione a priori relativa alle ipersuperfici minimali non parametriche},
Arch. Ration. Mech. Anal. \textbf{32} (1969), 255--267.

\bibitem{BG72}
E. Bombieri, E. Giusti,
\emph{Harnack's inequality for elliptic differential equations on minimal surfaces},
Invent. Math. \textbf{15} (1972), 24--46.

\bibitem{BBM01}
J. Bourgain, H. Brezis, P. Mironescu,
\emph{Another look at Sobolev spaces},
Optimal control and partial differential equations, 439--455, IOS, Amsterdam, 2001. 

\bibitem{BBM02}
J. Bourgain, H. Brezis, P. Mironescu,
\emph{Limiting embedding theorems for {$W^{s,p}$} when {$s\uparrow1$} and applications},
J. Anal. Math. \textbf{87} (2002), 77--101.
     
\bibitem{B01}
H. Brezis,
\emph{A quick proof of the fractional Sobolev inequality}, 2001,
unpublished, reproduced with the authorization of the author.

\bibitem{BL17}
C. Bucur, L. Lombardini,
\emph{Asymptotics as~$s \rightarrow 0$ of the nonlocal nonparametric Plateau problem with obstacles},
in preparation.

\bibitem{BLV16}
C. Bucur, L. Lombardini, E. Valdinoci,
\emph{Complete stickiness of nonlocal minimal surfaces for small values of the fractional parameter},
to appear in Ann. Inst. H. Poincar\'e Anal. Non Lin\'eaire.

\bibitem{BV16}
C. Bucur, E. Valdinoci,
\emph{Nonlocal diffusion and applications},
Lecture Notes of the Unione Matematica Italiana, Vol. 20, Springer, Heidelberg, 2016.

\bibitem{CCS17}
X. Cabr\'e, E. Cinti, J. Serra,
\emph{Stable $s$-minimal cones in $\R^3$ are flat for $s \sim 1$},
arXiv preprint, arXiv:1710.08722, 2017.

\bibitem{CFSW}
X. Cabr\'e, M. M. Fall, J. Sol\`a-Morales, T. Weth,
\emph{Curves and surfaces with constant nonlocal mean curvature: Meeting Alexandrov and Delaunay},
to appear in J. Reine Angew. Math.

\bibitem{CRS10}
L. A. Caffarelli, J.-M. Roquejoffre, O. Savin,
\emph{Nonlocal minimal surfaces},
Comm. Pure Appl. Math. \textbf{63} (2010), no. 9, 1111--1144.

\bibitem{CS09}
L. A. Caffarelli, L. Silvestre,
\emph{Regularity theory for fully nonlinear integro-differential equations},
Comm. Pure Appl. Math. \textbf{62} (2009), no. 5, 597--638. 

\bibitem{CS10}
L. A. Caffarelli, P. Souganidis,
\emph{Convergence of nonlocal threshold dynamics approximations to front propagation},
Arch. Ration. Mech. Anal. \textbf{195} (2010), no. 1, 1--23. 

\bibitem{CV11}
L. A. Caffarelli, E. Valdinoci,
\emph{Uniform estimates and limiting arguments for nonlocal minimal surfaces},
Calc. Var. Partial Differential Equations \textbf{41} (2011), no. 1-2, 203--240.

\bibitem{CV13}
L. A. Caffarelli, E. Valdinoci,
\emph{Regularity properties of nonlocal minimal surfaces via limiting arguments},
Adv. Math. \textbf{248} (2013), 843--871.

\bibitem{CSV16}
E. Cinti, J. Serra, E. Valdinoci,
\emph{Quantitative flatness results and $BV$-estimates for stable nonlocal minimal surfaces},
to appear in J. Differential Geom.

\bibitem{C17}
M. Cozzi,
\emph{Regularity results and Harnack inequalities for minimizers and solutions of nonlocal problems: a unified approach via fractional De Giorgi classes},
J. Funct. Anal. \textbf{272} (2017), no. 11, 4762--4837.

\bibitem{CF17}
M. Cozzi, A. Figalli,
\emph{Regularity theory for local and nonlocal minimal surfaces: an overview},
Nonlocal and nonlinear diffusions and interactions: new methods and directions, 117--158, Lecture Notes in Math., Vol. 2186, Fond. CIME/CIME Found. Subser., Springer, Cham, 2017.

\bibitem{CL17}
M. Cozzi, L. Lombardini,
\emph{Some remarks on nonlocal minimal graphs},
in preparation.

\bibitem{D02}
J. D\'avila,
\emph{On an open question about functions of bounded variation}, 
Calc. Var. Partial Differential Equations \textbf{15} (2002), no. 4, 519--527.

\bibitem{DDPW14}
J. D\'avila, M. del Pino, J. Wei,
\emph{Nonlocal~$s$-minimal surfaces and Lawson cones},
J. Differential Geom. \textbf{109} (2018), no. 18, 111--175.

\bibitem{DSJ11}
D. De Silva, D. Jerison,
\emph{A gradient bound for free boundary graphs},
Comm. Pure Appl. Math. \textbf{64} (2011), no. 4, 538--555.

\bibitem{DCKP14}
A. Di Castro, T. Kuusi, G. Palatucci,
\emph{Nonlocal {H}arnack inequalities},
J. Funct. Anal. \textbf{267} (2014), no. 6, 1807--1836.

\bibitem{DPV12}
E. Di Nezza, G. Palatucci, E. Valdinoci,
\emph{Hitchhiker's guide to the fractional {S}obolev spaces},
Bull. Sci. Math. \textbf{136} (2012), no. 5, 521--573.

\bibitem{DSV16}
S. Dipierro, O. Savin, E. Valdinoci,
\emph{Graph properties for nonlocal minimal surfaces},
Calc. Var. Partial Differential Equations \textbf{55} (2016), no. 4, Paper No. 86, 25 pp.

\bibitem{DSV17}
S. Dipierro, O. Savin, E. Valdinoci,
\emph{Boundary behavior of nonlocal minimal surfaces},
J. Funct. Anal. \textbf{272} (2017), no. 5, 1791--1851. 

\bibitem{DV16}
S. Dipierro, E. Valdinoci,
\emph{Nonlocal minimal surfaces: interior regularity, quantitative estimates and boundary stickiness}.
In: Recent developments in nonlocal theory, 165--209, De Gruyter, Berlin, 2018.
%arXiv preprint, arXiv:1607.06872, 2016.

\bibitem{EG92}
L. C. Evans, R. F. Gariepy,
\emph{Measure theory and fine properties of functions},
Stud. Adv. Math., CRC Press, Boca Raton, FL, 1992.

\bibitem{FarV17}
A. Farina, E. Valdinoci,
\emph{Flatness results for nonlocal minimal cones and subgraphs},
to appear in Ann. Sc. Norm. Super. Pisa, Cl. Sci. (5).
%, arXiv preprint, arXiv:1706.05701, 2017.

\bibitem{FF60}
H. Federer, W. H. Fleming,
\emph{Normal and integral currents}, 
Ann. of Math. (2) \textbf{72} (1960), 458--520. 

\bibitem{FFMMM15}
A. Figalli, N. Fusco, F. Maggi, V. Millot, M. Morini,
\emph{Isoperimetry and stability properties of balls with respect to nonlocal energies},
Comm. Math. Phys. \textbf{336} (2015), no. 1, 441--507.

\bibitem{FV17}
A. Figalli, E. Valdinoci,
\emph{Regularity and {B}ernstein-type results for nonlocal minimal surfaces},
J. Reine Angew. Math. \textbf{729} (2017), 263--273.

\bibitem{F63}
R. Finn,
\emph{New estimates for equations of minimal surface type},
Arch. Ration. Mech. Anal. \textbf{14} (1963), 337--375. 

\bibitem{G03}
E. Giusti,
\emph{Direct methods in the calculus of variations},
World Scientific Publishing Co., Inc., River Edge, NJ, 2003.

\bibitem{G84}
E. Giusti,
\emph{Minimal surfaces and functions of bounded variation},
Monographs in Mathematics, Vol. 80, Birkh\"auser Verlag, Basel, 1984.

\bibitem{H96}
P. Haj{\l}asz,
\emph{Sobolev spaces on an arbitrary metric space},
Potential Anal. \textbf{5} (1996), no. 4, 403--415.

\bibitem{HKST15}
J. Heinonen, P. Koskela, N. Shanmugalingam, J. T. Tyson,
\emph{Sobolev spaces on metric measure spaces},
New Mathematical Monographs, Vol. 27, Cambridge University Press, Cambridge, 2015.

\bibitem{Kas07}
M. Kassmann,
\emph{The classical {H}arnack inequality fails for nonlocal operators},
Preprint No. 360, Sonderforschungsbereich 611, Universit{\"a}t Bonn, 2007.

\bibitem{Kas09}
M. Kassmann,
\emph{A priori estimates for integro-differential operators with measurable kernels},
Calc. Var. Partial Differential Equations \textbf{34} (2009), no. 1, 1--21.

\bibitem{Kas11}
M. Kassmann,
\emph{Harnack inequalities and {H}\"{o}lder regularity estimates for nonlocal operators revisited},
preprint, 2011.

\bibitem{K86}
N. Korevaar, N.
\emph{An easy proof of the interior gradient bound for solutions to the prescribed mean curvature equation},
in: Nonlinear functional analysis and its applications, Part 2 (Berkeley, Calif., 1983), Proc. Sympos. Pure Math., Vol. 45, Part 2, Amer. Math. Soc., Providence, RI, 1986, 81--89.

\bibitem{I09}
C. Imbert,
\emph{Level set approach for fractional mean curvature flows},
Interfaces Free Bound. \textbf{11} (2009), no. 1, 153--176. 

\bibitem{L72}
N. S. Landkof,
\emph{Foundations of modern potential theory},
Die Grundlehren der mathematischen Wissenschaften, Band 180. Springer-Verlag, New York-Heidelberg, 1972.

\bibitem{L09}
G. Leoni,
\emph{A first course in Sobolev spaces},
Graduate Studies in Mathematics, Vol. 105, American Mathematical Society, Providence, RI, 2009.

\bibitem{L15}
L. Lombardini,
\emph{Fractional perimeter and nonlocal minimal surfaces},
Master's Degree Thesis, available at arXiv:1508.06241, 2015.

\bibitem{L18}
L. Lombardini,
\emph{Approximation of sets of finite fractional perimeter by smooth sets and comparison of local and global~$s$-minimal surfaces},
Interfaces Free Bound. \textbf{20} (2018), no. 2, 261--296.

\bibitem{M12}
F. Maggi,
\emph{Sets of finite perimeter and geometric variational problems. An introduction to geometric measure theory},
Cambridge Studies in Advanced Mathematics, Vol. 135, Cambridge University Press, Cambridge, 2012.

\bibitem{MP97}
V. Maz'ya, S. V. Poborchi, 
\emph{Differentiable functions on bad domains},
World Scientific Publishing Co., Inc., River Edge, NJ, 1997.

\bibitem{MS02}
V. Maz'ya, T. Shaposhnikova,
\emph{On the Bourgain, Brezis, and Mironescu theorem concerning limiting embeddings of fractional Sobolev spaces},
J. Funct. Anal. \textbf{195} (2002), no. 2, 230--238.

\bibitem{MS73}
J. H. Michael, L. M. Simon,
\emph{Sobolev and mean-value inequalities on generalized submanifolds of~$\R^n$},
Comm. Pure Appl. Anal. \textbf{26} (1973), 361--379.

\bibitem{M03}
G. Mingione,
\emph{The singular set of solutions to non-differentiable elliptic systems},
Arch. Ration. Mech. Anal. \textbf{166} (2003), no. 4, 287--301.

\bibitem{M67}
M. Miranda,
\emph{Diseguaglianze di Sobolev sulle ipersuperfici minimali},
Rend. Sem. Mat. Univ. Padova \textbf{38} (1967), 69--79. 

\bibitem{M61}
J. Moser,
\emph{On {H}arnack's theorem for elliptic differential equations},
Comm. Pure Appl. Math. \textbf{14} (1961), 577--591.

\bibitem{N16}
J. Nuutinen,
\emph{The Besov capacity in metric spaces},
Ann. Polon. Math. \textbf{117} (2016), no. 1, 59--78.

\bibitem{P04}
A. C. Ponce,
\emph{An estimate in the spirit of Poincaré's inequality},
J. Eur. Math. Soc. (JEMS) \textbf{6} (2004), no. 1, 1--15. 

\bibitem{RS16}
X. Ros-Oton, J. Serra,
\emph{The boundary Harnack principle for nonlocal elliptic operators in non-divergence form},
to appear in Potential Anal.

\bibitem{S-C02}
L. Saloff-Coste,
\emph{Aspects of {S}obolev-type inequalities},
London Mathematical Society Lecture Note Series, Vol. 289, Cambridge University Press, Cambridge, 2002.

\bibitem{SV12}
O. Savin, E. Valdinoci,
\emph{{$\Gamma$}-convergence for nonlocal phase transitions},
Ann. Inst. H. Poincar\'e Anal. Non Lin\'eaire \textbf{29} (2012), no. 4, 479--500.

\bibitem{SV13}
O. Savin, E. Valdinoci,
\emph{Regularity of nonlocal minimal cones in dimension 2},
Calc. Var. Partial Differential Equations \textbf{48} (2013), no. 1-2, 33--39.

\bibitem{S06}
L. Silvestre,
\emph{H\"older estimates for solutions of integro-differential equations like the fractional Laplace}, 
Indiana Univ. Math. J. \textbf{55} (2006), no. 3, 1155--1174.

\bibitem{S76}
L. Simon,
\emph{Interior gradient bounds for non-uniformly elliptic equations}, 
Indiana Univ. Math. J. \textbf{25} (1976), no. 9, 821--855.

\bibitem{SV04}
R. Song, Z. Vondra{\v c}ek,
\emph{Harnack inequality for some classes of Markov processes},
Math. Z. \textbf{246} (2004), no. 1-2, 177--202. 

\bibitem{T72}
N. S. Trudinger,
\emph{A new proof of the interior gradient bound for the minimal surface equation in~$n$ dimensions.} 
Proc. Nat. Acad. Sci. U.S.A. \textbf{69} (1972), 821--823.

\bibitem{W98}
X.-J. Wang,
\emph{Interior gradient estimates for mean curvature equations},
Math. Z. \textbf{228} (1998), no. 1, 73--81.

\end{thebibliography}
\end{document}